\newcommand{\N}{\mathbb{N}}
\newcommand{\R}{\mathbb{R}}
\newcommand{\Expect}{\mathbb{E}}
\newcommand{\Prob}{\mathbb{P}}
\newcommand{\calN}{\mathcal{N}}
\newcommand{\calP}{\mathcal{P}}
\newcommand{\pnorm}[2]{\lVert #1\rVert_{#2}}
\newcommand{\biggpnorm}[2]{\bigg\lVert#1\bigg\rVert_{#2}}
\newcommand{\iprod}[2]{\left\langle#1,#2\right\rangle}
\renewcommand{\epsilon}{\varepsilon}
\renewcommand{\d}[1]{\mathrm{d}#1}
\renewcommand{\hat}{\widehat}
\renewcommand{\tilde}{\widetilde}
\DeclareMathOperator{\E}{\mathbb{E}}
\DeclareMathOperator{\tr}{tr}
\DeclareMathOperator{\var}{Var}
\DeclareMathOperator{\cov}{Cov}
\DeclareMathOperator{\rank}{rank}
\DeclareMathOperator{\grad}{grad}
\DeclareMathOperator{\tangent}{\text{Tan}}
\DeclareMathOperator{\op}{op}
\DeclareMathOperator{\ent}{\text{ent}}
\DeclareMathOperator{\kl}{\text{KL}}
\DeclareMathOperator{\diag}{\textrm{diag}}
\let\liminf\relax
\DeclareMathOperator*\liminf{lim\,inf}
\let\limsup\relax
\DeclareMathOperator*\limsup{lim\,sup}
\DeclareMathOperator*{\argmin}{arg\,min}
\def\E{\mathbb{E}}
\def\R{\mathbb{R}}
\def\r{\mathbf{r}}
\def\a{\mathbf{a}}
\def\b{\mathbf{b}}
\def\y{\mathbf{y}}
\def\x{\mathbf{x}}
\def\z{\mathbf{z}}
\def\v{\mathbf{v}}
\def\s{\mathbf{s}}
\def\1{\mathbf{1}}
\def\u{\mathbf{u}}
\def\e{\mathbf{e}}
\def\P{\mathbb{P}}
\def\X{\mathbf{X}}
\def\B{\mathbf{B}}
\def\g{\mathbf{g}}
\def\V{\mathbf{V}}
\def\W{\mathbf{W}}
\def\boldeta{{\boldsymbol\eta}}
\def\balpha{{\boldsymbol\alpha}}
\def\btheta{{\boldsymbol\theta}}
\def\beps{{\boldsymbol\varepsilon}}
\def\bvarphi{{\boldsymbol\varphi}}
\def\blambda{{\boldsymbol\lambda}}
\def\bSigma{{\boldsymbol\Sigma}}
\def\bGamma{{\boldsymbol\Gamma}}
\def\N{\mathcal{N}}
\def\cP{\mathcal{P}}
\def\cS{\mathcal{S}}
\def\cT{\mathcal{T}}
\def\Id{\operatorname{Id}}
\def\d{\mathrm{d}}
\def\DKL{D_{\mathrm{KL}}}
\def\dTV{d_{\mathrm{TV}}}
\def\grad{\operatorname{grad}}
\def\Tr{\operatorname{Tr}}
\def\op{{\operatorname{op}}}
\def\eps{\varepsilon}
\def\Var{\operatorname{Var}}
\def\FR{{\operatorname{FR}}}
\def\cK{\mathcal{K}}
\newcommand{\revise}[1]{{\color{blue} #1}}
\newcommand{\beq}{\begin{equation}}
\newcommand{\eeq}{\end{equation}}
\newcommand{\beqa}{\begin{equation} \begin{aligned}}
\newcommand{\eeqa}{\end{aligned} \end{equation}}
\newcommand{\beqas}{\begin{equation*} \begin{aligned}}
\newcommand{\eeqas}{\end{aligned} \end{equation*}}
\newcommand{\bit}{\begin{itemize}}
	\newcommand{\eit}{\end{itemize}}
\newcommand{\bmat}{\begin{bmatrix}}
	\newcommand{\emat}{\end{bmatrix}}
\newtheorem{theorem}{Theorem}[section]
\newtheorem{lemma}[theorem]{Lemma}
\newtheorem{proposition}[theorem]{Proposition}
\newtheorem{corollary}[theorem]{Corollary}
\theoremstyle{definition}
\newtheorem{remark}[theorem]{Remark}
\newtheorem{assumption}[theorem]{Assumption}
\newcommand*\samethanks[1][\value{footnote}]{\footnotemark[#1]}
\begin{document}

\title{Gradient flows for empirical Bayes in high-dimensional linear models}
\author{Zhou Fan\thanks{Department of Statistics and Data Science, Yale
University}, Leying Guan\thanks{Department of Biostatistics, Yale University \\ 
\indent\indent \texttt{zhou.fan@yale.edu}, \texttt{leying.guan@yale.edu},
\texttt{yandi.shen@yale.edu}, \texttt{yihong.wu@yale.edu}\\
This work is supported in part by NSF DMS-2142476, NSF
DMS-2310836, NSF CCF-1900507.},
Yandi Shen\samethanks[1], Yihong Wu\samethanks[1]}
\date{}

\maketitle

\begin{abstract}
Empirical Bayes provides a powerful framework for learning and adapting to latent structure in data. In sequence models where an independent observation is associated to each latent parameter, theory and methods around empirical Bayes are well-developed. However, in models where latent parameters and observed data interact through more complex designs, many statistical and algorithmic questions remain unanswered.

In this work, we study a canonical setting of empirical Bayes estimation for the distribution of regression coefficients in a high-dimensional Bayesian or random effects linear model.  Computationally, we propose a new system of gradient flow equations for computing a nonparametric maximum likelihood estimator (NPMLE), which jointly optimizes over the prior and posterior distributions of the regression coefficients in a Gibbs variational representation of the marginal log-likelihood. A diffusion-based implementation yields an adaptive Langevin dynamics algorithm in which the prior evolves continuously to optimize a sequence model log-likelihood defined by the coordinates of the Langevin sample. Theoretically, we show polynomial-time convergence of the proposed gradient flow to a near-NPMLE from any initialization within a convex sub-level set of the marginal log-likelihood, by developing a high-temperature log-Sobolev inequality for the posterior law. We establish the statistical consistency of any near-NPMLE under deterministic conditions for the regression design as $n,p \to \infty$.
\end{abstract}

\section{Introduction}
\label{sec:intro}

Introduced by Robbins \cite{Robbins51, Robbins56} in the 1950s, empirical Bayes (EB)
is a powerful framework for large-scale inference that
learns and adapts to latent structure in data. In the most commonly studied
setting, one or several independent samples are
observed corresponding to each latent parameter, for example in the
Gaussian sequence model
\begin{align}\label{def:gau_seq}
y_i = \theta_i + \epsilon_i \qquad \text{ for } i=1,\ldots,n.
\end{align}
Treating $\{\theta_i\}_{i=1}^n$ as i.i.d.\ draws from a prior
distribution $g_\ast$, empirical Bayes methods
proceed by estimating $g_\ast$ from the observations $\{y_i\}_{i=1}^n$
and using the estimated prior $g_\ast$ to perform downstream Bayesian
inference. Such methods have found fruitful applications in numerous
areas of statistics and the applied sciences, and we refer to the review
articles of \cite{casella1985introduction,zhang2003compound,efron2021empirical}
and the monographs of \cite{maritz1989empirical,carlin1996bayes,efron2010large}
for systematic treatments of this broad subject.

In many applications, the data $\y \in \R^n$ may instead have
a more complex joint dependence on the latent parameters $\btheta \in \R^p$.
In this paper, we consider a canonical such setting of a Bayesian or random
effects linear model
\begin{align}\label{def:linear_model_intro}
\y=\X\btheta+\beps, \qquad \theta_j \overset{iid}{\sim} g_\ast
\text{ for } j=1,\ldots,p,
\qquad \eps_i \overset{iid}{\sim} \N(0,\sigma^2) \text{ for } i=1,\ldots,n.
\end{align}
Assuming that the sample size $n$ and dimension $p$ are both large, we study
the analogous question of estimating the prior distribution $g_*$ of the
regression coefficients from the data $(\X,\y)$.
One application in which this problem arises is in estimating
effect size distributions and genetic architectures of complex traits in
statistical genetics, and we refer to 
\cite{zhang2018estimation,oconnor2021distribution,zhou2021fast,spence2022flexible,morgante2023flexible}
for examples of methods developed in this context.

In practice, a common paradigm for estimating $g_*$ is
maximum marginal likelihood: Letting $\cP$ be a parametric or nonparametric
family of prior distributions on $\R$, one aims to compute
\begin{align}\label{def:npmle}
\hat{g}=\argmin_{g \in \cP} \bar{F}_n(g),
\quad \bar{F}_n(g):=-\frac{1}{p}\log \int
\frac{1}{(2\pi\sigma^2)^{n/2}}
\exp\left(-\frac{\|\y-\X\btheta\|^2}{2\sigma^2}\right)\prod_{j=1}^p
\d g(\theta_j)
\end{align}
where $\bar F_n(g)$ is the (rescaled) negative log-likelihood of $\y$,
marginalizing over the latent regression coefficients $\btheta \in \R^p$.
Such an approach is well-studied in the Gaussian sequence model
\eqref{def:gau_seq}, corresponding to the identity design $\X=\Id$
in the linear model. In this
setting (deconvolution), many works have established consistency and rates of
estimation for $\widehat g$ and for associated empirical Bayes estimates of
$\btheta$ (see Section \ref{sec:related_work} for pointers to this literature),
and nonparametric
optimization of $\bar F_n$ is also amenable to convex programming techniques.
In contrast, in the linear model
for general regression designs $\X$, even basic questions such as
conditions for consistency of $\widehat g$ in parametric contexts
as $n,p \to \infty$ are
unanswered, and (non-convex) optimization of $\bar F_n$ is a substantially
more challenging task. Various approximate optimization methods based on
mean-field variational inference (VI)
\cite{carbonetto2012scalable,spence2022flexible,kim2022flexible} and Monte
Carlo Expectation-Maximization (MCEM) or hierarchical Bayesian sampling
\cite{zhou2013polygenic,ge2019polygenic} have been proposed, but a complete
theoretical understanding of such methods is still lacking.

In this work, focusing on nonparametric estimation of $g_*$ over the prior
class of distributions with a fixed and bounded interval of support $[-M,M]$,
we make three contributions to the theory and computation of a nonparametric
maximum likelihood estimator (NPMLE):

\begin{enumerate}[1.]
\item In an asymptotic setting where $n,p \to \infty$, under a
deterministic assumption on the design matrix $\X$, we establish nonparametric
consistency for any near-NPMLE, i.e.\ any approximate optimizer $\widehat g$
of the marginal log-likelihood satisfying $\bar F_n(\widehat g) \leq
\bar{F}_n(g_*)+o(1)$.

Our assumption for $\X$ is reminiscent of inference procedures such as the
debiased lasso \cite{zhang2014confidence,van2014asymptotically} for individual
regression coefficients, and requires the existence of test
vectors $\{\z_j\}_{j=1}^p$ that can ``approximately orthogonalize'' a sufficient
number of variables $\{\x_j\}_{j=1}^p$ against the other variables
of the design. This extends a recent result of \cite{mukherjee2023mean} showing
consistency of $\widehat g$ under the stronger assumption that $n \geq p$ and
$\X$ is well-conditioned, and encompasses also settings where $n<p$
and $\X$ is a random design, having i.i.d.\ subgaussian rows with
well-conditioned population covariance.

\item Inspired by and building upon a line of work on optimizing functionals
of probability measures using gradient flows
\citep{jordan1998variational,chizat2018global,mei2018mean,lambert2022variational,yan2023learning},
we propose a system of bivariate gradient flow equations for computing a
near-NPMLE. Briefly, our approach consists of reparametrizing\footnote{
This reparameterization is introduced for multiple reasons, such as ensuring 
$\bvarphi$ has a smooth density on $\R^p$ which lends itself to Langevin-type
sampling algorithms for its posterior. See \prettyref{sec:phireparam} for a detailed discussion.} the linear
model (\ref{def:linear_model_intro}) using an auxiliary latent
variable $\bvarphi \in \R^p$ as
\begin{equation}\label{eq:reparamintro}
\y=\X\bvarphi+\tilde \beps,
\end{equation}
where $\bvarphi$ has i.i.d.\ coordinates with a Gaussian convolution
prior $\N_{\tau}*g_*$, and then optimizing
the Gibbs variational representation of $\bar{F}_n(g)$ in this
parametrization by $\bvarphi$,
\begin{equation}
\bar{F}_n(g)=\min_{q \in \cP_*(\R^p)} F_n(q,g),
\quad F_n(q,g)={-}\frac{1}{p}\int \Big[\log P_g(\y,\bvarphi)
-\log q(\bvarphi)\Big]\d q(\bvarphi).
\label{eq:varrep}
\end{equation}
Here $P_g(\y,\bvarphi)$ is the joint density of $(\y,\bvarphi)$
under the convolution prior $\N_\tau*g$ for $\bvarphi$,
the minimization in \eqref{eq:varrep}
is over all probability density functions $q(\bvarphi)$ on
$\R^p$, and its minimum attained at the posterior density
$q(\bvarphi)=P_g(\bvarphi \mid \y)$.

We propose to optimize $\bar F_n(g)$ by optimizing the Gibbs free energy
$F_n(q,g)$ jointly over $(q,g)$, via the simultaneous gradient flow equations
\begin{align}\label{eq:joint_flow_intro}
\begin{cases}
\frac{\d}{\d t} q_t(\bvarphi) &= {-}p \cdot \grad^{W_2}_q F_{n}(q_t,g_t)[\bvarphi]\\
\frac{\d}{\d t} g_t(\theta) &= {-}\alpha \cdot
\grad^{\FR}_g F_{n}(q_t,g_t)[\theta].
\end{cases}
\end{align}
In \eqref{eq:joint_flow_intro}, $\grad^{W_2}_q$ denotes the $q$-gradient of $F_n(q,g)$ under the Wasserstein-2 geometry over probability densities on $\R^p$,
and $\grad^{\FR}_g$ denotes the $g$-gradient of $F_n(q,g)$
under the Fisher-Rao geometry over densities on $[-M,M]$;
we refer to Sections \ref{sec:Gibbsrepr} and \ref{sec:phireparam}
for details. The tuning
parameter $\alpha>0$ is a relative learning rate of $g$ versus $q$.

As recognized in the pioneering work of \cite{jordan1998variational},
the time-evolution of $q_t$ in
(\ref{eq:joint_flow_intro}) represents the law of $\bvarphi_t \in \R^p$
evolving according to a Langevin diffusion. 
Consequently, the time-evolution of the prior density $g_t$ in
(\ref{eq:joint_flow_intro}) may be approximated by simulating a trajectory
$\{\bvarphi_t\}_{t \geq 0}$ of this diffusion. In our setting, this
diffusion has a time-evolving drift parameter, corresponding to a time-evolving
stationary distribution that depends on $g_t$. This leads to an adaptive
Langevin dynamics algorithm --- which we will refer to as EBflow ---
where the prior density $\{g_t\}_{t \geq 0}$ adapts to the sampled
Langevin chain $\{\bvarphi_t\}_{t \geq 0}$.

\item We carry out a theoretical analysis of the gradient flow
system \eqref{eq:joint_flow_intro} and its individual components,
establishing the following basic properties:
\begin{itemize}
\item Under mild assumptions for the initial conditions $(q_0,g_0)$, we show existence, uniqueness, and smoothness of the solution
$\{(q_t,g_t)\}_{t \geq 0}$ to (\ref{eq:joint_flow_intro}).
\item For fixed $q_t=q$, we will see that the ``univariate'' evolution of $g_t$ 
in (\ref{eq:joint_flow_intro}) is a Fisher-Rao gradient flow for optimizing
the marginal log-likelihood in the Gaussian sequence model (\ref{def:gau_seq}). 
We prove global convergence of this univariate flow
$\{g_t\}_{t \geq 0}$ to the sequence model NPMLE as $t \to \infty$, at the rate
$O(1/t)$.

\item For fixed $g_t=g$, the ``univariate'' evolution of $q_t$ in
(\ref{eq:joint_flow_intro}) describes a Langevin diffusion with fixed
stationary distribution given by the posterior law $P_g(\bvarphi \mid \y)$.
Adapting an insight of \cite{bauerschmidt2019simple} for log-Sobolev
inequalities (LSIs) in
high-temperature spin systems, we show for sufficiently large
noise variance $\sigma^2>0$ that $P_g(\bvarphi \mid \y)$ satisfies a
dimension-free LSI. This implies global convergence of
$\{q_t\}_{t \geq 0}$ to $P_g(\bvarphi \mid \y)$ as $t \to \infty$, at a
dimension-free exponential rate in KL-divergence.

\item For the joint evolution of $\{(q_t,g_t)\}_{t \geq 0}$ in
(\ref{eq:joint_flow_intro}), under this condition of a dimension-free LSI
for the posterior laws $\{P_{g_t}(\bvarphi \mid \y)\}$ established
above, we show local convergence of $\{g_t\}_{t
\geq 0}$ to a near-NPMLE within a time horizon that is linear in $(n,p)$,
provided that $\bar F_n$ is convex on the sub-level set
$\{g:\bar F_n(g) \leq F_n(q_0,g_0)\}$ defined by
the initial condition $(q_0,g_0)$.
\end{itemize}
We note that these results for the univariate evolutions of $g_t$ and $q_t$ may
be of independent interest in other applications involving NPMLE in the
Gaussian sequence model and posterior sampling in the linear model,
respectively.

While the convergence guarantees in this work are stated for the
continuous-time gradient flow (\ref{eq:joint_flow_intro}), we believe
they yield useful insights for an analysis also of the corresponding EBflow
algorithm. We discuss this and other directions for future work
in Section \ref{sec:conclusion}.
\end{enumerate}

The remainder of the paper is organized as follows:
The gradient flow and resulting EBflow method are
motivated and discussed further in Section \ref{sec:method}. Theoretical
results are presented and discussed in Section \ref{sec:theory}.
Numerical simulations and discussions of parameter tuning are
provided in Section \ref{sec:simulation}. Directions for future work are
discussed in Section \ref{sec:conclusion}.

\subsection{Related work}\label{sec:related_work}
\phantom{}
{\bf Empirical Bayes in sequence models.} The NPMLE was proposed
in \cite{robbins1950generalization, kiefer1956consistency} for nonparametric
estimation of an unknown prior distribution in sequence models. The monograph 
\cite{lindsay1995mixture} provides a systematic treatment of structural and
geometric properties, and we refer to \cite{polyanskiy2020self} for more recent
results. Early consistency results on the NPMLE include \cite{jewell1982mixtures, lindsay198geometry2, lambert1984asymptotic, pfanzagl1988consistency}; see also \cite{chen2017consistency} for a recent survey. 
We refer to \cite{genovese2000rates, ghosal2001entropies,
zhang2009generalized, jiang2009general, saha2020nonparametric,
polyanskiy2021sharp} for work on related problems of marginal
density estimation and EB-compound estimation. Various algorithms have been
proposed and studied for computing the NPMLE in sequence models,
including EM-type methods in \cite{dempster1977maximum, laird1978nonparametric,
cover1984algorithm, vardi1993image, jiang2009general}, general constrained
convex optimization techniques (e.g.\ the interior-point method) in \cite{liu2007partially,
koenker2014convex} and Frank-Wolf algorithm (in this literature known as the vertex direction methods) in \cite{lindsay1983ageometry1,lesperance1992algorithm,
bohning1995review, wang2007fast, groeneboom2008support}. We also mention the
recent work of \cite{zhang2022efficient} which proposed an EM-type algorithm in
general dimensions without a fixed support grid.\\

\noindent {\bf Empirical Bayes in the linear model.} 
EB estimation in linear models is related to that in the Gaussian
sequence model with correlated errors, which was 
discussed in \cite{efron2010correlated,schwartzman2010comment,sun2018solving}
in a context of multiple testing and false discovery rate control. For designs
$\X$ with $n \geq p$ and full column rank, the least squares
estimator $\hat\btheta_{\text{OLS}}=(\X^\top \X)^{-1} \X^{\top}
\y=\btheta+\beps'$ is a sufficient statistic and
follows such a correlated-errors model where
$\beps'\sim\mathcal{N}(0,\sigma^2(\X^\top\X)^{-1})$, establishing an equivalence
between these two models. These models
are no longer equivalent when $n<p$, as
$\y$ depends on $\btheta \in \R^p$ only via its projection onto the row span
of $\X$.

Some early and representative works studying EB in linear
models include \cite{nebebe1986bayes,george2000calibration,yuan2005efficient}
for parametric priors. A large body of literature in statistical
genetics has developed EB methods in both parametric and nonparametric contexts;
a few representative approaches and references include EM optimization of a
composite likelihood \cite{zhang2018estimation},
MCEM or hierarchical Bayes sampling algorithms
\citep{zhou2013polygenic,lloyd2019improved,ge2019polygenic,zhou2021fast}, mean-field variational approximations
\citep{spence2022flexible,morgante2023flexible}, and moment-matching procedures
in the Fourier domain \citep{oconnor2021distribution}. Nonparametric EB via a
discretized computation of the NPMLE was also suggested
in \cite{wang2021nonparametric}.

Theoretical results on EB in the
linear model seem more scarce. The work \cite{jiang2021nonparametric} studied a
mixture of (low-dimensional) regression models that is more akin to the sequence
model (\ref{def:gau_seq}). Recent results of Mukherjee et
al.\ \cite{mukherjee2023mean} 
analyzed the NPMLE and variational approximations in high-dimensional
regression models, showing in particular that the NPMLE is
consistent as $n,p \to \infty$ when $n \geq p$ and $\X/\sigma$ has all
singular values of constant order, and establishing conditions under which
consistency holds also for the optimizer of a mean-field variational
approximation to the marginal log-likelihood.\\

\noindent \textbf{Gradient flows.} 
Gradient flows of probability measures play a central role in our work, and we
refer to \cite{santambrogio2017euclidean} for an introduction and
\cite{ambrosio2008gradient, villani2009optimal} for fuller and
systematic treatments. The connection between gradient-based optimization of
entropy in the Wasserstein-2 geometry and Langevin diffusions was
recognized in \cite{jordan1998variational,otto2001geometry}, and has 
enabled a fruitful line of work developing new analyses of Langevin dynamics as
well as new Bayesian sampling algorithms
\citep{pereyra2016proximal,wibisono2018sampling,durmus2019analysis,ahn2021efficient,ma2021there}.
Wasserstein gradient flows have also been applied to understand
training dynamics of shallow neural networks \citep{nitanda2017stochastic,
mei2018mean, sirignano2020mean, rotskoff2018neural, chizat2018global} and to
develop new methods for variational inference \citep{lambert2022variational,
yao2022mean}.

Ideas of the Fisher-Rao geometry and gradient flow date back to classical work of \cite{hellinger1909, hotelling1930spaces,
rao1945information, kakutani1948equivalence}. More recently, together with the
Wasserstein geometry, it has played an important role in the study of unbalanced
optimal transport and the associated Wasserstein-Fisher-Rao flow
\citep{CHIZAT20183090, chizat2018interpolating, kondratyev2016new,
liero2018optimal, liero2016optimal,yan2023learning}. For sampling, recent work
of \cite{lu2019accelerating, lu2023birth, domingo2023explicit} showed that dynamics
resulting from the combined Wasserstein-Fisher-Rao gradient flow can accelerate
the convergence of Langevin dynamics, especially when the target is not
log-concave.\\

\noindent {\bf Monte Carlo procedures for maximum marginal likelihood estimation.}
Our diffusion-based implementation of the gradient flow 
leads to a continuous-time Monte Carlo EM
method \citep{wei1990monte,booth1999maximizing,levine2001implementations}, where
an intractable E-step is simulated using an Unadjusted Langevin Algorithm
\citep{roberts1996exponential,dalalyan2017theoretical,durmus2017nonasymptotic}.
Classical theoretical treatments of MCEM include
\cite{chan1995monte,fort2003convergence,neath2013convergence}, which assume
an asymptotically growing number of MCMC iterations 
between M-step parameter updates. Related SEM and SAEM methods were
studied in \cite{diebolt1993asymptotic,delyon1999convergence}, using instead a
small number of samples of the latent variable to inform an incremental E-step
update prior to each M-step;
the analyses of these works assume sampling from the exact
conditional distribution of the latent variable
rather than via a MCMC scheme. In contrast, our analyses of the gradient
flow equations may be understood as studying MCEM convergence in the
opposite regime where the prior is continuously updated using an
out-of-equilibrium Langevin sample $\bvarphi_t$,
whose law $q_t(\bvarphi)$ is always
different from the target posterior distribution $P_{g_t}(\bvarphi \mid \y)$ of
the latent variable $\bvarphi$ under the current prior $g_t$.

Closest to our work is the variational view of EM put forth in
\cite{neal1998view}, and the recent proposals of
\cite{kuntz2023particle,akyildiz2023interacting} which build upon this
viewpoint to develop bivariate gradient-flow methods for maximum marginal
likelihood estimation in general latent variable models. The main differences
with our setting are: (1) \cite{kuntz2023particle,akyildiz2023interacting}
address estimation of $g \equiv g_\balpha \in \cP$ within a parametric family
parametrized by a prior parameter $\balpha$, and suggest estimation or
sampling of $\balpha$ via the Gibbs free energy associated to
the latent variable $\btheta$. In the linear model, we develop additional
ideas around reparametrization by a smoothed latent variable $\bvarphi$,
to address the lack of regularity arising
in estimation over a nonparametric family. (2)
\cite{kuntz2023particle,akyildiz2023interacting} suggest ``particle''
implementations using a large number $N$ of parallel Langevin sampling chains
$\{\{\btheta_t^n\}_{t \geq 0}\}_{n=1}^N$. In our context where
$\btheta \in \R^p$ is itself high-dimensional with
product-law prior, we propose a procedure using a single Langevin chain.
(3) The theoretical analyses of these works assume that
the full-data log-likelihood ${-}\log P(\btheta,\y \mid \balpha)$
is jointly convex in the latent variable $\btheta$
and prior parameter $\balpha$, ensuring that the
Gibbs free energy is also Wasserstein-2 convex in $q$ for each fixed
$\balpha$ (see \cite{kuntz2023particle}).
This convexity in $q$ does not hold in our setting of a linear model 
with general regression designs $\X$, requiring the development of
different ideas including a new dimension-free LSI to analyze
the gradient flow dynamics.\\

\noindent {\bf Recent literature.} Following the initial arXiv posting of our
work, several additional papers have carried out related analyses:
\cite{caprio2025error} provides an analysis of global convergence of the
method in \cite{kuntz2023particle} under a LSI-type assumption for
$F_n(q,g)$ jointly in $(q,g)$ (which precludes settings where
$\bar F_n(g)=\min_q F_n(q,g)$ may have multiple local optima). 
\cite{marion2024implicit}
discusses a similar method in a general bilevel optimization framework with
parametric prior $g \equiv g_\balpha$, and proves local convergence in the
sense of vanishing averaged gradient $\frac{1}{T} \int_0^T \|\nabla_\balpha \bar
F_n(g_\balpha)\|^2 \d t \to 0$ under a uniform LSI assumption similar to what we
establish for the posterior law.
\cite{lim2024momentum} extends \cite{kuntz2023particle} to a
momentum-accelerated method based on the underdamped Langevin algorithm.
\cite{montanari2025provably} employs an idea similar to our reparametrization by $\bvarphi$ and
proof of posterior LSI, to develop a hierarchical algorithm for
posterior sampling of $\btheta$ in the linear model. \cite{lee2026parametric} studies
parametric EB in the linear model using mean-field variational inference,
establishing conditions under which the variational EB estimate
is consistent with $1/\sqrt{p}$-rate of convergence. Finally, 
\cite{fan2025dynamicalI,fan2025dynamicalII} (with overlapping authors as our
current manuscript) study a parametric analogue of EBflow for the linear
model, and carry out precise asymptotic
analyses of the dynamics and possible fixed points of a single-chain Langevin
implementation of this method when $n \asymp p$ and $\X$ is a random design
with i.i.d.\ coordinates.

\subsection{Notational conventions}

$\cP(M)$, $\cP(\R)$, $\cP([-M,M]^p)$, $\cP(\R^p)$ are the spaces of probability
distributions on $[-M,M]$, $\R$, $[-M,M]^p$, $\R^p$ with Borel sigma-field,
respectively, and $\cP_*(M),\cP_*(\R)$ etc.\ are the spaces of probability
densities on these domains with respect to Lebesgue measure.

For two distributions $P$ and $Q$, the total variation distance is
$\dTV(P,Q) = \frac{1}{2}\int|\d P-\d Q|$. The Wasserstein-$\ell$ distance for $\ell \geq 1$ is given by $W_\ell^\ell(P,Q) = \inf_{\Gamma \in \text{couplings of (P,Q)}}
(\E_{(X,Y) \sim \Gamma} \|X-Y\|_\ell^\ell)^{1/\ell}$.
When $P$ is absolutely continuous to $Q$, the
Kullback-Leibler divergence is $\DKL(P\|Q) = \int \log \frac{\d P}{\d Q}
\d P$.

We use $\N_\sigma$ as a shorthand for the Gaussian law $\N(0,\sigma^2)$, and
$\N_\sigma(x)$ for its density function at $x \in \R$. We use $P*Q$ for
the convolution of two measures, so
$[\N_\sigma*g](x)$ is the density of the convolution of $\N_\sigma$ with $g$.

Throughout this work, we treat the design $\X$ as deterministic, and
write posterior laws as $P_g(\btheta \mid \y)$ with conditioning on $\X$
being implicit.
We write $P_g(\y),P_g(\y,\btheta),P_g(\y,\bvarphi),P_g(\btheta \mid
\y),P_g(\bvarphi \mid \y)$ etc.\ for the
marginal, joint, and conditional densities of $\y,\btheta,\bvarphi$
in the linear model \eqref{def:linear_model_intro} or \eqref{eq:reparamintro}
with priors $\theta_j \overset{iid}{\sim} g$
and $\varphi_j \overset{iid}{\sim} \N_\tau * g$.

$\nabla f$, $\Delta f$, and $\nabla \cdot f$ denote the gradient, Laplacian, and
divergence of $f:\R^d \to \R$. A function $f:\Omega \to \R$
belongs to $C^r(\Omega)$ (resp.\ $C^\infty(\Omega)$)
if it is $r$-times (resp.\ infinitely-times) continuously differentiable on
$\Omega$. We use $\pnorm{\mathbf{A}}{\op}$ and 
$\|\mathbf{A}\|_{\mathrm{F}}$ for the $\ell_2$-operator norm and Frobenius norm
of a matrix $\mathbf{A}$, and
$\lambda_{\min}(\mathbf{A})$ (resp.\ $\lambda_{\max}(\mathbf{A})$) for its
smallest (resp.\ largest) eigenvalue when $\mathbf{A}$ is symmetric. 

\section{Gradient flow and adaptive Langevin dynamics method}\label{sec:method}

We adopt a similar setting as in \cite{mukherjee2023mean} and focus on
nonparametric estimation of the regression coefficient prior
\[g_\ast \in \cP(M)\]
in the linear model \eqref{def:linear_model_intro},
where $\cP(M)$ denotes the family of distributions supported on 
a bounded interval $[-M,M] \subset \R$.
This nonparametric class of priors is well-studied in the empirical Bayes literature for the sequence model (see, e.g.,~\cite{singh1979empirical,zhang2003compound,greenshtein2009asymptotic,jiang2009general,saha2020nonparametric,polyanskiy2021sharp})

We will assume, for simplicity, that this support constraint $M$ and noise variance
$\sigma^2$ in \eqref{def:linear_model_intro} are both known. In practice, one
may estimate $\sigma^2$ and the second moment of $g_*$ using a
method-of-moments procedure for variance components 
(see e.g.\ \cite{rao1972estimation,Dicker2014Variance,pazokitoroudi2020efficient}), and deduce
from the latter a reasonable bound for $M$.

To motivate our proposed method for estimating $g_*$, we introduce its
ideas in five stages: In Section \ref{sec:FisherRaoseq}, we review the Fisher-Rao
gradient flow for computing the NPMLE in the Gaussian sequence model.
In Section \ref{sec:Gibbsrepr}, we discuss optimization of the marginal
log-likelihood $\bar F_n(g)$ in the linear model via a bivariate gradient
flow on the Gibbs free energy.
In Section \ref{sec:phireparam}, we discuss the variable
reparametrization by $\bvarphi$. In Section
\ref{sec:discretize}, we discuss the implementation of the gradient flow
equations via an adaptive Langevin dynamics algorithm EBflow. Finally,
Section \ref{sec:posteriorinference} discusses how to implement
downstream tasks of posterior inference.

\subsection{Fisher-Rao gradient flow for the Gaussian sequence model}
\label{sec:FisherRaoseq}

Consider first the sequence model (\ref{def:gau_seq}) with Gaussian errors of
variance $\sigma^2$, corresponding to the
identity design matrix $\X=\Id$ with $p=n$. The NPMLE for priors supported on $[-M,M]$ is
\begin{align}\label{def:npmle_id}
\hat{g}=\argmin_{g \in \cP(M)}\bar{F}_n(g), \qquad
\bar{F}_n(g) = -\frac{1}{n}\sum_{i=1}^n \log [\N_\sigma \ast g](y_i)
\end{align}
where $\N_\sigma*g$ denotes the convolution of
$\N_\sigma \equiv \N(0,\sigma^2)$ with $g$.

As mentioned in \prettyref{sec:related_work}, many algorithms are available for optimizing $\bar F_n(g)$. Among these,
several versions of gradient flows were studied in \cite{yan2023learning}.
We review here the (arguably) simplest such flow discussed
in \cite{yan2023learning},
operating in the Fisher-Rao geometry for probability densities $g \in \cP_*(M)$.
This is the geometry induced by the Fisher-Rao or Hellinger distance on
$\cP_*(M)$,
\[d(g,h)=\left(4\int_{-M}^M \Big(\sqrt{g(\theta)}-\sqrt{h(\theta)}\Big)^2
\d\theta\right)^{1/2}.\]
The associated Fisher-Rao gradient of a functional $\bar F_n(g)$
takes a form
\[\grad_g^\text{FR} \bar F_n(g)=g \cdot \delta \bar F_n[g],\]
resulting in the gradient flow equation
\begin{equation}\label{eq:FisherRaoflowgeneral}
\frac{\d}{\d t} g_t(\theta)
={-}g_t(\theta) \cdot \delta \bar F_n[g_t](\theta) \quad
\text{ for each } \theta \in [-M,M].
\end{equation}
A time discretization would give
\begin{equation}\label{eq:discreteFR}
g_{t+\Delta t}(\theta)
=\Big(1-\Delta t \cdot \delta \bar F_n[g_t](\theta)\Big)g_t(\theta)
\end{equation}
which may be interpreted as updating the weight that $g_t(\cdot)$
assigns to each value $\theta \in [-M,M]$ by the multiplicative factor
$1-\Delta t \cdot \delta \bar F_n[g_t](\theta)$.

In the above, $\delta \bar F_n[g]:[-M,M] \rightarrow \R$ is the functional
derivative or first variation of $\bar F_n$ in $g$, defined for each strictly
positive density $g$ on $[-M,M]$ as the unique
(up to an additive constant) function for which
\begin{align}\label{def:first_variation}
\frac{\d}{\d \eps}\Big|_{\eps=0} \bar F_n(g + \eps\chi) = \int_{-M}^M \delta
\bar F_n[g](\theta)\chi(\theta)\d \theta
\end{align}
for every bounded perturbation $\chi:[-M,M] \rightarrow \R$ of $g$
satisfying $\int_{-M}^M \chi(\theta)\d \theta = 0$.
When $\bar F_n$ is the (negative) marginal log-likelihood in
(\ref{def:npmle_id}), its first variation may be computed to be
\begin{equation}\label{eq:deltaFnid}
\delta \bar F_n[g](\theta)=-\frac{1}{n}\sum_{i=1}^n
\frac{\N_\sigma(y_i-\theta)}{\N_\sigma*g(y_i)}+1,
\end{equation}
where this choice of constant $+1$ ensures the conservation-of-mass condition
$\int_{-M}^M g(\theta) \cdot \delta \bar F_n[g](\theta) \d\theta=0$
for the gradient flow \eqref{eq:FisherRaoflowgeneral}.
The form of this Fisher-Rao gradient flow is then
\begin{equation}\label{eq:fisherraoseq}
\frac{\d}{\d t}\,g_t(\theta)={-}g_t(\theta) \cdot \delta \bar F_n[g_t](\theta)
=g_t(\theta)\left[\frac{1}{n}\sum_{i=1}^n
\frac{\N_\sigma(y_i-\theta)}{\N_\sigma*g_t(y_i)}-1\right].
\end{equation}

As noted in \cite{yan2023learning}, choosing a step size of
$\Delta t=1$ in \eqref{eq:discreteFR} results in 
the widely studied Expectation-Maximization (EM) algorithm
\begin{align}\label{def:em_intro}
g_{t+1}(\theta)=\frac{1}{n}\sum_{i=1}^n
\frac{g_t(\theta)\N_\sigma(y_i-\theta)}{\N_\sigma*g_t(y_i)}
\end{align}
for computing the NPMLE in the Gaussian sequence model.
Thus the Fisher-Rao flow may be understood as an infinitesimal version of EM.

We will use this Fisher-Rao flow as one component of our bivariate gradient
flow \eqref{eq:joint_flow_intro} primarily for its simplicity:
\begin{enumerate}[1.]
\item It preserves naturally the constraint of a bounded support $[-M,M]$ for
$g$, in contrast to a Wasserstein gradient flow which (without additional
modifications) would operate on densities that are positive over
the whole real line.
\item It preserves naturally the positivity condition $g(\theta) \geq 0$, in
contrast to a more naive gradient flow $\frac{\d}{\d t} g_t(\theta)={-}\delta
\bar F_n[g_t](\theta)$ in the linear geometry which (without additional
modifications) would not preserve positivity.
\end{enumerate}
We note that this choice of Fisher-Rao geometry for $g$ in
\eqref{eq:joint_flow_intro} will not be essential for the implementability
of the final EBflow method,
and the method may be modified to use other choices for this geometry.
\cite{yan2023learning} provides evidence that estimation in the sequence model
may be more accurate using a hybrid Wasserstein-Fisher-Rao geometry, especially
when the number of support points used to numerically approximate $g(\theta)$ is
small, and it may be interesting to explore similar proposals in our setting.

\subsection{Gradient flow on the Gibbs free energy in the linear model}
\label{sec:Gibbsrepr}

Consider now the linear model with i.i.d.\ prior $g_\ast$ and
noise variance $\sigma^2>0$, which we restate here for clarity:
\begin{equation}\label{eq:linear_model}
\y=\X\btheta+\beps, \qquad \theta_j \overset{iid}{\sim} g_\ast
\text{ for } j=1,\ldots,p,
\qquad \eps_i \overset{iid}{\sim} \N(0,\sigma^2) \text{ for } i=1,\ldots,n,
\end{equation}
with $\btheta$ and $\beps$ independent.
For general regression designs $\X$, computing the first variation
$\delta \bar F_n(g)$ involves an intractable posterior integral
(see \eqref{eq:first_variation_equiv} of the Supplementary Appendices
for its form), rendering direct gradient-based
optimization of $\bar F_n(g)={-}\frac{1}{p}\log P_g(\y)$ challenging.

We propose to address this challenge by performing a gradient flow on a Gibbs
variational representation of $\bar F_n(g)$.
With respect to
the parametrization \eqref{eq:linear_model} by $\btheta
\in \R^p$, for any prior $g \in \cP_*(M)$ admitting a Lebesgue density,
this representation takes the form
\begin{equation}\label{eq:Gibbstheta}
\bar F_n(g)=\min_{q \in \cP_*([-M,M]^p)} \widetilde F_n(q,g),
\quad \widetilde F_n(q,g)={-}\frac{1}{p}
\int\Big[\log P_g(\y,\btheta)-\log q(\btheta)\Big]q(\btheta)\d\btheta
\end{equation}
where $\widetilde F_n(q,g)$ is the (rescaled) \emph{Gibbs free energy}.
This representation follows by noting that
\[\bar F_n(g)-\widetilde F_n(q,g)
=\frac{1}{p}\,\DKL(q(\btheta) \| P_g(\btheta \mid \y)) \geq 0.\]
The minimization in \eqref{eq:Gibbstheta} is over all densities $q(\btheta)$ on
$[-M,M]^p$, with minimum attained at the posterior density $\tilde
q(\btheta)=P_g(\btheta \mid \y)$. Optimizing $\bar F_n(g)$
is thus equivalent to optimizing $\tilde F_n(q,g)$ jointly over $(q,g)$.

This variational representation of $\bar F_n(g)$
is also the basis of variational inference
(VI), which replaces $\bar F_n(g)$ by an upper bound $\min_{q \in
\mathcal{Q}} \tilde F_n(q,g)$ restricting
to an approximating
sub-class of densities $\mathcal{Q}$ over $[-M,M]^p$, e.g.\ product densities 
in mean-field~VI. Strong conditions on the regression design
$\X$ would be needed to ensure that $P_g(\btheta \mid \y)$
is close to this approximating class $\mathcal{Q}$, so that
the prior $g$ minimizing this upper bound is close to the NPMLE. (We refer to
\cite{mukherjee2022variational,mukherjee2023mean,lee2026parametric} for a
detailed study of such conditions.)

Motivated by applications where such
conditions for $\X$ do not hold, we may consider instead
optimization of $\tilde F_n(q,g)$ over all densities $q(\btheta)$,
using a bivariate gradient flow
\begin{equation}\label{eq:gradientflowtheta}
\begin{cases}
\frac{\d}{\d t} q_t(\btheta) &= {-}p \cdot \grad^{W_2}_q \tilde
F_{n}(q_t,g_t)[\btheta]\\
\frac{\d}{\d t} g_t(\theta) &= {-}\alpha \cdot
\grad^{\FR}_g \tilde F_{n}(q_t,g_t)[\theta].
\end{cases}
\end{equation}
Here $\grad_g^\FR$ is the $g$-gradient of $\tilde F_n(q,g)$ in the
Fisher-Rao geometry discussed in Section~\ref{sec:FisherRaoseq}, 
$\grad_q^{W_2}$ is the $q$-gradient of $\tilde F_n(q,g)$ in the
Wasserstein-2 geometry, and $\alpha>0$ is a relative learning rate
which we will take independent of $n,p$.

For expositional purposes, let us derive explicit forms of the
gradient flow equations \eqref{eq:gradientflowtheta},
momentarily assuming that $g_t$ and $q_t$ are positive and smooth densities on
all of $\R$ and $\R^p$ with unbounded supports.
Applying $\log P_g(\y,\btheta)={-}\frac{n}{2}\log 2\pi \sigma^2
{-}\frac{1}{2\sigma^2}\|\y-\X\btheta\|^2+\sum_{j=1}^p \log g(\theta_j)$
in the definition \eqref{eq:Gibbstheta} of $\tilde F_n$,
the first variations of $\tilde F_n$ may be computed to be
\begin{equation}\label{eq:firstvariationsthetaparam}
\begin{aligned}
\delta_q \tilde F_{n}[q,g](\btheta)&=\frac{1}{p}
\bigg[\frac{1}{2\sigma^2}\|\y-\X\btheta\|^2
-\sum_{j=1}^p \log g(\theta_j)+\log q(\btheta)\bigg],\\
\delta_g \tilde F_{n}[q,g](\theta) &= {-}\frac{\bar q(\theta)}{g(\theta)}+1,
\end{aligned}
\end{equation}
where
\[\bar q(x)=\frac{1}{p}\sum_{j=1}^p \int_{\R^{p-1}}
q(\theta_1,\ldots,\theta_{j-1},x,\theta_{j+1},\ldots,\theta_p)
\prod_{i:i \neq j} \d \theta_i\]
is the empirical average of the marginal densities of 
coordinates $\theta_1,\ldots,\theta_p$ under $q(\cdot)$, and the constant $+1$
in \eqref{eq:firstvariationsthetaparam}
is again chosen to ensure the conservation-of-mass property
$\int g(\theta) \cdot \delta_g \tilde F_{n}(q,g)[\theta] \d \theta=0$.
The preceding Wasserstein-2 and Fisher-Rao gradients are of the form
\[\grad^{W_2}_q \tilde F_{n}(q,g)
={-}\nabla_\btheta \cdot (q\,\nabla_\btheta \delta_q \tilde F_n[q,g]),
\quad \grad_g^\FR \tilde F_n(q,g)=g \cdot \delta_g \tilde F_n[q,g],\]
which, upon substitution into \eqref{eq:gradientflowtheta},
leads to the gradient flow equations
\begin{align}
\frac{\d}{\d t}q_t(\btheta)&=\nabla_\btheta \cdot
\left[q_t(\btheta) \left(\frac{1}{\sigma^2}\X^\top(\X\btheta-\y)
-\left(\frac{g_t'(\theta_j)}{g_t(\theta_j)}\right)_{j=1}^p
\right)\right]+\Delta_\btheta q_t(\btheta),\label{eq:qflowtheta}\\
\frac{\d}{\d t}g_t(\theta)&=\alpha\big(\bar q_t(\theta)-g_t(\theta)\big)
\label{eq:gflowtheta}
\end{align}
where $\nabla_\btheta \cdot f(\btheta)$ and $\Delta_\btheta f(\btheta)$
denote the divergence and Laplacian with respect to $\btheta \in \R^p$.

Equations \eqref{eq:qflowtheta} and \eqref{eq:gflowtheta} admit the
following interpretation:
Similar to the identification in \cite{jordan1998variational}, the
evolution (\ref{eq:qflowtheta}) may be identified as
a Fokker-Planck equation for the density
$q_t(\btheta_t)$ of a sample $\btheta_t \in \R^p$
evolving according to a Langevin diffusion
\begin{equation}\label{def:Langevintheta}
\d \btheta_t = \left({-}\frac{1}{\sigma^2}\X^\top (\X\btheta_t-\y)
+\left(\frac{g_t'(\theta_{t,j})}{g_t(\theta_{t,j})}\right)_{j=1}^p
\right)\d t + \sqrt{2}\,\d \B_t
\end{equation}
where $\{\B_t\}_{t \geq 0}$ is a standard Brownian motion on $\R^p$.
This diffusion has a time-dependent drift depending on the current prior $g_t$.
The density $\bar q_t$ in \eqref{eq:gflowtheta} is the average marginal
density of individual coordinates of $\btheta_t$ under this diffusion. For
each fixed density $q$ on $\R^p$, it may be checked that the explicit
minimizer of $g \mapsto \widetilde F_n(q,g)$ is precisely $g=\bar q$,
and \eqref{eq:gflowtheta} may be understood as a process that evolves
$g_t$ towards $\bar q_t$ to enforce this condition. The scalings of the
gradient equations in \eqref{eq:gradientflowtheta} by $p$ and by $\alpha=O(1)$
lead to the prior $g_t(\cdot)$ and individual
coordinates of $\btheta_t$ simultaneously evolving at a comparable rate.

This discussion motivates the specific choice of the Wasserstein-2 geometry for
the $q$-gradient in \eqref{eq:gradientflowtheta}, as it enables simulation
of the $g$-flow in \eqref{eq:gflowtheta} using a sample
$\{\btheta_t\}_{t \geq 0}$ of this Langevin diffusion to approximate $\bar q_t$.
A similar method for maximum likelihood estimation in general latent
variable models with parametric priors was proposed in
\cite{kuntz2023particle}, building on the variational view of
EM in \cite{neal1998view}.

\subsection{Variable reparametrization}
\label{sec:phireparam}

In the Gaussian sequence model \eqref{def:gau_seq} corresponding to $\X=\Id$,
it is known that the NPMLE $\hat g=\argmin_g \bar F_n(g)$ is a discrete
distribution almost surely \cite{lindsay1995mixture},
and thus the corresponding
posterior law $P_{\hat g}(\btheta \mid \y)$ is also discretely supported.

Anticipating that a similar property holds for general regression designs
$\X$ in the linear model,\footnote{To our knowledge, for general regression designs,
it is not yet shown in the literature whether the NPMLE $\argmin_g \bar F_n(g)$
is unique, or whether it is given by a discrete measure. For simplicity, we
write ``a NPMLE $\widehat g=\argmin_g \bar F_n(g)$'' in our informal
discussions; our theoretical results will pertain to near-optimizers $\widehat
g$ of $\bar F_n(g)$ without assuming uniqueness or structural properties of
$\widehat g$.} we
expect the density $\{g_t\}_{t \geq 0}$ in the preceding gradient flow
equations \eqref{eq:gradientflowtheta} to converge to a discrete
distribution as $t \to \infty$, leading to a degeneracy of the drift
coefficient of the Langevin diffusion \eqref{def:Langevintheta} 
and inadequacy of Langevin dynamics as a posterior sampler for large times.

To address this issue, we consider a variation of the
idea in Section \ref{sec:Gibbsrepr} using a reparametrization of the linear
model via a smoothed latent variable $\bvarphi$: Fix any parameter $\tau^2>0$ satisfying
$\tau^2<\sigma^2/\|\X\|_\op^2$, and decompose the Gaussian noise
in \eqref{eq:linear_model}
as $\beps=\tilde \beps+\X\z$ where $\z \sim \N(0,\tau^2\Id)$ and 
$\tilde \beps \sim \N(0,\,\bSigma)$ with 
\begin{equation}\label{eq:psdconstraint}
\bSigma=\sigma^2\Id-\tau^2\X\X^\top\succ 0.
\end{equation}
Setting $\bvarphi=\btheta+\z$,
the marginal law of $\y$ is equivalently expressed as
\begin{align}\label{def:smooth_model}
\y=\X\bvarphi+\tilde \beps, \qquad \varphi_j \overset{iid}{\sim}
\N_\tau*g_\ast, \qquad \tilde \beps \sim \N(0,\,\bSigma)
\end{align}
where $\bvarphi$ and $\tilde \beps$ are independent, and
$\N_\tau*g_*$ denotes the convolution of $\N(0,\tau^2)$ and $g_*$.
Corresponding to this new latent variable $\bvarphi$, we may write an
alternative Gibbs variational representation of $\bar F_n(g)$ as
\[\bar{F}_n(g)=\min_{q \in \cP_*(\R^p)} F_n(q,g),
\quad F_n(q,g)={-}\frac{1}{p}\int \Big[\log P_g(\y,\bvarphi)-\log
q(\bvarphi)\Big]q(\bvarphi)\d\bvarphi\]
with minimization over all probability densities $q(\bvarphi)$ on $\R^p$.
The minimum is now attained at the posterior density $q(\bvarphi)=P_g(\bvarphi
\mid \y)$ of $\bvarphi$ rather than $\btheta$.
Analogous to \eqref{eq:firstvariationsthetaparam}, the first variations of
$F_n(q,g)$ may be computed to be (see Appendix \ref{sec:Fn_grad})
\begin{equation}\label{eq:firstvariations}
\begin{aligned}
\delta_q F_{n}[q,g](\bvarphi)&=\frac{1}{p}
\bigg[\frac{1}{2}(\y-\X\bvarphi)^\top \bSigma^{-1} (\y-\X\bvarphi)
-\sum_{j=1}^p \log[\N_\tau*g](\varphi_j)
+\log q(\bvarphi)\bigg],\\
\delta_g F_{n}[q,g](\theta) &= -\left[\N_\tau*\frac{\bar q}{\N_\tau*g}\right](\theta)+1,
\end{aligned}
\end{equation}
where
\begin{align}\label{def:density_avg}
\bar{q}(x)=\frac{1}{p}\sum_{j=1}^p \int_{\R^{p-1}}
q(\varphi_1,\ldots,\varphi_{j-1},x,\varphi_{j+1},\ldots,\varphi_p)
\prod_{i:i \neq j} \d \varphi_i
\end{align}
is again the average marginal density of individual coordinates under
the joint density
$q(\bvarphi)$. The gradient flow equations (\ref{eq:joint_flow_intro}) for
minimizing $F_n(q,g)$ then take the explicit forms
\begin{align}
\frac{\d}{\d t}q_t(\bvarphi)&=\nabla_\bvarphi \cdot
\left[q_t(\bvarphi) \left(\X^\top \bSigma^{-1}(\X\bvarphi-\y)
-\left(\frac{[\N_\tau*g_t]'(\varphi_j)}{[\N_\tau*g_t](\varphi_j)}\right)_{j=1}^p
\right)\right]+\Delta_\bvarphi q_t(\bvarphi),\label{eq:qflow}\\
\frac{\d}{\d t}g_t(\theta)&=\alpha\,g_t(\theta)
\left(\left[\N_\tau*\frac{\bar q_t}{\N_\tau*g_t}\right](\theta)-1\right),
\label{eq:gflow}
\end{align}
where $\nabla_\bvarphi \cdot f$ and $\Delta_\bvarphi f$ are the divergence and
Laplacian with respect to $\bvarphi \in \R^p$.

The evolution (\ref{eq:qflow}) is now a
Fokker-Planck equation representing the density
$q_t(\bvarphi_t)$ of the Langevin diffusion
\begin{equation}\label{def:Langevin}
\d \bvarphi_t=\left({-}\X^\top \bSigma^{-1}(\X\bvarphi_t-\y)
+\left(\frac{[\N_\tau*g_t]'(\varphi_{t,j})}{[\N_\tau*g_t]
(\varphi_{t,j})}\right)_{j=1}^p \right)\d t + \sqrt{2}\,\d \B_t
\end{equation} 
in the smoothed regression parameter $\bvarphi$ rather than $\btheta$.
We emphasize that the Fisher-Rao gradient in (\ref{eq:gflow})
is computed with respect to the unsmoothed prior $g$ for $\btheta$,
not the smoothed prior $\N_\tau*g$ for $\bvarphi$,
as performing a gradient flow in the latter would not directly yield an
estimate for $g_\ast$ and, more importantly, would require a method of
constraining or projecting the density $\N_\tau*g$ to the space of Gaussian
convolution measures along the flow.

For any fixed prior $g_t$, \eqref{def:Langevin} is a Langevin diffusion with
stationary distribution given by the posterior law
$P_{g_t}(\bvarphi \mid \y)$ of $\bvarphi$. In contrast to the procedure
discussed in Section \ref{sec:Gibbsrepr} based on the parametrization by
$\btheta$, here the prior density $\N_\tau * g_t$ remains smooth even if
$\{g_t\}_{t \geq 0}$ converges to a discrete measure, and
$P_{g_t}(\bvarphi \mid \y)$ admits a positive and bounded density on all of $\R^p$.
This regularity of $\N_\tau * g_t$ and $P_{g_t}(\bvarphi \mid \y)$ 
is essential to all of our theoretical analyses in Section~\ref{sec:theory}.
We remark that this variable reparametrization idea also underlies our proof of
consistency of the NPMLE, as well as the measure decomposition in our proof of
the posterior LSI, and it is inspired by the arguments of Bauerschmidt and
Bodineau in \cite{bauerschmidt2019simple}.

\subsection{Adaptive Langevin dynamics implementation}\label{sec:discretize}

Simulation of the gradient flow equation \eqref{eq:gflow} for $\{g_t\}_{t
\geq 0}$ requires knowledge of $\bar q_t$, which typically remains intractable
to exactly compute. However, $\bar q_t$ may be estimated from a sample
$\{\bvarphi_t\}_{t \geq 0}$ of the Langevin diffusion \eqref{def:Langevin}.
An important remark is that
while estimating the high-dimensional density $q_t(\bvarphi)$ on $\R^p$ may be
challenging, estimating its average marginal distribution
$\bar q_t(\varphi)$ on $\R$ is conceivably easier. Indeed, we propose the simple
empirical estimate
\begin{equation}\label{eq:barqestimate}
\bar q_t \approx \frac{1}{p}\sum_{j=1}^p \delta_{\varphi_{t,j}}
\end{equation}
based on the coordinates of a
single sample $\{\bvarphi_t\}_{t \geq 0}$ of this Langevin diffusion.
Substituting this into \eqref{eq:gflow} yields an empirical estimate
for this gradient equation,
\begin{equation}\label{eq:gflowest}
\frac{\d}{\d t} g_t(\theta) \approx \alpha\,g_t(\theta) \left[\frac{1}{p}
\sum_{j=1}^p \frac{\N_\tau(\varphi_{t,j}-\theta)}{\N_\tau * g_t(\varphi_{t,j})}
-1\right].
\end{equation}
Comparing with \eqref{eq:fisherraoseq}, we see that 
\eqref{eq:gflowest} admits a conceptually appealing interpretation as a
Fisher-Rao gradient flow for computing the NPMLE based on a
Gaussian sequence model for the coordinates of the Langevin iterate
$\bvarphi_t$,
\begin{equation}\label{eq:varphiseqmodel}
\bvarphi_t=\btheta+\z,
\qquad \theta_j \overset{iid}{\sim} g_*,
\qquad z_j \overset{iid}{\sim} \N(0,\tau^2).
\end{equation}
Our experiments support a conjecture that for large $n,p$ and design
matrices $\X$ with limited long-range dependence across variables, this
estimate \eqref{eq:gflowest} using a single Langevin trajectory
$\{\bvarphi_t\}_{t \geq 0}$ concentrates around the true gradient
\eqref{eq:gflow}. (Following the initial posting of our work, one setting
of this conjecture has been confirmed in \cite{fan2025dynamicalI} for a
parametric analogue of our method and i.i.d.\ design matrices $\X$.)

To numerically implement the resulting algorithm,
we approximate the prior $g(\theta)$ via a discretization in space as
\begin{equation}\label{eq:gdiscretization}
g\approx \sum_{k=1}^K w_k \delta_{b_k}
\end{equation}
using an equally spaced discrete support $b_1,\ldots,b_K \in [-M,M]$.
Then
\begin{align*}
\frac{1}{p}\sum_{j=1}^p
\frac{\N_\tau(\varphi_{t,j}-\theta)}{\N_\tau * g_t(\varphi_{t,j})}
& \approx \frac{1}{p}\sum_{j=1}^p \frac{\N_\tau(\varphi_{t,j}-\theta)}
{\sum_{k=1}^K w_k\,\N_\tau(\varphi_{t,j}-b_k)},\\
\frac{[\N_\tau*g]'(\varphi)}{[\N_\tau*g](\varphi)}
&\approx \frac{\sum_{k=1}^K {-}\frac{1}{\tau^2}(\varphi-b_k)
w_k\,\N_\tau(\varphi-b_k)}{\sum_{k=1}^K w_k\,\N_\tau(\varphi-b_k)}.
\end{align*}
Applying these approximations together with a discretization of
(\ref{eq:gflow}) and (\ref{def:Langevin}) in time using a simple
forward Euler scheme leads to our proposed EBflow algorithm
\begin{align}
\bvarphi_{t+1}&=\bvarphi_t-\eta^\varphi_t
\Bigg[\X^\top\bSigma^{-1}(\X\bvarphi_t-\y)+
\frac{1}{\tau^2}\bigg(\frac{\sum_{k=1}^K (\varphi_{t,j}-b_k)
w_{t,k}\,\N_\tau(\varphi_{t,j}-b_k)}{\sum_{k=1}^K
w_{t,k}\,\N_\tau(\varphi_{t,j}-b_k)}\bigg)_{j=1}^p\Bigg]\notag\\
&\hspace{2in}+\sqrt{2\eta^\varphi_t}\,\b_t,\label{eq:phiupdate}\\
w_{t+1,k}&=w_{t,k}+\eta^w_t w_{t,k}
\Bigg[\frac{1}{p}\sum_{j=1}^p \frac{\N_\tau(\varphi_{t+1,j}-b_k)}
{\sum_{i=1}^K w_{t,i}\,\N_\tau(\varphi_{t+1,j}-b_i)}-1\Bigg]
\text{ for } k=1,\ldots,K.\label{eq:wupdate}
\end{align}
Here, $\b_t \sim \N(0,\Id)$ is a standard Gaussian vector in $\R^p$ generated
independently in each iteration, $w_t:=(w_{t,1},\ldots,w_{t,K})$
are the probability weights representing $g_t \approx
\sum_{k=1}^K w_{t,k}\delta_{b_k}$, and $\eta_t^\varphi,\eta_t^w>0$
are step sizes for the updates of $\bvarphi_t$ and $w_t$ respectively,
with
\[\eta_t^w/\eta_t^\varphi=\alpha\]
representing the relative learning rate
in (\ref{eq:joint_flow_intro}) and (\ref{eq:gflow}).
(In our experiments, we will allow
$\eta_t^w$ and $\eta_t^\varphi$ to vary with the iteration $t$, fixing
their ratio $\eta_t^w/\eta_t^\varphi=\alpha$.)

We make two additional remarks for interpretation of this EBflow method:
\begin{enumerate}[1.]
\item The prior update (\ref{eq:wupdate}) has the
equivalent form 
\[w_{t+1,k}=(1-\eta^w_t)w_{t,k}
+\eta_t^w \cdot \frac{1}{p}\sum_{j=1}^p \frac{w_{t,k}\,
\N_\tau(\varphi_{t+1,j}-b_k)}{\sum_{i=1}^K
w_{t,i}\,\N_\tau(\varphi_{t+1,j}-b_i)}.\]
Thus, as long as $\eta_t^w \leq 1$, we have $w_{t,k} \geq 0$ always, an
appealing property resulting from the positivity of the Fisher-Rao gradient
flow.
\item The two steps (\ref{eq:phiupdate}--\ref{eq:wupdate}) are
``incremental'' versions of the E-step and M-step in an EM
algorithm with latent variable $\bvarphi$. Indeed, an exact EM algorithm in this
parametrization would minimize $F_n(q,g)$ via the following alternating updates
of $q$ and $g$:
\begin{equation}\label{eq:EM}
\begin{aligned}
&\text{(E-step)}
&q_t&=P_{g_t}(\bvarphi \mid \y) \qquad \text{(the posterior law of $\bvarphi$ under the prior
$\N_\tau*g_t$)},\\
&\text{(M-step)}
&g_{t+1}&=\argmin_{g \in \cP(M)} \int {-}\log[\N_\tau*g](\varphi)\bar{q}_t(\varphi)\d\varphi.
\end{aligned}
\end{equation}
Both steps have computational difficulties in practice: The E-step requires
computation of the posterior law $P_{g_t}(\bvarphi \mid \y)$. The update (\ref{eq:phiupdate}) replaces this
E-step by a single step of an Unadjusted Langevin Algorithm (ULA)
\cite{roberts1996exponential}, and then approximates
$\bar q_t$ needed in the M-step by the empirical distribution of coordinates
of the sampled $\bvarphi_t=(\varphi_{t,j})_{j=1}^p$
c.f.~\prettyref{eq:barqestimate}. The M-step solves
a NPMLE problem based on a Gaussian sequence model for $\bvarphi_t$
c.f.~\eqref{eq:varphiseqmodel}, which is
expensive to carry out in every iteration.
The update (\ref{eq:wupdate}) replaces this by a single step
of Fisher-Rao gradient descent for this NPMLE optimization.
\end{enumerate}

\subsection{Posterior inference}\label{sec:posteriorinference}

For downstream tasks of posterior inference, we perform Monte Carlo
approximation using samples $\bvarphi_t \in \R^p$
of (\ref{eq:phiupdate}--\ref{eq:wupdate}) for a set of final iterations $t \in
\{T+1,\ldots,T+T'\}$ after fixing the final estimated weights
$w_T=(w_{T,1},\ldots,w_{T,K})$.
Typically one is interested in computing expectations under the posterior law of
the original parameter $\btheta$, rather than under the reparametrized variable
$\bvarphi$. Such expectations may be estimated using samples
$\{\bvarphi_t\}$ via the relation
\begin{align*}
\E_g[f(\btheta) \mid \y]&=\int_{\R^p} \int_{[-M,M]^p}
f(\btheta) P_g(\btheta \mid \bvarphi) P_g(\bvarphi \mid \y)\d\btheta \d\bvarphi\\
&\approx \frac{1}{T'}\sum_{t=T+1}^{T+T'}
\int_{[-M,M]^p} f(\btheta)P_g(\btheta \mid \bvarphi_t)\d\btheta.
\end{align*}
Here, coordinates of $\btheta$ are independent conditional on $\bvarphi$,
with $P_g(\btheta \mid \bvarphi)=\prod_{j=1}^p P(\theta_j \mid \varphi_j)
\propto \prod_{j=1}^p g(\theta_j)\N_\tau(\varphi_j-\theta_j)$.
This enables Monte Carlo approximation of $\E_g[f(\btheta) \mid \y]$ 
via explicit computation of the integrals
$\int_{[-M,M]^p} f(\btheta)P_g(\btheta \mid \bvarphi_t)$ for
functions $f$ that depend only on a small number of coordinates of $\btheta$.

As a concrete example, the empirical Bayes
posterior mean $\E_{\widehat g}[\btheta \mid \y] \in \R^p$
corresponding to the final estimated prior
$\widehat g=g_T \approx \sum_{k=1}^K w_{T,k}\delta_{b_k}$ may be estimated as
\begin{align}
\E_{\widehat g}[\btheta \mid \y] &\approx \frac{1}{T'}\sum_{t=T+1}^{T+T'}
\int_{[-M,M]^p} \btheta P_{\widehat g}(\btheta \mid \bvarphi_t)\d\btheta\nonumber\\
&=\frac{1}{T'}\sum_{t=T+1}^{T+T'} \left(\frac{\int_{-M}^M \theta \cdot \widehat g(\theta)\,\N_\tau(\theta-\varphi_{t,j}) \d\theta}
{\int_{-M}^M \widehat g(\theta)
\N_\tau(\theta-\varphi_{t,j})\d\theta}\right)_{j=1}^p\nonumber\\
&\approx \frac{1}{T'}\sum_{t=T+1}^{T+T'}
\left(\frac{\sum_{k=1}^K b_kw_{T,k}\,\N_\tau(b_k-\varphi_{t,j})}{\sum_{k=1}^K
w_{T,k}\,\N_\tau(b_k-\varphi_{t,j})}\right)_{j=1}^p.\label{eq:posteriormean}
\end{align}
This estimate of $\E_{\widehat g}[\btheta \mid \y]$ may then be used for
downstream tasks, such as prediction of the response $y_\text{new}$ for a new
sample $\x_\text{new} \in \R^p$.

\section{Theoretical guarantees}\label{sec:theory}

In this section, we show statistical consistency for estimators of $g_* \in \cP(M)$
that approximately
maximize the marginal log-likelihood, and we
analyze the solution of the gradient flow (\ref{eq:qflow}--\ref{eq:gflow}).

Unless otherwise stated, we continue to treat the design $\X$ as
deterministic, i.e.\ in settings of random design, our theorems hold
conditional on the realization of $\X$.

\subsection{Consistency of near-NPMLEs} \label{sec:consistency}

We first discuss consistency of near-NPMLEs for estimating $g_* \in \cP(M)$.
We impose the following assumption on the design matrix $\X$. 

\begin{assumption}\label{assump:design}
Let $\x_1,\ldots,\x_p$ denote the columns of $\X \in \R^{n \times p}$.
There are constants $C_\ast,c_\ast,\beta>0$ such that
$\sigma^2 \in (c_*\|\X\|_\op^2,\frac{C_*}{p}\|\X\|_{\mathrm{F}}^2)$,
and the following two additional conditions hold for $\X$
as $n,p \to \infty$:
\begin{enumerate}[(a)]
\item Let $\1 \in \R^p$ be the all-1's vector. Then $\frac{1}{p}\|\X\1\|_2^2
\geq c_\ast \|\X\|_\op^2$.
\item For a subset $\cS \subseteq \{1,\ldots,p\}$ of size $|\cS| \geq
c_*p$ and for each $j \in \cS$, there exists a vector $\z_j \in \R^n$
(deterministic conditional on $\X$)
such that $\|\X\|_\op\|\z_j\|_2 \in [c_\ast,C_\ast]$,
$|\z_j^\top \X\1| \leq C_\ast$,
\begin{align}\label{ineq:test_vector}
\max_{j \in \cS} |\z_j^\top \x_j-1| \to 0,
\qquad \max_{j \in \cS} \sum_{k: k\neq j} |\z_j^\top \x_k|^{2+\beta} \to 0,
\end{align}
and for any $\{a_j:j \in \cS\}$ satisfying $|a_j| \leq 1$,
\begin{align}\label{eq:zjcorr}
\|\X\|_\op^2 \cdot \left\|\sum_{j \in \cS} a_j\z_j\right\|_2^2 \leq C_\ast p.
\end{align}
\end{enumerate}
\end{assumption}

Under Assumption \ref{assump:design}, the following results establish
consistency of near optimizers of the marginal log-likelihood
$\bar{F}_n$ in \eqref{def:npmle}, together with
consistency in normalized KL-divergence of the corresponding posteriors of
smoothed regression coefficients.
Its proof is presented in Appendix \ref{sec:proof_consistency}.

\begin{theorem}\label{thm:consistency}
Let $g_* \in \cP(M)$, and suppose Assumption \ref{assump:design} holds as $n,p
\to \infty$. Let $\hat{g}_n \in \cP(M)$ be any sequence of priors
such that, with probability approaching 1,
\begin{equation}\label{eq:nearNPMLE}
 \bar{F}_n(g_\ast) - \bar{F}_n(\hat{g}_n) = \frac{1}{p}\Big(\log
P_{\hat{g}_n}(\y) - \log P_{g_*}(\y)\Big) \geq -\delta_n
\end{equation}
for some $\delta_n \downarrow 0$ as $n,p \to \infty$. Then $\hat{g}_n \to
g_*$ weakly in probability.
\end{theorem}

\begin{corollary}\label{cor:consistencyposterior}
In the setting of Theorem \ref{thm:consistency}, consider the
parametrization \eqref{def:smooth_model} of the linear model by the variable
$\bvarphi$ for any
fixed constant $\tau^2 \in (0,\sigma^2/\|\X\|_\op^2)$. Then also
\[\frac{1}{p}\,\DKL\big(P_{\widehat g_n}(\bvarphi \mid \y)\|
P_{g_*}(\bvarphi \mid \y)\big) \to 0\]
in probability.
\end{corollary}

Let us give a discussion of the various aspects of
Assumption \ref{assump:design}:
\begin{enumerate}
\item For the Gaussian sequence model where $\X=\Id$ and $n=p$,
classical consistency results for the NPMLE (e.g.\ \cite{kiefer1956consistency})
pertain to fixed noise variance $\sigma^2>0$ and $n \to \infty$. The condition
$\sigma^2 \in (c_*\|\X\|_\op^2,\frac{C_*}{p}\|\X\|_{\mathrm{F}}^2)$ may be
understood as an
extension of this fixed noise variance condition to more general
regression designs, requiring that both the
maximum and average eigenvalues of $\X^\top \X \in \R^{p \times p}$
are of the same order as $\sigma^2$. (This requires the rank of $\X$ to be of
the same order as the dimension $p$.)

We note that an upper bound for $\sigma^2$ is needed, as it represents a
requirement of sufficiently strong signal-to-noise ratio for the model. 
In fact, the assumed upper bound $\sigma^2 = O(\frac{1}{p}\|\X\|_{\mathrm{F}}^2)$ is nearly optimal in the sense that 
$\sigma^2 = O(\frac{1}{p^{1+o(1)}}\|\X\|_{\mathrm{F}}^2)$ is necessary for
consistent nonparametric estimation of the prior even in the sequence model.
We require the lower bound for $\sigma^2$ in our arguments to ensure sufficient
regularity and continuity in $g$ of the marginal density $P_g(\y)$. It may be
an interesting question to understand whether such a lower bound is needed for
the NPMLE to succeed, which we do not address in this work. On the other hand,
we believe this lower bound assumption for $\sigma^2$ is mild, as one may add
artificial noise to $\y$ to satisfy this lower bound if it does not hold.
\item Conditions (a) and (b) of Assumption \ref{assump:design} on the design
$\X$ have two distinct
roles in our analysis. Assumption \ref{assump:design}(a) ensures correct
estimation of the mean of $g_\ast$. Note that if $\X\1=0$, then the mean of
$g_\ast$ is not identifiable, as changing this mean would not affect the
distribution of $(\X,\y)$. Assumption \ref{assump:design}(a) is a quantitative
strengthening of the requirement $\X\1 \neq 0$ for identifiability of this mean.

If we assume that the prior $g_*$ is centered such that $g_*$ has mean 0,
and we restrict also to mean-zero estimators $\widehat g$ satisfying
\eqref{eq:nearNPMLE}, then 
Assumption \ref{assump:design}(a) is not needed, and
our proof of Theorem \ref{thm:consistency} establishes the consistency of
$\widehat g$ under Assumption \ref{assump:design}(b) alone.

\item Assumption \ref{assump:design}(b) is the main non-trivial condition of the
theorem, and ensures correct estimation of properties of $g_*$ beyonds its first
two moments. We note that for many dense regression designs $\X$ (in contrast
to the sequence model $\X=\Id$), the marginal distribution of any $O(1)$-sized
subset of coordinates of $\y$ would be approximately normal for large $p$ (by
the Central Limit Theorem), and this
normal distribution depends only on the first two moments of $g_*$.
Thus consistent estimation of $g_*$ beyond its first two moments must
combine information from many coordinates of $\y$ to ``invert'' this averaging
of coordinates of $\btheta$ by the regression design.

Informally, Assumption \ref{assump:design}(b) 
posits the existence of test vectors $\{\z_j\}_{j \in \cS}$ such that
each vector $\z_j$ can approximately orthogonalize $\x_j$ against the other
variables $\{\x_k\}_{k \neq j}$ of the design.
Under this condition, $\z_j^\top \y$ would 
isolate the $j$-th coordinate of $\btheta$ and resemble an
observation $\theta_j+\eps_j'$ from a Gaussian sequence model
(\ref{def:gau_seq}) with inflated noise variance $\Var[\eps_j']>\sigma^2$. These
errors $\eps_j'$ are correlated for different coordinates $j \in \cS$, and the
condition \eqref{eq:zjcorr} ensures that the correlations are sufficiently
weak to guarantee concentration of
$\sum_{j \in \cS} f_j(\z_j^\top \y)$ for Lipschitz test functions $f_j:\R
\to \R$, which is needed in the proof.

Assumption \ref{assump:design}(b) is reminiscent of the
construction of approximate orthogonalization vectors in
the debiased Lasso \citep{zhang2014confidence}. An important difference is
that, here, $\{\z_j\}_{j\in \cS}$ is used only as a proof device for analyzing
the NPMLE, and plays no role in the estimation procedure. In settings of random
design, they may depend on the population covariance of the design
even when this covariance cannot be consistently estimated from the data.
\end{enumerate}

A related consistency result was shown in the recent work of
\cite{mukherjee2023mean} studying variational inference, in settings where
$p \leq n$ and
$c\sigma^2 \leq \lambda_{\min}(\X^\top \X) \leq \lambda_{\max}(\X^\top
\X) \leq C\sigma^2$. Assumption~\ref{assump:design} is readily verified under
such conditions, where the test vectors in Assumption \ref{assump:design}(b)
can be constructed as $\z_j=\X(\X^\top \X)^{-1}\mathbf{e}_j$ with $\mathbf{e}_j$
denoting the $j$-th standard basis vector in $\R^p$, so that $\{\z_j^\top \y\}_{j=1}^p$
are the ordinary least squares estimators of $\{\theta_j\}_{j=1}^p$.
Assumption~\ref{assump:design} can be seen as a
relaxation of these conditions in \cite{mukherjee2023mean}, allowing also for
undersampled designs where $n<p$ and the row span of $\X$ is not all of
$\R^p$. The following result shows, for example,
that Assumption \ref{assump:design} is satisfied
for random subgaussian designs with well-conditioned
population covariance. We present its proof in Appendix \ref{sec:design}.

\begin{proposition}\label{prop:condition_gaussian}
Suppose $\sigma^2>0$ is a fixed constant, and
$n/p \geq \gamma$ for a constant $\gamma>0$ (where possibly $\gamma<1$).
Suppose $\X\in\R^{n\times p}$ has i.i.d.\ rows
$\{\frac{1}{\sqrt{n}}x^{(i)}\}_{i=1}^n$ where, for some constants $C,c>0$,
each $x^{(i)}$ has mean 0, covariance $\bSigma_X \in \R^{p \times p}$ satisfying
\begin{align*}
c \leq \lambda_{\min}(\bSigma_X) \leq \lambda_{\max}(\bSigma_X) \leq C,
\end{align*}
and $x^{(i)}$ is $C$-subgaussian.\footnote{$x \in \R^p$ is
$C$-subgaussian if $\|a^\top x\|_{\psi_2} \leq C$ for all unit vectors $a \in
\R^p$, where $\|X\|_{\psi_2}=\inf\{K>0:\E e^{X^2/K^2} \leq 2\}$ is the 
subgaussian norm.} Then with probability approaching 1
as $n,p \to \infty$, Assumption~\ref{assump:design} is
satisfied with $\cS=\{1,\ldots,p\}$, any $\beta>0$, some $C_\ast,c_\ast>0$
depending only on $\sigma^2,\gamma,C,c$, and 
\begin{align*}
\z_j=\Pi\X\bSigma_X^{-1}\e_j
\end{align*}
where $\Pi \in \R^{n \times n}$ is the projection matrix orthogonal to $\X\1$ and $\e_j \in \R^p$ is the $j$-th standard basis vector.
\end{proposition}

\begin{remark}
We note that Assumption~\ref{assump:design} is a sufficient --- but not
necessary --- condition
for consistency of $\widehat g$. For example, consider a design
$\widetilde \X=[\X,\X] \in \R^{n \times 2p}$ consisting of two copies of a
matrix $\X \in \R^{n \times p}$, where $\X$
satisfies Assumption~\ref{assump:design}.
Observation of a linear model $\widetilde \y=\widetilde \X \widetilde
\btheta+\beps$ with $\widetilde \theta_j \overset{iid}{\sim} g_*$ is equivalent
to observation of a linear model
$\y=\X\btheta+\beps$ with $\theta_j=\widetilde \theta_j+\widetilde \theta_{j+p}
\overset{iid}{\sim} g_* * g_*$ for each $j=1,\ldots,p$. Our results then imply
that any near-NPMLE $\widehat g$ of $g_*$ based on $(\widetilde \X,\widetilde \y)$ must satisfy $\widehat g * \widehat g \to g_* * g_*$ weakly in probability,
as $n,p \to \infty$. This implies the weak consistency $\widehat g \to g_*$.
However, $\widetilde \X$ does not satisfy Assumption~\ref{assump:design}, as any
vector $\z_j$ for which $\z_j^\top \widetilde \x_j \to 1$ must also satisfy
$\z_j^\top \widetilde \x_{j+p} \to 1$, and vice versa. It may be an interesting
question for future work to identify a simple condition on $\X$ that is both
necessary and sufficient for consistent estimation of $g_*$.
\end{remark}

\subsection{Convergence of gradient flows}\label{sec:algo}

We now analyze convergence of the gradient flows
(\ref{eq:qflow}--\ref{eq:gflow}) introduced in Section \ref{sec:phireparam}.

In Section \ref{subsec:g_flow}, we
study convergence of the Fisher-Rao flow $\{g_t\}$ in \prettyref{eq:gflow} where
$q_t$ is replaced by a fixed distribution $q \in \cP(\R^p)$,
corresponding to a sequence model NPMLE problem.
In Section \ref{subsec:q_flow}, we study the Langevin diffusion corresponding to
the Wasserstein-2 flow  $\{q_t\}$ in \prettyref{eq:qflow} where $g_t$ is
replaced by a fixed distribution $g \in \cP(M)$, showing a
uniform log-Sobolev inequality for the posterior law $P_g(\bvarphi \mid \y)$
under large noise variances $\sigma^2>0$.
Lastly in Section \ref{subsec:joint_flow}, we combine the previous two
ingredients to study the joint gradient flow of both $\{q_t,g_t\}$.

\subsubsection{Univariate $g$-flow}\label{subsec:g_flow}

First consider the univariate Fisher-Rao gradient flow
\begin{align}\label{def:g_flow}
\frac{\d}{\d t}\,g_t(\theta) = g_t(\theta)\left[\N_\tau*\frac{\bar
q}{\N_\tau*g_t}(\theta)-1\right]
:=g_t(\theta)\left[\int \frac{\N_\tau(\theta-\varphi)}{\N_\tau*g_t(\varphi)}
\d\bar q(\varphi)-1\right]
\end{align}
for $\theta \in [-M,M]$ and
any fixed probability distribution $\bar q \in \cP(\R)$. This corresponds to the
evolution of $g_t$ alone in (\ref{eq:gflow}). Via a calculation
analogous to (\ref{eq:fisherraoseq}), it may be seen that this is
a Fisher-Rao gradient flow for minimizing the objective
\[\bar F(g \mid \bar q):=\int {-}\log[\N_\tau*g](\varphi)\d\bar{q}(\varphi).\]
Here $\bar q$ may be discrete, and we do not require
$\bar q$ to have density with respect to Lebesgue measure. In particular,
if $\bar q=\frac{1}{n}\sum_{i=1}^n \delta_{y_i}$ is
the empirical distribution of
$n$ samples, then this objective is exactly the marginal log-likelihood
$\bar F_n(g)$ in the Gaussian sequence model (\ref{def:gau_seq})
with noise variance $\tau^2$, and (\ref{def:g_flow}) is exactly the
gradient flow (\ref{eq:fisherraoseq}) for computing the sequence model NPMLE.

Corollary \ref{cor:exist_gflow} in
Appendix \ref{subsec:exist_gflow} shows that, for any
$\bar q \in \cP(\R)$ with finite moment-generating function,
if (\ref{def:g_flow}) is initialized at $g_0 \in \cP_*(M)$ having strictly
positive density on all of $[-M,M]$, then it has a
unique solution $\{g_t\}_{t \geq 0}$ in the space of positive densities.
The following (deterministic) result establishes convergence of this solution
as $t \to \infty$.

\begin{theorem}\label{thm:g_flow}
Let $\bar q \in \cP(\R)$ with
$\int e^{\lambda\varphi}\bar q(\varphi)\d\varphi<\infty$ for all $\lambda \in \R$.
Suppose that
(\ref{def:g_flow}) is initialized from a density $g_0 \in \cP_*(M)$
that is strictly positive on $[-M,M]$.
Then the solution $\{g_t\}_{t \geq 0}$ to (\ref{def:g_flow}) satisfies, for any $h \in \cP_*(M)$,
\begin{align}\label{ineq:subopt_gap_g}
\bar{F}(g_{t} \mid \bar q) - \bar{F}(h \mid \bar q)
\leq \frac{\DKL(h \| g_0)}{t}.
\end{align}
\end{theorem}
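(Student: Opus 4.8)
The plan is to view the flow \eqref{def:g_flow} as a continuous-time mirror-descent iteration---equivalently, the infinitesimal EM dynamics noted after \eqref{eq:fisherraoseq}---whose associated Bregman divergence is the Kullback--Leibler divergence, and to run the standard convexity telescoping argument. The one structural fact that makes this work is that $\bar F(g\mid\bar q)=\int-\log[\N_\tau*g](\varphi)\,\d\bar q(\varphi)$ is \emph{convex} in $g$, being the integral over $\varphi$ of the convex function $-\log$ composed with the linear map $g\mapsto\N_\tau*g$. As preliminaries I would note that we may assume $\DKL(h\|g_0)<\infty$ (otherwise there is nothing to prove); that by Corollary~\ref{cor:exist_gflow} (Appendix~\ref{subsec:exist_gflow}) the flow \eqref{def:g_flow} has a unique solution $\{g_t\}_{t\ge0}$ remaining a strictly positive, smooth density on $[-M,M]$; and that, writing the first variation as $\delta\bar F[g](\theta)=1-\int\N_\tau(\theta-\varphi)/[\N_\tau*g](\varphi)\,\d\bar q(\varphi)$ (cf.\ \eqref{eq:firstvariations}), the flow reads $\dot g_t/g_t=-\delta\bar F[g_t]$ with the additive constant chosen so that $\int_{-M}^M g_t\,\delta\bar F[g_t]=0$ for all $t$. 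A short estimate using the Gaussian tails of $\N_\tau$ and of $\N_\tau*g_t$ over the compact interval $[-M,M]$, combined with $\int e^{\lambda\varphi}\,\bar q(\varphi)\,\d\varphi<\infty$ for all $\lambda$, shows $\theta\mapsto\delta\bar F[g_t](\theta)$ is bounded on $[-M,M]$ (locally uniformly in $t$), $\bar F(g_t\mid\bar q)\in\R$, and $\bar F(h\mid\bar q)>-\infty$; these guarantee the quantities and differentiations below are well-defined.

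The argument then proceeds through three identities. First, energy dissipation: $\tfrac{\d}{\d t}\bar F(g_t\mid\bar q)=\int\delta\bar F[g_t]\,\dot g_t=-\int g_t\,(\delta\bar F[g_t])^2\le0$, so $t\mapsto\bar F(g_t\mid\bar q)$ is nonincreasing. Second, the evolution of the Lyapunov function: for the comparator $h$, $\tfrac{\d}{\d t}\DKL(h\|g_t)=-\int h\,\dot g_t/g_t=\int h\,\delta\bar F[g_t]$. Third, convexity at $g_t$: since $s\mapsto\bar F\big((1-s)g_t+sh\mid\bar q\big)$ is convex on $[0,1]$ with derivative $\int\delta\bar F[g_t]\,(h-g_t)$ at $s=0$, and $\int g_t\,\delta\bar F[g_t]=0$, one gets $\bar F(h\mid\bar q)\ge\bar F(g_t\mid\bar q)+\int h\,\delta\bar F[g_t]$. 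Combining the second and third gives the differential inequality $\tfrac{\d}{\d t}\DKL(h\|g_t)\le\bar F(h\mid\bar q)-\bar F(g_t\mid\bar q)$.

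To finish, I would integrate this over $[0,t]$, use the first identity in the form $\int_0^t\bar F(g_s\mid\bar q)\,\d s\ge t\,\bar F(g_t\mid\bar q)$, and use $\DKL(h\|g_t)\ge0$, obtaining
\[
-\DKL(h\|g_0)\;\le\;\DKL(h\|g_t)-\DKL(h\|g_0)\;\le\;t\big(\bar F(h\mid\bar q)-\bar F(g_t\mid\bar q)\big),
\]
which rearranges to \eqref{ineq:subopt_gap_g}. (If $\bar F(g_t\mid\bar q)<\bar F(h\mid\bar q)$ then \eqref{ineq:subopt_gap_g} is trivial, so no genuine case distinction is needed.)

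The convexity/telescoping skeleton is routine; the real work---and the step I expect to be the main obstacle---is the analytic justification: that $g_t$ stays strictly positive and smooth so that $\DKL(h\|g_t)$ and $\bar F(g_t\mid\bar q)$ are finite and differentiable in $t$ with the time derivatives passing inside the $\theta$- and $\varphi$-integrals, and that $\int\N_\tau(\theta-\varphi)/[\N_\tau*g_t](\varphi)\,\d\bar q(\varphi)$ is controlled uniformly for $\theta\in[-M,M]$. Both rely on the a priori positivity and regularity from Corollary~\ref{cor:exist_gflow} and on the moment-generating-function hypothesis on $\bar q$, which is precisely what keeps $\bar F(\cdot\mid\bar q)$ finite and makes these interchanges legitimate.
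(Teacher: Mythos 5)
Your proof is correct and follows essentially the same route as the paper's: convexity of $\bar F(\cdot\mid\bar q)$ in the linear geometry bounds the suboptimality gap by the integrated first variation, which is identified with $-\tfrac{\d}{\d t}\DKL(h\|g_t)$, and the bound is closed by integrating in time and using the monotone decrease of $\bar F(g_t\mid\bar q)$ along the flow. The analytic justifications you flag (positivity $g_t\geq g_0e^{-t}$ from Corollary~\ref{cor:exist_gflow}, boundedness of the first variation via the Gaussian-ratio estimate and the moment-generating-function hypothesis, and dominated convergence to differentiate under the integrals) are exactly the ones the paper supplies.
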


If $\bar q=\N_\tau*g_\ast$ is a Gaussian mixture for some true prior with 
density $g_\ast \in \cP_*(M)$,
then the minimizer of $\bar F(g \mid \bar q)$ is exactly $g_\ast$,
and applying Theorem \ref{thm:g_flow} with $h=g_\ast$ bounds
directly the suboptimality gap of $g_t$. If instead $\bar q$ is the empirical
distribution of a sample $(y_1,\ldots,y_n)$
and $\hat g$ is the minimizer of $\bar F(g \mid \bar q)=\bar F_n(g)$
given by the NPMLE in the sequence model (\ref{def:gau_seq}),
then we may apply
Theorem \ref{thm:g_flow} with $h$ being a smoothed approximation of $\hat g$,
to show
\[\bar{F}_n(g_{t}) - \bar{F}_n(\hat g)
\leq \frac{\DKL(h \| g_0)}{t} + \bar{F}_n(h)-\bar{F}_n(\hat g).\]
In particular, taking $t \to \infty$ first, followed by $h \to \hat g$
weakly, and noting that $\bar F_n(g)$ is weakly continuous in $g$,
we have $\lim_{t \to \infty} \bar{F}_n(g_{t})=\bar{F}_n(\hat g)$,
so $g_t$ is a near-NPMLE for large $t$.

The proof of Theorem \ref{thm:g_flow} can be found in
Appendix \ref{subsec:proof_g_flow}.
Note that the objective function $\bar F$ is not geodesically convex in the Fisher-Rao geometry (see \prettyref{app:nonconvex}) so standard results such as \cite[Theorem 4.0.4]{ambrosio2008gradient} are inapplicable. Instead, motivated by viewing (\ref{def:g_flow}) as a mirror flow \citep{nemirovsky1983problem}, the analysis exploits the convexity of $\bar F(g) \equiv \bar F(g \mid \bar q)$ in the standard linear geometry for any fixed $\bar q$, namely
\begin{equation}\label{eq:barFconvex}
\bar F(\lambda g_0+(1-\lambda)g_1)
\leq \lambda \bar F(g_0)+(1-\lambda) \bar F(g_1),
\end{equation}
to bound $\bar F(g_t)-\bar F(h)$ by the integrated first variation $\int_{-M}^M
{-}\delta \bar F[g_t](\theta)h(\theta)\d\theta$. We note that our
arguments for showing Theorem \ref{thm:g_flow} require $\{g_t\}_{t \geq 0}$
to have a bounded support $[-M,M]$, to ensure the boundedness of the first variation
\[\N_\tau * \frac{\bar q}{\N_\tau * g_t}(\theta)-1
=\int_\R \frac{e^{-\frac{1}{2\tau^2}(\varphi-\theta)^2}}
{\int_{-M}^M e^{-\frac{1}{2\tau^2}(\varphi-\theta')^2}g_t(\theta')\d \theta'}\,
\bar q(\varphi)\d \varphi-1\]
uniformly over $\theta$ belonging to this support
and over densities $g_t \in \cP_*(M)$ along the gradient flow.
This property will also be important for our results to follow.

Some related results in the literature include
\citep{lu2019accelerating, lu2023birth, domingo2023explicit} which established
exponential convergence of the Fisher-Rao gradient flow for optimizing a
different objective $g \mapsto \DKL(g \| \pi)$ with $\pi$ denoting a target
measure, and \cite[Theorem 4]{yan2023learning} which showed a conditional convergence
result for a discrete-time version of (\ref{def:g_flow}).
Complementary to this result, Theorem \ref{thm:g_flow} provides a
quantitative and unconditional convergence guarantee for the continuous-time
flow.

\subsubsection{Univariate $q$-flow}\label{subsec:q_flow}
Next, define the negative log-posterior-density for prior $g(\cdot)$ (up to a normalizing constant),
\begin{align}\label{Ug_smooth}
U_g(\bvarphi)=\frac{1}{2}(\y-\X\bvarphi)^\top \bSigma^{-1}(\y-\X\bvarphi) -
\sum_{j=1}^p \log \N_\tau \ast g(\varphi_j).
\end{align}
We consider the Langevin diffusion
\begin{equation}\label{def:Langevin_fixedg}
\d \bvarphi_t = {-}\nabla U_g(\bvarphi_t)
+ \sqrt{2}\,\d \B_t
\end{equation} 
with a fixed prior $g \in \cP(M)$,
whose law $q_t$ satisfies the Fokker-Planck evolution
\begin{equation}\label{def:q_flow}
\frac{\d}{\d t}q_t(\bvarphi)=\nabla \cdot
\left[q_t(\bvarphi)\nabla U_g(\bvarphi)\right]
+\Delta q_t(\bvarphi).
\end{equation}
This corresponds to the evolution of $q_t$ alone in (\ref{eq:qflow}), fixing
$g_t=g$.

We prove a uniform log-Sobolev inequality (LSI) for the stationary
distribution $P_g(\bvarphi \mid \y)$
of the diffusion (\ref{def:Langevin_fixedg}),
for any prior $g \in \cP(M)$ and any
noise level $\sigma^2>M^2\|\X\|_\op^2$. As this type of result may be of
independent interest in other contexts, we also provide a LSI
for the posterior law $P_g(\btheta \mid \y)$
in the original $\btheta$ parametrization when $g \in \cP_*(M)$
has a density on $[-M,M]$ bounded from above and below. 
We will write as shorthand
\begin{align}
\nu[g](\bvarphi)&:=P_g(\bvarphi \mid \y)=\frac{\exp(-U_g(\bvarphi))}{\int
\exp(-U_g(\bvarphi'))\d\bvarphi'} & \text{ for } g \in
\cP(M),\label{posterior_smooth}\\
\mu[g](\btheta)&:=P_g(\btheta \mid \y) & \text{ for } g \in
\cP_*(M)\label{posterior_nonsmooth}
\end{align}
for the posterior densities of $\bvarphi$ and $\btheta$.

Following \cite[Eq.~(2.18)]{ledoux2006concentration}, we say that a distribution
$\rho$ on an open domain $\Omega \subseteq \R^p$
satisfies a log-Sobolev inequality (LSI) on $\Omega$ with log-Sobolev
constant $C>0$ if, for any
bounded, Lipschitz, and continuously differentiable function
$f:\Omega \rightarrow \R$,
\begin{align}\label{def:LSI}
\ent_\rho(f^2):=\E_\rho f^2\log f^2 - \E_\rho f^2\cdot \log \E_\rho f^2 \leq 2C \cdot \E_\rho \pnorm{\nabla f}{}^2.
\end{align}
The following (deterministic) result,
whose proof is given in Section \ref{subsec:proof_q_flow},
establishes a LSI for the posterior distributions in both variable
parametrizations.

\begin{theorem}\label{thm:q_lsi}
Let $g \in \cP(M)$ and fix a constant $\delta>0$.
Suppose that $\sigma^2/\pnorm{\X}{\op}^2>M^2+\delta$. Then:
\begin{enumerate}[(a)]
\item If $\tau^2 \in (0,\sigma^2/\|\X\|_\op^2-\delta)$,
then $\nu[g](\bvarphi)$ satisfies a LSI on $\R^p$
with log-Sobolev constant $C=C(M,\tau,\delta)>0$ (not depending on $n,p$).
\item Suppose that $g \in \cP_*(M)$ and $\kappa(g) \equiv
\frac{\sup_{\theta\in[-M,M]} g(\theta)}{\inf_{\theta\in [-M,M]} g(\theta)} <
\infty$. Then $\mu[g](\btheta)$ satisfies a LSI on $(-M,M)^p$ with
log-Sobolev constant $C \cdot \kappa(g)^2$
for some $C=C(M,\delta)>0$ (not depending on $n,p$).
\end{enumerate}
\end{theorem}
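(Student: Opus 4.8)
The plan is to prove the LSI for $\nu[g]$ via the Bakry–Émery criterion combined with the Holley–Stroock perturbation principle, exploiting the fact that the reparametrization makes the ``quadratic'' part of $U_g$ strongly convex while the prior-dependent part is a bounded perturbation. Write $U_g(\bvarphi) = \frac12(\y-\X\bvarphi)^\top\bSigma^{-1}(\y-\X\bvarphi) - \sum_{j=1}^p \log[\N_\tau*g](\varphi_j)$. The Hessian of the first term is $\X^\top\bSigma^{-1}\X$, which is positive semidefinite but possibly degenerate, so it alone does not give a convexity lower bound. The trick I would use is to split off a Gaussian factor from each $\log[\N_\tau*g](\varphi_j)$: since $[\N_\tau*g](\varphi) = \int \N_\tau(\varphi-\theta)\,g(\d\theta)$ with $g$ supported on $[-M,M]$, one can write $[\N_\tau*g](\varphi) = \N_{\tau}(\varphi)\cdot h_g(\varphi)$ up to constants, where $h_g(\varphi) = \int e^{(\varphi\theta - \theta^2/2)/\tau^2} g(\d\theta)$, and crucially $\log h_g$ has a second derivative equal to $\mathrm{Var}_{\theta\sim g_\varphi}(\theta)/\tau^4 \in [0, M^2/\tau^4]$ where $g_\varphi$ is the tilted measure — this is the standard variance-of-exponential-family bound, and it is \emph{bounded}, which is exactly what Holley–Stroock needs. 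So decompose $U_g(\bvarphi) = V(\bvarphi) + W_g(\bvarphi)$ where $V(\bvarphi) = \frac12(\y-\X\bvarphi)^\top\bSigma^{-1}(\y-\X\bvarphi) + \frac{1}{2\tau^2}\|\bvarphi\|^2$ collects all genuinely convex quadratic pieces, and $W_g(\bvarphi) = -\sum_j \log h_g(\varphi_j)$ is the leftover.

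The key computation is that $\nabla^2 V(\bvarphi) = \X^\top\bSigma^{-1}\X + \tau^{-2}\Id \succeq \tau^{-2}\Id$, so the Gaussian-type measure $\propto e^{-V}$ satisfies a LSI with constant $\tau^2$ by Bakry–Émery. Then $W_g$ needs to be controlled as a perturbation: by Holley–Stroock, if $\sup W_g - \inf W_g \le b$, then $\nu[g] \propto e^{-V-W_g}$ satisfies a LSI with constant $\tau^2 e^{b}$. The oscillation of $W_g$ is controlled coordinatewise: each $-\log h_g(\varphi)$ is convex with $\frac{d}{d\varphi}[-\log h_g(\varphi)] = -\E_{g_\varphi}[\theta]/\tau^2 \in [-M/\tau^2, M/\tau^2]$, so $-\log h_g$ is $M/\tau^2$-Lipschitz on $\R$; but this does not bound its global oscillation. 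The cleaner route is to observe that the oscillation of $W_g$ over \emph{the effective support of $\nu[g]$} is what matters — but Holley–Stroock is a global statement, so instead one should bound $-\log h_g(\varphi) + \frac{\varphi^2}{2\tau^2}$ type quantities, or better: note $h_g(\varphi) \in [e^{(-M|\varphi| - M^2/2)/\tau^2}, e^{(M|\varphi|)/\tau^2}]$, so $-\log h_g(\varphi) \le M|\varphi|/\tau^2 + M^2/(2\tau^2)$ and $-\log h_g(\varphi) \ge -M|\varphi|/\tau^2$. This grows linearly in $|\varphi|$, which still is not globally bounded. The fix is to \emph{absorb the linear growth into the quadratic}: write $W_g(\varphi_j) = -\log h_g(\varphi_j)$ and compare to $\varepsilon \varphi_j^2/2$; since the condition $\sigma^2/\|\X\|_\op^2 > M^2 + \delta$ and $\tau^2 < \sigma^2/\|\X\|_\op^2 - \delta$ gives slack, there is room to peel off $\varepsilon\|\bvarphi\|^2/2$ from $V$ (leaving $V$ still strongly convex with a slightly worse constant) and use it to dominate the linear terms, so that the residual $W_g(\bvarphi) + \varepsilon\|\bvarphi\|^2/2 \le \sum_j (M|\varphi_j|/\tau^2 + M^2/(2\tau^2) + \varepsilon\varphi_j^2/2 - (\varepsilon/2)\varphi_j^2) $ — more carefully, complete the square: $-M|\varphi_j|/\tau^2 + (\varepsilon/2)\varphi_j^2 \ge -M^2/(2\varepsilon\tau^4)$, so the residual perturbation is bounded above, and similarly bounded below, giving a finite oscillation depending only on $M,\tau,\delta$ (through $\varepsilon$). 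Then Holley–Stroock yields the LSI constant $C(M,\tau,\delta)$.

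For part (2), the posterior $\mu[g](\btheta)$ on $(-M,M)^p$ has density $\propto e^{-\frac{1}{2\sigma^2}\|\y-\X\btheta\|^2}\prod_j g(\theta_j)$ restricted to the cube. Here the strategy is: (a) on the bounded convex cube $(-M,M)^p$, the uniform measure satisfies a LSI with a dimension-free constant $C(M)$ — this follows since the cube, being a product of intervals each of length $2M$, satisfies LSI with constant $O(M^2)$ by tensorization of the one-dimensional interval LSI; (b) the log-density relative to uniform is $-\frac{1}{2\sigma^2}\|\y-\X\btheta\|^2 + \sum_j \log g(\theta_j) + \text{const}$; the term $\sum_j\log g(\theta_j)$ has oscillation at most $p\log\kappa(g)$, which is \emph{not} dimension-free, so Holley–Stroock directly would give $e^{O(p\log\kappa)}$ — wrong. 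Instead, treat $\sum_j \log g(\theta_j)$ via \emph{tensorized} Holley–Stroock: one-dimensionally, the measure on $(-M,M)$ with density $\propto g(\theta)$ is a $\log\kappa(g)$-oscillation perturbation of uniform-on-interval (LSI const $O(M^2)$), so it satisfies LSI with constant $O(M^2)\kappa(g)^2$ (using $e^{2\log\kappa} = \kappa^2$); tensorizing, the product measure $\prod_j g(\theta_j)$ on the cube satisfies LSI with the \emph{same} constant $O(M^2)\kappa(g)^2$ (tensorization preserves the constant). Finally the Gaussian likelihood factor $e^{-\frac{1}{2\sigma^2}\|\y-\X\btheta\|^2}$ is \emph{log-concave} in $\btheta$, and adding a concave function to $-\log$ density (i.e. multiplying the measure by a log-concave factor) can only improve the LSI constant — this is the Bakry–Émery / Brascamp–Lieb type stability, or more simply can be handled by the fact that conditioning a log-concave-perturbation preserves LSI with the same constant (a result of e.g. the Bakry–Émery criterion applied to the combined potential, noting the Gaussian factor contributes a PSD Hessian). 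Hence $\mu[g]$ satisfies LSI on $(-M,M)^p$ with constant $C(M,\delta)\kappa(g)^2$. Note part (2) does not even need the $\sigma^2 > M^2\|\X\|_\op^2$ condition for the \emph{constant} — it holds for all $\sigma^2$ — so the theorem's hypothesis is only genuinely used in part (1); I'd state it uniformly for simplicity.

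I expect the main obstacle to be the treatment of the linear-in-$|\varphi|$ growth of $-\log h_g$ in part (1): Holley–Stroock requires a globally bounded perturbation, and the naive decomposition does not provide one. The resolution — peeling a small $\varepsilon\|\bvarphi\|^2$ off the strongly convex quadratic to absorb the linear terms via completing the square, which is where the slack $\delta$ in the hypothesis is consumed — is the crux of the argument and requires care in tracking that $V - \varepsilon\|\cdot\|^2/2$ remains strongly convex (equivalently, $\X^\top\bSigma^{-1}\X + (\tau^{-2}-\varepsilon)\Id \succ 0$, which holds for $\varepsilon < \tau^{-2}$). The rest is bookkeeping: assembling the Bakry–Émery constant $\tau^2$ (or slightly larger after the peel) with the Holley–Stroock factor $e^{b}$ where $b = b(M,\tau,\varepsilon)$, and choosing $\varepsilon$ as a fixed function of $M,\tau,\delta$.
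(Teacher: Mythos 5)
Your proposal for part (1) does not work, and the failure is structural rather than a fixable technicality. The perturbation $W_g(\bvarphi)=\sum_j -\log h_g(\varphi_j)$ is, per coordinate, a concave $M/\tau^2$-Lipschitz function whose oscillation over $\R$ is infinite in \emph{both} directions (take $g=\tfrac12\delta_{-M}+\tfrac12\delta_M$, for which $-\log h_g(\varphi)\sim -M|\varphi|/\tau^2$). Your completing-the-square step $-M|\varphi_j|/\tau^2+(\varepsilon/2)\varphi_j^2\geq -M^2/(2\varepsilon\tau^4)$ only caps the residual from below; the residual $W_g+\varepsilon\|\cdot\|^2/2$ is still unbounded above (it grows like $\varepsilon\|\bvarphi\|^2/2$), and if you instead move the $\varepsilon\|\cdot\|^2/2$ to the other side the residual becomes unbounded below. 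So Holley--Stroock never applies. Worse, even if each coordinate contributed a finite oscillation $b_0(M,\tau,\delta)$, the total oscillation of the perturbation on $\R^p$ would be $pb_0$, and Holley--Stroock would give an LSI constant of order $e^{pb_0}$ --- useless for the dimension-free claim. The paper explicitly flags this: ``a dimension-free LSI cannot be obtained by a naive application of Holley--Stroock perturbation in $\R^p$.'' The actual proof uses the decomposition of \cite{bauerschmidt2019simple}: introduce $\bvarphi'=\bvarphi+\z'$ with total smoothing variance ${\tau'}^2>M^2$, write $\nu[g]$ as the mixture $\int P_g(\bvarphi\mid\bvarphi')P_g(\bvarphi'\mid\y)\d\bvarphi'$, note that the conditional is a \emph{product} measure whose one-dimensional marginals satisfy a uniform LSI (compactly-supported mixture convolved with a Gaussian), that the mixing measure is strongly log-concave precisely because ${\tau'}^2>M^2$ (this is where the hypothesis $\sigma^2/\|\X\|_\op^2>M^2+\delta$ is consumed, not in absorbing linear terms), and then combine the two via the entropy chain rule $\ent_\nu(f^2)=\E_\mu\ent_{\nu_{\bvarphi'}}(f^2)+\ent_\mu\E_{\nu_{\bvarphi'}}(f^2)$, with covariance estimates to control $\|\nabla[\E_{\nu_{\bvarphi'}}(f^2)]^{1/2}\|$.

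Part (2) has a separate gap: the step ``multiplying the measure by a log-concave factor can only improve the LSI constant'' is not a theorem. Bakry--\'Emery lets you add a convex term to an already \emph{uniformly convex} potential, but your reference measure $\prod_j g(\theta_j)$ on the cube satisfies LSI only via Holley--Stroock, not via a curvature bound, and stability of LSI under general log-concave perturbations is essentially an open problem (and fails as stated for degenerate log-concave factors). The coupling term $\|\y-\X\btheta\|^2/(2\sigma^2)$ cannot be dismissed this way. Consistent with this, the paper's proof of part (2) runs through the same mixture decomposition $P_g(\btheta\mid\y)=\int P_g(\btheta\mid\bvarphi')P_g(\bvarphi'\mid\y)\d\bvarphi'$ and \emph{does} use the high-noise condition to make the mixing measure log-concave; your closing remark that part (2) holds for all $\sigma^2$ is therefore not supported by your argument.
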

A direct and well-known consequence of Theorem \ref{thm:q_lsi}(a)
(see \cite[p.~288]{villani2009optimal})
is that the Langevin diffusion (\ref{def:Langevin_fixedg}) converges
exponentially fast in KL-divergence to $\nu[g]$.
\begin{corollary}\label{cor:q_flow}
Under the assumption of Theorem \ref{thm:q_lsi}(a), let $q_0 \in \cP(\R^p)$,
and let $\{q_t\}_{t \geq 0}$ be the marginal laws of
$\{\bvarphi_t\}_{t \geq 0}$ in (\ref{def:Langevin_fixedg}). Then for any $t \geq 0$,
\begin{align*}
\DKL(q_t \| \nu[g]) \leq \exp(-2t/C) \cdot \DKL(q_0 \| \nu[g]),
\end{align*}
where $C$ is the log-Sobolev constant of Theorem \ref{thm:q_lsi}(a). 
\end{corollary}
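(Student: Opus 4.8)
The plan is to use the classical entropy-dissipation argument together with Gr\"onwall's inequality; this is the standard route by which a log-Sobolev inequality for the stationary measure implies exponential decay of relative entropy along the corresponding Fokker--Planck flow (see \cite[p.~288]{villani2009optimal}). If $\DKL(q_0\|\nu[g])=\infty$ the claimed bound is vacuous, so we may assume $\DKL(q_0\|\nu[g])<\infty$.

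The core computation runs as follows. Since $\nu[g]\propto e^{-U_g}$, we have $U_g=-\log\nu[g]$ up to an additive constant, so \eqref{def:q_flow} can be written in the transport form $\partial_t q_t=\nabla\cdot\big(q_t\,\nabla\log(q_t/\nu[g])\big)$. Differentiating the relative entropy along the flow, using $\partial_t\!\int q_t=0$ and integration by parts, gives the entropy-dissipation identity
\[
\frac{\d}{\d t}\,\DKL(q_t\|\nu[g])=\int \partial_t q_t\cdot\log\frac{q_t}{\nu[g]}\,\d\bvarphi=-\int q_t\,\Big\|\nabla\log\frac{q_t}{\nu[g]}\Big\|^2\d\bvarphi=:-I\big(q_t\,\|\,\nu[g]\big),
\]
where $I(\cdot\|\cdot)$ is the relative Fisher information. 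Next apply the LSI of Theorem \ref{thm:q_lsi}-(1) (valid under the hypothesis $\tau^2\in(0,\sigma^2/\|\X\|_\op^2-\delta)$) with $\rho=\nu[g]$ and test function $f=\sqrt{q_t/\nu[g]}$: since $\E_{\nu[g]}f^2=\int q_t=1$ we get $\ent_{\nu[g]}(f^2)=\DKL(q_t\|\nu[g])$, while a short computation gives $4\,\E_{\nu[g]}\|\nabla f\|^2=I(q_t\|\nu[g])$. Hence \eqref{def:LSI} reads $\DKL(q_t\|\nu[g])\le \tfrac{C}{2}\,I(q_t\|\nu[g])$, i.e.\ $I(q_t\|\nu[g])\ge \tfrac{2}{C}\,\DKL(q_t\|\nu[g])$. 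Combining with the dissipation identity yields $\tfrac{\d}{\d t}\DKL(q_t\|\nu[g])\le-\tfrac{2}{C}\DKL(q_t\|\nu[g])$, and Gr\"onwall's inequality gives $\DKL(q_t\|\nu[g])\le e^{-2t/C}\DKL(q_0\|\nu[g])$, which is the assertion with the same constant $C=C(M,\tau,\delta)$ as in Theorem \ref{thm:q_lsi}-(1).

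The formal steps above are routine; the only real work is in justifying them rigorously. One must verify that $t\mapsto\DKL(q_t\|\nu[g])$ is absolutely continuous in $t$, and that the differentiation under the integral sign and the integration by parts in the dissipation identity are legitimate. This follows from smoothing/regularity of the uniformly-elliptic, smooth-drift Fokker--Planck semigroup (so $q_t$ is a smooth positive density for $t>0$) together with the sub-Gaussian tail bounds on $\nu[g]$ inherited from its LSI, which control the relevant boundary terms and integrability. One also needs an approximation argument --- truncating $f$ to make it bounded, Lipschitz and $C^1$ as required in \eqref{def:LSI}, then passing to the limit by monotone/dominated convergence --- to legitimately apply the LSI to $f=\sqrt{q_t/\nu[g]}$. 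As these are entirely standard, I would either include a brief truncation/regularization argument or, as the surrounding text already does, simply cite \cite[Section~9.3 and p.~288]{villani2009optimal}.
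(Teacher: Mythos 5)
Your proof is correct and is exactly the standard entropy-dissipation/Gr\"onwall argument that the paper invokes by citing \cite[p.~288]{villani2009optimal}; the constant bookkeeping (LSI with $f=\sqrt{q_t/\nu[g]}$ giving $I \geq \tfrac{2}{C}\DKL$ and hence the rate $e^{-2t/C}$) matches the statement. No substantive difference from the paper's (cited) proof.
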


The posterior laws $\nu[g](\bvarphi)=P_g(\bvarphi \mid \y)$ and
$\mu[g](\btheta)=P_g(\btheta \mid \y)$ are, in general,
not log-concave and also not close to product measures, and a dimension-free
LSI cannot be obtained by a naive application of Holley-Stroock perturbation
in $\R^p$. Our proof of Theorem
\ref{thm:q_lsi} uses instead the idea of \cite{bauerschmidt2019simple} to
express these posterior laws as log-concave mixtures of product distributions.
In our setting of a Bayesian linear model,
this idea has an intuitive interpretation related to the
reparametrization of $\btheta$ by $\bvarphi$:
For the posterior of $\btheta$, we may consider the generative process
$\btheta \mapsto \bvarphi' \mapsto \y$ 
where $\bvarphi'=\btheta+\z$ is given as in (\ref{def:smooth_model}) for some
${\tau'}^2 > 0$ to be specified, and write
\begin{align*}
P_g(\btheta \mid \y)
=\int P_{g}(\btheta \mid \bvarphi') P_g(\bvarphi' \mid \y)\d\bvarphi'.
\end{align*}
Then a LSI for $P_g(\btheta \mid \y)$ follows from those for
$P_{g}(\btheta \mid \bvarphi')$ (uniform over $\bvarphi'$) and $P_g(\bvarphi' \mid \y)$ (uniform over $\y$), which are established as follows:
\begin{itemize}
\item Since $P_{g}(\btheta \mid \bvarphi')$ is a product measure, by
tensorization it suffices to establish a uniform LSI over its univariate
marginals. We do this using a Holley-Stroock perturbation argument on $\R$
and the characterization of LSI for one-dimensional
measures \citep{bobkov1999exponential}  (c.f.\ Lemma \ref{lem:exp_tilt_lsi}).
\item The smoothed prior $\mathcal{N}_{\tau'} \ast g$, and hence also
the posterior measure $P_g(\bvarphi' \mid \y)$, are strongly log-concave when
${\tau'}^2>M^2$, and a LSI follows from the Bakry-Emery criterion \citep{bakry1985diffusions}. 
\end{itemize}
The LSI for $P_g(\bvarphi \mid \y)$ follows from a similar argument, where we
consider instead the generative process $\btheta \mapsto \bvarphi \mapsto
\bvarphi' \mapsto \y$ to represent
\begin{align*}
P_{g}(\bvarphi \mid \y)=\int P_{g}(\bvarphi \mid \bvarphi') P_g(\bvarphi' \mid
\y)\d\bvarphi'.
\end{align*}

We remark that if $\tau^2>M^2$ in our starting reparametrization by
$\bvarphi$ for the joint gradient flow,
then we may simply take $\bvarphi'=\bvarphi$, and the LSI follows
directly from log-concavity of $P_g(\bvarphi \mid \y)$. However, choosing a large value of $\tau^2$
leads to a difficult sequence-model NPMLE problem to be solved by the
evolution of $\{g_t\}$ in the joint flow, which we observe in practice can
lead to unstable behavior of the EBflow algorithm and slow convergence.
Theorem \ref{thm:q_lsi} shows that as long as the noise variance
satisfies $\sigma^2>M^2\|\X\|_\op^2$, a LSI holds for $P_g(\bvarphi \mid \y)$
under any choice of reparametrization variance for $\bvarphi$ in the allowable
range $\tau^2 \in (0,\sigma^2/\|\X\|_\op^2)$ , even if the resulting
posterior law $P_g(\bvarphi \mid \y)$ is far from a log-concave measure.

Importantly, this requirement $\sigma^2>M^2\|\X\|_\op^2$ is compatible
with the condition for $\sigma^2$ in Assumption \ref{assump:design} and Theorem
\ref{thm:consistency} ensuring consistency of near-NPMLEs. Noting that
$\Var[y_i]=\|\x^{(i)}\|_2^2 \cdot \Var_{g_*}[\theta]+\sigma^2$, and considering
a typical setting where the rows of $\X$ and prior variance scale as
$\|\x^{(i)}\|_2 \asymp \|\X\|_\op$ and $\Var_{g_*}[\theta] \asymp M^2$,
the scaling $\sigma^2 \asymp M^2\|\X\|_\op^2$ may be interpreted as a regime
where $\beps$ explains a constant fraction of the variance of $\y$, which we
believe is also a relevant regime for applications. We note that the precise
requirement $\sigma^2>M^2\|\X\|_\op^2$ is likely loose by a constant factor
for any given prior and a random design, and should be interpreted as a
sufficient condition that guarantees dimension-free mixing of Langevin
dynamics under worst-case priors and designs.

The mixing of Langevin diffusions for posterior sampling in Bayesian regression
models has been studied before, for example
in \cite{dalalyan2012sparse} using a Meyn-Tweedie approach
and in \cite{dalalyan2017theoretical} for log-concave priors. We mention also
the recent results of \cite{nickl2022polynomial} that show rapid
mixing of Langevin diffusions in settings where the posterior concentrates
in a locally log-concave region of the parameter space. However, 
these results do not imply polynomial-time mixing
in many settings of interest in our application,
such as that of Proposition \ref{prop:condition_gaussian} where $p$ scales
linearly with $n$ and the posterior $P_g(\bvarphi \mid \y)$ is not 
log-concave and does not contract around the posterior mode.

\subsubsection{Joint flow}\label{subsec:joint_flow}
Equipped with the understanding of its two components, we now study the joint
flow (\ref{eq:qflow}--\ref{eq:gflow}), recorded here for the reader's convenience:
\begin{equation}\label{eq:jointflowabbr}
\begin{aligned}
\frac{\d}{\d t}q_t(\bvarphi)&=\nabla \cdot
\left[q_t(\bvarphi) \nabla U_{g_t}(\bvarphi)\right]+\Delta q_t(\bvarphi)\\
\frac{\d}{\d t}g_t(\theta)&=\alpha \cdot g_t(\theta)
\left(\left[\N_\tau*\frac{\bar q_t}{\N_\tau*g_t}\right](\theta)-1\right). 
\end{aligned}
\end{equation}

The following theorem, proved in Appendix \ref{subsec:joint_flow_existence},
verifies the existence and uniqueness of a solution
to this joint gradient flow in $\cP_*(\R^p) \times \cP_*(M)$.

\begin{theorem}\label{thm:flow_solution}
Fix any $T>0$. Suppose the initialization $(q_0,g_0) \in \cP_*(\R^p) \times
\cP_*(M)$ satisfies:
\begin{enumerate}[(i)]
\item The density $q_0$ is continuous and upper bounded,
and for any $\blambda \in \R^p$ we have $\E_{\bvarphi \sim q_0}
e^{\blambda^\top \bvarphi}<\infty$.
\item The density $g_0$ is strictly positive on all of $[-M,M]$.
\end{enumerate}
Then there exists a unique solution
$\{(q_t,g_t)\}_{t\in[0,T]}$ to (\ref{eq:qflow}--\ref{eq:gflow})
such that $(q_t,g_t) \in \cP_*(\R^p) \times \cP_*(M)$ for all $t \in [0,T]$.
This solution $\{(q_t,g_t)\}_{t \in [0,T]}$ satisfies:
\begin{enumerate}
\item $t \mapsto g_t(\theta)$ belongs to $C^\infty([0,T])$ for each fixed
$\theta \in [-M,M]$. Furthermore, $g_t(\theta)\geq g_0(\theta) e^{-t}$ for each
$\theta \in [-M,M]$ and $t \in [0,T]$
\item $\{q_t\}_{t \in [0,T]}$ are the densities of the marginal laws of
$\{\bvarphi_t\}_{t \in [0,T]}$ solving the SDE (\ref{def:Langevin})
with initialization $\bvarphi_0 \sim q_0$. For any $\blambda \in \R^p$,
\begin{align}\label{q_mgf_bound}
\sup_{t\in [0,T]} \E_{\bvarphi \sim q_t} e^{\blambda^\top \bvarphi}<\infty.
\end{align}
Furthermore, $(t,\bvarphi) \mapsto q_t(\bvarphi)$ belongs to
$C^\infty([0,T] \times \R^p)$.
\end{enumerate}
\end{theorem}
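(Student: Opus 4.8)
The plan is to decouple the two equations using a fixed-point argument on the prior path $\{g_t\}$. The structure of (\ref{eq:gflow}) shows that $g_t$ is obtained from $\{\bar q_s\}_{s \le t}$ by solving a linear ODE in $t$ (pointwise in $\theta$): writing $R_t(\theta) := [\N_\tau * (\bar q_t / \N_\tau * g_t)](\theta)$, we have $\frac{\d}{\d t}\log g_t(\theta) = \alpha(R_t(\theta) - 1)$, so $g_t(\theta) = g_0(\theta)\exp(\alpha \int_0^t (R_s(\theta)-1)\,\d s)$, which immediately gives the lower bound $g_t(\theta) \ge g_0(\theta)e^{-\alpha t} \ge g_0(\theta)e^{-t}$ once we know $R_s \ge 0$ (which is automatic) and the normalization $\int g_t = 1$ is preserved (which follows because $\int \bar q_s = 1$ and the flow is exactly the Fisher-Rao gradient, so $\frac{\d}{\d t}\int g_t\,\d\theta = \alpha \int g_t(R_t-1) = \alpha(\int \bar q_t - 1) = 0$). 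Meanwhile, for a \emph{given} continuous path $t \mapsto g_t \in \cP_*(M)$ bounded below, equation (\ref{eq:qflow}) is a linear Fokker--Planck equation whose drift $b_t(\bvarphi) = -\X^\top\bSigma^{-1}(\X\bvarphi - \y) + (([\N_\tau*g_t]'/[\N_\tau*g_t])(\varphi_j))_j$ is smooth in $\bvarphi$, globally Lipschitz (the log-derivative of a Gaussian convolution of a compactly supported measure is Lipschitz with constant $1/\tau^2$), with at most linear growth; hence the SDE (\ref{def:Langevin}) has a unique strong solution $\bvarphi_t$ with $\bvarphi_0 \sim q_0$, and the moment generating function bound (\ref{q_mgf_bound}) follows from a Gr\"onwall estimate on $\E e^{\blambda^\top \bvarphi_t}$ using the linear growth of the drift together with the assumed finiteness at $t=0$.

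The first main step is therefore to set up the fixed-point map $\Phi: \{g_t\}_{t \in [0,T]} \mapsto \{g_t\}_{t \in [0,T]}$ that takes a candidate prior path, solves the Langevin SDE to get $\{q_t\}$ and hence $\{\bar q_t\}$, then plugs this into the explicit exponential formula above to produce a new prior path; a solution of the joint system is exactly a fixed point of $\Phi$. I would work in the Banach space $C([0,T_0]; C([-M,M]))$ (or a weighted version controlling $\inf g_t$ from below), restricted to the closed convex set of paths with $g_t \in \cP_*(M)$ and $g_t(\theta) \ge g_0(\theta)e^{-t}$, and show $\Phi$ is a contraction for $T_0$ small. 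The key Lipschitz estimate: if $g_t^{(1)}, g_t^{(2)}$ are two candidate paths that are close in sup norm and bounded below, then (a) the two drifts differ by $O(\|g_t^{(1)} - g_t^{(2)}\|_\infty)$ uniformly in $\bvarphi$ — this uses that the map $g \mapsto [\N_\tau * g]'/[\N_\tau * g]$ is Lipschitz from $(C([-M,M]), \|\cdot\|_\infty)$ with the lower bound into $C_b(\R)$, since both numerator and denominator are smooth bounded-below functionals of $g$ — so by a standard SDE-stability (Gr\"onwall) argument the laws $q_t^{(1)}, q_t^{(2)}$ are close in $W_1$, hence $\bar q_t^{(1)}, \bar q_t^{(2)}$ are close; and (b) $\bar q \mapsto R_t(\theta)$ and then $R \mapsto$ the exponential are Lipschitz on the relevant sets, giving $\|\Phi(g^{(1)})_t - \Phi(g^{(2)})_t\|_\infty \le C T_0 \sup_s \|g_s^{(1)} - g_s^{(2)}\|_\infty$. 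Contraction for small $T_0$ gives local existence and uniqueness; since none of the constants degenerate as long as $\inf_\theta g_0(\theta)e^{-T} > 0$ and the mgf bound (\ref{q_mgf_bound}) holds (both of which are maintained a priori on any existence interval), the solution extends to all of $[0,T]$ by iterating.

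For the regularity claims I would argue by bootstrapping once the solution exists. Given the (now known, continuous in $t$) prior path, (\ref{eq:qflow}) is a linear parabolic PDE with smooth coefficients on $(0,T)\times\R^p$, so parabolic interior regularity (or the Hörmander/hypoellipticity for the nondegenerate Laplacian term plus smooth drift, together with the positivity of the transition density) yields $(t,\bvarphi)\mapsto q_t(\bvarphi)\in C^\infty((0,T)\times\R^p)$ and strict positivity of $q_t$; in particular $\bar q_t(x)$ is smooth in $x$ and, by differentiating under the integral with the mgf bound providing domination, also smooth in $t$ on $(0,T)$. Feeding this back, $R_t(\theta)$ is $C^\infty$ in $t$, so the explicit formula $g_t(\theta) = g_0(\theta)\exp(\alpha\int_0^t(R_s(\theta)-1)\d s)$ shows $t\mapsto g_t(\theta)$ is $C^\infty$ on $(0,T)$, and a further pass shows it is in fact a classical solution of (\ref{eq:gflow}). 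The main obstacle I anticipate is part (a) of the contraction estimate — tracking the dependence of the law $q_t$ of the McKean--Vlasov-type diffusion on the prior path through the drift, quantitatively and with the right norms, and making sure the lower bound $\inf g_t > 0$ (needed to keep the log-derivative Lipschitz and $R_t$ well-defined) is genuinely propagated rather than assumed; the explicit exponential formula for $g_t$ is what makes this tractable, since it converts the lower bound into an a priori consequence rather than something to be bootstrapped. The mgf propagation (\ref{q_mgf_bound}) and the extension of the local solution to $[0,T]$ are then routine given the non-degeneracy of the constants.
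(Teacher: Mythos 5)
Your proposal is correct and takes essentially the same route as the paper: a short-time Banach fixed-point argument built on the explicit exponential formula $g_t(\theta)=g_0(\theta)\exp(\alpha\int_0^t(R_s(\theta)-1)\d s)$ (which also yields the a priori lower bound and normalization), strong well-posedness of the Langevin SDE with drift uniformly Lipschitz over $g\in\cP(M)$, MGF propagation via It\^o/Gr\"onwall, sequential extension to $[0,T]$, and an alternating regularity bootstrap between time-smoothness of $g_t$ and space-time smoothness of $q_t$. The only differences are organizational — you contract the composed map on $g$-paths in sup norm, while the paper contracts the joint map on $(q,g)$ in a path-Wasserstein $\times$ TV metric — and the one detail to watch is that your Lipschitz step $\bar q\mapsto R_t$ cannot use $W_1$ alone (the test functions $\varphi\mapsto\N_\tau(\theta-\varphi)/[\N_\tau*g](\varphi)$ have exponentially growing derivatives), so it must be paired with the propagated exponential moment bounds exactly as you propagate them for (\ref{q_mgf_bound}).
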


The following result is the main theoretical guarantee
of this work for the joint gradient flow (\ref{eq:qflow}--\ref{eq:gflow}). Under
the assumption of a uniform LSI for the posterior laws
$\{P_g(\bvarphi \mid \y):g \in \cP(M)\}$ as shown in Theorem \ref{thm:q_lsi},
it establishes that:
\begin{itemize}
\item If the marginal log-likelihood objective
$\bar F_n$ is convex in a sub-level set defined by the initial objective value,
then $\{g_t\}$ reaches a near-NPMLE in polynomial time.

\item In the absence of such a local convexity guarantee,
$\{g_t\}$ reaches a density that approximately
satisfies an averaged first-order optimality condition for
$\bar F_n(g)$, defined in terms of the first variation
\begin{equation}\label{eq:deltaFn}
\delta \bar F_n[g](\theta):={-}\N_\tau*\frac{\bar\nu[g]}{\N_\tau*g}(\theta)+1
\end{equation}
where $\bar\nu[g]$ is the average marginal density of
$\nu[g]=P_g(\bvarphi \mid \y)$ as defined by (\ref{def:density_avg}).
(See Appendix \ref{sec:Fn_grad} for a derivation of this form.)
\end{itemize}

\begin{theorem}\label{thm:algo}
Suppose that $\nu[g](\bvarphi) \equiv
P_g(\bvarphi \mid \y)$ satisfies a LSI on $\R^p$ with some log-Sobolev constant $C=C(M,\tau)>0$ (not depending on $n,p$),
simultaneously over all priors $g \in \cP(M)$.
Let $(q_0,g_0) \in \cP_*(\R^p) \times \cP_*(M)$ satisfy the conditions of Theorem \ref{thm:flow_solution}, and let $\{(q_t,g_t)\}_{t \geq 0}$ be the
solution to (\ref{eq:qflow}--\ref{eq:gflow}). 
Denote
\[F_{n,\ast}=\min_g \bar F_n(g)=\min_{(q,g)} F_n(q,g),
\qquad \mathcal{K}_n:=\{g: \bar{F}_n(g) \leq F_n(q_0,g_0)\}.\]
Then there exist $(M,\tau,\alpha)$-dependent constants
$T_0,c_0,\eps_0>0$
such that for any density $h \in \cP_*(M)$, any $\eps \in (0,\eps_0)$,
and some time
\begin{align}\label{def:joint_flow_time}
t \leq T(h,\eps):=T_0\Big(\eps^{-1}\DKL(h \|
g_0)+p\eps^{-3.01}(F_n(q_0,g_0)-F_{n,*})\Big),
\end{align}
it holds that
\begin{align}
\int_{-M}^M -\delta \bar{F}_n[g_t](\theta) h(\theta) \d\theta &\leq \eps
\label{ineq:subgrad}
\end{align}
and
\begin{align}
\DKL(q_t \|\nu[g_t]) \leq \eps^2.
\label{ineq:langevinconverge}
\end{align}

If $h \in \mathcal{K}_n$ and $\bar{F}_n$ is convex (in the usual sense of
(\ref{eq:barFconvex})) over $\mathcal{K}_n$, then also
\begin{align}\label{ineq:subopt_gap_joint}
\bar{F}_n(g_t) - \bar{F}_n(h) \leq \eps.
\end{align}
\end{theorem}

Together with Theorem \ref{thm:consistency} and Corollary
\ref{cor:consistencyposterior}, we obtain the following
consequence regarding estimation of $g_*$ by the above prior $g_t$ in the
gradient flow, and closeness of their associated posterior laws.

\begin{corollary}\label{cor:algo}
Suppose that Assumption \ref{assump:design} holds and $g_* \in \cP_*(M)$.
In the setting of Theorem \ref{thm:algo}, fix a sequence $\eps_n$ such that
$\eps_n \to 0$ as $n,p \to \infty$, suppose that
$g_* \in \cK_n$ and $\bar F_n$ is convex on $\cK_n$ for all large $n,p$, and
let $t_n$ be the time for which \eqref{ineq:langevinconverge}
and \eqref{ineq:subopt_gap_joint} hold with $h=g_*$ and $\eps=\eps_n$.

Then as $n,p \to \infty$,
$g_{t_n} \to g_*$ weakly in probability, and its corresponding
posterior law $\nu[g_{t_n}]=P_{g_{t_n}}(\bvarphi \mid \y)$ satisfies
\begin{equation}\label{eq:qnuW2converge}
\frac{1}{p}\,\DKL(\nu[g_{t_n}]\|\nu[g_*]) \to 0
\end{equation}
in probability.
Furthermore, for any bounded function $f:[-M,M] \to \R$, denoting by
$f(\btheta)$ its entrywise application to $\btheta \in \R^p$, the
posterior expectations of $f(\btheta)$ satisfy
\begin{equation}\label{eq:posteriormeanconverge}
\frac{1}{p}\big\|\E_{g_{t_n}}[f(\btheta) \mid \y]-\E_{g_*}[f(\btheta) \mid
\y]\big\|_2^2 \to 0.
\end{equation}
\end{corollary}

The proofs of Theorem \ref{thm:algo} and Corollary \ref{cor:algo} are given in 
Appendix \ref{subsec:proof_main_algo}. 
Let us make several remarks regarding the conditions and
interpretation of this theorem.

\begin{enumerate}
\item The required uniform LSI condition is provided, for example, by Theorem
\ref{thm:q_lsi} in high-noise settings where $\sigma^2>M^2\|\X\|_\op^2$.
(In theory, if $\sigma^2>M^2\|\X\|_\op^2$ does not hold,
one may add extra Gaussian noise to
$\y$ to satisfy the condition. In practice, we observe that when $\sigma^2$ is
very small, adding a little bit of noise may indeed
improve the mixing of EBflow and estimation accuracy for $g_*$.)
\item The guarantee (\ref{ineq:subgrad}) may be understood as an
averaged version of a first-order optimality condition for $g_t$ to locally minimize $\bar F_n(g)$.
Indeed, any local minimizer $\tilde g$ of $\bar F_n(g)$ must satisfy
$\frac{\d}{\d t} \bar{F}_n\big((1-t)\tilde{g} + t\delta_\theta\big)|_{t=0+} \geq 0$ for
all $\theta\in [-M,M]$, which implies that
\begin{align}\label{ineq:npmle_necessary}
-\delta \bar{F}_n(\tilde{g})(\theta) \leq 0 \quad \text{for all }
\theta \in [-M,M]. 
\end{align}
The statement (\ref{ineq:subgrad}) averages this
inequality with respect to a fixed (arbitrary) reference density $h$.

\item If $g_\ast$ admits a density on $[-M,M]$, then
choosing $h=g_\ast$ in (\ref{ineq:subopt_gap_joint}) gives a certification that
$g_t$ is a near-NPMLE, in the sense of Theorem \ref{thm:consistency}.
(If $g_\ast$ does not admit a density, then the theorem may be applied
with $h$ being a smoothed approximation of $g_\ast$
similar to the discussion after Theorem \ref{thm:g_flow}.)

For general regression designs, the marginal log-likelihood
$\bar F_n(g)$ is non-convex in $g$ and may have multiple local optima,
so Theorem \ref{thm:algo} only
ensures (\ref{ineq:subopt_gap_joint}) from an initialization $(q_0,g_0)$
for which the sub-level set $\cK_n=\{g:\bar F_n(g) \leq F_n(q_0,g_0)\}$ is convex.
If $q_0=P_{g_0}(\bvarphi \mid \y)$ is the posterior law corresponding
to $g_0$ (obtained e.g.\ from a burn-in run of a Langevin diffusion
with the fixed prior $g_0$), then $F_n(q_0,g_0)=\bar F_n(g_0)$, so this
requires convexity of $\{g:\bar F_n(g) \leq \bar F_n(g_0)\}$.

We emphasize that $\cK_n$ is a sub-level set for the univariate prior $g$, and the
theorem does \emph{not} require the much stronger joint convexity of
$F_n(q,g)$ on the sub-level set $\{(q,g):F_n(q,g) \leq F_n(q_0,g_0)\}$.
In certain asymptotic settings as $n,p \to \infty$, we expect that
$\bar F_n(g)$ and its functional derivatives may converge to those of a
deterministic functional $\bar F(g)$, and that
$\bar F_n(g)$ for all large $n,p$ would be convex in a fixed
and dimension-free sub-level set $\mathcal{K}$ of this limit $\bar F(g)$;
we leave a more detailed investigation of this to future work.
\item Under Assumption \ref{assump:design}, for any choice
$\tau^2<\sigma^2/\|\X\|_\op^2-\delta$, and for any initial conditions
$(q_0,g_0)$ satisfying the mild requirements $\DKL(g_*\|g_0)=O(1)$, 
$\DKL(q_0\|(\N_\tau*g_0)^{\otimes p})=O(p)$,
and $\E_{q_0}[\|\bvarphi\|_2^2]=O(p)$, the time horizon $T(g_*,\eps)$ in
\eqref{def:joint_flow_time} is bounded as
\begin{equation}\label{eq:Tbound}
T(g_*,\eps)=O(\eps^{-3.01}(n+p)).
\end{equation}
We refer to Appendix \ref{appendix:initialgap} for a verification of this
claim. Thus under these conditions, Theorem \ref{thm:algo} establishes
closeness of $g_t$ and $g_*$ on a time horizon that is linear in $(n,p)$.
\end{enumerate}

As mentioned in the introduction,
convergence of MCEM methods to fixed points of the EM algorithm has been
classically studied in
\cite{chan1995monte,fort2003convergence,neath2013convergence}, in settings of
asymptotically growing numbers of EM iterations between discrete M-step updates.
Theorem \ref{thm:algo} may be understood as a convergence result for a
continuous-time version of MCEM, in a regime where the prior $g_t$ is
continuously updated before the law $q_t$ of the posterior sample equilibrates
to the stationary law $\nu[g_t]$.

\section{Experiments}\label{sec:simulation}

To allow the flexibility of fitting a smooth prior, we consider optimizing the marginal likelihood $\bar F_n(g)$ in (\ref{def:npmle}) with an additional smoothing-spline penalty (as suggested classically in \cite{goodd1971nonparametric}, and also studied for
deconvolution estimation in sequence models in \cite{madrid2018deconvolution}):
\begin{align}\label{eq:smoothobjective}
\bar F_{n,\lambda}(g)&= \bar F_n(g) +\frac{\lambda}{2}\int_{-M}^M g''(\theta)^2 d\theta =  -\frac{1}{p}\log P_g(\y)
+\frac{\lambda}{2}\int_{-M}^M g''(\theta)^2 d\theta.
\end{align}
By tuning the magnitude of $\lambda$, we may trade off optimization of the data
log-likelihood $\bar F_n(g)$ with the smoothness of the fitted prior $\hat g$. We discuss the implementation details of the EBflow algorithm for (\ref{eq:smoothobjective}) in Appendix~\ref{subsec:ebflow_smoothness}.

 We compare EBflow described in Section \ref{sec:discretize} with
several versions of Monte-Carlo expectation maximization
(MCEM), and with an approach based on mean-field variational inference using
coordinate ascent updates (CAVI).
We implement all methods to estimate the weights of a
discrete prior $\sum_{k=1}^K w_k \delta_{b_k}$ with $K=61$ fixed and equally
spaced support points $b_1,\ldots,b_K \in [-3,3]$. 

For MCEM, the most direct comparison is with an EM implementation
using the same latent parameter $\bvarphi$ as in EBflow, sampling iterates
$\{\bvarphi_t\}_{t \geq 0}$ using an Unadjusted Langevin Algorithm (ULA) to
compute a Monte Carlo approximation for the E-step. We perform $T_\text{iter}$
ULA iterations (\ref{eq:phiupdate}) with step size $\eta^\varphi$ between
M-steps, and update the prior in each M-step by computing the NPMLE (with fixed
support and estimated weights) in the sequence
model (\ref{eq:varphiseqmodel}) based on the coordinates
$(\varphi_{t,j})_{j=1}^p$ of the past $T_\text{iter}$ iterations. This update is
a convex program, which we solve using CVXR with the CSC solver. We refer to
this method as Langevin-MCEM.

We also compare to a Gibbs-MCEM method and to CAVI, both operating
in the original latent variable parametrization by $\btheta$.
For Gibbs-MCEM, we perform $T_\text{iter}$ Gibbs sampling iterations
between updates of the prior,
each iteration resampling $\{\theta_j\}_{j \in [p]}$ in sequence.
For CAVI, we optimize the Gibbs free energy
over priors $g(\theta)$ and product-law posteriors $q(\btheta)=\prod_{j=1}^p
q_j(\theta_j)$ by updating $\{q_j\}_{j \in [p]}$ in sequence, followed by
the prior $g$.

As a short burn-in period, we perform the first 200 iterations of EBflow,
Langevin-MCEM, and Gibbs-MCEM without prior updates,
under a uniform prior initialization $w=(\frac{1}{K},\ldots,\frac{1}{K})$.
Further details of the methods are discussed in Appendix~\ref{sec:other_algo}.

\subsection{Tuning parameter choices}\label{sec:tuning}
In this section, we first discuss some considerations on choosing
tuning parameters of the various methods,
including the step size schedules $\eta_t^\varphi,\eta_t^w$, smoothing-spline penalty $\lambda$, and variable reparametrization
variance $\tau^2$ for EBflow. More details of algorithm implementation can be found in Appendix~\ref{sec:other_algo}.

\phantom{}\\
\noindent \textbf{Variable reparametrization variance and smoothness
regularization.} We first discuss the choice of $\tau^2$ in EBflow and
$\lambda$ in the spline-penalized objective \eqref{eq:smoothobjective}.

It is understood that the approximation accuracy of ULA for its continuous-time
Langevin diffusion improves with the smoothness of the log-density of its
stationary distribution \citep{vempala2019rapid,balasubramanian2022towards},
and a smoother log-density will also allow for larger step sizes and faster
convergence.
In the parametrization by $\bvarphi$, the negative Hessian of the
log-posterior-density is (by Tweedie's formula)
\begin{equation}\label{eq:hessianposterior}
-\nabla^2 \log P(\bvarphi \mid \y)
=\X^\top \bSigma^{-1}\X+\frac{1}{\tau^2}\Id
-\frac{1}{\tau^4}\diag\Big(\Var[\theta_j \mid \varphi_j]\Big)_{j=1}^p
\end{equation}
where $\bSigma=(\sigma^2\Id-\tau^2\X\X^\top)^{-1}$. 
Ignoring the last $\bvarphi$-dependent term and treating the largest eigenvalue
\begin{equation}\label{eq:lambdamax}
\lambda_{\max}:=\lambda_{\max}(\X^\top \bSigma^{-1}\X+\tau^{-2}\Id)
\end{equation}
of the first two terms as a proxy for
the smoothness of this log-posterior, this motivates a choice
\begin{equation}\label{eq:tausqsetting}
\tau^2=0.5\,\sigma^2/\|\X\|_\op^2
\end{equation}
which is the value minimizing $\lambda_{\max}$ in \eqref{eq:lambdamax}.
Table \ref{tab:tausq} of the Supplementary Appendices
provides a numerical comparison of results of EBflow for different choices of
\[\tau^2=c_\tau \cdot \sigma^2/\|\X\|_\op^2\]
with $c_\tau \in \{0.8,0.5,0.2\}$ (where we recall from \eqref{eq:psdconstraint}
that we must have $c_\tau \in (0,1)$), under various
priors and regression designs to be discussed in Section \ref{sec:comparison}.
We observe that results are fairly robust to this choice of $c_\tau$,
with the choice $c_\tau=0.5$ in \eqref{eq:tausqsetting} yielding lowest errors
in several settings. We will henceforth fix this choice \eqref{eq:tausqsetting} 
in the subsequent experiments.

For purposes of smooth density estimation, the choice of regularization
parameter $\lambda$ in the smoothing spline penalty \eqref{eq:smoothobjective}
represents a standard bias-variance tradeoff, where the best choice
depends on the smoothness of the true prior $g_*$. Many heuristics based on
information criteria and cross-validation, as well as sensitivity analysis along
the full regularization path corresponding to all $\lambda>0$, are
discussed in \cite{madrid2018deconvolution} and apply
equally in our setting. (We do not develop a fast algorithm for
computing this full regularization path as in \cite{madrid2018deconvolution},
and leave this as an interesting question for future work.)

Table \ref{tab:lambda} of the Supplementary
Appendices provides a numerical comparison of results of EBflow for the
choices $\lambda \in \{0.01,0.03,0.001,\text{1e-4},\text{1e-5},0\}$
and the same priors and designs as in Table \ref{tab:tausq}. 
Denoting by $\sum_{k=1}^K
w_{*,k} \delta_{b_k}$ a discretization of the true prior density $g_*(\theta)$
using weights $w_{*,k}=g_*(b_k)\Delta:=g_*(b_k)(b_{k+1}-b_k)$,
Tables \ref{tab:tausq} and \ref{tab:lambda} report the (discretized)
total-variation error
\begin{equation}\label{eq:TV}
\dTV(\widehat g,g_\ast)=\frac{1}{2}\sum_{k=1}^K |\widehat
w_k-w_{\ast,k}|
\end{equation}
of the final EBflow density estimate $\widehat g=\sum_{k=1}^K \widehat w_k
\delta_{b_k}$, together with the attained value of the negative
marginal log-likelihood $\bar F_n(\widehat g)$ for the identity design
$\X=\Id$, or the standardized prediction mean-squared-error
\begin{equation}\label{eq:predictionMSE}
\text{MSE}=\frac{\E\|\x_\text{new}^\top(\btheta_\ast-\E_{\widehat g}[\btheta
\mid \y])\|^2}{\E\|\x_\text{new}^\top\btheta_\ast\|^2}
\end{equation}
over fresh samples $\x_{\text{new}}$ for the remaining random designs.
The posterior mean $\E_{\widehat g}[\btheta \mid \y]$ is computed as described
in Section \ref{sec:posteriorinference} using $T'=50{,}000$ additional iterates of $\bvarphi_t$, and
the expectations over $\x_{\text{new}}$ are approximated via
$n_\text{test}=1000$ independent test samples.
All TV errors, log-likelihoods, and prediction MSEs are 
averaged over 10 random trials, using the same design $\X$ and
independently sampled regression coefficients and noise $(\btheta,\beps)$ per trial.

As expected, the negative log-likelihood $\bar F_n(\widehat g)$
is smallest for all priors in the setting of no regularization
$\lambda=0$. We observe that the
prediction MSE is also nearly minimized for most priors and designs at
$\lambda=0$, and is approximately constant across a range of
sufficiently small values of $\lambda$. The smallest errors in TV distance for
the estimated prior densities are generally attained at the smoothest
estimates within these ranges of $\lambda$ that yield near-optimal prediction
MSE.\\

\noindent \textbf{Step sizes.} 
Theoretical results of \cite{vempala2019rapid,balasubramanian2022towards}
on ULA suggest a step size inversely proportional to the smoothness of the
target density. As such, 
we will rescale the step size $\eta_t^\varphi$ in EBflow and Langevin-MCEM by
$1/\lambda_{\max}$ in \eqref{eq:lambdamax},
i.e.\ all further reference to $\eta_t^\varphi$ will be
in the following reparametrized form of (\ref{eq:phiupdate}):
\begin{align}
\bvarphi_{t+1}&=\bvarphi_t-\frac{\eta^\varphi_t}{\lambda_{\max}}
\Bigg[\X^\top\bSigma^{-1}(\X\bvarphi_t-\y)+
\frac{1}{\tau^2}\bigg(\frac{\sum_{k=1}^K w_{t,k}(\varphi_{t,j}-b_k)
\N_\tau(\varphi-b_k)}{\sum_{k=1}^K
w_{t,k}\,\N_\tau(\varphi_{t,j}-b_k)}\bigg)_{j=1}^p\Bigg]\nonumber\\
&\hspace{1in}+\sqrt{\frac{2\eta^\varphi_t}{\lambda_{\max}}}\,\b_t.
\label{eq:phiupdatereparam}
\end{align}

A larger step size
$\eta_t^{\varphi}$ in EBflow leads to faster mixing, but can result in a higher
discretization bias for ULA. The step size $\eta_t^w$ determines an exponential
weighting of past iterates when updating the prior, where large
$\eta_t^w$ can lead to high Monte Carlo variance in the estimated prior,
while small $\eta^w_t$ can lead to slow convergence.

To illustrate these tradeoffs,
Figure \ref{fig:tuning}(a) of the Supplementary Appendices
displays the TV error \eqref{eq:TV}
of the density estimate $g_t=\sum_{k=1}^k w_t
\delta_{b_t}$ across 10{,}000 iterations of EBflow, for 10 trials with
independently sampled $(\btheta,\beps)$ under a
Gaussian prior $g_*=\N(0,1)$ and i.i.d.\ Gaussian design $x_{ij}
\overset{iid}{\sim} \N(0,\frac{1}{n})$.
In this example, fixing larger step sizes
$\eta^{\varphi}_{t}=1.0$ and $\eta^w_t=0.01$ results in better performance,
with smaller $(\eta_t^w,\eta_t^\varphi)$ leading to slower convergence, and
larger $\eta_t^w/\eta_t^\varphi$ leading to higher variance in the
estimated prior. There is a small amount of ULA bias incurred by the large step
size choice $\eta^{\varphi}_{t}=1.0$, and
Figure \ref{fig:tuning}(b) displays the TV error under an
alternative log-linear step size decay over 10,000 iterations,
\begin{equation}\label{eq:loglinearstep}
\eta_t^\varphi=a \cdot b^t \text{ with } \eta_t^\varphi=1.0 \text{ at } t=1
\text{ and } \eta_t^\varphi=0.1 \text{ at } t=10000,
\qquad \eta_t^w=0.01\eta_t^\varphi.
\end{equation}
The reduction of ULA bias for large $t$
yields a reduction of the final TV error (averaged across 10 trials) to 0.040,
from 0.070 for the fixed choice
$(\eta_t^\varphi,\eta_w^\varphi)=(1.0,0.01)$. We recommend using this log-linear
decay (\ref{eq:loglinearstep}) as the default option for problems of this size,
with the possibility of increasing $\eta_t^w$ relative to $\eta_t^\varphi$ for
problems with higher dimension $p$.

Analogous trade-offs apply for the other MCEM approaches when determining the
number of iterations $T_\text{iter}$ between prior updates.
Figure \ref{fig:tuning}(c--d) displays TV error for Langevin-MCEM with
$\eta_t^\varphi \in \{1.0,0.1\}$ and $T_\text{iter} \in \{100,1000\}$, and for
Gibbs-MCEM with $T_\text{iter} \in \{10,100,1000\}$, both across the
same 10 random simulations of $(\btheta,\beps)$. The best estimates are
attained at $T_\text{iter}=100$ for both methods, and we will fix this choice
in the subsequent method comparisons.\\

\subsection{Method comparisons}\label{sec:comparison}
In this section, we now compare the various methods across a range of
priors and designs.

\phantom{}\\
\noindent\textbf{Priors and designs.} We generate data $\y=\X\btheta+\beps$
where $\{\theta_j\}_{j=1}^p \overset{iid}{\sim} g_\ast$, and the prior
distribution $g_\ast$ is one of four smooth densities:
\begin{itemize}
\item (gaussian) $g_\ast=\N(0,1)$
\item (skew) $g_\ast=\frac{1}{3}\N(-2,0.33^2)+\frac{1}{3}\N(-1.5,0.5^2)
+\frac{1}{3}\N(0,1)$
\item (scale-mixture)
$g_\ast=\frac{1}{3}\N(0,1)+\frac{1}{3}\N(0,0.3^2)+\frac{1}{3}\N(0,0.1^2)$
\item (bimodal) $g_\ast=\frac{1}{2}\N(-1.5,0.5^2)+\frac{1}{2}\N(1.5,0.5^2)$
\end{itemize}
We generate $\X$ to be one of the following designs:
\begin{itemize}
\item (identity) $\X=\Id$ and $n=p$.
\item (iid) $\X$ has i.i.d.\ Gaussian entries $x_{ij} \overset{iid}{\sim}
\N(0,\frac{1}{n})$.
\item (block02corr0.9) $\X$ has independent multivariate Gaussian rows with marginal
distribution of coordinates $x_{ij} \sim \N(0,\frac{1}{n})$, consisting of
$\frac{p}{2}$ independent pairs of variables with correlation 0.9.
\item (block10corr0.5) $\X$ has independent multivariate Gaussian rows with marginal
distribution of coordinates $x_{ij} \sim \N(0,\frac{1}{n})$, consisting of
$\frac{p}{10}$ independent blocks of variables with pairwise correlation 0.5.
\item (genotype) $\X$ represents a design matrix that may arise in a
single-locus association analysis in statistical genetics studies.
We simulate independent (human) European genotypes
in a 5 megabase locus of Chromosome 6 using the coalescent simulator
stdpopsim with an out-of-Africa migration model and estimated recombination
rates \cite{adrion2020community}:
\begin{center}
\texttt{stdpopsim HomSap -c chr6 --left 33000000 --right 38000000 -o
genotypes.ts -d OutOfAfrica\_2T12 EUR:3000}
\end{center}
We then filter down to a subset of $p=1000$ common variants in
this locus, restricting to minor allele frequency $>0.01$,
and standardize columns of $\X$ to mean 0 and variance $\frac{1}{n}$. A plot of
pairwise variable correlations (i.e.\ the linkage disequilibrium matrix) in this
design $\X$ is depicted in Figure \ref{fig:LD} of the Supplementary Appendices.
\end{itemize}
We fix the dimension $p=1000$ for all designs. For the latter four random
designs, we vary the training sample sizes $n \in \{500,2000\}$
corresponding to $n/p=0.5$ and $n/p=2.0$.
The noise variance $\sigma^2$ is set
so that $\beps$ explains 50\% of the variance of $\y$
for the Gaussian prior, and 20\% of the variance of $\y$
for the (more difficult) skew, scale-mixture, and bimodal priors.
We perform 10 independent trials for each method,
using the same sampled design $\X$ and independently sampled $(\btheta,\beps)$
per trial.\\

\noindent\textbf{Experimental setup.} EBflow is run with the choice of
$\tau^2$ in (\ref{eq:tausqsetting}) and
log-linear step size decay (\ref{eq:loglinearstep}) over 10{,}000 iterations.
We tested also a preconditioned variant 
of EBflow with the $\bvarphi$ update in
(\ref{eq:phiupdatereparam}) replaced by
\begin{align}\label{eq:phiupdateprecond}
\bvarphi_{t+1}&=\bvarphi_t-\eta^\varphi_t \mathbf{Q}^{-1}
\Bigg[\X^\top\bSigma^{-1}(\X\bvarphi_t-\y)+
\frac{1}{\tau^2}\bigg(\frac{\sum_{k=1}^K w_{t,k}(\varphi_{t,j}-b_k)
\N_\tau(\varphi-b_k)}{\sum_{k=1}^K
w_{t,k}\,\N_\tau(\varphi_{t,j}-b_k)}\bigg)_{j=1}^p\Bigg]\notag\\
&\hspace{1in}+\sqrt{2\eta^\varphi_t}\mathbf{Q}^{-1/2}\,\g_t,
\end{align}
with preconditioner
$\mathbf{Q}=\X^\top \bSigma^{-1}\X+\tau^{-2}\Id$ as a proxy for the
Hessian (\ref{eq:hessianposterior}). Related forms of preconditioning have been
suggested, for example, for posterior sampling in logistic regression models in
\cite{dalalyan2017theoretical} to accelerate mixing of Langevin dynamics with
ill-conditioned designs. One may check that this preconditioning corresponds
to the gradient flow (\ref{eq:joint_flow_intro}) with an alternative
choice of inner-product on $\R^p$ underlying the Wasserstein-2 geometry.
Langevin-MCEM and Gibbs-MCEM are run fixing $T_\text{iter}=100$ Monte Carlo iterations
between M-step updates, also over 10{,}000 total Monte Carlo iterations.
CAVI is run for 1000 iterations, as it converges much faster than the MCEM methods.

All methods are applied to minimize the smoothing-spline regularized
log-likelihood objective \eqref{eq:smoothobjective}, where
the penalty is fixed at $\lambda=0.003$ for the (smoothest) Gaussian prior,
$\lambda=0.001$ for the bimodal prior, and $\lambda=\text{1e-5}$ for the skew
and (least smooth) scale-mixture priors. These choices correspond roughly to
the largest settings of $\lambda$ in Table \ref{tab:lambda} within the penalty
ranges that give similar prediction MSE as the unregularized estimate 
($\lambda=0$). To simplify the experiments, we use these fixed choices of
$\lambda$ and apply all methods to optimize the same regularized objective with
this fixed penalty; in practice, this choice of $\lambda$ may be approximately
determined using cross-validation.

The posterior mean $\E_{\widehat g}[\btheta \mid \y]$ corresponding to the
final estimated prior $\widehat g$ is computed in EBflow and Langevin-MCEM
using $T'=50{,}000$ additional iterates of $\bvarphi_t$ as described in Section
\ref{sec:posteriorinference}, using $T'=50{,}000$ additional iterates of
$\btheta_t$ in Gibbs-MCEM, and using the average of the posterior
marginals estimated in CAVI.\\

\noindent\textbf{Results.} Figure \ref{fig:block02corr0.9} displays the
estimated priors in the example of the block02corr0.9
design with $n=2000$ and $p=1000$. Estimated priors in
the other 4 regression designs are depicted in Figures
\ref{fig:identity}, \ref{fig:iid}, \ref{fig:block10corr0.5}, and
\ref{fig:genotype} of the Supplementary Appendices.

Tables \ref{tab:gaussian}--\ref{tab:bimodal} of the Supplementary Appendices
summarize estimation accuracy and runtime across all tested methods, priors,
and designs. We report the TV error \eqref{eq:TV}, attained
negative log-likelihood $\bar F_n(g)$ for identity design $\X=\Id$,
and prediction MSE \eqref{eq:predictionMSE} for the remaining random designs,
all averaged over 10 trials. We report also a measure of the compute time,
calculated as the median across 10 trials of the time in seconds needed for each
method to attain TV error $<0.3$, under our R-based implementations of these
methods. We report a compute time of Inf if fewer than half of the trials
attained TV error $<0.3$.

In Tables \ref{tab:gaussian}--\ref{tab:bimodal},
we report also an ``oracle'' benchmark $\bar F_n(g_*)$ 
for the negative log-likelihood in the identity design $\X=\Id$, and 
\[\text{MSE}=\frac{\E\|\x_\text{new}^\top(\btheta_\ast-\E_{g_*}[\btheta \mid
\y])\|^2}{\E\|\x_\text{new}^\top\btheta_\ast\|^2}\]
for the prediction MSE in the remaining four random designs,
all computed with knowledge of the true prior $g_*$.
The oracle value of $\bar F_n(g_*)$ for $\X=\Id$ and prediction MSE for standard
Gaussian prior $g_*=\N(0,\Id)$ are computed analytically, and the prediction
MSEs for the remaining priors are computed via MCMC approximation of
$\E_{g_*}[\btheta \mid \y]$ using the Langevin-MCEM method (without prior
updates) and a fixed step size $\eta^\varphi=0.1$ corresponding to the final
step size of EBflow in \eqref{eq:loglinearstep}.\\

\revise{
\begin{figure}
\centering
\includegraphics[width=1\textwidth]{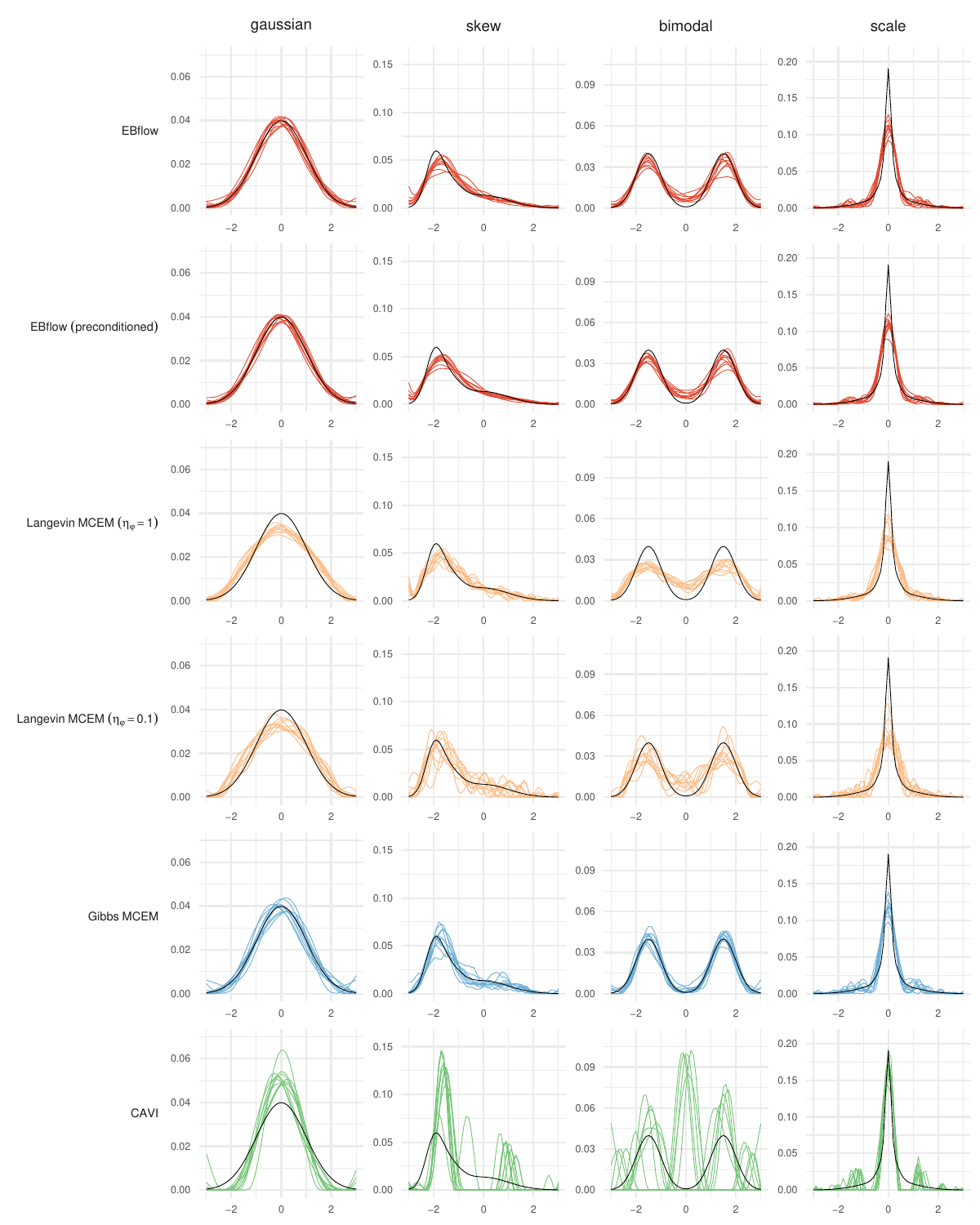}%
\caption{Estimated prior densities across 10 independent simulations of
$(\btheta,\beps)$. The regression design is block02corr0.9
($p/2$ independent pairs of Gaussian variables with correlation 0.9 within each
pair), with $n=2000$ and $p=1000$. The true priors left to right are: gaussian,
skew, scale-mixture, bimodal.}\label{fig:block02corr0.9}
\end{figure}
}

We provide here a high-level summary of these comparisons:
\begin{itemize}
\item For identity design $\X=\Id$ (i.e.\ the Gaussian sequence
model \eqref{def:gau_seq}),
all methods attain a negative log-likelihood value $\bar F_n(\widehat g)$
close to or smaller than the ``oracle'' value $\bar F_n(g_*)$, and they return
reasonably accurate estimates of the true prior.
Density estimates of EBflow seem somewhat more accurate and less
variable across trials than the other methods.
\item For the remaining random (non-identity) designs $\X$, estimates of EBflow
and the MCEM procedures are generally more accurate than CAVI,
in terms of both TV error and prediction MSE. The difference with CAVI is
particularly large in some settings of block-correlated and genotype designs,
but also apparent for i.i.d.\ Gaussian designs (where a TAP correction
\cite{celentano2023mean} would be needed for CAVI to achieve consistent
estimation of $g_*$ in the high-dimensional settings tested here).
\item Across EBflow and the MCEM procedures, prediction MSEs are generally
similar and close to the oracle benchmark that has access to
$g_*$.\footnote{We note that the prediction MSE depends both on the
difference between posterior means $\E_{g_*}[\btheta \mid \y]$ and $\E_{\widehat
g}[\btheta \mid \y]$,
and on the entrywise variance of the MCMC estimate of
$\E_{\widehat g}[\btheta \mid \y]$. The latter depends in turn on the
autocorrelation of the Markov Chain, so prediction MSE is
an aggregate measure of the estimation error of $\widehat g$ and mixing rate of
the MCMC sampler. There are several
settings, for example under the bimodal prior,
where the implemented methods achieve smaller
prediction MSE than the ``oracle'' benchmark: We believe this is caused by the
phenomenon that the
tested MCMC procedures --- including the ULA scheme used to compute the oracle
--- mix more slowly for the posterior of the true prior $g_*$ than
for the estimated prior $\widehat g$, and this difference has a larger effect on
the prediction MSE than the difference between
$\E_{g_*}[\btheta \mid \y]$ and $\E_{\widehat g}[\btheta \mid \y]$.}
EBflow generally attains lower TV error than these fixed tunings of
Langevin-MCEM, and the lowest TV error across many designs for the gaussian 
prior. Gibbs-MCEM attains the lowest TV error across many designs for the
bimodal prior, and EBflow is competitive with Gibbs-MCEM in TV error
for the skew and scale-mixture priors.
\item In our implementations, EBflow generally has faster runtime than both
Langevin-MCEM and Gibbs-MCEM. We note that long runtimes for Langevin-MCEM arise
from the full computation of the NPMLE using CVXR/CSC in each M-step update.
\item Our tested preconditioning of EBflow improves estimation
accuracy in the genotype design, and has a lesser effect in the other 
designs.
\end{itemize}

\section{Discussion}\label{sec:conclusion}
We have studied empirical Bayes estimation of an unknown prior distribution for
the regression coefficients of a
linear model, introducing a system of bivariate gradient flow
equations for optimizing the marginal log-likelihood via its Gibbs variational
representation. This gradient flow system may be simulated using a Langevin
diffusion, where the prior is continuously updated based on an estimate of the
average marginal distribution of coordinates of the Langevin sample.
Although the regression model and Langevin sample are both high-dimensional, the
one-dimensional nature of the estimand $g_\ast$ enables its estimation using a
fully nonparametric approach. Similar ideas can be explored in
models beyond linear regression, where a high-dimensional latent
parameter is comprised of independent draws from an unknown
univariate or low-dimensional prior law.

We believe there are many interesting directions for future work, and we discuss
here two of these directions:
\begin{enumerate}
\item (Global landscape analysis) For general regression designs,
the optimization landscape of the negative log-likelihood $\bar F_n(g)$ may have
multiple local minimizers, and we find empirically that the prior density estimated by EBflow
is sometimes sensitive to the choice of initialization. In such settings, we
have only provided a theoretical guarantee of convergence of the gradient flow
to a near-NPMLE from a ``local'' initialization within a convex sub-level
set of $\bar F_n(g)$.
It is an open question to obtain a better understanding of the global landscape
geometry of $\bar F_n(g)$ and global convergence properties of the proposed
gradient flow in the presence of multiple local minimizers.

\item (Time discretization and propagation-of-chaos) In this work, we have
studied the dynamics and convergence of the idealized continuous-time gradient
flow, but have not yet taken the steps to translate this to
convergence guarantees for the EBflow algorithm.
We believe that a careful analysis of the EBflow algorithm may lead to improved
understanding of issues related to parameter tuning, such as the choice of
variance $\tau^2$ in our reparametrization by the smoothed regression variable
$\bvarphi$.

One particular question of interest is to understand conditions under which the
true law $\bar q_t$ may be consistently estimated by the empirical average of
coordinates in (\ref{eq:barqestimate}) over a time horizon that is
long enough for convergence of the gradient flow, using
only one realization $\{\bvarphi_t\}_{t \in [0,T]}$ of the Langevin diffusion
rather than an asymptotically growing number of independent parallel chains. We
note that this type of ``single-chain propagation-of-chaos'' result is only
possible in settings of growing dimensions $p \to \infty$, and we believe that
its study may lead to improved understanding of functional inequalities and
concentration of measure for the law $\{q_t\}_{t \in [0,T]}$ as it evolves
along the gradient flow.
\end{enumerate}

Following the initial posting of our manuscript, the works
\cite{fan2025dynamicalI,fan2025dynamicalII} have taken a step to address
several of these questions for a parametric analogue of EBflow,
in a setting of design matrices $\X$ with i.i.d.\
coordinates. It was shown in \cite{fan2025dynamicalI} that the above
single-chain propagation-of-chaos holds over dimension-free time horizons
$[0,T]$, and exact asymptotic characterizations of both the landscape
of $\bar F_n(g)$ and global dynamics of the adaptive Langevin procedure
(rather than the gradient flow) were given in
\cite{fan2025dynamicalII} under an assumption of a uniform posterior LSI similar
to this work. These
questions remain open for the nonparametric EBflow procedure described here,
and for design matrices $\X$ beyond the i.i.d.\ setting.

Our numerical simulations suggest that EBflow is competitive in many settings
with alternative approaches such as MCEM with coordinate-wise Gibbs sampling,
in terms of both estimation accuracy and runtime. More importantly, we hope
that this EBflow approach can make available a broader toolbox of ideas related
to Langevin-based algorithms for empirical Bayes estimation,
including ideas around adaptive preconditioning \citep{girolami2011riemann}
and underdamped/Hamiltonian dynamics \citep{neal2011mcmc,cheng2018underdamped}
to handle more ill-conditioned designs, and stochastic gradient
approximations \citep{welling2011bayesian,ma2015complete} to scale to
larger-$(n,p)$ problems. In extensions to more complex models where
latent parameters are organized in multi-layer architectures,
such gradient-based ideas may also have advantages over coordinate-wise Gibbs
sampling, due to the lower complexity of computing gradients using
back-propagation procedures. We believe that such extensions to more complex
models and sampling algorithms would be interesting to explore also in future
work.

\appendix

\section{Proof of consistency}\label{sec:proof_consistency}

In this appendix, we prove Theorem \ref{thm:consistency} and Proposition
\ref{prop:condition_gaussian}.

\subsection{TV lower bound}

For any $\eta>0$ and $g\in \cP(M)$, we write as shorthand
\[g_\eta=\N_\eta*g=\N(0,\eta^2)*g.\]
Let
\begin{align}\label{def:smooth_W1}
d_\eta(g,g')=W_1\big(g_\eta,g_\eta'\big)
\end{align}
be the smoothed Wasserstein-1 distance on $\cP(M)$.

All constants in the lemmas and proofs may depend on the support constraint $M$,
on $C_\ast, c_\ast,\beta$ in Assumption \ref{assump:design},
and on $\sigma^2$ and the lower bound $\gamma$ for $n/p$ in the
setting of Proposition \ref{prop:condition_gaussian}.

\begin{lemma}\label{lemma:TVlower}
Under Assumption \ref{assump:design},
there exists a sufficiently large constant $v>0$, constants $c,c'>0$, and
a sequence $\delta_n \to 0$ such that for all large $n,p$ and all
$g,g' \in \cP(M)$,
\begin{align*}
\dTV(P_g(\y),P_{g'}(\y)) \geq 1-2e^{-cp(d_v(g,g')-\delta_n)_+^2}.
\end{align*}
\end{lemma}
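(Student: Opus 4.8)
The plan is to distinguish $P_g(\y)$ from $P_{g'}(\y)$ by a (possibly randomized) statistic $\hat G=\hat G(\y)$, a probability measure on $\R$, designed so that under $P_g$ its law concentrates, in Wasserstein-$1$ distance, around the Gaussian mixture $\N_v*g$, uniformly over $g\in\cP(M)$. Concretely, suppose one can show: for a sufficiently large constant $v$, a constant $c>0$, and a sequence $\delta_n\to0$ (all independent of $g$), writing $\y\sim P_g$,
\begin{align}\label{eq:concgoal}
\prob{W_1\!\left(\hat G(\y),\,\N_v*g\right)>\delta_n+t}\ \le\ 2e^{-cpt^2}\qquad\text{for all }t\ge 0.
\end{align}
Given \eqref{eq:concgoal}, set $E_g=\{W_1(\hat G,\N_v*g)<\tfrac12 d_v(g,g')\}$ and define $E_{g'}$ analogously; these are disjoint by the triangle inequality for $W_1$, and \eqref{eq:concgoal}, optimized over $t<\tfrac12 d_v(g,g')-\delta_n$, gives $P_g(E_g),\,P_{g'}(E_{g'})\ge 1-2e^{-cp(\frac12 d_v(g,g')-\delta_n)_+^2}$. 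Since $E_g\subseteq E_{g'}^c$,
\[
\dTV(P_g(\y),P_{g'}(\y))\ \ge\ P_g(E_g)-P_{g'}(E_g)\ \ge\ 1-4e^{-cp(\frac12 d_v(g,g')-\delta_n)_+^2},
\]
which is of the asserted form after relabelling $c$ and $\delta_n$ (and is vacuous when the right-hand side is nonpositive). It would then remain to construct $\hat G$ and to establish \eqref{eq:concgoal}.

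For the construction I would use a recentred, noise-augmented empirical distribution of the ``z-scores'' $\z_j^\top\y$, $j\in\cS$. First, one may replace each $\z_j$ by its component orthogonal to $\X\1$; using $\|\X\1\|_2^2\ge c_\ast p$ and $\|\X\|_\op\le C_\ast$, this changes $\|\z_j\|_2^2$, $\z_j^\top\x_j$ and $\sum_{k\ne j}|\z_j^\top\x_k|^{2+\beta}$ by only $o(1)$, so we may assume $\z_j^\top\X\1=0$ for all $j\in\cS$ and that Assumption~\ref{assump:design}(3)--(4) persist with adjusted constants. Then fix $v^2>\sigma^2 C_\ast$, draw independent $w_j\sim\N(0,\,v^2-\sigma^2\|\z_j\|_2^2)$, put $\hat\mu=(\X\1)^\top\y/\|\X\1\|_2^2$, and define
\[
\hat G(\y)=\frac{1}{|\cS|}\sum_{j\in\cS}\delta_{\,\z_j^\top\y+\hat\mu+w_j}\,.
\]
Augmenting by the independent noise $w$ is harmless for a TV lower bound, since $\dTV(P_g,P_{g'})=\dTV(P_g\otimes Q,P_{g'}\otimes Q)\ge\dTV\big(\mathrm{Law}_{P_g}(\hat G),\mathrm{Law}_{P_{g'}}(\hat G)\big)$. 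Writing $\y=\X\btheta+\beps$ with $\btheta\sim g^{\otimes p}$, each atom of $\hat G$ equals $(\z_j^\top\x_j+\pi_j)\,\theta_j+\sum_{k\ne j}(\z_j^\top\x_k+\pi_k)\,\theta_k+(\text{Gaussian of variance exactly }v^2)$, where $\pi_k=(\X\1)^\top\x_k/\|\X\1\|_2^2$ satisfies $\max_k|\pi_k|=O(p^{-1/2})$ and, because $\z_j^\top\X\1=0$, $\sum_{k\ne j}(\z_j^\top\x_k+\pi_k)=-(\z_j^\top\x_j-1)-\pi_j\to0$ uniformly in $j$. Combined with $\max_j|\z_j^\top\x_j-1|\to0$ and $\max_j\sum_{k\ne j}|\z_j^\top\x_k|^{2+\beta}\to0$ from \eqref{ineq:test_vector}, together with the law of large numbers for the empirical distribution of $\{\theta_j\}_{j\in\cS}$ (convolved with $\N_v$), this should yield the ``bias'' bound $\E_{P_g}\!\left[W_1(\hat G,\N_v*g)\right]\le\delta_n$ for some $\delta_n\to0$, uniformly over $g$.

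The remaining step, which I expect to be the main obstacle, is to promote this bias bound to the exponential concentration \eqref{eq:concgoal}, i.e.\ to show that $W_1(\hat G,\N_v*g)$ concentrates around its mean at the $p^{-1/2}$ scale. One views $\hat G$ as a function of the independent blocks $\btheta\in[-M,M]^p$, $\beps\sim\N(0,\sigma^2\Id)$, and $w$. A single coordinate $\theta_\ell$ moves $W_1(\hat G,\cdot)$ by at most $\tfrac{2M}{|\cS|}\big(\sum_{j\in\cS}|\z_j^\top\x_\ell|+|\cS|\,|\pi_\ell|\big)$, which is $O(p^{-1/2})$ uniformly in $\ell$ because $\sum_{j\in\cS}|\z_j^\top\x_\ell|=\big\langle\sum_{j\in\cS}s_j\z_j,\,\x_\ell\big\rangle\le\|\X\|_\op\,\big\|\sum_{j\in\cS}s_j\z_j\big\|_2\le C_\ast\sqrt{C_\ast p}$ by Assumption~\ref{assump:design}(4) (with $s_j=\operatorname{sgn}(\z_j^\top\x_\ell)$); similarly $\hat G$ is $O(p^{-1/2})$-Lipschitz in $(\beps,w)$ once one bounds $\big\|\sum_{j\in\cS}\z_j\z_j^\top\big\|_\op\le C_\ast p$ (again Assumption~\ref{assump:design}(4)) and uses the independence of the $w_j$. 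Feeding these into a bounded-differences inequality in $\btheta$ and Gaussian concentration in $(\beps,w)$ would give \eqref{eq:concgoal}. The genuinely delicate point is that the z-scores $\z_j^\top\y$ are \emph{not} independent across $j$ --- they share the noise $\beps$ and the cross-contamination $\sum_{k\ne j}\z_j^\top\x_k\theta_k$ --- so these correlations must be controlled quantitatively (this is exactly the role of the $\ell^{2+\beta}$ decay in \eqref{ineq:test_vector} and of the norm bound in Assumption~\ref{assump:design}(4)), both to keep the fluctuations at the $p^{-1/2}$ scale and to make the constant $c$ and the sequence $\delta_n$ uniform in $g$.
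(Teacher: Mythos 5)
Your overall architecture (a single $W_1$-based statistic whose law separates under $P_g$ versus $P_{g'}$) is genuinely different from the paper's proof, which instead splits into three cases — testing the mean via $\1^\top\X^\top\y$, testing the variance via the quadratic form $\|\X^\top\Pi\y\|^2$ with Hanson–Wright, and only then comparing the z-scores $\z_j^\top\y$ via per-coordinate Kantorovich test functions $f_j$ chosen after fixing the pair $(g,g')$. Unfortunately your version has a genuine gap at its core claim \eqref{eq:concgoal}. The cross-contamination term $b_j=\sum_{k\neq j}\z_j^\top\x_k\,\theta_k$ is \emph{not} negligible: Assumption \ref{assump:design}.3 only gives $\sum_{k\neq j}|\z_j^\top\x_k|^{2+\beta}\to 0$, which is a Lyapunov/CLT condition and does not force the variance $v_j\kappa_2=\sum_{k\neq j}(\z_j^\top\x_k)^2\cdot\mathrm{Var}(g)$ to vanish (e.g.\ coefficients of size $p^{-1/2}$ give $v_j\asymp 1$ while the $(2{+}\beta)$-sum vanishes; this is exactly the regime of Proposition \ref{prop:condition_gaussian}). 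Your recentring by $\hat\mu$ kills the \emph{mean} of $b_j$, but each atom of $\hat G$ is still approximately $\theta_j+\N(0,\,v_j\kappa_2+v^2)$, so $\hat G$ concentrates around $\frac{1}{|\cS|}\sum_{j\in\cS}g*\N(0,v_j\kappa_2+v^2)$, not around $\N_v*g$; the discrepancy is $O(\max_j v_j\kappa_2)=O(1)$ and depends on the unknown variance $\kappa_2$ of $g$, so it cannot be absorbed into a $g$-independent $\delta_n\to 0$. Repairing this forces you to first dispose of the case $|\kappa_2-\kappa_2'|$ large (and similarly the means), which is precisely why the paper's proof has its Case 1/Case 2/Case 3 structure and why Case 3 compares $g_{\sqrt{\eta^2+v_j\kappa_2+w_j}}$ with $g'_{\sqrt{\eta^2+v_j\kappa_2+w_j}}$ at a \emph{common} smoothing level before invoking monotonicity of the smoothed $W_1$.

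A second, more technical gap is the rate in \eqref{eq:concgoal}. Bounded differences in $\btheta$ with per-coordinate oscillation $c_\ell=O(p^{-1/2})$ (driven by $\sum_{j\in\cS}|\z_j^\top\x_\ell|=O(\sqrt p)$) gives $\sum_\ell c_\ell^2=O(1)$ and hence only $e^{-ct^2}$, not the $e^{-cpt^2}$ you need; the dimension-dependent rate is essential downstream (the covering-net union bound and Borel–Cantelli in the consistency proof). To get $e^{-cpt^2}$ one must use that the functional is $O(p^{-1/2})$-Lipschitz in the \emph{Euclidean} norm of $(\btheta,\beps)$ together with a log-Sobolev inequality for the law of $(\btheta,\beps)$ — and since $g\in\cP(M)$ may be discrete, this in turn requires first replacing $g$ by $\N_\eta*g$ via the data-processing step, which your proposal omits entirely.
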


\begin{proof}
Note that $\dTV(P_g(\y),P_{g'}(\y))$ and all conditions of
Assumption \ref{assump:design} are invariant under a simultaneous
rescaling $(\X,\y,\sigma,\z_j) \mapsto (\alpha
\X,\alpha\y,\alpha\sigma,\alpha^{-1}\z_j)$ for any
$\alpha>0$. Thus, applying such a rescaling, we may assume without loss
of generality that 
\begin{equation}\label{eq:Xnorm}
\|\X\|_\op=1, \qquad c_*<\sigma^2<C_*,
\end{equation}
where the bounds for $\sigma^2$ follow from Assumption \ref{assump:design}
and $p^{-1}\|\X\|_{\mathrm{F}}^2 \leq \|\X\|_\op^2$.

Fixing any constant $\eta>0$, the smoothed distribution $g_{\eta}$ satisfies a
LSI with uniformly bounded LSI constant for all
$g \in \cP(M)$ \cite[Theorem 1.1]{bardet2018functional}. Note that convolving $g$
with $\N_\eta$ corresponds to adding an independent $\N(0,\eta^2\,\X\X^\top)$ 
noise vector to $\y$. Hence by the data processing inequality,
\begin{align*}
\dTV\big(P_g(\y),P_{g'}(\y)\big)
\geq \dTV\big(P_{g_{\eta}}(\y),P_{g_{\eta}'}(\y)\big),
\end{align*}
and it suffices to lower bound the TV distance on the right side for priors
$g_{\eta}$ and $g_{\eta}'$.

Noting that $g_{\eta}$ has variance at most $M^2+\eta^2$ for every $g \in
\cP(M)$, we pick a constant $v>0$ satisfying
\begin{equation}\label{eq:vdef}
v^2 \geq \max_{j\in \cS} \left(\eta^2+v_j \cdot (M^2+\eta^2)+w_j\right),
\qquad \text{ where } v_j=\sum_{k:k \neq j} (\z_j^\top \x_k)^2,
\qquad w_j=\sigma^2 \|\z_j\|_2^2,
\end{equation}
and $\{\z_j\}_{j\in\cS}$ are given by Assumption \ref{assump:design}. We will
check below that $v_j,w_j$ are bounded above by a constant uniformly over
$j \in \cS$, so that we may indeed pick $v$ here as a constant independent of
$n,p$. Denote $\Delta=d_v(g,g')=W_1(g_v,g_v')$.
To establish the desired TV lower bound, it suffices to exhibit an explicit test statistic $T(\y)$ with rejection threshold
$t=(\E_{g_{\eta}}[T(\y)]+\E_{g_{\eta}'}[T(\y)])/2$ such that
\begin{align}\label{eq:testingerrors}
\max(\P_{g_{\eta}}[T(\y)<t],\P_{g_{\eta}'}[T(\y) \geq t]) \leq e^{-cp(\Delta-\delta_n)_+^2}.
\end{align}
Indeed, this immediately implies the conclusion of the lemma,
\[\dTV(P_{g_\eta}(\y),P_{g_\eta'}(\y)) \geq
\Big|\P_{g_{\eta}}[T(\y) \geq t]-\P_{g_{\eta}'}[T(\y) \geq t]\Big| \geq
1-2e^{-cp(\Delta-\delta_n)_+^2}.\]

Let $\kappa_1,\kappa_2$ and $\kappa_1',\kappa_2'$ denote the means and
variances of $g_{\eta},g_{\eta}'$ respectively. To construct $T(\y)$ satisfying
(\ref{eq:testingerrors}), we consider three cases. The constants
$C,c>0$ below will change from instance to instance.\\

{\bf Case 1:} $|\kappa_1-\kappa_1'|>c_1\Delta$ for a constant $c_1>0$ to be
specified. Suppose
without loss of generality $\kappa_1>\kappa_1'$, and consider the test
statistic
\begin{align*}
T(\y)=\1^\top \X^\top \y=\1^\top \X^\top \X\btheta+\1^\top \X^\top \beps.
\end{align*}
Under Assumption \ref{assump:design}(a) and the normalization
$\|\X\|_\op=1$ in \eqref{eq:Xnorm}, we have
\begin{equation}\label{eq:meansep1}
\E_{g_{\eta}}[T(\y)]-\E_{g_{\eta}'}[T(\y)]
=\1^\top \X^\top \X\1 \cdot (\kappa_1-\kappa_1')
\geq c\Delta p.
\end{equation}
Computing the gradient of $T(\y)$ in $(\btheta,\beps)$, we observe that
$T(\y)$ is $L$-Lipschitz in $(\btheta,\beps)$ with respect to
the $\ell_2$-norm, where
$L^2=\1^\top(\X^\top \X)^2 \1+\1^\top \X^\top \X\1 \leq Cp$
by \eqref{eq:Xnorm}.
Then by the LSI for $(\btheta,\beps)$ under both $g_{\eta}$ and $g_{\eta}'$,
for any $s>0$, $T(\y)$ has the subgaussian concentration arounds its mean
\begin{align*}
\max\Big(\P_{g_{\eta}}[T(\y)-\E_{g_{\eta}} T(\y) \leq -s\Delta
p],\;\P_{g_{\eta}'}[T(\y)-\E_{g_{\eta}'} T(\y) \geq s\Delta p]\Big) \leq
e^{-cs^2\Delta^2 p}.
\end{align*}
Choosing a constant $s>0$ small enough and
combining with (\ref{eq:meansep1}), this shows that (\ref{eq:testingerrors}) holds with $\delta_n=0$.\\

{\bf Case 2:} $|\kappa_2-\kappa_2'| \geq c_2\max(\Delta,\Delta^2)$
for a constant $c_2>0$ to be specified. Suppose without loss of generality
$\kappa_2>\kappa_2'$, let $\Pi$ denote the projection in $\R^n$
orthogonal to $\X\1$,
and consider the test statistic
\begin{align*}
T(\y)=\|\X^\top \Pi\y\|^2=\|\X^\top\Pi(\X\btheta+\beps)\|^2
=\begin{pmatrix} \btheta \\ \beps \end{pmatrix}^\top
\underbrace{\begin{pmatrix} \X^\top \Pi\X\X^\top \Pi\X & \X^\top \Pi\X\X^\top \Pi \\
\Pi\X\X^\top \Pi \X & \Pi\X\X^\top\Pi\end{pmatrix}}_{:=M(\X,\Pi)}
\begin{pmatrix} \btheta \\
\beps \end{pmatrix}
\end{align*}
Since $\|\X\|_{\mathrm{F}}^2 \geq p\sigma^2/C_*$ by Assumption
\ref{assump:design}, this and the conditions of \eqref{eq:Xnorm} imply that
the matrix $\X^\top \Pi\X$ has at
least $cp$ eigenvalues greater than $c$, for some sufficiently small constant
$c>0$. Then, irrespective of the means $\kappa_1,\kappa_1'$, we have
\begin{align*}
\E_{G_{\eta}}[T(\y)]-\E_{G_{\eta}'}[T(\y)]=\Tr \X^\top \Pi\X\X^\top\Pi\X \cdot
(\kappa_2-\kappa_2') \geq c\max(\Delta,\Delta^2)p.
\end{align*}
We have also $\|M(\X,\Pi)\|_\op \leq C$ and $\|M(\X,\Pi)\|_{\mathrm{F}}^2
\leq Cp$ for the above matrix $M(\X,\Pi)$, by \eqref{eq:Xnorm}
and the fact that each block of $M(\X,\Pi)$ has rank at most $p$.
Then by the Hanson-Wright inequality \cite[Theorem 1.1]{rudelson2013hanson},
viewing $T(\y)$ as a quadratic form in $(\btheta,\beps)$ where $\btheta$ is subgaussian under both $g_{\eta}$ and $g_{\eta}'$, for any $s>0$,
\begin{align*}
\P_{g_{\eta}}[T(\y)-\E_{g_\eta} T(\y) &\leq -s\max(\Delta,\Delta^2)p] \leq
e^{-c\min(s,s^2)\Delta^2p}\\
\P_{g_{\eta}'}[T(\y)-\E_{g_\eta'} T(\y) &\geq s\max(\Delta,\Delta^2)p] \leq
e^{-c\min(s,s^2)\Delta^2p}.
\end{align*}
Then (\ref{eq:testingerrors}) holds also in this case with $\delta_n=0$.\\

{\bf Case 3:} $|\kappa_1-\kappa_1'|<c_1\Delta$ and $|\kappa_2-\kappa_2'|<c_2\max(\Delta,\Delta^2)$. Let $\cS \subseteq \{1,\ldots,p\}$ and $\{\z_j:j \in \cS\}$ be as in Assumption \ref{assump:design}. For each $j \in \cS$, consider
\begin{align*}
s_j=\z_j^\top \y=\underbrace{\z_j^\top \x_j \cdot \theta_j}_{\equiv a_j}
+\underbrace{\sum_{k:k \neq j} \z_j^\top \x_k \cdot \theta_k}_{\equiv b_j}
+\underbrace{\z_j^\top \beps}_{\equiv e_j},
\end{align*}
where these three terms $a_j,b_j,e_j$ are independent. Let
$\mathcal{L}(f(\btheta))$ denote the law of $f(\btheta)$ when $\theta_j
\overset{iid}{\sim} g_{\eta}$. Then
by Assumption \ref{assump:design}(b),
\begin{align*}
W_1(\mathcal{L}(a_j),g_\eta) = W_1(\mathcal{L}(a_j),\mathcal{L}(\theta_j)) \leq |\z_j^\top \x_j-1| \cdot \E_{g_{\eta}}[|\theta_j|] \to 0.
\end{align*}
For $b_j$, we have
\begin{align*}
\E_{g_{\eta}}[b_j]=m_j\kappa_1, \qquad \Var_{g_\eta}[b_j]=v_j\kappa_2, \qquad
\text{where } m_j=\sum_{k:k \neq j} \z_j^\top \x_k, \qquad v_j=\sum_{k:k \neq j} (\z_j^\top \x_k)^2.
\end{align*}
The Wasserstein-$(2+\beta)$ CLT of \cite[Eq.\ (4.1)]{rio2009upper} shows that for some $c = c(\beta) > 0$, 
\begin{align*}
W_{2+\beta}\Big(\mathcal{L}(b_j),\N(m_j\kappa_1,v_j\kappa_2)\Big)^{2+\beta} \leq
c \sum_{k:k \neq j} |\z_j^\top \x_k|^{2+\beta} \cdot
\E_{g_\eta}[|\theta_k|^{2+\beta}].
\end{align*}
Assumption \ref{assump:design}(b) provides the Lyapunov condition ensuring that the
right side converges to 0 as $n,p \to \infty$, uniformly over $j \in \cS$.
Then also
\begin{align*}
W_1\Big(\mathcal{L}(b_j),\N(m_j\kappa_1,v_j\kappa_2)\Big) \leq W_{2+\beta}\Big(\mathcal{L}(b_j),\N(m_j\kappa_1,v_j\kappa_2)\Big) \to 0.
\end{align*}
Finally, $e_j \sim \N(0,w_j)$ with variance $w_j=\sigma^2\|\z_j\|_2^2$. Putting this together,
\begin{equation}\label{eq:Lsapprox}
W_1\Big(\mathcal{L}(s_j),\,g_{\eta}*\N(m_j\kappa_1,v_j\kappa_2+w_j)\Big) \leq \delta_n/4
\end{equation}
for some $\delta_n \to 0$. Similarly, writing $\mathcal{L}'(\cdot)$ for the law under $g_{\eta}'$,
\begin{equation}\label{eq:LprimeW1bound}
W_1\Big(\mathcal{L}'(s_j),\,g_{\eta}'*\N(m_j\kappa_1',v_j\kappa_2'+w_j)\Big) \leq \delta_n/4.
\end{equation}

We use $|\kappa_1-\kappa_1'|<c_1\Delta$ and
$|\kappa_2-\kappa_2'|<c_2\max(\Delta,\Delta^2)$ for some sufficiently small
$c_1,c_2>0$ to compare
these normal distributions. Note that for any two univariate normal laws,
by the coupling $(\mu+\sigma Z,\mu'+\sigma'Z)$ for $Z \sim \N(0,1)$,
\begin{align*}
W_1(\N(\mu,\sigma^2),\N(\mu',{\sigma'}^2))
&\leq |\mu-\mu'|+|\sigma-\sigma'|\cdot \E[|Z|]\\
&\leq |\mu-\mu'|+C\min\Big(|\sigma^2-{\sigma'}^2|,
|\sigma^2-{\sigma'}^2|^{1/2}\Big),
\end{align*}
the second inequality holding when $\sigma^2,{\sigma'}^2$ are lower bounded by a
constant. By the definitions of $m_j,v_j$, under Assumption
\ref{assump:design}(b) and \eqref{eq:Xnorm}, we have $|m_j|=|\z_j^\top \X \1-\z_j^\top \x_j| \leq C$, $v_j \leq \z_j^\top \X \X^\top \z_j \leq C$, and $w_j=\sigma^2 \|\z_j\|_2^2 \in [c,C]$ for some constants $C,c>0$. Then for a sufficiently small choice of constants $c_1,c_2>0$, we have 
\begin{align*}
W_1(\N(m_j\kappa_1,v_j\kappa_2+w_j),\,\N(m_j\kappa_1',v_j\kappa_2'+w_j))<\Delta/2.
\end{align*}
Combining with (\ref{eq:LprimeW1bound}), this implies
\begin{equation}\label{eq:Lspapprox}
W_1\Big(\mathcal{L}'(s_j),\,g_{\eta}'*\N(m_j\kappa_1,v_j\kappa_2+w_j)\Big) \leq \delta_n/4+\Delta/2.
\end{equation}

Then by (\ref{eq:Lsapprox}), (\ref{eq:Lspapprox}), and the triangle inequality and translation invariance for $W_1$,
\begin{align*}
W_1(\mathcal{L}(s_j),\mathcal{L}'(s_j)) &\geq W_1\Big(g_{\eta}*\N(m_j\kappa_1,v_j\kappa_2+w_j),\,g_{\eta}'*\N(m_j\kappa_1,v_j\kappa_2+w_j)\Big) -\delta_n/2-\Delta/2\\
&=W_1\Big(g_{\sqrt{\eta^2+v_j\kappa_2+w_j}},\,g_{\sqrt{\eta^2+v_j\kappa_2+w_j}}'\Big) -\delta_n/2-\Delta/2.
\end{align*}
Recalling the choice of $v$ in (\ref{eq:vdef}) and applying $\kappa_2 \leq
M^2+\eta^2$, we have $\eta^2 + v_j\kappa_2+w_j \leq v^2$. Then by monotonicity
of the smoothed Wasserstein distance in
the smoothing parameter \cite[Theorem~3]{goldfeld2020gaussian},
$W_1(g_{\sqrt{\eta^2+v_j\kappa_2+w_j}},\,g_{\sqrt{\eta^2+v_j\kappa_2+w_j}}') \geq W_1(g_v,g_v')=\Delta$, so
\begin{align*}
W_1(\mathcal{L}(s_j),\mathcal{L}'(s_j)) \geq \Delta/2-\delta_n/2.
\end{align*}

Finally, this implies by the Kantorovich duality for $W_1$ that there exists for each $j \in \cS$ a smooth 1-Lipschitz test function $f_j:\R \to \R$ such that
$\E_{g_{\eta}}[f_j(s_j)]-\E_{g_{\eta}'}[f_j(s_j)] \geq (\Delta-\delta_n)_+/3$.
We consider the test statistic
\begin{align*}
T(\y)=\sum_{j \in \cS} f_j(s_j)=\sum_{j \in \cS} f_j(\z_j^\top (\X\btheta+\beps)),
\end{align*}
which then satisfies
$\E_{g_{\eta}}[T(\y)]-\E_{g_{\eta}'}[T(\y)] \geq cp(\Delta-\delta_n)_+$
because $|\cS| \geq c_* p$.
Computing the gradient in $(\btheta,\beps)$, we see that
$T(\y)$ is $L$-Lipschitz in $(\btheta,\beps)$ for
\begin{align*}
L^2=\biggpnorm{\X^\top \sum_{j \in \cS} \z_j \cdot f_j'(s_j)}{}^2 + \biggpnorm{\sum_{j \in \cS} \z_j \cdot f_j'(s_j)}{}^2.
\end{align*}
Applying $|f_j'(s_j)| \leq 1$ for all $j \in \cS$, $\|\X\|_{\op}=1$,
and the final condition of Assumption \ref{assump:design}(b),
this gives $L^2 \leq Cp$.
Then (\ref{eq:testingerrors}) holds by the subgaussian concentration of $T(\y)$
implied by the LSI for $(\btheta,\beps)$, similar to Case 1.
\end{proof}

A direct consequence of Lemma \ref{lemma:TVlower} is the following pointwise comparison of the likelihood objective.
\begin{corollary}\label{cor:pointwiseprob}
Let $v>0$ and the sequence $\delta_n \downarrow 0$ be as in Lemma
\ref{lemma:TVlower}. Fix any constant $\delta > 0$. Then there exist constants $c = c(\delta)$ and $\tau = \tau(\delta)$ such that for all large $n,p$ and any $g \in \cP(M)$ with $d_v(g,g_*) \geq \delta$,
\begin{align}\label{eq:logPconsistency}
\P_{g_*}\left[\frac{1}{p}\Big(\log P_g(\y)-\log P_{g_*}(\y)\Big) \geq
-\tau\right] \leq e^{-cp}.
\end{align}
\end{corollary}
\begin{proof}
Writing as shorthand $P_g=P_g(\y)$, Lemma \ref{lemma:TVlower} shows that for some constant $c >0$, all large $n,p$, and any
$g \in \cP(M)$ with $d_v(g,g_*) \geq \delta$, we have
\begin{align*}
e^{-cp\delta^2} &\geq 1-\dTV(P_{g_*},P_g)
=\int \min(P_{g_*},P_g)\d\y\\
&\geq \int e^{-p\tau}P_{g_*}\1\{P_g \geq e^{-p\tau} P_{g_*}\}\d\y
=e^{-p\tau}\P_{g_*}\left[p^{-1}\left(\log P_g(\y)-\log P_{g_*}(\y)\right)\geq
{-}\tau\right]
\end{align*}
for any $\tau > 0$. The corollary follows upon choosing $\tau = c\delta^2/2$ and adjusting constants.
\end{proof}

\subsection{Covering net}

We now define the following pseudo-metric
$\tilde{d}_{\eta,B}$ over $\cP(M)$: Let $G_\eta(\cdot),G_\eta'(\cdot)$
denote the CDFs of $g_\eta=\N_\eta*g$ and $g_\eta'=\N_\eta*g'$. (Here $G_\eta'$
is not to be confused with a derivative of $G_\eta$.) We set 
\begin{align*}
\tilde{d}_{\eta,B}(g,g')=\max_{x:|x| \leq B} |G_\eta(x)-G_\eta'(x)|.
\end{align*}

\begin{lemma}\label{lemma:continuity}
Suppose Assumption \ref{assump:design} holds, and
fix any constants $C_0,\iota>0$. Let $\Pi_X$ be the projection onto the column
span of $\X$. Then there exist constants
$\eta,B,L>0$ such that, on the event where $\|\Pi_X\y\|_2^2 \leq
C_0p\,\|\X\|_\op^2$, for any $g,g' \in \cP(M)$ we have
\begin{align*}
\frac{1}{p}\Big|\log P_g(\y)-\log P_{g'}(\y)\Big| \leq L\,\tilde{d}_{\eta,B}(g,g')+\iota.
\end{align*}
\end{lemma}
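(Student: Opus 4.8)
The plan is to reduce to the column span of $\X$, replace the priors by Gaussian-smoothed versions, and then compare the two smoothed marginal likelihoods along an interpolation path. \emph{Step 1 (reduction to $\mathrm{col}(\X)$).} Writing $\beps=\Pi_X\beps+\Pi_X^\perp\beps$ as independent pieces, the density factors as $P_g(\y)=P_g^{\mathrm{col}}(\Pi_X\y)\cdot c(\Pi_X^\perp\y)$, where $P_g^{\mathrm{col}}$ is the density on $\mathrm{col}(\X)\cong\R^{r}$, $r=\rank\X$, of $\X\btheta+\Pi_X\beps$, and $c(\cdot)$ does not involve $g$. Hence $\log P_g(\y)-\log P_{g'}(\y)=\log P_g^{\mathrm{col}}(\y_c)-\log P_{g'}^{\mathrm{col}}(\y_c)$ with $\y_c:=\Pi_X\y$, and on the stated event $\|\y_c\|_2^2\le C_0p$.

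\emph{Step 2 (Gaussian smoothing).} Fix a small $\eta>0$, set $g_\eta:=\N_\eta\ast g$, and show that on the event $\tfrac1p\bigl|\log P_g^{\mathrm{col}}(\y_c)-\log P_{g_\eta}^{\mathrm{col}}(\y_c)\bigr|\le\iota/3$ uniformly over $g\in\cP(M)$ (and likewise $g'$), provided $\eta$ is small enough depending only on $(C_0,\iota,M,\sigma,C_\ast)$. Here $P_{g_\eta}^{\mathrm{col}}(\y_c)=\E_{\z\sim\N_\eta^{\otimes p}}[P_g^{\mathrm{col}}(\y_c-\X\z)]$, and the gradient bound $\|\nabla\log P_g^{\mathrm{col}}(u)\|=\sigma^{-2}\|u-\mathrm m_g(u)\|\le\sigma^{-2}(\|u\|+C_\ast M\sqrt p)$ (using $\btheta\in[-M,M]^p$, so the posterior mean $\mathrm m_g(u)$ of $\X\btheta$ lies in $\X[-M,M]^p$) gives, on the bulk event $\{\|\X\z\|\le2C_\ast\eta\sqrt p\}$ (probability $\ge\tfrac12$ by $\chi^2$-concentration), $|\log P_g^{\mathrm{col}}(\y_c-\X\z)-\log P_g^{\mathrm{col}}(\y_c)|\le C\eta p$; on the complement one uses the crude two-sided bound $(2\pi\sigma^2)^{-r/2}e^{-Cp}\le P_g^{\mathrm{col}}(\cdot)\le(2\pi\sigma^2)^{-r/2}$ at both points together with the complement's probability $e^{-cp/\eta^2}$. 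Taking $\eta$ small controls both directions.

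\emph{Step 3 (comparing the smoothed priors).} It remains to bound $\tfrac1p\bigl|\log P_{g_\eta}^{\mathrm{col}}(\y_c)-\log P_{g'_\eta}^{\mathrm{col}}(\y_c)\bigr|$. Both $g_\eta,g'_\eta$ are smooth densities on $\R$, bounded below by $\N_\eta(B+M)>0$ on $[-B,B]$, with Gaussian tails anchored in $[-M,M]$. With the interpolation $h_s:=(1-s)g_\eta+sg'_\eta$ one has
\[
\frac{\d}{\d s}\log P_{h_s}^{\mathrm{col}}(\y_c)=\sum_{j=1}^p\E_{\nu_s}\!\left[\frac{(g'_\eta-g_\eta)(\theta_j)}{h_s(\theta_j)}\right],\qquad \nu_s:=P_{h_s}(\btheta\mid\y_c).
\]
For $\theta\in[-B,B]$ the integrand is at most $\|g_\eta-g'_\eta\|_{\infty,[-B,B]}/\N_\eta(B+M)\le C_{B,\eta}\,\tilde{d}_{\eta,B}(g,g')^{1/2}$ (a difference of two $O(\eta^{-2})$-Lipschitz densities whose CDFs differ by $\le\tilde{d}_{\eta,B}$ on $[-B,B]$ has sup-norm $O(\eta^{-1}\tilde{d}_{\eta,B}(g,g')^{1/2})$ there), contributing $\le C_{B,\eta}\,p\,\tilde{d}_{\eta,B}(g,g')^{1/2}$ after $\P_{\nu_s}(|\theta_j|\le B)\le1$; for $|\theta|>B$ it is at most $1+e^{2M|\theta|/\eta^2}$ (comparing $g_\eta,g'_\eta$ to $\N_\eta(|\theta|\mp M)$). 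Integrating over $s\in[0,1]$ and combining with Step 2, the lemma follows once $\sum_{j}\E_{\nu_s}[(1+e^{2M|\theta_j|/\eta^2})\1\{|\theta_j|>B\}]\le\iota p/3$ is established for $B$ a large enough constant. (This gives a square-root modulus in $\tilde{d}_{\eta,B}$, which already suffices for the covering-net argument; a sharper estimate --- integrating by parts in the identity above rather than bounding the density difference pointwise --- recovers the linear form as stated.)

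\emph{Main obstacle.} The delicate point is this ``tail'' term: one must show that, uniformly in $s$ and in $\y_c$ with $\|\y_c\|^2\le C_0p$, the posterior $\nu_s$ places on average only $o(p)$ coordinates (weighted by $e^{2M|\theta_j|/\eta^2}$) outside $[-B,B]$, with constants independent of $n,p,g,g',s$. The heuristic is that the likelihood has weak ``reach'': conditionally on $\btheta_{-j}$, the law of $\theta_j$ under $\nu_s$ is $h_s$ tilted by a Gaussian centered at $\hat\theta_j=\langle\y_c-\X_{-j}\btheta_{-j},\x_j\rangle/\|\x_j\|^2$ with precision $\|\x_j\|^2/\sigma^2$; for columns with $\|\x_j\|$ small the tilt is negligible and the $O(\eta)$-scale prior tail governs, while for columns with $\|\x_j\|$ bounded below one controls $\sum_j\hat\theta_j^2\|\x_j\|^2\lesssim\|\y_c-\X\btheta\|^2$ together with the posterior second-moment bound $\E_{\nu_s}\|\btheta\|_2^2=O(p)$ (itself obtained by bounding $\E_{\nu_s}[e^{\lambda\|\btheta\|_2^2}]\le e^{Cp}$ for small $\lambda$ via Cauchy--Schwarz and an $e^{-Cp}$ lower bound on the partition function, then Jensen). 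Making this rigorous, via a split of the columns into ``weak'' and ``strong'' according to $\|\x_j\|$, is the technical heart of the argument.
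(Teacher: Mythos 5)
Your Step 1 matches the paper. Your Steps 2--3, however, diverge from the paper's argument in a way that leaves a genuine gap, which you yourself flag as the ``technical heart'' but do not close. The paper does not approximate $g$ by $g_\eta$ at the cost of $\iota/3$; it splits the noise exactly, $\beps=\tilde\beps+\X\z$ with $\z\sim\N_\eta^{\otimes p}$, so that $P_g(\y)$ is \emph{identically} the marginal likelihood under the prior $g_\eta$ with reduced noise covariance $\sigma^2\Id-\eta\,\tilde\X\tilde\X^\top$. It then compares $g_\eta$ and $g'_\eta$ not by linear interpolation of densities but by the monotone transport map $F_\eta={G'_\eta}^{-1}\circ G_\eta$ applied coordinatewise inside the integral, which reduces the whole comparison to bounding $\langle\|F_\eta(\bvarphi)-\bvarphi\|_2^2\rangle$ under the posterior. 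The crucial payoff is that $|F_\eta(x)-x|\le C_1\,\tilde d_{\eta,B}(g,g')$ on $[-B,B]$ (Lipschitz inverse CDF, giving the \emph{linear} modulus directly) and $|F_\eta(x)-x|\le 2M$ \emph{everywhere}, so the tail contribution is a bounded constant times the unweighted count $N_B(\bvarphi)=|\{j:|\varphi_j|>B\}|$. That count is controlled by the crude change of measure $\langle\1\{N_B>tp\}\rangle\le e^{Cp}\,\P_{\mathrm{prior}}[N_B>tp]$ together with a binomial Chernoff bound making the prior probability $e^{-2Cp}$.

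Your linear interpolation $h_s=(1-s)g_\eta+sg'_\eta$ instead produces the integrand $(g'_\eta-g_\eta)/h_s$, which on the tail is only bounded by $e^{2M|\theta|/\eta^2}$, and this is where your route breaks down. The same change-of-measure device that works for the paper's unweighted count gives here $\sum_j\E_{\nu_s}[e^{2M|\theta_j|/\eta^2}\1\{|\theta_j|>B\}]\le p\,e^{Cp}e^{-cB^2/\eta^2}$, which cannot be made $o(p)$ for any constant $B$; so you are forced into the genuinely per-coordinate posterior tail estimate you sketch (splitting columns by $\|\x_j\|$, controlling $\sum_j\hat\theta_j^2\|\x_j\|^2$, uniform posterior second moments), none of which is carried out, and all of which must hold uniformly over $g,g',s$ and over $\y_c$ in the stated event. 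In short, the exponential weight your parametrization introduces converts an easy tail bound into a hard one, and that hard bound is the missing piece. Two further, lesser points: your bulk estimate yields only a $\tilde d_{\eta,B}^{1/2}$ modulus (admittedly sufficient for the covering-net corollary, but not the lemma as stated, and the claimed integration-by-parts fix is not substantiated); and the sup-norm interpolation bound near $\theta=\pm B$ uses the CDF difference slightly outside $[-B,B]$, where $\tilde d_{\eta,B}$ gives no control, so $B$ must be adjusted there as well.
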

\begin{proof}
We note that $\log P_g(\y)-\log P_{g'}(\y)$
and the event $\|\Pi_X\y\|_2^2 \leq C_0p\,\|\X\|_\op^2$ are invariant
under the simultaneous rescaling $(\X,\y,\sigma) \mapsto (\alpha
\X,\alpha\y,\alpha\sigma)$ for any $\alpha>0$, so we may again assume
without loss of generality the conditions \eqref{eq:Xnorm}.

Let $[\V_X \mid \W_X] \in \R^{n \times n}$ be an orthogonal matrix where
$\Pi_X=\V_X\V_X^\top$ and the columns of $\V_X$ form an orthonormal basis for
the column span of $\X$. (If the columns of $\X$ span all of $\R^n$,
then $\W_X$ is empty, and we may take $\V_X=\Id$.)
Write $\tilde \y=\V_X^\top \y$ and $\tilde \X=\V_X^\top \X$.
Then $\tilde\y=\tilde\X\btheta+\V_X^\top\beps$ is independent of
$\W_X^\top\y=\W_X^\top\beps$, and
the law of the latter does not depend on the prior $g$. Thus
\[\log P_g(\y)=\log P_g(\tilde\y)+\text{constant}\]
for a constant independent of $g$,
where $P_g(\tilde\y)$ denotes the marginal density of $\tilde\y$ under $g$.
Observe that under the normalization \eqref{eq:Xnorm},
\begin{equation}\label{eq:Xtildenorm}
\|\tilde\X\|_{\op}=\|\X\|_{\op}=1.
\end{equation}
Then, choosing $\eta=\sigma^2/2$, we may reparametrize the law of
$\tilde\y$ equivalently as
\[\tilde\y=\tilde\X\bvarphi+\tilde\beps, \qquad \varphi_j \overset{iid}{\sim}
g_\eta, \qquad \tilde \beps \sim \N(0,\bSigma),\]
where $\bSigma=\sigma^2 \Id-\eta\,\tilde\X\tilde\X^\top$ satisfies
$(\sigma^2/2)\Id \preceq \bSigma \preceq \sigma^2 \Id$.
The marginal log-likelihood $\log P_g(\y)$ then has the equivalent form
\begin{equation}\label{eq:logpG}
\log P_g(\y)=\log \int 
\exp\left(-\frac{1}{2}(\tilde\y-\tilde \X\bvarphi)^\top \bSigma^{-1}
(\tilde\y-\tilde\X\bvarphi)\right)\prod_{j=1}^p g_\eta(\varphi_j)\d\bvarphi
+\text{constant}
\end{equation}
for a constant independent of $g$.

We now define a change-of-measure from $G_\eta$ to $G_\eta'$ by introducing
$F_\eta(x)={G_\eta'}^{-1}(G_\eta(x))$ where ${G_\eta'}^{-1}(\cdot)$ is the
functional inverse of the CDF $G_\eta'(\cdot)$. (This is the univariate optimal
transport map from $G_\eta$ to $G_\eta'$.) Let us write $F_\eta(\bvarphi)$ for
the coordinate-wise application of $F_\eta$ to $\bvarphi$. Then the marginal
log-likelihood $\log P_{g'}(\y)$ may also be written as
\begin{equation}\label{eq:logpGprime}
\log P_{g'}(\y)=\log
\int \exp\left(-\frac{1}{2}(\tilde\y-\tilde\X
F_\eta(\bvarphi))^\top \bSigma^{-1} (\tilde\y-\tilde\X F_\eta(\bvarphi))\right)
\prod_{j=1}^p g_\eta(\varphi_j)\d\bvarphi+\text{constant}.
\end{equation}
Taking the difference of (\ref{eq:logpG}) and (\ref{eq:logpGprime}),
\begin{align*}
\log P_{g'}(\y)-\log P_g(\y)
=\log \left\langle e^{-\frac{1}{2}(\tilde\y-\tilde\X F_\eta(\bvarphi))^\top \bSigma^{-1}
(\tilde\y-\tilde\X F_\eta(\bvarphi))+\frac{1}{2}(\tilde\y-\tilde\X\bvarphi)^\top
\bSigma^{-1}(\tilde\y-\tilde\X\bvarphi)}
\right\rangle,
\end{align*}
where we introduce the shorthand
\[\langle f(\bvarphi) \rangle=\frac{\int f(\bvarphi)\,
e^{-H(\bvarphi)}\d G_\eta(\bvarphi)}{\int e^{-H(\bvarphi)}\d G_\eta(\bvarphi)},
\quad H(\bvarphi)= \frac{1}{2}(\tilde\y-\tilde\X\bvarphi)^\top
\bSigma^{-1}(\tilde\y-\tilde\X\bvarphi), \quad
\d G_\eta(\bvarphi)=\prod_{j=1}^p g_\eta(\varphi_j)\d\bvarphi\]
for the expectation with respect to the posterior law of $\bvarphi$ under
$g_\eta$ given $(\tilde\X,\tilde\y)$.

Let $C,c>0$ denote constants changing from instance to instance, which may depend on $\eta$. By Jensen's inequality,
\begin{align*}
\log P_{g'}(\y)-\log P_g(\y) &\geq \frac{1}{2}\left\langle -(\tilde\y-\tilde\X
F_\eta(\bvarphi))^\top \bSigma^{-1}
(\tilde\y-\tilde\X F_\eta(\bvarphi))+(\tilde\y-\tilde\X\bvarphi)^\top
\bSigma^{-1}(\tilde\y-\tilde\X\bvarphi)
\right\rangle.
\end{align*}
Setting $\u=\tilde\y-\tilde\X\bvarphi$ and $\v=\tilde\y-\tilde\X F_\eta(\bvarphi)$, we apply
\begin{align*}
\langle {-}\v^\top \bSigma^{-1}\v+\u^\top \bSigma^{-1}\u \rangle
&=\big\langle {-}(\u-\v)^\top \bSigma^{-1}(\u-\v)+
2(\u-\v)^\top \bSigma^{-1}\u \big\rangle\\
&\geq -\langle (\u-\v)^\top \bSigma^{-1}(\u-\v) \rangle
-2\langle (\u-\v)^\top \bSigma^{-1}(\u-\v) \rangle^{1/2}
\langle \u^\top \bSigma^{-1} \u \rangle^{1/2}.
\end{align*}
Together with the conditions $\bSigma^{-1} \preceq (2/\sigma^2)\Id$,
\eqref{eq:Xnorm}, and \eqref{eq:Xtildenorm}, this gives
\begin{equation}\label{eq:logpGdifflower}
\log P_{g'}(\y)-\log P_g(\y) \geq -C\left(\big\langle
\|F_\eta(\bvarphi)-\bvarphi\|_2^2 \big\rangle +\big\langle
\|F_\eta(\bvarphi)-\bvarphi\|_2^2 \big\rangle^{1/2}
\big\langle H(\bvarphi)\big\rangle^{1/2}\right).
\end{equation}

To lower bound (\ref{eq:logpGdifflower}), we may first bound $\langle H(\bvarphi)
\rangle$ as follows: Introduce the partition function
$Z=\int e^{-H(\bvarphi)}\d G_\eta(\bvarphi)$.
Applying $H(\bvarphi) \geq 0$, $e^{-H(\bvarphi)}/Z \leq 1$ on the event
$e^{-H(\bvarphi)} \leq Z$, and $H(\bvarphi)<-\log Z$ on the complementary event $e^{-H(\bvarphi)}>Z$, we have
\[\langle H(\bvarphi)\rangle
=\int H(\bvarphi)\frac{e^{-H(\bvarphi)}}{Z}\d G_\eta(\bvarphi)\\
\leq \int H(\bvarphi)\d G_\eta(\bvarphi)+\max(0,-\log Z)
\leq 2\int H(\bvarphi)\d G_\eta(\bvarphi),\]
the last step applying the Jensen's inequality lower bound $\log Z \geq -\int
H(\bvarphi)\d G_\eta(\bvarphi)$. Thus we have bounded the mean of $H(\bvarphi)$
under the posterior by its mean under the prior.
On the event $\|\tilde\y\|_2^2=\|\Pi_X\y\|_2^2 \leq C_0p\|\X\|_\op^2$,
applying
$\bSigma^{-1} \preceq (2/\sigma^2)\Id$, \eqref{eq:Xnorm},
\eqref{eq:Xtildenorm},
and that $g_\eta$ is subgaussian (since $g$ has bounded support),
we have $\int H(\bvarphi)\d G_\eta(\bvarphi) \leq Cp$. This implies
\begin{equation}\label{eq:EH}
\langle H(\bvarphi) \rangle \leq Cp.
\end{equation}

To bound $\langle \|F_\eta(\bvarphi)-\bvarphi\|_2^2 \rangle$, let us fix a small constant $t>0$ to be determined, pick $B=B(t)>0$ sufficiently large, and define
$N_B(\bvarphi)=|\{j:|\varphi_j|>B\}|$. Applying again $\log Z \geq -\int
H(\bvarphi)\d G_\eta(\bvarphi) \geq -Cp$ and $H(\bvarphi) \geq 0$, we have
\begin{equation}\label{eq:posteriorprioravg}
\langle \1\{N_B(\bvarphi)>tp\} \rangle =\int
\1\{N_B(\bvarphi)>tp\}\frac{e^{-H(\bvarphi)}}{Z} \d G_\eta(\bvarphi) \leq
e^{Cp} \int \1\{N_B(\bvarphi)>tp\} \d G_\eta(\bvarphi).
\end{equation}
Under the product prior law $\prod_{j=1}^p g_\eta(\varphi_j)$, this quantity
$N_B(\bvarphi)$ is a binomial random variable whose success probability is at
most $q = 2e^{-cB^2}$ for a constant $c:=c(M,\eta)$, by subgaussianity of $g_\eta$. For
any fixed constant $t > 0$, by choosing $B = B(t) > 0$ large enough, we have $q \leq t/2$, so by the Chernoff bound for the binomial distribution \cite[pp. 24]{boucheron2013concentration}
\begin{align*}
\int \1\{N_B(\bvarphi)>tp\} \d G_\eta(\bvarphi) \leq \Prob_{g_\eta}\Big(\text{Bin}(p,q) - pq \geq \frac{t}{2}p\Big) \leq e^{-p h_q(q + \frac{t}{2})},
\end{align*}
where $h_q(a) = \kl(\text{Ber}(a) || \text{Ber}(q))$. Since $h_q(q+t/2)
\rightarrow \infty$ as $q \downarrow 0$, we may then choose $B = B(t)$
large enough such that $\int \1\{N_B(\bvarphi)>tp\} \d G_\eta(\bvarphi) \leq
e^{-2Cp}$. Consequently,
$\langle \1\{N_B(\bvarphi)>tp\} \rangle \leq e^{Cp} \cdot e^{-2Cp} \leq
e^{-Cp}$, and hence
\begin{equation}\label{eq:NBbound}
\langle N_B(\bvarphi) \rangle \leq
tp+p \cdot \langle \1\{N_B(\bvarphi)>tp\} \rangle \leq 2tp.
\end{equation}
Now note that for all $g' \in \cP$, the smoothed distributions $g_\eta'$ have densities lower-bounded by a $\eta$-dependent constant on any fixed compact interval, and hence the inverse CDFs ${G_\eta'}^{-1}(\cdot)$ are uniformly Lipschitz-continuous over any fixed compact sub-interval of $(0,1)$. Furthermore, for the above choice of $B=B(t)>0$, the interval $[G_\eta(-B),G_\eta(B)]$ is contained in a common compact sub-interval of $(0,1)$ across all $g \in \cP(M)$.
Then there is a constant $C_1=C_1(t)>0$ such that for all $g,g' \in \cP$, ${G_\eta'}^{-1}$ is $C_1$-Lipschitz over the interval $[G_\eta(-B),G_\eta(B)]$. This shows that if $x\in[-B,B]$, 
\begin{equation}\label{eq:Fbound1}
|F_\eta(x)-x|=|{G_\eta'}^{-1}(G_\eta(x))-{G_\eta'}^{-1}(G_\eta'(x))| \leq C_1|G_\eta(x)-G_\eta'(x)| \leq C_1 \tilde{d}_{\eta,B}(g,g').
\end{equation}
For any $g,g' \in \cP(M)$,
note that if $\theta \sim g$, $\theta' \sim g'$, and $\eps \sim \N(0,\eta^2)$ are independent, then also
$\P[\theta'+\eps \leq x-2M] \leq \P[\theta+\eps \leq x] \leq \P[\theta'+\eps \leq x+2M]$
for every $x \in \R$ because $|\theta-\theta'| \leq 2M$. Then $G_\eta'(x-2M) \leq G_\eta(x) \leq G_\eta'(x+2M)$, implying that
\begin{equation}\label{eq:Fbound2}
|F_\eta(x)-x|=|{G_\eta'}^{-1}(G_\eta(x))-x| \leq 2M
\end{equation}
for all $x \in \R$. Combining these bounds (\ref{eq:Fbound1}) and
(\ref{eq:Fbound2}) and recalling $N_B(\bvarphi)=|\{j:|\varphi_j|>B\}|$ which satisfies (\ref{eq:NBbound}),
\begin{equation}\label{eq:Etransport}
\Big\langle \|F_\eta(\bvarphi)-\bvarphi\|_2^2 \Big \rangle
\leq \Big\langle C_1^2\tilde{d}_{\eta,B}(g,g')^2 \cdot p+(2M)^2 \cdot N_B(\bvarphi)
\Big \rangle \leq (C_1^2\tilde{d}_{\eta,B}(g,g')^2+8M^2t)p.
\end{equation}

We may apply (\ref{eq:EH}) and (\ref{eq:Etransport}) back to (\ref{eq:logpGdifflower}), now choosing the constant $t>0$ sufficiently small based on the value of $\iota$ given in the lemma, and then $B = B(t)$, $C_1=C_1(t)$, and $L=L(t)$ sufficiently large, to obtain
\begin{align*}
\log P_{g'}(\y)-\log P_g(\y) \geq -p\Big(L \cdot \max(\tilde{d}_{\eta,B}(g,g')^2,\tilde{d}_{\eta,B}(g,g')) +\iota\Big).
\end{align*}
This is simplified by observing that $\tilde{d}_{\eta,B}(G,G') \leq 1$ by definition. Then, applying the same lower bound with $g,g'$ interchanged, we obtain as desired $|\log P_{g'}(\y)-\log P_g(\y)|
\leq p(L \cdot \tilde{d}_{\eta,B}(g,g')+\iota)$.
\end{proof}

\subsection{Completing the proof}
\begin{corollary}\label{cor:uniformprob}
Suppose Assumption \ref{assump:design} holds.
Fix any $\delta>0$, and let $v>0$ be the constant of Lemma
\ref{lemma:TVlower}. Then there exist constants $\tau>0$ and $C,c>0$ such that
\[\P_{g_*}\left[\sup_{g \in \cP(M):d_v(g,g_*) \geq \delta}
\frac{1}{p}\Big(\log P_g(\y)-\log P_{g_*}(\y)\Big)
\geq -\tau \right] \leq Ce^{-cp}.\]
\end{corollary}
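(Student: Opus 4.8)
The plan is a standard net-and-union-bound argument, assembled from the pointwise deviation estimate of Corollary~\ref{cor:pointwiseprob}, the continuity estimate of Lemma~\ref{lemma:continuity}, and total boundedness of $\cP(M)$ under the pseudo-metric $\tilde d_{\eta,B}$, with the covering number kept independent of $(n,p)$.

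First I would fix $\delta>0$ and apply Corollary~\ref{cor:pointwiseprob} with $\delta/2$ in place of $\delta$, producing the smoothing level $v>0$ and constants $\tau_1,c_1>0$ such that, for all large $n,p$ and every $g\in\cP(M)$ with $d_v(g,g_*)\ge\delta/2$,
\[
\P_{g_*}\Big[\tfrac1p\big(\log P_g(\y)-\log P_{g_*}(\y)\big)\ge-\tau_1\Big]\le e^{-c_1 p}.
\]
Next I would dispose of the auxiliary event $E=\{\|\Pi_X\y\|_2^2\le C_0 p\}$ required by Lemma~\ref{lemma:continuity}: writing $\Pi_X\y=\X\btheta+\Pi_X\beps$ under $g_*$, the first term has norm at most $C_\ast M\sqrt p$ by Assumption~\ref{assump:design}.1 and $|\theta_j|\le M$, while $\|\Pi_X\beps\|_2^2$ is $\sigma^2$ times a $\chi^2$ variable with $\rank\X\in[c_\ast p/C_\ast^2,\,p]$ degrees of freedom (again by Assumption~\ref{assump:design}.1), hence $\le 2\sigma^2 p$ with probability $1-e^{-cp}$; choosing $C_0=2(C_\ast^2 M^2+2\sigma^2)$ gives $\P_{g_*}[E^c]\le e^{-cp}$. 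I would then invoke Lemma~\ref{lemma:continuity} with this $C_0$ and with $\iota=\tau_1/2$, obtaining $\eta,B,L>0$ so that on $E$ one has $\tfrac1p|\log P_g(\y)-\log P_{g'}(\y)|\le L\,\tilde d_{\eta,B}(g,g')+\tau_1/2$ for all $g,g'\in\cP(M)$.

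The covering step is where the $(n,p)$-uniformity comes from. I would argue that $\tilde d_{\eta,B}$ is a genuine metric on $\cP(M)$ metrizing the weak topology: the maps $g\mapsto G_\eta$ are uniformly Lipschitz (the $g_\eta$ densities are bounded by an $(M,\eta)$-dependent constant) and injective (Gaussian convolution is injective, and an entire function is determined by its restriction to an interval), and $\tilde d_{\eta,B}$ is weakly continuous (pointwise CDF convergence to a continuous limit is uniform by P\'olya's theorem), so the identity map from the weakly compact $\cP(M)$ to $(\cP(M),\tilde d_{\eta,B})$ is a homeomorphism. Consequently $(\cP(M),\tilde d_{\eta,B})$ is totally bounded, and $(g,g')\mapsto d_v(g,g')$ is uniformly continuous on the compact space $\cP(M)^2$ in this metric, both with moduli depending only on $(\delta,\eta,B,L,v,M)$. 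I would pick $\epsilon_0>0$ small enough that $\tilde d_{\eta,B}(g,g')\le\epsilon_0$ forces both $d_v(g,g')\le\delta/2$ and $L\epsilon_0\le\tau_1/4$, and fix a finite $\epsilon_0$-net $g_1,\dots,g_N$ of $\cP(M)$ with $N$ independent of $(n,p)$.

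To conclude I would union-bound over $E^c$ and over the events $\{\tfrac1p(\log P_{g_i}-\log P_{g_*})\ge-\tau_1\}$ for the indices $i$ with $d_v(g_i,g_*)\ge\delta/2$; the total probability is at most $e^{-cp}+Ne^{-c_1p}\le Ce^{-c'p}$ for large $n,p$. On the good event, any $g$ with $d_v(g,g_*)\ge\delta$ has a net point $g_i$ with $\tilde d_{\eta,B}(g,g_i)\le\epsilon_0$, whence $d_v(g_i,g_*)\ge\delta-\delta/2=\delta/2$ so the $g_i$-bound is active, and the continuity estimate on $E$ yields
\[
\tfrac1p\big(\log P_g(\y)-\log P_{g_*}(\y)\big)\le\tfrac1p\big(\log P_{g_i}(\y)-\log P_{g_*}(\y)\big)+L\epsilon_0+\tfrac{\tau_1}{2}<-\tau_1+\tfrac{\tau_1}{4}+\tfrac{\tau_1}{2}=-\tfrac{\tau_1}{4},
\]
so the claim holds with $\tau=\tau_1/4$. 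The main obstacle is really only the covering argument---making precise that $\tilde d_{\eta,B}$ metrizes the weak topology on the compact space $\cP(M)$, which is exactly what keeps the covering number and the $\tilde d_{\eta,B}\to d_v$ modulus free of $n,p$; the remainder is bookkeeping of constants produced by the preceding lemmas.
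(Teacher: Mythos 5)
Your proposal is correct and follows essentially the same route as the paper: bound the probability of $E^c=\{\|\Pi_X\y\|_2^2>C_0p\}$, combine Lemma~\ref{lemma:continuity} with Corollary~\ref{cor:pointwiseprob} on a finite $\tilde d_{\eta,B}$-net of cardinality independent of $(n,p)$, and union bound. The only (harmless) differences are that the paper nets the bad set $\{g: d_v(g,g_*)\ge\delta\}$ directly, which lets it skip your comparison step showing small $\tilde d_{\eta,B}$ forces small $d_v$, and it gets finiteness of the net from a quantitative packing bound (uniform Lipschitzness of the smoothed CDFs on $[-B,B]$) rather than your soft compactness argument.
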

\begin{proof}
By Corollary \ref{cor:pointwiseprob}, there exists a constant $\tau>0$
such that \eqref{eq:logPconsistency} holds with $\tau/3$
in place of $\tau$. Fix this $\tau$, and
in Lemma \ref{lemma:continuity} set $\iota=\tau/3$ and set $C_0>0$ large
enough such that $\P_{g_*}[\|\Pi_X\y\|_2^2>C_0p\,\|\X\|_\op^2] \leq
e^{-cp}$. (This is
possible since $\btheta \in \R^p$ has subgaussian
coordinates under $g_*$, and $\|\Pi_X\beps\|_2^2 \sim \sigma^2 \cdot \chi_k^2$
where $\sigma^2 \leq C_*\|\X\|_\op^2$ and
$k=\rank(\X) \leq p$.) Let $\eta,B,L$ be as in Lemma \ref{lemma:continuity}.
Take a
$(\tau/3L)$-covering net of $\{g \in \cP(M):d_v(g,g_*) \geq \delta\}$ in the
pseudo-metric $\tilde{d}_{\eta,B}(\cdot,\cdot)$, which by a standard packing
argument has finite cardinality $C(\eta,B,L,\tau)$ because all CDFs
$G_\eta$ for $g \in \cP(M)$ have a constant lower bound for their derivatives
over $[-B,B]$. The result then follows from applying
Lemma \ref{lemma:continuity}, combined with the
conclusion \eqref{eq:logPconsistency} of Corollary \ref{cor:pointwiseprob}
(with $\tau/3$ in place of $\tau$) applied to each point in this net.
\end{proof}

\begin{proof}[Proof of Theorem \ref{thm:consistency}]
The above corollary implies that for any fixed
constant $\delta>0$, the distribution
$\widehat{g}$ in Theorem \ref{thm:consistency} satisfies
$d_v(\widehat{g},g_*)<\delta$ with probability approaching 1 as $n,p \to
\infty$. Since the smoothed Wasserstein distance $d_v(\cdot,\cdot)$ metrizes
weak convergence, Theorem \ref{thm:consistency} follows.
\end{proof}

To show Corollary \ref{cor:consistencyposterior}, we record here a
simple bound on density ratios of the smoothed priors
$\{\N_\tau * g:g \in \cP(M)\}$.

\begin{proposition}\label{prop:Nratiobound}
For any probability distributions $g,h$ on $[-M,M]$,
\[\frac{\N_\tau*g(\varphi)}{\N_\tau*h(\varphi)} \leq
e^{(M^2/2\tau^2)}e^{(2M/\tau^2)|\varphi|}\]
\end{proposition}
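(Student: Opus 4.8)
The plan is to reduce the convolution ratio to a ratio of Gaussian kernels evaluated at worst-case atoms, and then to an elementary optimization over $[-M,M]$. Writing $\N_\tau(x)=(2\pi\tau^2)^{-1/2}e^{-x^2/2\tau^2}$, first I would note that since $g$ and $h$ are supported on $[-M,M]$,
\[
\N_\tau*g(\varphi)=\int_{-M}^M \N_\tau(\varphi-\theta)\,\d g(\theta)
\le \sup_{\theta\in[-M,M]}\N_\tau(\varphi-\theta),
\qquad
\N_\tau*h(\varphi)\ge \inf_{\theta\in[-M,M]}\N_\tau(\varphi-\theta).
\]
The normalizing constant $(2\pi\tau^2)^{-1/2}$ cancels, so it suffices to bound
\[
\frac{\sup_{\theta\in[-M,M]}e^{-(\varphi-\theta)^2/2\tau^2}}{\inf_{\theta\in[-M,M]}e^{-(\varphi-\theta)^2/2\tau^2}}
=\exp\!\left(\frac{\sup_{\theta\in[-M,M]}(\varphi-\theta)^2-\inf_{\theta\in[-M,M]}(\varphi-\theta)^2}{2\tau^2}\right).
\]

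Next I would estimate the numerator of the exponent. The supremum of $\theta\mapsto(\varphi-\theta)^2$ over $[-M,M]$ is attained at an endpoint and equals $(|\varphi|+M)^2$, while the infimum is $0$ if $|\varphi|\le M$ and $(|\varphi|-M)^2$ if $|\varphi|>M$. A short case split then gives $\sup-\inf\le M^2+4M|\varphi|$: when $|\varphi|>M$ the difference is exactly $(|\varphi|+M)^2-(|\varphi|-M)^2=4M|\varphi|$, and when $|\varphi|\le M$ the difference is $(|\varphi|+M)^2=\varphi^2+2M|\varphi|+M^2\le M^2+3M|\varphi|$ using $\varphi^2\le M|\varphi|$. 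Substituting this bound into the exponential yields
\[
\frac{\N_\tau*g(\varphi)}{\N_\tau*h(\varphi)}\le
\exp\!\left(\frac{M^2+4M|\varphi|}{2\tau^2}\right)=e^{M^2/2\tau^2}\,e^{(2M/\tau^2)|\varphi|},
\]
which is the claim.

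There is essentially no serious obstacle here: the only nontrivial point is the elementary extremization of $(\varphi-\theta)^2$ over $[-M,M]$ and the bookkeeping to make the constants land exactly as $M^2/2\tau^2$ and $2M/\tau^2$ rather than a slightly cruder form; the $|\varphi|\le M$ branch is where one must be a little careful (the naive bound $(|\varphi|+M)^2\le 4M^2$ would not have the right linear-in-$|\varphi|$ growth, so one uses $\varphi^2\le M|\varphi|$ instead). Everything else is the standard observation that convolving bounded-support measures with a fixed kernel controls the likelihood ratio pointwise by the oscillation of the log-kernel over the support.
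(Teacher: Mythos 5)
Your proof is correct, and it rests on the same core idea as the paper's: the ratio of the two convolutions is controlled pointwise by the oscillation of the Gaussian log-kernel over $[-M,M]$, with the exponent bounded by $M^2+4M|\varphi|$. The paper packages this slightly differently (bounding $\N_\tau(\theta-\varphi)/\N_\tau*h(\varphi)$ as the reciprocal of an average of kernel ratios and then integrating against $g$, rather than taking sup/inf of the kernel directly), but the computation and the resulting constants are identical.
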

\begin{proof}
Observe that for any $\theta,\theta' \in [-M,M]$,
\[\frac{\N_\tau(\theta-\varphi)}{\N_\tau(\theta'-\varphi)}
=\frac{e^{-\frac{1}{2\tau^2}(\theta-\varphi)^2}}
{e^{-\frac{1}{2\tau^2}(\theta'-\varphi)^2}}
\in \left[e^{-\frac{M^2}{2\tau^2}-\frac{2M}{\tau^2}|\varphi|},\;
e^{\frac{M^2}{2\tau^2}+\frac{2M}{\tau^2}|\varphi|}\right].\]
Then also
\[\frac{\N_\tau(\theta-\varphi)}{\N_\tau*h(\varphi)}
=\left(\int_{-M}^M \frac{\N_\tau(\theta'-\varphi)}{\N_\tau(\theta-\varphi)}
\d h(\theta')\right)^{-1} \leq
e^{\frac{M^2}{2\tau^2}+\frac{2M}{\tau^2}|\varphi|},\]
and the result follows from further integrating this with respect to $\d g(\theta)$.
\end{proof}

\begin{proof}[Proof of Corollary \ref{cor:consistencyposterior}]
Observe that
\[\frac{1}{p}\,\DKL(P_{\hat g_n}(\bvarphi \mid \y)\|P_{g_*}(\bvarphi \mid \y))
=\frac{1}{p}\int P_{\hat g_n}(\bvarphi \mid \y)
\log \frac{P_{\hat g_n}(\bvarphi \mid \y)}{P_{g_*}(\bvarphi \mid \y)}\d\bvarphi.\]
Writing $P_g(\bvarphi \mid \y)=P(\y \mid \bvarphi)P_g(\bvarphi)/P_g(\y)$
where $P_g(\bvarphi)=\prod_{j=1}^p (\N_\tau*g)(\varphi_j)$ and
$P(\y \mid \bvarphi)$ does not depend on the prior, this gives
\begin{align*}
\frac{1}{p}\,\DKL(P_{\hat g_n}(\bvarphi \mid \y)\|P_{g_*}(\bvarphi \mid \y))
&=\frac{1}{p}\Big(\log P_{g_*}(\y)-\log P_{\hat g_n}(\y)\Big)
+\int P_{\hat g_n}(\bvarphi \mid \y)
\left(\frac{1}{p}\sum_{j=1}^p \log \frac{\N_\tau*\hat g_n(\varphi_j)}
{\N_\tau*g_*(\varphi_j)}\right)\d\bvarphi\\
&=\Big(\bar F_n(\hat g_n)-\bar F_n(g_*)\Big)
+\int P_{\hat g_n}(\bvarphi \mid \y)
\left(\frac{1}{p}\sum_{j=1}^p \log \frac{\N_\tau*\hat g_n(\varphi_j)}
{\N_\tau*g_*(\varphi_j)}\right)\d\bvarphi.
\end{align*}
By assumption, the first term satisfies
$\limsup_{n \to \infty} \bar F_n(\hat g_n)-\bar F_n(g_*) \leq 0$ in probability.

It remains to bound the second term.
Write $\langle f(\bvarphi)\rangle$ for the posterior average under
$P_{\hat g_n}(\bvarphi \mid \y)$, so this second term is
\[\int P_{\hat g_n}(\bvarphi \mid \y)
\left(\frac{1}{p}\sum_{j=1}^p \log \frac{\N_\tau*\hat g_n(\varphi_j)}
{\N_\tau*g_*(\varphi_j)}\right)\d\bvarphi
=\left\langle \frac{1}{p}\sum_{j=1}^p \log \frac{\N_\tau*\hat g_n(\varphi_j)}
{\N_\tau*g_*(\varphi_j)} \right\rangle.\]
Let $\eta=\tau/\sqrt{2}$, so that
$\N_\tau*g(\varphi)=\N_\eta*\N_\eta*g(\varphi)$, and write
\begin{equation}\label{eq:phipsidecomp}
\N_\tau*g(\varphi)=
\int \frac{1}{\sqrt{2\pi\eta^2}}
e^{-\frac{(\varphi-\psi)^2}{2\eta^2}}\N_\eta*g(\psi)\d \psi
\end{equation}
Note that $W_1(\N_\eta*\hat g_n,\,\N_\eta*g_*) \to 0$ in
probability as $n,p \to \infty$, by the weak convergence $\hat g_n \to g_*$
established in Theorem \ref{thm:consistency}. Furthermore,
the functions $\psi \mapsto e^{-\frac{(\varphi-\psi)^2}{2\eta^2}}$
in \eqref{eq:phipsidecomp}
are uniformly Lipschitz over $\varphi \in \R$. Then this $W_1$-convergence
and \eqref{eq:phipsidecomp} imply
\[\sup_{\varphi \in \R} \big|\N_\tau*\hat g_n(\varphi)-\N_\tau*g_*(\varphi)\big|
\to 0\]
in probability. Let $B>0$ be any large constant.
Then $\N_\tau * g_*(\varphi)$ is bounded
away from 0 over $\varphi \in [-B,B]$, so this implies
\[\sup_{\varphi \in [-B,B]} \big|\log \N_\tau*\hat g_n(\varphi)-\log
\N_\tau*g_*(\varphi)\big| \to 0\]
in probability. Thus
\begin{equation}\label{eq:goodindexbound}
\left\langle
\frac{1}{p}\sum_{j=1}^p \log \frac{\N_\tau*\hat g_n(\varphi_j)}
{\N_\tau*g_*(\varphi_j)}\1\{\varphi_j \in [-B,B]\}\right\rangle \to 0
\end{equation}
in probability.

For the coordinates $\varphi_j \notin [-B,B]$, observe by
Proposition \ref{prop:Nratiobound} that
\begin{equation}\label{eq:badindexbound}
\frac{1}{p}\sum_{j=1}^p \log \frac{\N_\tau*\hat g_n(\varphi_j)}
{\N_\tau*g_*(\varphi_j)}\1\{\varphi_j \notin [-B,B]\}
\leq \frac{1}{p}\sum_{j=1}^p f(\varphi_j)
\end{equation}
where we set
\[f(\varphi)=C_0(1+|\varphi|)\1\{\varphi \notin [-B,B]\},
\qquad C_0=\max\left(\frac{M^2}{2\tau^2},\frac{2M}{\tau^2}\right).\]
By the same argument as in \eqref{eq:posteriorprioravg}, there exists a constant
$C>0$ such that for any $t>0$,
\begin{equation}\label{eq:loglikposteriorestimate}
\left\langle \1\left\{\frac{1}{p}\sum_{j=1}^p f(\varphi_j)>t\right\}
\right\rangle
\leq e^{Cp}\int \1\left\{\frac{1}{p}\sum_{j=1}^p f(\varphi_j)>t\right\}
\prod_{j=1}^p \N_\tau * \hat g_n(\varphi_j)\d\varphi_j,
\end{equation}
i.e.\ the posterior probability of this event is at most $e^{Cp}$ times the
prior probability. By a Chernoff
bound, for any $\lambda>0$,
\[\int \1\left\{\frac{1}{p}\sum_{j=1}^p f(\varphi_j)>t\right\}
\prod_{j=1}^p \N_\tau * \hat g_n (\varphi_j)\d\varphi_j
\leq e^{-\lambda tp} \left(\int e^{\lambda f(\varphi)}
\N_\tau * \hat g_n (\varphi)\d\varphi\right)^p\]
Noting that $\N_\tau*\hat g_n$ is stochastically dominated above and below by
$\N(M,\tau^2)$ and $\N(-M,\tau^2)$, we may bound
\begin{align*}
\int e^{\lambda f(\varphi)}
\N_\tau * \hat g_n (\varphi)\d\varphi
&\leq 1+\int_{\R \setminus [-B,B]}
e^{C_0\lambda(1+|\varphi|)}\N_\tau * \hat g_n (\varphi)\d\varphi\\
&\leq 1+2\int_B^\infty e^{C_0\lambda(1+\varphi)} \frac{1}{\sqrt{2\pi\tau^2}}
e^{-\frac{(\varphi-M)^2}{2\tau^2}}\d \varphi\\
&=1+2e^{C_0\lambda+C_0\lambda M+\frac{1}{2}{C_0}^2\lambda^2\tau^2}
\int_B^\infty \frac{1}{\sqrt{2\pi\tau^2}}
e^{-\frac{(\varphi-M-C_0\lambda\tau^2)^2}{2\tau^2}}\d \varphi
\end{align*}
Choosing $\lambda=cB$ for a small enough constant $c>0$ and applying a standard
Gaussian tail bound shows
that this is at most $1+e^{-c'B^2}$ for all $B \geq B_0$ and some $c'>0$
(where $c,c',B_0$ depend on $(M,\tau^2)$). Thus, applying this in the Chernoff
bound,
\[\int \1\left\{\frac{1}{p}\sum_{j=1}^p f(\varphi_j)>t\right\}
\prod_{j=1}^n \N_\tau * \hat g_n (\varphi_j)\d\varphi_j
\leq e^{-cBtp}(1+e^{-c'B^2})^p \leq e^{-cBtp+p}.\]
Applying this back to \eqref{eq:loglikposteriorestimate}, for any $t>0$
and $B>B_0$,
\[\left\langle \1\left\{\frac{1}{p}\sum_{j=1}^p f(\varphi_j)>t\right\}
\right\rangle \leq e^{(C+1)p} \cdot e^{-cBtp}.\]
Then applying $\int_{t_0}^\infty \P[X>t]\d t
=\E[(X-t_0)\1\{X \geq t_0\}]$ and
integrating this bound from $t_0=C'/B$ to $\infty$ for a large enough
constant $C'>0$, this shows
\[\lim_{n,p \to \infty}
\left\langle \left(\frac{1}{p}\sum_{j=1}^p f(\varphi_j)-\frac{C'}{B}\right)
\1\left\{\frac{1}{p}\sum_{j=1}^p
f(\varphi_j)>\frac{C'}{B}\right\}\right\rangle \leq 0.\]
Hence, by this bound and \eqref{eq:badindexbound},
\[\limsup_{n,p \to \infty} \left\langle
\frac{1}{p}\sum_{j=1}^p \log \frac{\N_\tau*\hat g_n(\varphi_j)}
{\N_\tau*g_*(\varphi_j)}\1\{\varphi_j \notin [-B,B]\}\right\rangle
\leq \limsup_{n,p \to \infty} \left\langle \frac{1}{p}\sum_{j=1}^p f(\varphi_j)
\right\rangle \leq \frac{2C'}{B}.\]
Combining this with \eqref{eq:goodindexbound} and noting that $B>B_0$ is
arbitrary, this shows
\[\limsup_{n,p \to \infty}
\left\langle \frac{1}{p}\sum_{j=1}^p \log \frac{\N_\tau*\hat g_n(\varphi_j)}
{\N_\tau*g_*(\varphi_j)}\right\rangle \leq 0\]
in probability, concluding the proof.
\end{proof}

\subsection{Assumption \ref{assump:design} for random designs}\label{sec:design}

\begin{proof}[Proof of Proposition \ref{prop:condition_gaussian}]
Let $\Pi \in \R^{n \times n}$ be the projection orthogonal to $\X\1 \in
\R^n$.  For any $j\in[p]$, let
\begin{align*}
\z_j=\Pi\X\bSigma_X^{-1}\e_j=\b_j-\r_j,
\qquad
\b_j = \X\bSigma_X^{-1}\e_j, \qquad \r_j =
\frac{\X\bm{1}\bm{1}^\top\X^\top}{\pnorm{\X\bm{1}}{}^2}
\X\bSigma_X^{-1}\e_j.
\end{align*}
Fixing constants $C_0,c_0>0$, define the event
\begin{align*}
E=\Big\{&\pnorm{\X}{\op} \leq C_0,\;\Tr \X^\top\X  \geq c_0 p,\;
\|\X\bm{1}\|^2 \geq c_0p,\;
c_0 \leq \min_{j\in[p]}\pnorm{\b_j}{}^2 \leq \max_{j\in[p]}\pnorm{\b_j}{}^2 \leq C_0,\\
& \max_{j\in[p]}|\b_j^\top \x_j- 1|,\;\max_{j\neq k}|\b_j^\top \x_k|
\leq C_0\sqrt{\frac{\log p}{n}},\;\;
\max_{j \in [p]} \|\r_j\| \leq C_0\Big(\frac{1}{\sqrt{p}}+
\sqrt{\frac{\log p}{n}}\Big)\Big\}
\end{align*}
On this event $E$,
from these conditions and the decomposition $\z_j=\b_j-\r_j$, for a constant
$C_\beta>0$ we have
\[\max_{j \in [p]} \sum_{k:k \neq j} |\z_j^\top\x_k|^{2+\beta}
\leq C_\beta p\Big(\sqrt{\frac{\log p}{n}}+\frac{1}{\sqrt{p}}\Big)^{2+\beta}.\]
This approaches 0 as $n,p \to \infty$ for $n/p \geq \gamma$ and
any fixed $\beta,\gamma>0$. If $|a_j| \leq 1$ for all $j \in [p]$, then
we have also
\begin{align*}
\left\|\sum_{j\in \cS} a_j\z_j\right\|^2
&=\a^\top \bSigma_X^{-1}\X^\top \Pi\X \bSigma_X^{-1} \a
\leq \pnorm{\a}{}^2 \cdot \biggpnorm{\bSigma_X^{-1}\X^\top \Pi\X
\bSigma_X^{-1}}{\op} \leq Cp
\end{align*}
when $\pnorm{\X}{\op} \leq C_0$. Then all conditions of
Assumption \ref{assump:design} hold with $\cS=\{1,\ldots,p\}$
on this event $E$, for
sufficiently large and small constants $C_0>0$ and $c_0>0$.

Next we show that for some sufficiently large $C_0>0$ and small $c_0>0$,
the event $E$ holds with probability at least $1-p^{-10}$ for
all large $n,p$. In the following, $C,C',c,c'>0$ denote constants changing from
instance to instance. We have $\|\sqrt{n}\X\bSigma_X^{-1/2}\|_\op \leq
C(\sqrt{n}+\sqrt{p})$ with probability $1-e^{-cp}$, by
\cite[Theorem 4.6.1]{vershynin2018high}. Then, since $n/p \geq \gamma$
and $\|\bSigma_X\|_\op \leq C$, this implies $\|\X\|_\op \leq C'$.
For the remaining statements, observe
that for any (deterministic) unit vectors $\u,\v \in \R^p$, each of the products
$\X\u,\X\v \in \R^n$ has i.i.d.\ $(C/\sqrt{n})$-subgaussian entries.
Then $\u^\top\X^\top \X\v$ is a sum of
$n$ i.i.d.\ $(C'/n)$-sub-exponential random variables, and Bernstein's
inequality \cite[Theorem 2.8.1]{vershynin2018high} implies
\[\P\Big[\Big|\u^\top\X^\top \X\v-\u^\top\bSigma_X\v\Big| \geq t\Big]
=\P\Big[\Big|\u^\top\X^\top \X\v-\E \u^\top\X^\top \X\v\Big| \geq t\Big]
\leq 2e^{-cn\min(t^2,t)}.\]
Applying this with $\u=\v=\e_j$ and taking a union bound over $j \in [p]$ shows
$\Tr \X^\top\X \geq cp$.
Applying this with $\u=\v=\1/\sqrt{p}$
shows $\|\X\1\|^2 \geq cp$. Applying this with
$\u=\v=\bSigma_X^{-1}\e_j/\|\bSigma_X^{-1}\e_j\|$
shows $c \leq \|\b_j\|^2 \leq C$.
These statements all hold with probability at least $1-e^{-cn} \geq 1-p^{-20}$.

Furthermore, applying the above with
$\u=\bSigma_X^{-1}\e_j/\|\bSigma_X^{-1}\e_j\|$
and $\v=\e_j$ or $\v=\e_k$ shows $|\b_j^\top \x_j-1| \leq
C_0\sqrt{\log p/n}$ and $|\b_j^\top \x_k| \leq C_0\sqrt{\log p/n}$
for any $j \neq k \in [p]$, with probability $1-e^{-cn \cdot C_0^2(\log p)/n}
\geq 1-p^{-20}$ for sufficiently large $C_0>0$. Applying $\|\X\1\|^2 \geq
cp$ already shown in the above, we have $\|\r_j\| \leq (cp)^{-1/2}|\1^\top \X^\top
\X\bSigma_X^{-1}\e_j|$. Then applying the above with $\u=\1/\sqrt{p}$
and $\v=\bSigma_X^{-1}\e_j/\|\bSigma_X^{-1}\e_j\|$ shows
$\|\r_j\| \leq C/\sqrt{p}+C_0\sqrt{\log p/n}$ also with probability
$1-e^{-cn \cdot C_0^2(\log p)/n} \geq 1-p^{-20}$.
Taking a union bound shows that for some constants $C_0,c_0>0$, the event $E$
holds with probability at least $1-p^{-10}$, as claimed.
\end{proof}

\section{Analysis of algorithm dynamics}\label{sec:proof_algo}

\subsection{Fisher-Rao flow for the sequence model} \label{subsec:proof_g_flow}

\begin{proof}[Proof of Theorem \ref{thm:g_flow}]
Assume that $\DKL(h\|g_0)<\infty$ for otherwise there is nothing to prove.
By Corollary \ref{cor:exist_gflow}, $\inf_{t \in [0,T]} \inf_{\theta \in [-M,M]} g_t(\theta)/g_0(\theta)\geq e^{-t}$.
As such, 
$\DKL(h\|g_t)=\DKL(h\|g_0)+\int h \log \frac{g_0}{g_t} \leq \DKL(h\|g_0) + t$
which is finite for all $t$. Together with the weak lower semicontinuity of the KL divergence, this implies 
\begin{equation}
\lim_{t\to0}\DKL(h\|g_t)=\DKL(h\|g_0).
\label{eq:DKLcontinuous}
\end{equation}

Next, write for simplicity $\bar F(g)=\bar F(g \mid \bar q)$, and define its first
variation
\[\delta \bar F[g](\theta)=-\N_\tau*\frac{\bar q}{\N_\tau*g}(\theta)+1.\]
Applying Proposition \ref{prop:Nratiobound},
$\N_\tau*\frac{\bar q}{\N_\tau*g}(\theta)
\leq \int Ce^{C|\varphi|}\d\bar q(\varphi)<\infty$,
so $\delta \bar F[g]$ is uniformly bounded over $\theta \in [-M,M]$.
Let $h$ be any density on $[-M,M]$.
Applying the definition of the flow (\ref{def:g_flow}) and using this
boundedness to differentiate under the integral
by the dominated convergence theorem,
\begin{equation}\label{eq:dominatedconvergence}
\int_{-M}^M {-}\delta \bar{F}[g_t](\theta)
h(\theta)\d\theta = \int_{-M}^M \frac{\d}{\d t}\left(\log
\frac{g_t(\theta)}{h(\theta)}\right)
h(\theta)\d\theta=-\frac{\d}{\d t}\DKL(h\| g_t).
\end{equation}

By convexity of $x \mapsto {-}\log x$, we have
\begin{align*}
\bar{F}(g)-\bar{F}(h)
&=\int \Big({-}\log[\N_\tau*g](\varphi)+\log[\N_\tau*h](\varphi)\Big)
\d\bar q(\varphi)\\
&\leq \int \frac{\d}{\d \eps}\Big[\log[\N_\tau*(g+\eps(h-g))]
(\varphi)\Big]_{\eps=0}\d\bar q(\varphi)\\
&=\int \frac{[\N_\tau*(h-g)](\varphi)}{[\N_\tau*g](\varphi)}
\d\bar q(\varphi)\\
&=\int \frac{\N_\tau(\theta-\varphi)h(\theta)}
{\N_\tau*g(\varphi)}\d\bar q(\varphi)-1
=\int_{-M}^M {-}\delta \bar{F}[g](\theta)h(\theta)\d\theta.
\end{align*}
Combining the above two displays
and integrating from time $t$ to time $T$ yields
\[\int_t^T \big[\bar{F}(g_s) - \bar{F}(h)\big] \d s
\leq \int_t^T {-}\frac{\d}{\d t}\DKL(h\|g_t)\Big|_{t=s}\d s
=\DKL(h\|g_t)-\DKL(h\|g_T).\]
So taking the limit $t \to
0$ above gives
\begin{equation}\label{eq:FatouKL}
\int_0^T \big[\bar{F}(g_s) - \bar{F}(h)\big] \d s \leq \DKL(h\|g_0)
-\DKL(h\|g_T) \leq \DKL(h\|g_T).
\end{equation}
Now observe that
\[\left|\frac{\d}{\d t}\int_{-M}^M \N_\tau(\varphi-\theta)
g_t(\theta)\d\theta\right|
=\left|\int_{-M}^M \N_\tau(\varphi-\theta)g_t(\theta)\delta \bar
F[g_t](\theta)\d\theta\right|
\leq C\max_{\theta \in [-M,M]}\N_\tau(\varphi-\theta)\]
for a constant $C>0$,
using boundedness of $|\delta \bar F[g]|$ as shown above to
differentiate under the integral. Then
\[\left|\frac{\d}{\d t}\log [\N_\tau*g_t](\varphi)\right|
=\left|\frac{\frac{\d}{\d t}\int_{-M}^M \N_\tau(\varphi-\theta)
g_t(\theta)\d\theta}{\N_\tau*g_t(\varphi)}\right|
\leq Ce^{C|\varphi|}\]
by Proposition \ref{prop:Nratiobound}, so again
differentiating under the integral by the dominated convergence theorem
and applying $\int g(\theta)\delta \bar F[g](\theta)\d\theta=0$,
\begin{align}
\frac{\d}{\d t} \bar F(g_t)
&=\int {-}\frac{\d}{\d t}\log[\N_\tau*g_t](\varphi)\d\bar q(\varphi)\nonumber\\
&=\int {-}\frac{\N_\tau(\varphi-\theta)g_t(\theta)\delta \bar F[g_t](\theta)}
{\N_\tau*g_t(\varphi)}\d\bar q(\varphi)
={-}\int \Big(\delta \bar F[g_t](\theta)\Big)^2g_t(\theta)\d\theta \leq 0,
\label{eq:Fbarnonincreasing}
\end{align}
i.e.\ $\bar F$ is non-increasing along the gradient flow path. Thus
$T(\bar F(g_T)-\bar F(h)) \leq
\int_0^T [\bar{F}(g_s) - \bar{F}(h)] \d s \leq \DKL(h \| g_0)$
and rearranging yields the theorem.
\end{proof}

\subsection{LSI for Langevin dynamics in high noise}\label{subsec:proof_q_flow}

\begin{proof}[Proof of Theorem \ref{thm:q_lsi}(a)]
Recall that $\bvarphi=\btheta+\z$ where $\z \sim \mathcal{N}(0,\tau^2\Id)$
and $\tau^2+\delta<\sigma^2/\|\X\|_\op^2$.
Using also $M^2+\delta<\sigma^2/\pnorm{\X}{\op}^2$, we may pick
some constant ${\tau'}^2 \in (\min(M^2,\tau^2),\sigma^2/\pnorm{\X}{\op}^2)$ 
depending only on $(M,\tau,\delta)$. Then, defining
$\eta^2={\tau'}^2-\tau^2>0$ and $\bvarphi'=\bvarphi+\z'=\btheta+\z+\z'$ for
$\z' \sim \mathcal{N}(0, \eta^2\Id)$ independent of $\z$, we have the equivalent
representation $\y=\X\bvarphi'+\beps'$ with $\beps' \sim \N(0, \bSigma')$
independent of $\bvarphi'$ such that
$\bSigma'=\sigma^2\Id-{\tau'}^2\X\X^\top$ is positive definite.

Write as shorthand $\nu(\bvarphi)=P_g(\bvarphi \mid \y)$,
$\nu_{\bvarphi'}(\bvarphi)=P_g(\bvarphi \mid \bvarphi')$,
and $\mu(\bvarphi') = P_g(\bvarphi' \mid \y)$. Following the decomposition of
\cite{bauerschmidt2019simple},
for any bounded, Lipschitz, and
continuously differentiable function $f:\R^p \rightarrow \R$, by the
tower property
\begin{align}\label{eq:ent_decomposition}
\ent_{\nu}(f^2)&=\E_\nu f^2\log f^2-\E_\nu f^2 \cdot \log \E_\nu f^2\nonumber\\
&=\E_{\bvarphi' \sim \mu}\Big[\E_{\nu_{\bvarphi'}} f^2\log f^2
-\E_{\nu_{\bvarphi'}} f^2 \cdot \log \E_{\nu_{\bvarphi'}} f^2\Big]\nonumber\\
&\hspace{1in}+\E_{\bvarphi' \sim \mu}\Big[\E_{\nu_{\bvarphi'}} f^2 \cdot \log
\E_{\nu_{\bvarphi'}} f^2\Big]
-\E_{\bvarphi' \sim \mu}[\E_{\nu_{\bvarphi'}} f^2] \cdot \log
\E_{\bvarphi' \sim \mu}[\E_{\nu_{\bvarphi'}} f^2]\nonumber\\
&=\E_{\mu} \ent_{\nu_{\bvarphi'}}(f^2) + \ent_{\mu} \E_{\nu_{\bvarphi'}}(f^2).
\end{align}

For the first term, note that $\nu_{\bvarphi'}(\bvarphi)$ is a product measure
with marginals
\begin{align}\label{eq:marginal_posterior}
\nu_{\bvarphi',j}(\varphi_j) \propto P_g(\varphi_j \mid \varphi_j') &\propto
e^{-\frac{(\varphi_j'-\varphi_j)^2}{2\eta^2}} \cdot \int
e^{-\frac{(\varphi_j-\theta)^2}{2\tau^2}}\d g(\theta) \propto \int
e^{-\frac{(\varphi_j - c(\theta))^2}{2\gamma^2}} \d\bar{g}(\theta),
\end{align}
where
\begin{align}\label{def:gamma_c}
\gamma^2 = \frac{\eta^2\tau^2}{\eta^2 + \tau^2}, \quad c(\theta) =
\frac{\tau^2\varphi_j' + \eta^2 \theta}{\eta^2 + \tau^2}, \quad \d\bar{g}(\theta)
 = \frac{e^{-\frac{(\theta - \varphi_j')^2}{2(\eta^2 + \tau^2)}}\d g(\theta)}{\int
e^{-\frac{(u - \varphi_j')^2}{2(\eta^2 + \tau^2)}}\d g(u)}.
\end{align}
In other words, $\nu_{\bvarphi',j}$ is the convolution between
$\calN(0,\gamma^2)$ and the pushforward of $\bar g$ under $c(\cdot)$, which is supported on an interval of length at most $2M$. By \cite[Theorem 1]{zimmermann2016elementary}, 
$\nu_{\bvarphi',j}$ satisfies a one-dimensional LSI with constant $C=C(\tau,\tau',M)$:
\begin{align}\label{ineq:nu_varphi_lsi}
\ent_{\nu_{\bvarphi',j}}(f^2) \leq C\cdot \E_{\nu_{\bvarphi',j}} |\partial_j f(\bvarphi)|^2
\end{align}
where we write
$\ent_{\nu_{\bvarphi',j}}$ and $\E_{\nu_{\bvarphi',j}}$ for the partial
entropy and expectation over the coordinate $\varphi_j$ of $\bvarphi \sim
\nu_{\bvarphi'}$. 
Finally, by tensorization of LSI \cite[Theorem 9.9(i)]{villani2009optimal}, the product measure $\nu_{\bvarphi'}$ satisfies a LSI with the same $C$, namely,
\begin{align}\label{ineq:entropy_tensor}
\ent_{\nu_{\bvarphi'}}(f^2) \leq C \cdot \E_{\nu_{\bvarphi'}} \pnorm{\nabla f(\bvarphi)}{}^2.
\end{align}
Taking further expectation under $\mu$ yields
\begin{align}\label{ineq:ent_1}
\E_{\mu} \ent_{\nu_{\bvarphi'}}(f^2) \leq C\cdot \E_{\nu} \pnorm{\nabla f(\bvarphi)}{}^2.
\end{align}

To bound the second term in (\ref{eq:ent_decomposition}), note that
$\mu(\bvarphi') \propto
\exp\big(-\frac{1}{2}(\y-\X\bvarphi')^\top{\bSigma'}^{-1}(\y-\X\varphi') -
\sum_{j=1}^p V(\varphi_j')\big)$, where
\begin{align*}
V(\varphi')={-}\log\int
e^{-\frac{1}{2{\tau'}^2}(\varphi'-\theta)^2}\d g(\theta). 
\end{align*}
We have
\begin{align*}
V''(\varphi')=\frac{1}{{\tau'}^2} - \partial_{\varphi'}^2 \log \int
e^{\frac{\varphi'\theta}{{\tau'}^2} - \frac{\theta^2}{2{\tau'}^2}}\d
g(\theta) = \frac{1}{{\tau'}^2} - \frac{\var_g(\theta \mid
\varphi')}{{\tau'}^4} \geq \frac{1}{{\tau'}^2} -
\frac{M^2}{{\tau'}^4}=:c(\tau',M)>0,
\end{align*}
using the fact that $P(\theta \mid \varphi')$ is supported on $[-M,M]$ and
${\tau'}^2>M^2$. Hence the density of $\mu(\bvarphi')$ is strictly log-concave,
so by the Bakry-Emery criterion \cite[Theorem 9.9(iii)]{villani2009optimal}, $\mu(\bvarphi')$
satisfies a LSI with constant $c(\tau',M)^{-1}$. Let $G(\bvarphi') \equiv [\E_{\nu_{\bvarphi'}}(f^2)]^{1/2}$. Then
\begin{align}
\ent_{\mu} \E_{\nu_{\bvarphi'}}(f^2) = \ent_{\mu}(G^2(\bvarphi')) \leq
c(\tau',M)^{-1} \cdot \E_{\mu} \pnorm{\nabla G(\bvarphi')}{}^2. 
\label{eq:LSI-mu}
\end{align}
We have
\begin{align}
\partial_{\varphi_i'} G(\bm{\varphi}') = \frac{\partial_{\varphi_i'}
G^2(\bvarphi')}{2G(\bvarphi')} =
\frac{1}{2\eta^2}\frac{\cov_{\nu_{\bvarphi'}}(f^2,\varphi_i)}{[\E_{\nu_{\bvarphi'}}(f^2)]^{1/2}}
=
\frac{1}{2\eta^2}\frac{\E_{\nu_{\bvarphi',-i}}\cov_{\nu_{\bvarphi',i}}(f^2,\varphi_i)}{[\E_{\nu_{\bvarphi'}}(f^2)]^{1/2}},
\label{eq:Ggrad}
\end{align}
where 
the second equality follows from $\partial_{\varphi_i'} G^2(\bvarphi') = \E_{\nu_{\bvarphi'}}[f^2 \partial_{\varphi_i'} \log P_g(\varphi_i|\varphi_i')]$ and 
$\partial_{\varphi_i'} \log P_g(\varphi_i|\varphi_i') = \frac{1}{\eta^2}(\varphi_i-\E[\varphi_i|\varphi'_i])$ from \prettyref{eq:marginal_posterior}, and
the last equality uses the fact that $\nu_{\bvarphi'}$ is a product
measure. 
Shorthand $\cov_{\nu_{\bvarphi',i}}(f^2,\varphi_i)$ as $\cov_i(f^2,
\varphi_i\mid \varphi_i')$. Note that (\ref{eq:marginal_posterior}) yields the
representation of the conditional law
$(\varphi_i \mid \varphi_i') \stackrel{d}{=} c(\theta) + z$
where $c(\theta)$ is given in (\ref{def:gamma_c}), and
$\theta \sim \bar{g}(\cdot)$ and $z \sim \N(0,\gamma^2)$ are independent
(conditional on $\varphi_i'$). Consequently,
\begin{align*}
\cov_i(f^2(\bvarphi), \varphi_i \mid \varphi_i') &= \cov_i(f^2(\bvarphi_{-i},
c(\theta) + z), c(\theta) + z \mid \varphi_i')\\
&= \cov_i(f^2(\bvarphi_{-i}, c(\theta) + z), c(\theta) \mid \varphi_i')
+ \cov_i(f^2(\bvarphi_{-i}, c(\theta) + z), z \mid \varphi_i')\\
&\equiv (I) + (II). 
\end{align*}
To bound $(I)$, let $\theta_1,\theta_2 \sim \bar g(\cdot)$ and $z_1,z_2 \sim
\N(0,\gamma^2)$ be independent. Then
\begin{align*}
2 \cdot (I) &= \E\Big[(f^2(\bvarphi_{-i}, c(\theta_1) + z_1) - f^2(\bvarphi_{-i},
c(\theta_2) + z_2))(c(\theta_1) - c(\theta_2))\;\Big|\; \varphi_i'\Big]\\
&\leq \E\Big[\Big(f(\bvarphi_{-i}, c(\theta_1) + z_1) - f(\bvarphi_{-i},
c(\theta_2) + z_2)\Big)^2\;\Big|\;\varphi_i'\Big]^{1/2}\\
&\qquad \cdot \E\Big[\Big(f(\bvarphi_{-i}, c(\theta_1) + z_1) + f(\bvarphi_{-i},
c(\theta_2) + z_2)\Big)^2(c(\theta_1) - c(\theta_2))^2\;\Big|\;
\varphi_i'\Big]^{1/2}\\
&= (2\var_{\nu_{\bvarphi',i}}[f])^{1/2} \cdot \frac{\eta^2}{{\tau'}^2} \cdot
\E\Big[\Big(f(\bvarphi_{-i}, c(\theta_1) + z_1) + f(\bvarphi_{-i}, c(\theta_2) +
z_2)\Big)^2(\theta_1 - \theta_2)^2\;\Big|\;\varphi_i'\Big]^{1/2}\\
&\leq C(M,\tau,\tau') \cdot \big[\E_{\nu_{\bvarphi',i}}(\partial_i
f)^2\big]^{1/2}\cdot \big[\E_{\nu_{\bvarphi',i}}(f^2)\big]^{1/2}, 
\end{align*}
where the last step applies the Poincar\'e inequality for
$\nu_{\bvarphi',i}$ (as implied by its LSI (\ref{ineq:nu_varphi_lsi}) -- see \cite[(9.32)]{villani2009optimal}) to $\var_{\nu_{\bvarphi',i}}[f]$, and the
fact that $\theta_1,\theta_2\in[-M,M]$. To bound $(II)$, using the independence of
$z$ and $\theta$ and applying Stein's lemma,
\begin{align*}
(II) &= \E\Big[f^2(\bvarphi_{-i}, c(\theta) + z) z \;\Big|\;\varphi_i'\Big]
= 2\gamma^2 \cdot \E\Big[f(\bvarphi_{-i}, c(\theta) + z) \partial_i
f(\bvarphi_{-i}, c(\theta) + z)\;\Big|\;\varphi_i'\Big]\\
&\leq C(\tau,\tau')\cdot \big[\E_{\nu_{\bvarphi',i}}(\partial_i f)^2\big]^{1/2}\cdot
\big[\E_{\nu_{\bvarphi',i}}(f^2)\big]^{1/2}.
\end{align*} 
Combining the two bounds and applying Cauchy-Schwarz and the tower property
for $\E_{\nu_{\bvarphi',-i}}$ yields
\[\E_{\nu_{\bvarphi',-i}}
\cov_i(f^2(\bvarphi), \varphi_i \mid \varphi_i') \leq C \cdot
\big[\E_{\nu_{\bvarphi'}}(\partial_i f)^2\big]^{1/2}\cdot
\big[\E_{\nu_{\bvarphi'}}(f^2)\big]^{1/2}.\]
Substituting this into \prettyref{eq:LSI-mu} and \prettyref{eq:Ggrad},
the second term in (\ref{eq:ent_decomposition}) is bounded by
\begin{align}\label{ineq:ent_2}
\ent_{\mu} \E_{\nu_{\bvarphi'}}(f^2) \leq C'\cdot \E_\mu\sum_{i=1}^p
\E_{\nu_{\bvarphi'}}(\partial_i f)^2 = C'\cdot \E_{\nu}\pnorm{\nabla f}{}^2
\end{align} 
for some $C'=C'(\tau,\tau',M)$. Combining (\ref{ineq:ent_1}) and (\ref{ineq:ent_2}) concludes the proof.
\end{proof}

The proof of Theorem \ref{thm:q_lsi}(b) requires the following
additional lemma, whose proof we present after the main proof.

\begin{lemma}\label{lem:exp_tilt_lsi}
Fix any $M > 0$. Uniformly over $h \in \R$, the measure $\mu_h(x) \propto
e^{hx}\bm{1}_{|x|<M}$ satisfies a LSI on $(-M,M)$ with log-Sobolev constant $C = C(M) > 0$.
\end{lemma}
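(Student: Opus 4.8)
The plan is to reduce the statement to a one-parameter family on a fixed interval and then treat small and large tilts by different means. First I would normalize: the substitution $x\mapsto x/M$ rescales any LSI constant by $M^2$, so it suffices to take $M=1$; and the reflection $x\mapsto -x$ carries $\mu_h$ to $\mu_{-h}$ without changing the LSI constant, so it suffices to take $h\ge 0$. Writing $C_{\mathrm{LSI}}(\cdot)$ for the optimal LSI constant, the goal thus becomes $\sup_{h\ge 0}C_{\mathrm{LSI}}(\mu_h)<\infty$ with $\mu_h(x)\propto e^{hx}\bm{1}_{(-1,1)}(x)$, after which the lemma holds with $C(M)=M^2\sup_{h\ge 0}C_{\mathrm{LSI}}(\mu_h)$.

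For bounded tilts, say $h\in[0,1]$, I would use Holley--Stroock perturbation from the uniform law. The uniform measure on $(-1,1)$ has density bounded above and below on its support, hence satisfies a LSI with some absolute constant $C_0$ (immediate from the one-dimensional characterization of \cite{bobkov1999exponential}); since $\mathrm{d}\mu_h/\mathrm{d}\,\mathrm{Unif}(-1,1)\propto e^{hx}$ has ratio of supremum to infimum $e^{2h}\le e^2$, Holley--Stroock gives $C_{\mathrm{LSI}}(\mu_h)\le e^2 C_0$ throughout this range, the case $h=0$ included.

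For large tilts $h>1$, the point is that $\mu_h$ is essentially supported on a window of width $\asymp 1/h$ near the endpoint $x=1$, so I would change variables to $Y=h(1-X)$ with $X\sim\mu_h$; then $Y$ has the truncated-exponential law $\nu_h(y)\propto e^{-y}\bm{1}_{(0,2h)}(y)$, and a linear change of variables gives $C_{\mathrm{LSI}}(\mu_h)=h^{-2}C_{\mathrm{LSI}}(\nu_h)$. It then suffices to show the matching bound $C_{\mathrm{LSI}}(\nu_h)\le C_1 h$ with $C_1$ absolute, which I would read off the one-dimensional criterion of \cite{bobkov1999exponential}. Its median is $m=\log\frac{2}{1+e^{-2h}}\in(0,\log 2]$, and for $y\in(m,2h)$ one has $p(y):=\nu_h([y,2h))=\frac{e^{-y}-e^{-2h}}{1-e^{-2h}}$ together with $\int_m^y\nu_h(t)^{-1}\,\mathrm{d}t=(1-e^{-2h})(e^y-e^m)\le\min(p(y)^{-1},\,e^{2h})$. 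Since $s\mapsto s\log(1/s)$ is bounded by $1/e$ and increasing on $(0,1/e)$, the quantity $p(y)\log\frac{1}{p(y)}\int_m^y\nu_h^{-1}$ entering the criterion is at most $2h$ in the regime $p(y)\ge e^{-2h}$ (using $\int\le p^{-1}$) and at most $2h$ in the regime $p(y)<e^{-2h}$ (using $\int\le e^{2h}$); the analogous quantity over $(0,m)$ is bounded absolutely because there $\int_y^m\nu_h^{-1}\le 2$. Hence $C_{\mathrm{LSI}}(\nu_h)\lesssim h$, so $C_{\mathrm{LSI}}(\mu_h)\lesssim 1/h\le 1$, and combining the two ranges gives $\sup_{h\ge 0}C_{\mathrm{LSI}}(\mu_h)<\infty$.

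The step I expect to be the real obstacle is precisely the uniformity as $h\to\infty$: the naive Holley--Stroock bound degrades like $e^{2hM}$, so one cannot avoid exploiting the concentration of $\mu_h$ at its endpoint, and one needs the sharp fact that the LSI constant of a standard exponential truncated to $(0,L)$ is $O(L)$. This linear rate is exactly what cancels the $h^{-2}$ produced by the rescaling, keeping the constant bounded (in fact vanishing as $h\to\infty$); the remainder of the argument is routine once the criterion is organized via the $\min(p^{-1},e^{2h})$ estimate above.
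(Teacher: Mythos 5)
Your proof is correct and takes essentially the same route as the paper's: Holley--Stroock from the uniform law for bounded tilts, and the one-dimensional Bobkov--G\"otze criterion (the paper's Lemma~\ref{lem:lsi_constant}) for large $h$, arriving at the same quantitative conclusion $C_{\mathrm{LSI}}(\mu_h)\lesssim 1/h$ as $h\to\infty$. The only difference is cosmetic: you rescale to the truncated exponential $\nu_h$ before applying the criterion, whereas the paper applies it directly to $\mu_h$ (computing $1-m\asymp 1/h$ and bounding $B_\pm$ there); the two computations are equivalent under the linear change of variables.
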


\begin{proof}[Proof of Theorem \ref{thm:q_lsi}(b)]
The proof is similar to Theorem \ref{thm:q_lsi}(a), and we skip some of the
common steps. Pick ${\tau'}^2 \in (M^2,\sigma^2/\|\X\|_\op^2)$
depending only on $(M,\delta)$.
Let $\bvarphi'=\btheta + \z$ with $\z \sim \mathcal{N}(0, {\tau'}^2\Id)$, and
represent
$\y=\X\bvarphi' + \beps'$ with $\beps' \sim \mathcal{N}(0,\bSigma')$
and $\bSigma'=\sigma^2-{\tau'}^2\X\X^\top$.
Then, denoting $\nu=P_g(\btheta \mid \y)$, $\nu_{\bvarphi'}=P_g(\btheta \mid
\bvarphi')$ and $\mu=P_g(\bvarphi' \mid \y)$, we have, analogous to \prettyref{eq:ent_decomposition},
\[\ent_{\nu}(f^2) = \E_{\mu} \ent_{\nu_{\bvarphi'}}(f^2) + \ent_{\mu}
\E_{\nu_{\bvarphi'}}(f^2).\]
Here $\nu_{\bvarphi'}$ is a product measure with marginals
\begin{align*}
\nu_{\bvarphi',j}(\theta_j) \propto e^{-\frac{\varphi_j'}{\tau^2}\theta_j}\cdot
e^{-\frac{\theta_j^2}{2\tau^2}}g(\theta_j).
\end{align*}
By Lemma \ref{lem:exp_tilt_lsi}, the density proportional to
$e^{-\frac{\varphi_j'}{\tau^2}\theta_j}$ on $(-M,M)$ satisfies a LSI
with constant $C = C(M) > 0$ uniformly over $\varphi_j'$. Then by the
Holley-Stroock perturbation principle \cite[Theorem 9.9(ii)]{villani2009optimal}, $\nu_{\bvarphi',j}$ also satisfies a LSI
with constant $C = C'(M,\tau)\kappa(g)^2$.
Applying this and tensorization shows
\[\E_{\mu} \ent_{\nu_{\bvarphi'}}(f^2) \leq C\cdot \E_{\nu} \pnorm{\nabla
f(\btheta)}{}^2.\]

For the second term, we have again that $\mu$ is strongly log-concave
and hence satisfies a LSI with some constant $c(\tau',M)^{-1}<\infty$.
Let $G(\bvarphi') \equiv [\E_{\nu_{\bvarphi'}}(f^2)]^{1/2}$. Then as before,
\begin{align*}
\ent_{\mu} \E_{\nu_{\bvarphi'}}(f^2) = \ent_{\mu}(G^2(\bvarphi')) \leq
c(\tau',M)^{-1} \cdot \E_{\mu} \pnorm{\nabla G(\bvarphi')}{}^2,
\end{align*}
where
\begin{align*}
\partial_{\varphi_i'} G(\bvarphi') = \frac{\partial_{\varphi_i'}
G^2(\bvarphi')}{2G(\bvarphi')} =
\frac{1}{2{\tau'}^2}\frac{\E_{\nu_{\bvarphi',-i}}\cov_{\nu_{\bvarphi',i}}(f^2,\theta_i)}{[\E_{\nu_{\bvarphi'}}(f^2)]^{1/2}}.
\end{align*}
Here, since $\theta_i \in [-M,M]$ is bounded, we have simply
\begin{align*}
\cov_{\nu_{\bvarphi',i}}(f^2,\theta_i) &\leq C(M) \cdot
\var_{\nu_{\bvarphi',i}}(f^2)
\leq C'(M) \cdot \big[\var_{\nu_{\varphi',i}}(f)\big]^{1/2} \cdot
\big[\E_{\nu_{\bvarphi',i}}(f^2)\big]^{1/2}.
\end{align*}
Then, applying again the Poincar\'e inequality for $\nu_{\bvarphi',i}$ gives
\[\ent_{\mu} \nu_{\bvarphi'}(f^2) \leq C\cdot \E_\mu\sum_{i=1}^p
\E_{\nu_{\bvarphi'}}(\partial_i f)^2 =  C \cdot
\E_{\nu}\pnorm{\nabla f}{}^2\]
which completes the proof.
\end{proof}

To prove Lemma \ref{lem:exp_tilt_lsi}, we use the following characterization of
the LSI constant due to \cite[Theorem 5.3]{bobkov1999exponential}. We note that
this result has also been applied in \cite{zhang2011uniform} to prove an analogue of Lemma \ref{lem:exp_tilt_lsi} under spherical priors and general dimensions.

\begin{lemma}[\cite{bobkov1999exponential}]\label{lem:lsi_constant}
Let $\mu$ be a (Borel) probability measure on $\R$ and let $\rho(x)$ be the
density of its absolutely continuous part with respect to Lebesgue measure.
Let $m$ be a median of $\mu$. Then the optimal constant $C_\ast$ in the LSI
\begin{align*}
\ent_\mu(f^2) \leq C_* \E_\mu[{f'}^2]
\end{align*}
satisfies $c_1\max (B_-, B_+) \leq C_\ast \leq c_2\max(B_-, B_+)$ for some universal constants $c_1,c_2$, where
\begin{align*}
B_+ &= \sup_{x > m} \mu\big([x, \infty)\big) \cdot\log \frac{1}{\mu\big([x,
\infty)\big)}  \cdot\int_m^x \frac{1}{\rho(t)}\d t,\\
B_- &= \sup_{x > m} \mu\big((-\infty, x]\big) \cdot\log
\frac{1}{\mu\big((-\infty, x]\big)} \cdot\int_x^m \frac{1}{\rho(t)}\d t.
\end{align*}
\end{lemma}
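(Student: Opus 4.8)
The plan is to follow the argument of \cite{bobkov1999exponential} and prove the two bounds $C_\ast\le c_2\max(B_-,B_+)$ and $C_\ast\ge c_1\max(B_-,B_+)$ separately. In both directions the key reduction is to split at the median $m$: the LSI $\ent_\mu(f^2)\le C\,\E_\mu[{f'}^2]$ is equivalent, up to universal constants, to the conjunction of two \emph{one-sided} logarithmic Hardy inequalities, one for Lipschitz functions on $[m,\infty)$ vanishing at $m$ (governed by $B_+$) and its mirror image on $(-\infty,m]$ (governed by $B_-$). I would first carry out the lower bound, which is the easier half, and then the upper bound.

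For the lower bound, I would plug explicit test functions into the LSI. Fix $x>m$ with $p:=\mu([x,\infty))>0$ and $I_x:=\int_m^x\rho(t)^{-1}\,dt<\infty$ (otherwise the corresponding summand of $B_+$ is $0$ or the bound is vacuous). Take $f$ supported on $[m,\infty)$, equal to $p^{-1/2}$ on $[x,\infty)$, equal to $0$ at $m$, and on $[m,x]$ defined by $f(s)=p^{-1/2}\big(\int_m^s\rho(t)^{-1}dt\big)\big/I_x$, so that $\int {f'}^2\,d\mu = p^{-1}/I_x$. Since $f^2$ is a (slightly spread out) renormalized indicator of $[x,\infty)$, elementary estimates on $t\mapsto t\log t$ give $\ent_\mu(f^2)\ge c\log(1/p)$, after absorbing the contribution of the interpolation region using that it carries little entropy relative to $\log(1/p)$ when $I_x$ is large. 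Dividing, $C_\ast \ge c\,p\log(1/p)\,I_x$; taking the supremum over $x$ yields $C_\ast\gtrsim B_+$, and the symmetric construction gives $C_\ast\gtrsim B_-$, hence $C_\ast\ge c_1\max(B_-,B_+)$.

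For the upper bound the analytic core is the one-sided estimate: for every locally Lipschitz $u$ on $[m,\infty)$ with $u(m)=0$,
\[
\int_m^\infty u^2\log\frac{u^2}{\int u^2\,d\mu}\,d\mu \;\le\; c\,B_+\int_m^\infty {u'}^2\,d\mu,
\]
together with its reflection with $B_-$ on $(-\infty,m]$. I would prove this Muckenhoupt-type characterization by a level-set decomposition: partition $[m,\infty)$ into consecutive intervals $[x_k,x_{k+1})$ on which the tail $\mu([x,\infty))$ decreases by a fixed factor, bound the contribution of each interval to the left-hand side in terms of the tail mass $p_k:=\mu([x_k,\infty))$ and the local Hardy weight $\int_{x_k}^{x_{k+1}}\rho^{-1}$, dominate each such weight by $B_+\big/\!\big(p_k\log(1/p_k)\big)$, and sum the resulting series in which the $p_k\log(1/p_k)$-factors telescope against the geometrically decaying tails. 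Granting these one-sided inequalities, I would glue them: with $a:=f(m)$ write $f=a+u+v$ where $u=(f-a)\mathbf{1}_{[m,\infty)}$ and $v=(f-a)\mathbf{1}_{(-\infty,m]}$ (both vanishing at $m$, essentially disjoint supports); a subadditivity lemma in the spirit of Rothaus bounds $\ent_\mu(f^2)$ by $\ent_\mu(u^2)+\ent_\mu(v^2)$ plus a constant multiple of the variance $\E_\mu[(f-a)^2]=\E_\mu u^2+\E_\mu v^2$. Applying the two one-sided logarithmic Hardy inequalities to the entropy terms and absorbing the variance term via the one-dimensional Poincar\'e characterization — whose constant is $\asymp\max(A_+,A_-)$ with $A_\pm$ the analogues of $B_\pm$ with the logarithm deleted, and $A_\pm\le(\log 2)^{-1}B_\pm$ because $\mu([x,\infty))\le\tfrac12$ for $x>m$ — yields $C_\ast\le c_2\max(B_-,B_+)$.

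The main obstacle is the sharp one-sided logarithmic Hardy inequality, specifically the upper bound on its constant of the exact order $B_+$: the level-set decomposition must be arranged so that the entropy produced on each scale is weighted by $p_k\log(1/p_k)$ and so that the sum against the telescoping Hardy weights is $O(B_+)$ rather than losing a factor that grows with the number of scales present. This forces a careful choice of the decomposition ratio and of how $u$ is split across scales, and it is the technical heart of the Bobkov--G\"otze argument; the remaining ingredients (the test-function lower bound, the Rothaus-type subadditivity, and the Poincar\'e comparison) are standard.
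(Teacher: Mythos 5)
This lemma is not proved in the paper at all: it is imported verbatim from Bobkov--G\"otze (Theorem 5.3 of \cite{bobkov1999exponential}) and used as a black box in the proof of Lemma \ref{lem:exp_tilt_lsi}, so there is no in-paper argument to compare against. Judged on its own terms, your sketch correctly reconstructs the architecture of the known proof: splitting at the median, reducing the two-sided LSI to one-sided logarithmic Hardy inequalities characterized by a Muckenhoupt-type condition, gluing via Rothaus-type subadditivity of entropy for disjointly supported pieces, and absorbing the variance term through the Poincar\'e characterization with $A_\pm \le B_\pm/\log 2$. That is essentially the Bobkov--G\"otze/Barthe--Roberto route, and you correctly identify the one-sided upper bound as the technical heart.

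One step as written is wrong, though the conclusion it feeds is salvageable. For your lower-bound test function $f$ (equal to $p^{-1/2}$ on $[x,\infty)$, Hardy-interpolated on $[m,x]$), the claim $\ent_\mu(f^2)\ge c\log(1/p)$ is false in general: the interpolation region need not ``carry little entropy''. If $\mu$ places mass close to $1/2$ near $x$ at points where $\int_m^s\rho^{-1}$ has already nearly saturated $I_x$, then $S:=\int_{[m,x)}f^2\,\d\mu$ can be of order $p^{-1}$, and the normalization term $\big(\int f^2 \d\mu\big)\log\int f^2\d\mu \approx (1+S)\log(1+S)$ swamps $\log(1/p)$, making $\ent_\mu(f^2)$ as small as $O(1)$. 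The standard repair is a case split on $S$: the duality bound $\ent_\mu(f^2)\ge \E_\mu[f^2\mathbf{1}_{[x,\infty)}]\log\frac{1}{2p}-(\log 2)\,\E_\mu[f^2]$ gives the desired $\ent_\mu(f^2)\gtrsim\log(1/p)$ when $S\lesssim \log(1/p)$; when $S$ is larger, one instead uses $\Var_\mu(f)\ge \E_\mu[f^2]/2\ge S/2$ (since $f$ vanishes on a set of measure $\ge 1/2$) together with the Poincar\'e inequality implied by the LSI, which yields $C_\ast\gtrsim S\,p\,I_x\gtrsim p\log(1/p)I_x$ directly. With that case analysis inserted, and with the usual care about defining $\E_\mu[f'^2]$ on the singular part of $\mu$ (one perturbs $f$ to be locally constant there), your outline matches the known proof.
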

This result does not require $\mu$ to have full support on all of $\R$, and
$B_+$ or $B_-$ is defined as 0 if $\mu((m,\infty))=0$ or $\mu((-\infty,m))=0$
respectively.

\begin{proof}[Proof of Lemma \ref{lem:exp_tilt_lsi}]
The uniform density on $(-M,M)$ satisfies a LSI with constant $C(M)>0$ by
\cite{ghang2014sharp}. Then by the Holley-Stroock perturbation principle, it
suffices to show the result for $|h|>h_0(M)$ and any fixed and large
constant $h_0(M)>0$. By symmetry, we may assume $h>0$.
For presentational simplicity, we only prove the case $M = 1$, and the case of general $M$ follows similarly.

Using Lemma \ref{lem:lsi_constant}, it suffices to control the
quantities $B_+,B_-$ for $\mu_h$.
Let us write $a(h) \asymp b(h)$ if there exist constants $C,c>0$ for which
$ca(h) \leq b(h) \leq Ca(h)$ for all large $h$.
Let $m \equiv m(h)$ be the median of $\mu_h$, and let
$\rho_h(t)=e^{ht}/(h^{-1}(e^h-e^{-h}))$ be the density of $\mu_h$. 
We claim that $1-m \asymp \frac{1}{h}$.
To see this, note that for any $x \in (-1,1)$,
\begin{align*}
\mu_h([-1,x]) = \int_{-1}^x \frac{e^{ht}}{h^{-1}(e^h - e^{-h})}\d t=\frac{e^{hx} - e^{-h}}{e^h - e^{-h}},
\end{align*}
so for sufficiently large $h$, $\inf\{x: \mu_h([-1,x]) \geq 1/2\} \in [1-C/h,1 - c/h]$. 

We first bound $B_+$. Using the boundedness of $t \mapsto t\log(1/t)$ and
\begin{align*}
\int_{m}^x \frac{1}{\rho_h(t)}\d t &= \frac{1}{h^2}(e^h - e^{-h})(e^{-hm} -
e^{-hx}) \asymp \frac{1}{h^2}(e^{h(1-m)} - e^{h(1-x)}),
\end{align*} 
we have for $1-m \asymp \frac{1}{h}$ and any $x \in (m,1)$ that
\begin{align*}
&\mu_h\big([x,1]\big) \cdot\log \frac{1}{\mu_h\big([x, 1]\big)} \cdot\int_{m}^x
\frac{1}{\rho_h(t)}\d t
\leq \frac{C}{h^2}
\end{align*}
for a constant $C>0$ independent of $x$. Then $B_+ \leq C/h^2$.

Next we bound $B_-$. Using
\begin{align*}
\mu_h\big([-1,x]\big)
=\frac{e^{hx}-e^{-h}}{e^h-e^{-h}} \asymp e^{-h(1-x)}(1-e^{-h(1+x)}),\quad \int_x^m
\frac{1}{\rho_h(t)}\d t \asymp \frac{e^{h(1-x)}}{h^2}(1-e^{h(x-m)}),
\end{align*}
we have for any $x \in (-1, m)$ and a constant $C>0$ that
\begin{align*}
\mu_h\big([-1,x]\big) \cdot\log\frac{1}{\mu_h\big([-1, x])\big)}
\cdot\int_x^{m} \frac{1}{\rho_h(t)}\d t
&\leq \frac{C}{h^2}(1-e^{-h(1+x)}) \log
\frac{e^{h}-e^{-h}}{e^{hx} - e^{-h}}\\
&\leq \frac{C}{h^2}(1-e^{-h(1+x)}) \log \frac{e^{h(1-x)}}{1-e^{-h(1+x)}}.
\end{align*}
Applying again boundedness of $t \mapsto t\log(1/t)$, we have in this case
$B_- \leq C'/h$ for a constant $C'>0$.
In summary, $\max(B_+,B_-)$ is at most a constant,
which in light of Lemma \ref{lem:lsi_constant} completes the proof.
\end{proof}

\subsection{Analysis of the joint gradient flow}\label{subsec:proof_main_algo}

We isolate the following intuitive but somewhat
technical lemma, which verifies the statement
\[\frac{\d}{\d t} F_n(q_t,g_t)
=-p\|\grad_q^{W_2} F_n(q_t,g_t)\|_{q_t}^2
-\alpha\|\grad_g^{\FR} F_n(q_t,g_t)\|_{g_t}^2\]
where $\|\cdot\|_{q_t}^2,\|\cdot\|_{g_t}^2$ are the squared norms under the
Wasserstein-2 metric tensor at $q_t$ and Fisher-Rao metric tensor at $g_t$,
respectively. (Our formal definition of the right side above is stated
in \eqref{eq:Fnderiv} below, and further background for these metric tensors
is discussed in Appendix \ref{sec:flow_background}.)

\begin{lemma}\label{lemma:Fndecays}
Under the conditions of Theorem \ref{thm:algo},
\[\limsup_{t \to 0} F_n(q_t,g_t) \leq F_n(q_0,g_0).\]
Furthermore $t \mapsto F_n(q_t,g_t)$ is differentiable at every $t>0$, and
\begin{equation}\label{eq:Fnderiv}
\frac{\d}{\d t} F_n(q_t,g_t)
=-\frac{1}{p}\int \left\|
\nabla \log\frac{\d q_t}{\d \nu[g_t]}\right\|_2^2 q_t(\bvarphi)\d\bvarphi
- \alpha\int 
\bigg(\bigg[\N_\tau*\frac{\bar q_t}{\N_\tau*g_t}\bigg](\theta)-1\bigg)^2
g_t(\theta)\d\theta.
\end{equation}
\end{lemma}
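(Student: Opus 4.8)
The plan is to combine the two gradient-flow equations \eqref{eq:qflow}--\eqref{eq:gflow} with the smoothness and moment-control guarantees of Theorem \ref{thm:flow_solution}, and to justify differentiating $F_n(q_t,g_t)$ under the integral sign. First I would write $F_n(q_t,g_t)=\frac1p\int [\frac12(\y-\X\bvarphi)^\top\bSigma^{-1}(\y-\X\bvarphi)-\sum_j\log\N_\tau*g_t(\varphi_j)+\log q_t(\bvarphi)]q_t(\bvarphi)\d\bvarphi$ (plus the additive constant), and split its time-derivative into a $q$-part (varying $q_t$ with $g_t$ fixed) and a $g$-part (varying $g_t$ with $q_t$ fixed), using $\delta_q F_n$ and $\delta_g F_n$ from \eqref{eq:firstvariations}. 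For the $q$-part, substituting the Fokker--Planck evolution \eqref{eq:qflow} and integrating by parts gives $-\frac1p\int\langle\nabla\delta_qF_n(q_t,g_t),\nabla q_t\rangle$-type terms; since $\delta_q F_n(q_t,g_t)[\bvarphi]=\frac1p\log\frac{q_t(\bvarphi)}{\nu[g_t](\bvarphi)}+\mathrm{const}$ by \eqref{eq:nug} and \eqref{posterior_smooth}, this collapses (after an integration by parts, using the standard identity $\int q_t\|\nabla\log\frac{q_t}{\nu[g_t]}\|^2=\int\langle\nabla\log\frac{q_t}{\nu[g_t]},\nabla\delta_qF_n\cdot p\rangle q_t$) to $-\frac1p\int\|\nabla\log\frac{\d q_t}{\d\nu[g_t]}\|_2^2\,q_t\,\d\bvarphi$, which is the first term on the right of \eqref{eq:Fnderiv}. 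For the $g$-part, substituting \eqref{eq:gflow} and using $\delta_g F_n(q,g)[\theta]=-[\N_\tau*\frac{\bar q}{\N_\tau*g}](\theta)+1$ together with $\int g_t(\theta)\delta_gF_n(q_t,g_t)[\theta]\d\theta$ handling the additive constant, yields $-\alpha\int([\N_\tau*\frac{\bar q_t}{\N_\tau*g_t}](\theta)-1)^2 g_t(\theta)\,\d\theta$, the second term.

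The technical content is almost entirely in justifying these manipulations rigorously on $(0,T)$. For the $q$-part I would use that $(t,\bvarphi)\mapsto q_t(\bvarphi)$ is $C^\infty$ on $(0,T)\times\R^p$ and that $\sup_{t\in[0,T]}\E_{q_t}e^{\blambda^\top\bvarphi}<\infty$ (Theorem \ref{thm:flow_solution}); together with $U_{g_t}$ being smooth with at-most-quadratic growth and $\nabla\log\N_\tau*g_t$ being bounded (Proposition \ref{prop:Nratiobound} and its derivative analogue), this gives enough integrability to differentiate under the integral, integrate by parts with vanishing boundary terms, and exchange $\partial_t$ with $\int$. For the $g$-part I would invoke that $t\mapsto g_t(\theta)$ is $C^\infty$ on $(0,T)$ with $g_t(\theta)\ge g_0(\theta)e^{-t}$, that $\delta_g F_n(q_t,g_t)[\theta]$ is bounded uniformly in $\theta\in[-M,M]$ and locally in $t$ (again via Proposition \ref{prop:Nratiobound} applied to $\bar q_t$, whose exponential moments are controlled), so that dominated convergence applies exactly as in the proof of Theorem \ref{thm:g_flow}.

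The statement $\limsup_{t\to0}F_n(q_t,g_t)\le F_n(q_0,g_0)$ I would obtain by lower-semicontinuity/continuity arguments near $t=0$: the potential-energy and cross-entropy terms $\frac1p\int[\frac12(\y-\X\bvarphi)^\top\bSigma^{-1}(\y-\X\bvarphi)-\sum_j\log\N_\tau*g_t(\varphi_j)]q_t(\bvarphi)\d\bvarphi$ converge as $t\to0$ (using continuity of $q_t$ in a suitable weak sense with the uniform exponential-moment bound, $q_0$ bounded and continuous, $g_t(\theta)\to g_0(\theta)$, and $|\log\N_\tau*g_t(\varphi)|$ dominated by $C(1+|\varphi|^2)$), while the negative entropy $-\frac1p\int q_t\log q_t$ is only lower-semicontinuous, giving $\limsup_{t\to0}(-\frac1p\int q_t\log q_t)\le -\frac1p\int q_0\log q_0$ — which is the correct direction. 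The main obstacle is precisely this last point: controlling the entropy term near $t=0$ and ruling out an instantaneous drop-then-jump in $F_n$; the cleanest route is to note that \eqref{eq:Fnderiv} already shows $t\mapsto F_n(q_t,g_t)$ is nonincreasing on $(0,T)$, so $\limsup_{t\to0}F_n(q_t,g_t)=\lim_{t\to0^+}F_n(q_t,g_t)$ exists, and then match it with $F_n(q_0,g_0)$ using the convergence of the energy terms plus lower-semicontinuity of entropy to get ``$\le$'' (the reverse inequality is not needed for the lemma).
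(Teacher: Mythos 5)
Your treatment of the derivative formula \eqref{eq:Fnderiv} follows essentially the same route as the paper: split $\frac{\d}{\d t}F_n(q_t,g_t)$ into a $q$-part and a $g$-part, substitute the Fokker--Planck and Fisher--Rao evolutions, integrate by parts, and identify $\delta_q F_n=\frac1p\log(q_t/\nu[g_t])+\mathrm{const}$. The paper carries this out via a spatial truncation $F_n^{\eta_R}$ with bump functions, shows the resulting commutator term vanishes as $R\to\infty$ in time-integrated form, and only then differentiates the integrated identity in $T$; the needed integrability of the relative Fisher information $\int\|\nabla\log(q_t/\nu[g_t])\|_2^2\,q_t$ and the quadratic growth bound on $|\log q_t|$ are imported from parabolic regularity estimates (Bogachev et al.), not merely from the moment bounds of Theorem \ref{thm:flow_solution}. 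Your sketch leaves these as "enough integrability," which is the standard gap-prone step, but the strategy is the right one.

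There is, however, a genuine error in your argument for $\limsup_{t\to0}F_n(q_t,g_t)\leq F_n(q_0,g_0)$. By \eqref{eq:Fnqg} the entropy-type term in $F_n$ enters with a \emph{plus} sign, $+\frac1p\int q\log q$, so the inequality you need is $\limsup_{t\to0}\int q_t\log q_t\leq\int q_0\log q_0$. Weak lower semicontinuity of $q\mapsto\int q\log q$ gives exactly the opposite: $\liminf_{t\to 0}\int q_t\log q_t\geq\int q_0\log q_0$, which (together with convergence of the energy terms) bounds $F_n(q_t,g_t)$ from \emph{below} in the limit, not from above. Your claim that lower semicontinuity "gives $\limsup_{t\to0}(-\frac1p\int q_t\log q_t)\le -\frac1p\int q_0\log q_0$ --- which is the correct direction" conflates lsc with usc and misidentifies the sign of the term in $F_n$; even your fallback via monotonicity on $(0,T)$ still relies on this wrong-direction semicontinuity step to close the argument. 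The generic variational argument cannot work here: one needs a quantitative input. The paper's fix is the uniform upper bound $\sup_{t\in[0,T]}\sup_{\bvarphi}q_t(\bvarphi)\leq e^{C_0}$ (a consequence of the parabolic regularity in Lemma \ref{lemma:smoothness}-(2), which uses that $q_0$ is continuous and bounded), combined with pointwise convergence $q_t(\bvarphi)\to q_0(\bvarphi)$ and Fatou's lemma applied to the nonnegative integrand $[C_0-\log q_t]q_t$; this yields $\liminf_{t\to0}\int[C_0-\log q_t]q_t\geq\int[C_0-\log q_0]q_0$, i.e.\ precisely $\limsup_{t\to0}\int q_t\log q_t\leq\int q_0\log q_0$. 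Without this (or an equivalent) use of the uniform density bound, your proof of the first assertion of the lemma does not go through.
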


\noindent The proof of Lemma \ref{lemma:Fndecays} is deferred to Appendix
\ref{sec:Fndecays}.

\begin{proof}[Proof of Theorem \ref{thm:algo}]
Assume $\DKL(h\|g_0)<\infty$, as otherwise there is nothing to prove.
Then by \prettyref{thm:flow_solution}(a) and the same argument leading to
\prettyref{eq:DKLcontinuous}, $\DKL(h\|g_t)$ is finite for all $t$ and
continuous at $t=0$. By the definition \eqref{eq:deltaFn},
for any density $h$ supported on $[-M,M]$, 
\begin{equation}\label{eq:integratedfirstvar}
\int_{-M}^M -\delta \bar F_n[g_t](\theta) \cdot h(\theta)
=\int_\R \frac{\N_\tau*h}{\N_\tau*g_t}(\varphi)\bar\nu[g_t](\varphi)-1
=\frac{1}{p}\sum_{j=1}^p\int_{\R^p}\frac{\N_\tau*h}{\N_\tau*g_t}(\varphi_j)\nu[g_t](\bvarphi)-1.
\end{equation}

Denote
\[f_t(\varphi)=\log \frac{\N_\tau*h}{\N_\tau*g_t}(\varphi),
\qquad \nu_j[g_t](f_t)=\int f_t(\varphi_j)\nu[g_t](\bvarphi).\]
Then
\begin{align}
\int\frac{\N_\tau*h}{\N_\tau*g_t}(\varphi_j)\nu[g_t](\bvarphi) &= \int
e^{f_t(\varphi_j)}\nu[g_t](\bvarphi)\nonumber\\
&=\int e^{f_t(\varphi_j)}q_t(\bvarphi)
+e^{\nu_j[g_t](f_t)}
\int e^{f_t(\varphi_j)-\nu_j[g_t](f_t)}(\nu[g_t](\bvarphi)-q_t(\bvarphi)).
\label{eq:integratedfirstvarterm}
\end{align}
Observe that $f_t(\varphi_j)$ is $2M$-Lipschitz in $\bvarphi$, because
\begin{align*}
\frac{\d }{\d \varphi} f_t(\varphi) = \frac{(\N_\tau \ast h)'}{\N_\tau \ast
h}(\varphi) - \frac{(\N_\tau \ast g_t)'}{\N_\tau \ast g_t}(\varphi) =
\E_h[\theta \mid \theta + \tau Z = \varphi] -  \E_{g_t}[\theta \mid \theta +
\tau Z = \varphi] \in [-2M,2M]
\end{align*}
with $Z \sim \N(0,1)$. 
Then for some $(M,\tau)$-dependent constants $C,C',c>0$,
$f_t(\varphi_j)$ is $C'$-subgaussian under the law $\nu[g_t](\bvarphi)$
by the given LSI assumption (cf.~\cite[Theorem 9.9(iv)]{villani2009optimal}),
so for any $L>0$,
\begin{align}
&\int_{f_t(\varphi_j)-\nu_j[g_t](f_t)>L}
e^{f_t(\varphi_j)-\nu_j[g_t](f_t)}(\nu[g_t](\bvarphi)-q_t(\bvarphi))\nonumber\\
&\hspace{1in}\leq \int_{f_t(\varphi_j)-\nu_j[g_t](f_t)>L}
e^{f_t(\varphi_j)-\nu_j[g_t](f_t)} \nu[g_t](\bvarphi)
\leq Ce^{-cL^2}.\label{eq:nuqcompare1}
\end{align}

Next, applying $\dTV(\nu[g_t],q_t)=\frac{1}{2}\int
|\nu[g_t](\bvarphi)-q_t(\bvarphi)|$
and Pinsker's inequality $\dTV(\nu[g_t],q_t) \leq
\sqrt{\frac{1}{2}\DKL(q_t\|\nu[g_t])}$, we have
\begin{equation}\label{eq:nuqcompare2}
\int_{f_t(\varphi_j)-\nu_j[g_t](f_t) \leq L}
e^{f_t(\varphi_j)-\nu_j[g_t](f_t)}(\nu[g_t](\bvarphi)-q_t(\bvarphi))
\leq 2e^L\dTV(\nu[g_t],q_t)
\leq e^L\sqrt{2\DKL(q_t\|\nu[g_t])}.
\end{equation}
Applying (\ref{eq:Fnderiv}) without
the second term as an upper bound,
and applying again the LSI for $\nu[g_t](\bvarphi)$ with
$f = \sqrt{\d q_t/\d \nu[g_t]}$ in its definition (\ref{def:LSI}),
we have for a constant $c:=c(M,\tau)>0$,
\begin{align}
\frac{\d}{\d t}F_n(q_t,g_t) &\leq
-\frac{1}{p}\int \biggpnorm{\nabla \log\frac{\d q_t}{\d \nu[g_t]}}{2}^2
q_t(\bvarphi)\d\bvarphi
\leq -\frac{c}{p}\DKL(q_t \|\nu[g_t]) \leq 0.\label{ineq:flow_descent}
\end{align}
Applying (\ref{ineq:flow_descent}) to (\ref{eq:nuqcompare2}) and combining with
(\ref{eq:nuqcompare1}), we obtain for $(M,\tau)$-dependent constants $C_0,c_0>0$ that
\begin{equation}\label{eq:nuqcompare}
\int e^{f_t(\varphi_j)-\nu_j[g_t](f_t)}(\nu[g_t](\bvarphi)-q_t(\bvarphi))
\leq C_0\left(e^{-c_0L^2}+e^L\sqrt{-p \cdot \frac{\d}{\d t} F_n(q_t,g_t)}\right):=H_t(L)
\end{equation}
where we define $H_t(L)$ to be this (non-negative) upper bound.

Now by Jensen's inequality,
$e^{\nu_j[g_t](f_t)} \leq \int e^{f_t(\varphi_j)}\nu[g_t](\bvarphi)$.
Applying this and (\ref{eq:nuqcompare}) back to (\ref{eq:integratedfirstvarterm}),
\[\int e^{f_t(\varphi_j)}\nu[g_t](\bvarphi)-1
\leq \int e^{f_t(\varphi_j)}q_t(\bvarphi)-1
+H_t(L)\int e^{f_t(\varphi_j)}\nu[g_t](\bvarphi).\]
Then, averaging over $j=1,\ldots,p$, applying the identity
(\ref{eq:integratedfirstvar}), and using
\[\frac{1}{p}\sum_{j=1}^p e^{f_t(\varphi_j)}q_t(\bvarphi)-1
=\int \frac{\N*h}{\N*g_t}(\varphi)\bar
q_t(\varphi)\d\varphi-1=\frac{\d}{\d t}[-\DKL(h\|g_t)]\]
(where we may differentiate under the integral by the same
argument as in (\ref{eq:dominatedconvergence})), we get
\[\int_{-M}^M -\delta \bar F_n[g_t](\theta) \cdot h(\theta)
\leq \frac{\d}{\d t}[-\DKL(h\|g_t)]+H_t(L)\Big(\int_{-M}^M -\delta \bar F_n[g_t](\theta)
\cdot h(\theta)+1\Big).\]

Now denote
\[E(t)=\int_{-M}^M -\delta \bar F_n[g_t](\theta) \cdot h(\theta),
\quad A(t)={-}\DKL(h\|g_t),
\quad B(t)={-}C_0^2e^{2L}p \cdot F_n(q_t,g_t),\]
and fix $L=L(\eps)$ such that
$C_0e^{-c_0L(\eps)^2}=\eps/4$, i.e.\ $L(\eps)=\sqrt{(-1/c_0)\log[\eps/(4C_0)]}$.
Then the above differential inequality reads
\begin{equation}\label{eq:diffineq}
E(t) \leq A'(t)+(\eps/4+\sqrt{B'(t)})(E(t)+1).
\end{equation}
Note that by the definition \eqref{eq:deltaFn}, we have
$E(t)+1 \geq 0$ for all $t \geq 0$. Furthermore $B'(t) \geq 0$ by
\eqref{ineq:flow_descent}, and
\begin{align*}
A'(t)=\frac{\d }{\d t}\int_{-M}^M h(\theta)\log g_t(\theta)
=\alpha \cdot \int_{-M}^M \Big(\Big[\N_\tau \ast \frac{\bar{q}_t}{\N_\tau \ast
g_t}\Big](\theta) - 1\Big)h(\theta) \geq 
\alpha \cdot \int_{-M}^M {-}h(\theta)=-\alpha.
\end{align*}
Let $\cT=\{t>0:B'(t)<(\eps/4)^2\}$. Then \eqref{eq:diffineq} implies
$E(t) \leq A'(t)+(\eps/2)(E(t)+1)$ for $t \in \cT$.
Rearranging this inequality,
integrating over $\cT \cap [0,T]$ for any $T>0$, and applying the
above lower bound $A'(t) \geq -\alpha$ gives
\begin{equation}\label{eq:intEtbound}
\int_{\cT \cap [0,T]} E(t) \d t
\leq \frac{1}{1-\eps/2}\int_{\cT \cap [0,T]} (A'(t)+\eps/2)\d t
\leq \frac{1}{1-\eps/2}\left(\int_0^T A'(t)\d t
+\alpha |\cT^c \cap [0,T]|+\frac{\eps T}{2}\right)
\end{equation}
where $|\cT^c \cap [0,T]|$ denotes the Lebesgue measure of this set.

We have $\limsup_{t \to 0} F_n(q_t,g_t) \leq F_n(q_0,g_0)$ by Lemma
\ref{lemma:Fndecays}, and $\lim_{t \to 0} \DKL(h\|g_t) = \DKL(h\|g_0)$. Then
\begin{align*}
\int_0^T A'(t)\d t&=\DKL(h\|g_0)-\DKL(h\|g_T) \leq \DKL(h\|g_0),\\
\int_0^T B'(t)\d t&\leq C_0^2e^{2L(\eps)}p\big(F_n(q_0,g_0)-F_n(q_t,g_t)\big)
\leq C_0^2e^{2L(\eps)}p\big(F_n(q_0,g_0)-F_{n,*}\big).
\end{align*}
The condition $B'(t) \geq 0$ and definition of $\cT$ then imply
\[|\cT^c \cap [0,T]| \leq
(4/\eps)^2C_0^2e^{2L(\eps)}p\big(F_n(q_0,g_0)-F_{n,*}\big)
\leq C_1\eps^{-2.01}p\big(F_n(q_0,g_0)-F_{n,*}\big),\]
the last inequality holding for a constant $C_1>0$ and all
$\eps \in (0,\eps_0)$ using
$e^{2L(\eps)} \leq Ce^{c\sqrt{{-}\log \eps}} \leq C'\eps^{-0.01}$.
Then choosing any $T \geq 4C_1\eps^{-2.01}p(F_n(q_0,g_0)-F_{n,*})$,
we must have $|\cT \cap [0,T]| \geq 3T/4$, and
applying these bounds into \eqref{eq:intEtbound} gives
\[\frac{1}{|\cT \cap [0,T]|} \int_{\cT \cap [0,T]} E(t)\d t
\leq \frac{1}{1-\eps/2}
\cdot \frac{4}{3T}
\Big(\DKL(h\|g_0)+C_1\alpha \eps^{-2.01}p(F_n(q_0,g_0)-F_{n,*})
+\frac{\eps T}{2}\Big).\]
The right side is at most $\eps$ for any $\eps \in (0,\eps_0)$
and the choice $T=T(h,\eps)$ defined by \eqref{def:joint_flow_time},
for sufficiently large and small constants $T_0,\eps_0>0$.
This implies that there exists $t \in \cT \cap [0,T(h,\eps)]$ for which
\[E(t)=\int_{-M}^M -\delta \bar F_n[g_t](\theta) \cdot h(\theta) \leq \eps,\]
establishing \eqref{ineq:subgrad}. For this time $t \in \cT$, we have
\[(\eps/4)^2>B'(t)={-}C_0^2e^{2L(\eps)}p \cdot \frac{\d}{\d t}
F_n(q_t,g_t)
\geq {-}C\eps^{-0.01}\DKL(q_t\|\nu[g_t])
\geq {-}C'\eps^{-0.01}W_2(q_t,\nu[g_t])^2,\]
the last two inequalities holding by \eqref{ineq:flow_descent}
and the $T_2$-transportation inequality implied by the LSI for $\nu[g_t]$
\cite{otto2000generalization}. Rearranging shows \eqref{ineq:langevinconverge}
for any $\eps \in (0,\eps_0)$ and a sufficiently small constant $\eps_0>0$.

For (\ref{ineq:subopt_gap_joint}), since $\bar{F}_n(g_t) \leq
F_n(q_t, g_t) \leq F_n(q_0,g_0)$ where the second inequality follows from
(\ref{ineq:flow_descent}), we have that $g_t$
belongs to $\mathcal{K}_n$ for all $t \geq 0$. Let $t \leq
T(g_*,\eps)$ be as in (\ref{ineq:subgrad}) such that $\int_{-M}^M -\delta
\bar{F}_n[g_{t}](\theta)g_*(\theta) \leq \eps$, and define
$H(\lambda)=\bar{F}_n(\lambda g_* + (1-\lambda)g_{t})$. Since also $g_* \in
\mathcal{K}_n$, by the assumed convexity of $\bar F_n$ on $\mathcal{K}_n$
we have $H''(\lambda) \geq 0$ for all $\lambda \in [0,1]$
so $H(1) \geq H(0) + H'(0)$, which is equivalent to
\[\bar{F}_n(g_{t}) - \bar{F}_n(g_*) \leq \int_{-M}^M -\delta
\bar{F}_n[g_{t}](\theta)g_*(\theta)\d\theta \leq \eps.\]
This shows (\ref{ineq:subopt_gap_joint}).
\end{proof}

\begin{proof}[Proof of Corollary \ref{cor:algo}]
The convergence $g_{t_n} \to g_*$ weakly in probability 
and the statement \eqref{eq:qnuW2converge} follow directly from
from Theorem \ref{thm:consistency} and Corollary
\ref{cor:consistencyposterior}.

For the final statement \eqref{eq:posteriormeanconverge}, note that by
the $T_2$-inequality implied by the LSI for $\nu[g_*]$,
we have $W_2(q,\nu[g_*])^2 \leq C\DKL(q\|\nu[g_*])$ for a constant $C>0$
and any density $q \in \cP_*(\R^p)$ \cite{otto2000generalization}. Hence
\eqref{eq:qnuW2converge} implies also that
\begin{equation}
\frac{1}{p}\,W_2(\nu[g_{t_n}],\nu[g_*])^2 \to 0
\label{eq:W2-joint}
\end{equation}
in probability, i.e.\ for any $\delta>0$, with probability 
(over $\y$ defining the posterior laws $\nu[g_{t_n}],\nu[g_*]$) approaching 1
as $n,p \to \infty$,
there exists a coupling of
$\bvarphi \sim \nu[g_{t_n}]$ and $\tilde \bvarphi \sim \nu[g_*]$ such that
\[\frac{1}{p}\|\bvarphi-\tilde \bvarphi\|_2^2 \leq \delta.\]
Write each coordinate of $\E_g[f(\btheta) \mid \y]$ as
\begin{align*}
\E_g[f(\theta_j) \mid \y]
&=\int_{-M}^M f(\theta_j)P_g(\theta_j \mid \y)\d \theta_j\\
&=\int \left(\int_{-M}^M f(\theta_j)P_g(\theta_j \mid
\varphi_j)\d\theta_j\right) P_g(\varphi_j \mid \y) \d \bvarphi
=\E_g[\langle f(\theta_j) \rangle_{\varphi_j} \mid \y],
\end{align*}
where we denote by
$\langle f(\theta) \rangle_\varphi=\int_{-M}^M f(\theta) P_g(\theta \mid
\varphi)\d\theta$ the posterior average of $f(\theta)$ given $\varphi$.
Observe that
\begin{align*}
\frac{\d}{\d \varphi} \langle f(\theta) \rangle_\varphi
&=\frac{\d}{\d \varphi} \int_{-M}^M f(\theta)
\frac{e^{-\frac{(\theta-\varphi)^2}{2\tau^2}}g(\theta)}
{\int_{-M}^M e^{-\frac{(\theta'-\varphi)^2}{2\tau^2}}g(\theta')\d\theta'}
\d \theta\\
&=\left\langle f(\theta) \cdot
\frac{\theta-\varphi}{\tau^2}\right\rangle_\varphi
-\langle f(\theta)\rangle_\varphi \left\langle
\frac{\theta-\varphi}{\tau^2}\right\rangle_\varphi
=\frac{1}{\tau^2}\Big(\langle f(\theta) \cdot \theta\rangle_\varphi
-\langle f(\theta)\rangle_\varphi \langle \theta\rangle_\varphi\Big)
\end{align*}
(a covariance between $f(\theta)$ and $\theta$ under this posterior law).
If $f:\R \to \R$ is bounded, say as $\|f\|_\infty \leq B$, then this is at most
$2MB/\tau^2$, establishing that $\varphi \mapsto \langle
f(\theta) \rangle_\varphi$ is Lipschitz. Then, under the above coupling,
\begin{align*}
\frac{1}{p}\big\|\E_{g_{t_n}}[f(\btheta) \mid \y]
-\E_{g_*}[f(\btheta) \mid \y]\big\|_2^2
&=\frac{1}{p}\sum_{j=1}^p
\Big(\E_{g_{t_n}}[\langle f(\theta_j) \rangle_{\varphi_j} \mid \y]
-\E_{g_*}[\langle f(\theta_j) \rangle_{\varphi_j} \mid \y]\Big)^2\\
&\leq \frac{1}{p}\sum_{j=1}^p \left(\frac{2MB}{\tau^2}\right)^2
\E[|\varphi_j-\tilde \varphi_j|^2]
\leq \left(\frac{2MB}{\tau^2}\right)^2 \delta.
\end{align*}
As $\delta>0$ here is arbitrary, this implies the convergence
\eqref{eq:posteriormeanconverge} in probability.
(The same proof shows that posterior mean consistency only requires 
the posterior marginals are on average consistent, namely, $\frac{1}{p}\sum_{j=1}^p W_2^2(\varphi_j,\tilde{\varphi}_j)=o(1)$, which is implied by the normalized $W_2$-consistency of the full posteriors in \prettyref{eq:W2-joint}.)
\end{proof}

\subsection{Analysis of $T(g_*,\eps)$}\label{appendix:initialgap}

Suppose that Assumption \ref{assump:design} holds, and
$\tau^2<\sigma^2/\|\X\|_\op^2-\delta$. Suppose also that
$g_*$ and $(g_0,q_0)$ satisfy
\begin{equation}\label{eq:initialcond}
\DKL(g_*\|g_0)=O(1),
\qquad \DKL(q_0\|(\N_\tau*g_0)^{\otimes p})=O(p),
\qquad \E_{q_0}[\|\bvarphi\|_2^2]=O(p).
\end{equation}
We verify the bound \eqref{eq:Tbound} for $T(g_*,\eps)$ under these conditions.

Recall that $F_{n,*}=\min_{g \in \calP(M)} \bar F_n(g)$.
Bounding the exponential term in \eqref{def:npmle} from above
by 1, we have $F_{n,*} \geq \frac{n}{2p} \log 2\pi\sigma^2$.
Next, writing the definition \eqref{eq:varrep} as
\[F_n(q,g)={-}\frac{1}{p}\int
\Big[\log P(\y \mid \bvarphi)
-\log \frac{q(\bvarphi)}{P_g(\bvarphi)}\Big]\d q(\bvarphi)
={-}\frac{1}{p}\E_{\bvarphi \sim q}[\log P(\y \mid \bvarphi)]
+\frac{1}{p}\DKL(q\|P_g),\]
we have
\begin{align}
F_n(q_0,g_0)&= \frac{1}{2p}\Expect_{\bvarphi\sim q_0}[(\y-\X\bvarphi)^\top
\bSigma^{-1} (\y-\X\bvarphi)]+\frac{1}{p} \DKL(q_0\|(\N_\tau*g_0)^{\otimes
p})\nonumber\\
&\hspace{1in}+\frac{n}{2p}\log 2\pi + \frac{1}{2p}\log\det(\bSigma).
\label{eq:F0}
\end{align}
Note that 
\[
\Expect_{\bvarphi\sim q_0}[(\y-\X\bvarphi)^\top \bSigma^{-1} (\y-\X\bvarphi)]
\leq
\|\bSigma^{-1}\|_{\op} \Expect_{\bvarphi\sim q_0}[\|\y-\X\bvarphi\|_2^2] \leq 2
\|\bSigma^{-1}\|_{\op} (\|\y\|_2^2+\E_{q_0}[\|\bvarphi\|_2^2]\|\X\|_\op^2),
\]
where $\|\bSigma^{-1}\|_{\op}=(\sigma^2-\tau^2
\|\X\|_{\op}^2)^{-1}<c/\sigma^2$
and $\|\X\|_\op^2\leq C\sigma^2$  by \prettyref{assump:design}.
Furthermore $\|\y\|_2^2 \leq 2\|\X\|_{\op}^2 \|\btheta\|_2^2 + 2\|\beps\|_2^2
\leq C(p+n)\sigma^2$ with probability approaching 1 as $n,p \to \infty$.
Thus, under \eqref{eq:initialcond},
the first term in \prettyref{eq:F0} is at most $C'(1+\frac{n}{p})$
for a constant $C'>0$. The second term is at most $O(1)$ by
\eqref{eq:initialcond}. For the last two terms
we have $n \log 2\pi+\log \det(\bSigma) \leq n\log 2\pi+n \log
\pnorm{\bSigma}{\op} \leq n\log 2\pi \sigma^2$, so the last two terms are at
most $F_{n,*}$. Thus
\[F_n(q_0,g_0)-F_{n,*} \leq C\Big(1+\frac{n}{p}\Big),\]
and combining with the first condition of \eqref{eq:initialcond} shows
\eqref{eq:Tbound}.

\section{Solution properties of gradient flows}\label{sec:existenceuniqueness}

In this appendix, we prove Theorem \ref{thm:flow_solution}.
Throughout this appendix, $n,p,\X,\y$ are fixed, and $C,C',C_0,K$ etc.\ denote
constants that may depend on $n,p,\X,\y$ unless otherwise specified.

By a solution $\{q_t,g_t\}_{t \in [0,T]}$ to
(\ref{eq:qflow}--\ref{eq:gflow}), we mean functions
$t \mapsto g_t(\theta)$ in $C^1([0,T])$ for each $\theta \in [-M,M]$ and
$(t,\bvarphi) \mapsto q_t(\bvarphi)$ in $C^{1,2}([0,T] \times \R^p)$,
such that $q_t,g_t$ are probability densities for all $t \in [0,T]$ and
(\ref{eq:qflow}--\ref{eq:gflow}) hold in the classical sense of pointwise
equality. We call the solution unique if for any other such solution
$\{\tilde q_t,\tilde g_t\}_{t \in [0,T]}$, we have
$q_t(\bvarphi)=\tilde q_t(\bvarphi)$ and $g_t(\theta)=\tilde g_t(\theta)$
for all $t \in [0,T]$, $\theta \in [-M,M]$, and $\bvarphi \in \R^p$.

We equip $\cP(M)$ with the total variation metric
$\dTV(g,\tilde{g})=\sup_{A\subseteq [-M,M]} \big|g(A) - \tilde{g}(A)\big|$,
or equivalently
$\dTV(g,\tilde{g})=\frac{1}{2}\int_{-M}^M |g(\theta)-\tilde{g}(\theta)|
\d\theta$ when $g,\tilde{g} \in \cP_*(M)$ both admit densities. Then $\cP(M)$ is
complete under $\dTV$. For any $T>0$, let
\begin{align*}
\mathcal{L}_g \equiv \mathcal{L}_g(T)=C\big([0,T], \cP(M)\big),
\end{align*}
the space of continuous (with respect to $\dTV$) $\cP(M)$-valued processes $\{g_t\}_{t \in [0,T]}$,
equipped with the metric
$d_{\infty,\mathrm{TV}}(g,\tilde{g})=\sup_{t\in [0,T]} \dTV(g_t,\tilde{g}_t)$.
Then also $\mathcal{L}_g$ is complete under $d_{\infty,\mathrm{TV}}$.

Next, let
\begin{align*}
E_q \equiv E_q(T)=C([0,T], \R^p),
\end{align*}
the space of continuous $\R^p$-valued processes $\{\bvarphi_t\}_{t \in [0,T]}$,
equipped analogously with
the norm $\|\bvarphi\|_{\infty,2}=\sup_{t\in [0,T]} \pnorm{\bvarphi_t}{2}$. Let $\mathcal{L}_q
\equiv \mathcal{L}_q(T)$ be the space of probability distributions on $E_q$
for which $\E[\|\bvarphi\|_{\infty,2}^2]<\infty$,
equipped with the Wasserstein-2 distance induced by this norm,
\begin{align}\label{def:W2_process}
\notag d_W(q,\tilde{q}) &=\inf_{\bvarphi =
\{\bvarphi_t\}_{t\in[0,T]} \sim q,\, \tilde\bvarphi =
\{\tilde\bvarphi_t\}_{t\in[0,T]} \sim \tilde{q}} \big(\E \pnorm{\bvarphi -
\tilde\bvarphi}{\infty,2}^2\big)^{1/2}\\
&= \inf_{\bvarphi = \{\bvarphi_t\}_{t\in[0,T]} \sim q,\, \tilde\bvarphi =
\{\tilde\bvarphi_t\}_{t\in[0,T]} \sim \tilde{q}} \Big(\E \sup_{t \in
[0,T]} \pnorm{\bvarphi_t - \tilde\bvarphi_t}{2}^2\Big)^{1/2}
\end{align}
where the infimum is taken over all couplings of $q,\tilde{q} \in
\mathcal{L}_q$. Then $E_q$ is complete, and hence $\mathcal{L}_q$ is
also complete by \cite[Theorem 6.18]{villani2009optimal}. We will write
$\{q_t\}_{t \in [0,T]}$ for the marginal laws of $q \in \mathcal{L}_q$.

\subsection{Solution properties of the g-flow}\label{subsec:exist_gflow}

The following result gives the solution properties of the Fisher-Rao $g$-flow
for a fixed law $q \in \mathcal{L}_q(T)$.

\begin{lemma}\label{lem:exist_g_fixed_q}
\phantom{}\\
\begin{enumerate}
\item 
Fix any $T > 0$. Let $q\in \mathcal{L}_q(T)$ be
such that there exists some $K<\infty$ for which its marginal laws $\{q_t\}_{t
\in [0,T]}$ satisfy
\begin{equation}
\sup_{t\in [0,T]} \sup_{j \in [p]} \E_{\bvarphi_t\sim q_t} e^{(8M/\tau^2)|\varphi_{t,j}|}<K.
\label{eq:qtMGF}
\end{equation}
Then for any initialization $g_0 \in \cP_*(M)$ 
that is strictly positive on $[-M,M]$,
there exists a unique solution $\{g_t\}_{t\in [0,T]}$
to
\begin{align}\label{def:ODE_g}
\frac{\d}{\d t}g_t(\theta) = g_t(\theta) \Big(\N_\tau \ast
\frac{\bar{q}_t}{\N_\tau \ast g_t}(\theta) - 1\Big)
\end{align}
such that $g_t \in \cP_*(M)$ for all $t \in [0,T]$. Furthermore,
$g_t(\theta)\geq g_0(\theta) e^{-t}$ for all $t \in [0,T]$ and $\theta \in
[-M,M]$.\\

\item For any $K>0$, there exists $C_0>0$ depending only on $(M,\tau,K)$
such that the following holds: Fix any $T \leq 1/C_0$.
For any $q,\tilde{q} \in \mathcal{L}_q(T)$ both satisfying the property
\prettyref{eq:qtMGF}, and for any $g_0 \in \cP_*(M)$ satisfying $g_0(\theta)>0$
for all $\theta \in [-M,M]$, if $g,\tilde{g}$ are the solutions of
(\ref{def:ODE_g}) associated to $q,\tilde{q}$ given by Part (1), then
\begin{align*}
d_{\infty,\mathrm{TV}}(g,\tilde{g}) \leq 2C_0T \cdot d_W(q,\tilde{q}).
\end{align*}
\end{enumerate}
\end{lemma}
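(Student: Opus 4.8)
The plan is to prove both parts of Lemma~\ref{lem:exist_g_fixed_q} together via an integral reformulation of the ODE~\eqref{def:ODE_g} and a Banach fixed-point / Gr\"onwall argument. First I would rewrite \eqref{def:ODE_g} in integral form: along the flow, $\frac{\d}{\d t}\log g_t(\theta) = \N_\tau*\frac{\bar q_t}{\N_\tau*g_t}(\theta)-1$, so
\[
g_t(\theta) = g_0(\theta)\exp\!\Big(\!\int_0^t \Big[\N_\tau*\tfrac{\bar q_s}{\N_\tau*g_s}(\theta)-1\Big]\d s\Big).
\]
The right-hand side defines a map $\Phi$ on the space $C([0,T],\cP_*(M))$ (after renormalizing to a probability density at each time). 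For Part~(1), I would verify that $\Phi$ maps this space into itself and is a contraction for small $T$, using two ingredients: (i) the pointwise bound $\N_\tau*\frac{\bar q_s}{\N_\tau*g_s}(\theta)\le \int C e^{C|\varphi|}\d\bar q_s(\varphi)$ from Proposition~\ref{prop:Nratiobound}, which together with the moment bound \eqref{eq:qtMGF} (taking $C=2M/\tau^2$ so $4C=8M/\tau^2$, matching the exponent in \eqref{eq:qtMGF}) shows the integrand is bounded by an $(M,\tau,K)$-dependent constant $C_0$; and (ii) a Lipschitz estimate showing $|\N_\tau*\frac{\bar q}{\N_\tau*g}(\theta) - \N_\tau*\frac{\bar q}{\N_\tau*\tilde g}(\theta)| \le C_0\,\dTV(g,\tilde g)$ uniformly in $\theta$, which follows by writing the difference as $\int \N_\tau(\theta-\varphi)\bar q(\d\varphi)\cdot\frac{\N_\tau*(\tilde g - g)(\varphi)}{\N_\tau*g(\varphi)\,\N_\tau*\tilde g(\varphi)}$ and bounding the ratio factors via Proposition~\ref{prop:Nratiobound} and the moment condition. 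Standard Picard iteration then gives existence and uniqueness on $[0,T_0]$ for some small $T_0$, and since $T_0$ depends only on $(M,\tau,K)$ and not on the initial condition, one can extend to all of $[0,T]$ by iterating (the lower bound $g_t(\theta)\ge g_0(\theta)e^{-t}$ propagates and keeps us in $\cP_*(M)$). The lower bound $g_t(\theta)\ge g_0(\theta)e^{-t}$ is immediate from the integral representation since $\N_\tau*\frac{\bar q_s}{\N_\tau*g_s}\ge 0$.

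For Part~(2), which is the stability estimate requested, I would take two laws $q,\tilde q$ both satisfying \eqref{eq:qtMGF} with their associated solutions $g,\tilde g$ (same $g_0$), and estimate $\dTV(g_t,\tilde g_t)$. Using the integral representation and the elementary inequality $|e^a - e^b|\le (e^{|a|}\vee e^{|b|})|a-b|$, together with the uniform bound $\big|\int_0^t[\N_\tau*\frac{\bar q_s}{\N_\tau*g_s}-1]\d s\big|\le C_0 T$ established in Part~(1) (so the prefactor $e^{C_0T}$ is $O(1)$ for $T\le 1/C_0$), one reduces to bounding
\[
\int_{-M}^M g_0(\theta)\Big|\int_0^t\Big[\N_\tau*\tfrac{\bar q_s}{\N_\tau*g_s}(\theta) - \N_\tau*\tfrac{\bar{\tilde q}_s}{\N_\tau*\tilde g_s}(\theta)\Big]\d s\Big|\,\d\theta.
\]
Splitting the inner difference by adding and subtracting $\N_\tau*\frac{\bar{\tilde q}_s}{\N_\tau*g_s}(\theta)$ gives a ``$q$ vs $\tilde q$'' term and a ``$g$ vs $\tilde g$'' term. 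The second term is handled by the Lipschitz estimate from Part~(1), contributing $\le C_0\int_0^t \dTV(g_s,\tilde g_s)\d s$. The first term is $\N_\tau*\frac{\bar q_s - \bar{\tilde q}_s}{\N_\tau*g_s}(\theta)$, which I would bound by $C_0\, W_1(\bar q_s,\bar{\tilde q}_s)$: indeed $\varphi\mapsto \frac{\N_\tau(\theta-\varphi)}{\N_\tau*g_s(\varphi)}$ is $(M,\tau,K)$-Lipschitz (its derivative is controlled using Proposition~\ref{prop:Nratiobound} and the moment bound, after possibly enlarging $C_0$), so Kantorovich duality applies, and then $W_1(\bar q_s,\bar{\tilde q}_s)\le \frac1p\sum_j W_1(q_{s,j},\tilde q_{s,j})\le d_W(q,\tilde q)$ by convexity of $W_1$ and the definition \eqref{def:W2_process} of $d_W$ (using that an optimal coupling of the path laws couples each time marginal). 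Collecting terms yields $\dTV(g_t,\tilde g_t)\le C_0 T\, d_W(q,\tilde q) + C_0\int_0^t \dTV(g_s,\tilde g_s)\d s$ (up to the $O(1)$ prefactor absorbed into $C_0$), and Gr\"onwall's inequality with $T\le 1/C_0$ gives $d_{\infty,\mathrm{TV}}(g,\tilde g)\le 2C_0T\, d_W(q,\tilde q)$, as claimed.

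The main obstacle I anticipate is the bookkeeping around the various ratio and Lipschitz bounds: one needs the single constant $C_0=C_0(M,\tau,K)$ to simultaneously control (a) the magnitude of the drift $\N_\tau*\frac{\bar q_s}{\N_\tau*g_s}$, (b) its TV-Lipschitz dependence on $g_s$, and (c) its $W_1$-Lipschitz dependence on $\bar q_s$ through a Lipschitz-in-$\varphi$ estimate on $\N_\tau(\theta-\cdot)/\N_\tau*g_s(\cdot)$. All three come from Proposition~\ref{prop:Nratiobound} combined with the uniform exponential-moment bound \eqref{eq:qtMGF}, but care is needed that the exponent $8M/\tau^2$ in \eqref{eq:qtMGF} is large enough: Proposition~\ref{prop:Nratiobound} gives ratios of order $e^{(2M/\tau^2)|\varphi|}$, and after multiplying by one more such factor (from differentiating $\log\N_\tau$, whose derivative grows like $|\varphi|/\tau^2$ but is dominated by $e^{(2M/\tau^2)|\varphi|}$ after adjusting constants) and applying Cauchy--Schwarz one stays within exponent $8M/\tau^2$, so \eqref{eq:qtMGF} is exactly what is needed. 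A secondary subtlety is ensuring the renormalization of $g_t$ to a probability density is consistent with \eqref{def:ODE_g}; this is automatic because the drift $\N_\tau*\frac{\bar q_t}{\N_\tau*g_t}(\theta)-1$ has mean zero under $g_t$, so $\frac{\d}{\d t}\int g_t = 0$, but it should be checked that the fixed-point iterates remain normalized (or one works with the projected map on densities, which is equally contractive).
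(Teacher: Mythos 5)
Your proposal is correct and follows essentially the same route as the paper: an exponential integral reformulation of \eqref{def:ODE_g}, a Banach fixed-point argument on a short time interval with sequential extension, and a perturbation estimate that splits the drift difference into a TV-Lipschitz term in $g$ and a coupling term in $q$, closed by Gr\"onwall/rearrangement for $T\lesssim 1/C_0$. Two execution points where the paper differs from your sketch: it does \emph{not} renormalize the fixed-point iterates (it works in the tube $\{g: g_0/2\le g\le 2g_0\}$ and verifies normalization only for the actual fixed point via $\partial_t\int g_t = 1-\int g_t$), and for the $q$-versus-$\tilde q$ term it replaces your plain Kantorovich-duality step --- which does not apply directly because $\varphi\mapsto \N_\tau(\theta-\varphi)/\N_\tau*g_s(\varphi)$ is only locally Lipschitz with weight $e^{(4M/\tau^2)|\varphi|}$ --- by exactly the coupling-plus-Cauchy--Schwarz argument (using the $8M/\tau^2$ moment) that you describe in your closing paragraph.
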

\begin{proof}[Proof of Lemma \ref{lem:exist_g_fixed_q}-\textbf{Part (1)}]
Fix $g_0 \in \cP_*(M)$ with $g_0(\theta)>0$ for all $\theta \in [-M,M]$. Define
\[B(g_0)=\{g \in L^1([-M,M]):g(\theta) \in
[g_0(\theta)/2,\,2g_0(\theta)] \text{ for Lebesgue-a.e.~} \theta \in[-M,M]\},\]
equipped with the $L^1$ metric
$\|g-\tilde{g}\|_{L^1} = \int_{-M}^{M} |g(\theta)-\tilde{g}(\theta)|$.
Note that $L^1([-M,M])$ is complete, and if
$g_n \in B(g_0)$ converges in $L^1$ to some
$g_\ast \in L^1([-M,M])$, then also $g_n(\theta)-g_\ast(\theta)
\to 0$ for Lebesgue-a.e.\ $\theta \in [-M,M]$. Then $g_\ast(\theta) \in
[g_0(\theta)/2,\,2g_0(\theta)]$ for Lebesgue-a.e.\ $\theta \in [-M,M]$,
implying that $B(g_0)$ is closed in $L^1([-M,M])$ and hence complete.

For some $t_0>0$ to be specified later, define
$E(g_0)=C([0,t_0],B(g_0))$ equipped with the metric
$d_{\infty,L^1}(g,\tilde{g})=\sup_{t \in [0,t_0]} \|g_t-\tilde{g}_t\|_{L^1}$.
Then $E(g_0)$ is also complete, by the above completeness of $B(g_0)$.
Define a map $\Gamma:E(g_0) \to E(g_0)$ with $\{g_t\}_{t \in [0,t_0]}
\mapsto \{\Gamma g\}_{t \in [0,t_0]}$, by
\begin{align}\label{def:Gamma_map}
(\Gamma g)_t(\theta)=g_0(\theta) \cdot \exp\left(\int_0^t \Big[\N_\tau \ast
\frac{\bar{q}_s}{\N_\tau \ast g_s}(\theta) - 1\Big]\d s\right).
\end{align}
We claim that for small enough $t_0$, this definition of $\Gamma$ indeed
maps $E(g_0)$ to itself, and furthermore is contractive under $d_{\infty,L^1}$.
To see that $\Gamma$ maps $E(g_0)$ to itself,
note that for any $g \in E(g_0)$ and $t \in [0,t_0]$,
\begin{equation}\label{eq:logglower}
\log\,(\Gamma g)_t(\theta) = \log g_0(\theta) + \int_0^t\Big[\N_\tau \ast
\frac{\bar{q}_s}{\N_\tau \ast g_s}(\theta) - 1\Big]\d s \geq \log g_0(\theta) - t.
\end{equation}
Observe that condition \prettyref{eq:qtMGF}
implies also $\int e^{(8M/\tau^2)|\varphi|}\bar q_s(\varphi)<K$.
Applying this,
$\frac{\N_\tau(\varphi-\theta)}{\N_\tau(\varphi-\theta')}
\leq Ce^{(2M/\tau^2)|\varphi|}$ for all $\theta,\theta' \in [-M,M]$ by
Proposition \ref{prop:Nratiobound},
and $\int_{-M}^M g_s \geq \int_{-M}^M g_0/2=1/2$,
\begin{align}
\N_\tau \ast \frac{\bar{q}_s}{\N_\tau \ast g_s}(\theta) &= \int
\frac{\N_\tau(\varphi-\theta)}{\int_{-M}^M \N_\tau(\varphi -
\theta')g_s(\theta')\d\theta'} \bar{q}_s(\varphi) \d \varphi\notag\\
&\leq \int Ce^{(2M/\tau^2)|\varphi|} \bar{q}_s(\varphi)\d \varphi \cdot
\frac{1}{\int_{-M}^M g_s(\theta') \d\theta'}<C'\label{eq:NqNhbound}
\end{align}
for some $C'>0$. Here and below, $C,C'$ etc.\ are constants depending only on
$(M,\tau,K)$. So
\begin{equation}\label{eq:loggupper}
\log\,(\Gamma g)_t(\theta) = \log g_0(\theta) + \int_0^t\Big[\N_\tau \ast
\frac{\bar{q}_s}{\N_\tau \ast g_s}(\theta) - 1\Big]\d s \leq \log g_0(\theta) +
C't. 
\end{equation}
Choosing $t_0<(\log 2)/\max(C',1)$, 
this shows that for all $t \leq t_0$ and $\theta \in [-M,M]$, we have
$(\Gamma g)_t(\theta) \in (g_0(\theta)/2,\,2g_0(\theta))$, so $(\Gamma g)_t \in
B(g_0)$. Furthermore, it is clear from the definition of $\Gamma$
that if $s \to t$, then
$(\Gamma g)_s(\theta) \to (\Gamma g)_t(\theta)$ pointwise over
$\theta \in [-M,M]$. Then also $\int_{-M}^M |(\Gamma g)_s-(\Gamma g)_t| \to 0$
by (\ref{eq:logglower}), (\ref{eq:loggupper}), and the dominated convergence
theorem, so $t \mapsto (\Gamma g)_t$ is continuous in $L^1$. Thus $\Gamma$ is a
well-defined map from $E(g_0)$ to itself.

To see that $\Gamma$ is contractive, observe that for any $g,\tilde{g} \in E(g_0)$ and $t\in
[0,t_0]$,
\begin{align*}
&\int_{-M}^M \Big|(\Gamma g)_t(\theta) - (\Gamma \tilde{g})_t(\theta)\Big| \d\theta\\
&=\int_{-M}^M g_0(\theta)\Big|\exp\Big(\int_0^t \big[\N_\tau \ast
\frac{\bar{q}_s}{\N_\tau \ast g_s}(\theta) - 1\big] \d s\Big) -
\exp\Big(\int_0^t \big[\N_\tau \ast \frac{\bar{q}_s}{\N_\tau \ast \tilde{g}_s}(\theta) - 1\big] \d s\Big)\Big| \d\theta\\
&\leq \sup_{\theta\in[-M,M]}\Big|\exp\Big(\int_0^t \N_\tau \ast
\frac{\bar{q}_s}{\N_\tau \ast g_s}(\theta)\d s\Big) - \exp\Big(\int_0^t\N_\tau
\ast \frac{\bar{q}_s}{\N_\tau \ast \tilde{g}_s}(\theta) \d s\Big)\Big|\\
&\leq t\cdot \sup_{\theta \in [-M,M]}e^{\xi(\theta)} \cdot \sup_{s\in
[0,t]}\Big|\N_\tau \ast \frac{\bar{q}_s}{\N_\tau \ast g_s}(\theta) - \N_\tau
\ast \frac{\bar{q}_s}{\N_\tau \ast \tilde{g}_s}(\theta)\Big|,
\end{align*}
where $\xi(\theta)$ is some value between $\int_0^t \N_\tau \ast
\frac{\bar{q}_s}{\N_\tau \ast g_s}(\theta)\d s$ and $\int_0^t \N_\tau
\ast \frac{\bar{q}_s}{\N_\tau \ast \tilde{g}_s}(\theta)\d s$. Using
(\ref{eq:NqNhbound}), we have $\sup_{\theta\in[-M,M]} e^{\xi(\theta)}<2$
because $t \leq t_0<(\log 2)/C'$.
Moreover, using again Proposition \ref{prop:Nratiobound} and
the lower bounds $\int_{-M}^M g_s(\theta) \geq 1/2$ and
$\int_{-M}^M \tilde g_s(\theta) \geq 1/2$ for all
$s\in [0,t_0]$, we have
\begin{align*}
&\sup_{\theta \in [-M,M]}\sup_{s\in [0,t]}\Big|\N_\tau \ast
\frac{\bar{q}_s}{\N_\tau \ast g_s}(\theta) - \N_\tau \ast
\frac{\bar{q}_s}{\N_\tau \ast \tilde{g}_s}(\theta)\Big|\\
&\leq \sup_{\theta \in [-M,M]}\sup_{s\in [0,t]} \int_{-M}^M\int
\frac{\N_\tau(\varphi-\theta)\N_\tau(\varphi -
\theta')\bar{q}_s(\varphi)}{\int_{-M}^M
\N_\tau(\varphi - \theta')g_s(\theta')\d\theta'\int_{-M}^M \N_\tau(\varphi -
\theta')\tilde{g}_s(\theta')\d\theta'} \cdot |g_s(\theta') -
\tilde{g}_s(\theta')|\d \varphi\d\theta'\\
&\leq \sup_{s\in [0,t]} \int Ce^{(4M/\tau^2)|\varphi|} \bar{q}_s(\varphi) \d \varphi \cdot
\sup_{s\in[0,t]} \frac{1}{\int_{-M}^M g_s(\theta')\d\theta' \cdot
\int_{-M}^M \tilde{g}_s(\theta')\d\theta'} \cdot \sup_{s\in[0,t]} \int_{-M}^M
|g_s(\theta') - \tilde{g}_s(\theta')|\d\theta'\\
&\leq C''\pnorm{g - \tilde{g}}{\infty,L^1}.
\end{align*}
For some $C''>0$. In summary,
\begin{align*}
\pnorm{\Gamma g - \Gamma \tilde{g}}{\infty,L^1}=\sup_{t\in [0,t_0]} \int_{-M}^M
\Big|(\Gamma g)_t(\theta) - (\Gamma \tilde g)_t(\theta)\Big| \d\theta \leq 2C''t_0
\pnorm{g - \tilde{g}}{\infty,L^1}.
\end{align*}
Choosing $t_0 \leq 1/4C''$, this
establishes that $\Gamma: E(g_0) \rightarrow E(g_0)$ is a
contraction.

By the Banach fixed point theorem, there exists a unique fixed
point $g=\{g_t\}_{t \in [0,t_0]} \in E(g_0)$ of $\Gamma$. Choosing the version
of $g$ satisfying (\ref{def:Gamma_map}) with pointwise equality for all $t \in
[0,t_0]$ and $\theta \in [-M,M]$, we must then have
\begin{equation}\label{eq:gfixedcondition}
\log g_t(\theta) = \log g_0(\theta) + \int_0^t \Big[\N_\tau \ast
\frac{\bar{q}_s}{\N_\tau \ast g_s}(\theta) - 1\Big]\d s
\end{equation}
for all $t \in [0,t_0]$ and $\theta \in [-M,M]$. 
The functions $\bvarphi \mapsto p^{-1}\sum_{j=1}^p
\frac{\N_\tau(\varphi_j-\theta)}{\N_\tau*g_s(\varphi_j)} q_s(\bvarphi)$
are uniformly integrable over $s \in [0,t_0]$, by
Proposition \ref{prop:Nratiobound} and the condition (\ref{eq:qtMGF})
for $q_s$. Thus $s \mapsto \N_\tau*\frac{\bar q_s}{\N_\tau*g_s}(\theta)=p^{-1}\sum_{j=1}^p
\int \frac{\N_\tau(\varphi_j-\theta)}{\N_\tau*g_s(\varphi_j)} q_s(\bvarphi) \d\bvarphi$
is continuous in $s$, so $g_t(\theta)$ is $C^1$ in $t$. Differentiating
(\ref{eq:gfixedcondition}) in $t$ shows that $\{g_t\}_{t \in [0,t_0]}$ solves
(\ref{def:ODE_g}). Furthermore, $g_t \in \cP_*(M)$ for all $t \in [0,t_0]$,
because
\begin{align*}
\partial_t \int_{-M}^M g_t = \int_{-M}^M g_t \cdot\partial_t \log g_t =
\int_{-M}^M g_t \cdot \Big[\N_\tau \ast \frac{\bar{q}_t}{\N_\tau \ast g_t} -
1\Big] = 1 - \int_{-M}^M g_t, 
\end{align*}
which has the unique solution $\int_{-M}^M g_t=1$ with initialization
$\int_{-M}^M g_0 = 1$.

Conversely, let $\{\tilde g_t\}_{t \in [0,t_0]}$ be any other
solution of (\ref{def:ODE_g}) where $\tilde g_t \in \cP_*(M)$ for all $t \in
[0,t_0]$. Suppose by contradiction that there exists some
$\theta \in [-M,M]$ for which $\tilde g_t(\theta) \notin
[g_0(\theta)/2,\,2g_0(\theta)]$ for some $t \in [0,t_0]$. Fixing this $\theta$,
let $t'$ be the infimum of all such $t$. Then 
$\tilde g_{t'}(\theta)=g_0(\theta)/2$ or $\tilde g_{t'}=2g_0(\theta)$ by
continuity of $t \mapsto \tilde g_t(\theta)$. On the other hand, we must have
$\log \tilde g_t(\theta) \geq \log g_0(\theta)-t$ and
$\log \tilde g_t(\theta) \leq \log g_0(\theta)+C't$ for all $t \in [0,t']$,
by the same arguments as (\ref{eq:logglower}) and (\ref{eq:loggupper})
where we may use $\int_{-M}^M \tilde{g}_s=1$ in place of the previously used
condition $\int_{-M}^M g_s \geq 1/2$ to derive (\ref{eq:loggupper}).
Since $t' \leq t_0<(\log 2)/\max(C',1)$, integrating these inequalities leads to
the contradiction that
$\tilde g_{t'}(\theta)>g_0(\theta)/2$ and $\tilde g_{t'}(\theta)<2g_0(\theta)$
strictly. Thus, $\tilde g_t(\theta) \in [g_0(\theta)/2,\,2g_0(\theta)]$ for all
$t \in [0,t_0]$ and $\theta \in [-M,M]$. So
$\{\tilde g_t\}_{t \in [0,t_0]}$ is also a fixed point of $\Gamma$ in $E(g_0)$,
implying by uniqueness of this fixed point that $g_t=\tilde g_t$
as elements of $L^1([-M,M])$ for all $t \in [0,t_0]$. This implies that
the right sides of (\ref{eq:gfixedcondition}) are equal for
$g_t$ and $\tilde g_t$, and consequently the left sides are equal pointwise,
i.e.\ $g_t(\theta)=\tilde g_t(\theta)$ pointwise for all $\theta \in [-M,M]$
and $t \in [0,t_0]$. Thus the solution
$\{g_t\}_{t \in [0,t_0]}$ to (\ref{def:ODE_g}) is unique.

This solution satisfies that $g_{t_0}$ is a strictly positive density in
$\cP_*(M)$. Then we may
apply the above argument sequentially to $[t_0,2t_0]$, $[2t_0,3t_0]$ etc.\ with
$g_{t_0}$, $g_{2t_0}$ etc.\ in place of $g_0$. This proves the existence and
uniqueness of the solution $\{g_t\}_{t \in [0,T]}$ to (\ref{def:ODE_g}).
Finally, from \prettyref{eq:gfixedcondition} we get $g_t(\theta)/g_0(\theta)
\geq e^{-t}$ for all $\theta \in [-M,M]$ and $t \in [0,T]$, 
completing the proof of Part (1).
\end{proof}

\begin{proof}[Proof of Lemma \ref{lem:exist_g_fixed_q}-\textbf{Part (2)}]
By definition,
\begin{align*}
g_t(\theta) &= g_0(\theta) + \int_0^t g_s(\theta)\Big[\N_\tau \ast \frac{\bar{q}_s}{\N_\tau \ast g_s}(\theta) - 1\Big] \d s,\\
\tilde{g}_t(\theta) &= g_0(\theta) + \int_0^t \tilde{g}_s(\theta) \Big[\N_\tau \ast \frac{\bar{\tilde{q}}_s}{\N_\tau \ast g_s}(\theta) - 1\Big] \d s. 
\end{align*}
Then for any $t\in[0,T]$, 
\begin{align}\label{ineq:g_bound_diff_q}
\notag\int_{-M}^M |g_t(\theta) - \tilde{g}_t(\theta)|\d\theta &\leq \int_{-M}^M
\Big|\int_0^t \bigg(g_s(\theta)\Big[\N_\tau \ast \frac{\bar{q}_s}{\N_\tau \ast g_s}(\theta) \Big] - \tilde{g}_s(\theta)\Big[\N_\tau \ast \frac{\bar{\tilde{q}}_s}{\N_\tau \ast \tilde{g}_s}(\theta)\Big]\bigg) \d s\Big|\d\theta\\
\notag&\quad + \int_0^t \int_{-M}^M |g_s(\theta) - \tilde{g}_s(\theta)|\d\theta
\d s\\
&\leq (I) + (II) + (III) + 2T \cdot \dTV(g, \tilde{g}),
\end{align}
where
\begin{align*}
(I) &= \int_{-M}^M\int_0^t \big|g_s(\theta) - \tilde{g}_s(\theta)\big| \cdot \N_\tau \ast \frac{\bar{q}_s}{\N_\tau \ast g_s}(\theta) \d s\d\theta,\\
(II) &=  \int_{-M}^M\int_0^t \tilde{g}_s(\theta) \cdot \Big|\N_\tau \ast \frac{\bar{q}_s - \bar{\tilde{q}}_s}{\N_\tau \ast g_s}(\theta)\Big| \d s\d\theta,\\
(III) &= \int_{-M}^M\int_0^t \tilde{g}_s(\theta) \cdot \Big|\N_\tau \ast \Big(\frac{\bar{\tilde{q}}_s}{\N_\tau \ast g_s}  -  \frac{\bar{\tilde{q}}_s}{\N_\tau \ast \tilde{g}_s}\Big)(\theta)\Big|\d s\d\theta.
\end{align*}
Let $C,C'$ etc.\ denote constants depending only on $(M,\tau,K)$ and changing
from instance to instance.
Applying Proposition \ref{prop:Nratiobound}, we have $\N_\tau \ast
\frac{\bar{q}_s}{\N_\tau \ast g_s}(\theta) \leq C\int
e^{(2M/\tau^2)|\varphi|}\bar{q}_s(\varphi) \d \varphi$ for any $\theta\in[-M,M]$, hence
\begin{align*}
(I) \leq Ct \sup_{s \in [0,t]} \Big(\int_{-M}^M\big|g_s(\theta) -
\tilde{g}_s(\theta)\big|\d\theta \cdot \int e^{(2M/\tau^2)|\varphi|}\bar{q}_s(\varphi) \d
\varphi\Big) \leq C'T \cdot d_{\infty,\mathrm{TV}}(g,\tilde{g}).
\end{align*}
Similarly,
\begin{align*}
(III) &\leq \int_{-M}^M \int_0^t \tilde{g}_s(\theta) \cdot
\N_\tau(\theta - \varphi)
\bar{\tilde{q}}_s(\varphi)
\cdot \frac{\int_{-M}^M \N_\tau(\varphi - \theta')|g_s(\theta') -
\tilde{g}_s(\theta')|\d\theta'}{\N_\tau * g_s(\varphi) \cdot
\N_\tau*\tilde{g}_s(\varphi)}\d s\d\theta\\
&\leq Ct\sup_{s \in [0,t]} \Big(\int_{-M}^M\big|g_s(\theta') -
\tilde{g}_s(\theta')\big|\d\theta' \cdot \int
e^{(4M/\tau^2)|\varphi|}\bar{\tilde{q}}_s(\varphi) \d \varphi\Big) \leq C'T \cdot
d_{\infty,\mathrm{TV}}(g,\tilde{g}).
\end{align*}
To bound $(II)$, let $\bvarphi = \{\bvarphi_t\}_{t\in [0,T]}$ and
$\tilde\bvarphi =
\{\tilde\bvarphi_t\}_{t\in [0,T]}$ be two processes on the same probability space
such that $\bvarphi \sim q$ and $\tilde\bvarphi \sim \tilde{q}$. Then
\begin{align*}
(II) &= \int_{-M}^M\int_0^t \tilde{g}_s(\theta) \cdot \Big|\int
\frac{\N_\tau(\theta - \varphi)}{\N_\tau \ast
g_s(\varphi)}\big(\bar{q}_s(\varphi) - \bar{\tilde{q}}_s(\varphi)\big) \d
\varphi\Big| \d s\d\theta\\
&= \int_{-M}^M\int_0^t \tilde{g}_s(\theta) \cdot \big|\frac{1}{p}\sum_{j=1}^p
\E\mathsf{f}_s(\varphi_{s,j};\theta) - \E\mathsf{f}_s(\tilde{\varphi}_{s,j};\theta)\big|\d s\d\theta,
\end{align*}
where $\mathsf{f}_s(\varphi;\theta) =
\frac{\N_\tau(\varphi-\theta)}{\N_\tau \ast g_s(\varphi)}$. For any
$\theta\in[-M,M]$, applying Proposition \ref{prop:Nratiobound},
\begin{align*}
\big|\mathsf{f}'_s(\varphi;\theta)\big| = \frac{1}{\tau^2}\Big|\frac{\N_\tau(\varphi -
\theta)\cdot \theta}{\N_\tau \ast g_s(\varphi)} - \frac{\N_\tau(\varphi-\theta)
\cdot \int \N_\tau(\varphi-\theta')\cdot\theta'g_s(\theta')\d\theta'}{\big(\N_\tau
\ast g_s(\varphi)\big)^2}\Big| \leq Ce^{(4M/\tau^2)|\varphi|}, 
\end{align*}
so it holds that
\begin{align*}
\big|\E\mathsf{f}_s(\varphi_{s,j};\theta) -
\E\mathsf{f}_s(\tilde{\varphi}_{s,j};\theta)\big| &\leq C\,\E
\big(e^{(4M/\tau^2)|\varphi_{s,j}|} +
e^{(4M/\tau^2)|\tilde{\varphi}_{s,j}|}\big) \cdot |\varphi_{s,j} -
\tilde{\varphi}_{s,j}|\\
&\leq C\big(\E e^{(8M/\tau^2)|\varphi_{s,j}|} + e^{(8M/\tau^2)|\tilde{\varphi}_{s,j}|}\big)^{1/2} \cdot
\big(\E (\varphi_{s,j} - \tilde{\varphi}_{s,j})^2\big)^{1/2}\\
& \leq C'\big(\E (\varphi_{s,j} - \tilde{\varphi}_{s,j})^2\big)^{1/2},
\end{align*}
using the condition \prettyref{eq:qtMGF} for $\{q_t\}$ and
$\{\tilde{q}_t\}$. Therefore,
\begin{align*}
(II) \leq C\int_0^t \frac{1}{p}\sum_{j=1}^p \big[\E (\varphi_{s,j} -
\tilde{\varphi}_{s,j})^2\big]^{1/2}\d s
\leq C'T \cdot \big(\E \sup_{s\in[0,T]} \pnorm{\bvarphi_{s} - \tilde{\bvarphi}_{s}}{2}^2\big)^{1/2}. 
\end{align*}
Taking the infimum over couplings of $q,\tilde q \in \mathcal{L}_q$ yields
$(II) \leq C'T \cdot d_W(q,\tilde q)$.
Plugging the bounds for $(I), (II), (III)$ into (\ref{ineq:g_bound_diff_q})
yields, for some $C_0>0$ (not depending on $T$),
\begin{align*}
d_{\infty,\mathrm{TV}}(g,\tilde{g}) \leq C_0T \cdot (d_{\infty,\mathrm{TV}}(g,\tilde{g}) + d_{W}(q,\tilde{q}))
\end{align*}
Then choosing $T \leq 1/(2C_0)$ and rearranging concludes the proof.
\end{proof}

An immediate corollary is the existence and uniqueness of the univariate g-flow
$\{g_t\}_{t \geq 0}$ in the setting of Theorem \ref{thm:g_flow}.

\begin{corollary}\label{cor:exist_gflow}
Fix any $T>0$.
Let $\bar q \in \cP(\R)$ satisfy $\int e^{\lambda \varphi}\bar
q(\varphi)\d\varphi<\infty$ for all $\lambda \in \R$. Then, given any initial
density $g_0 \in \cP_*(M)$ that is strictly positive on $[-M,M]$,
there exists a unique solution $\{g_t\}_{t \in [0,T]}$ to (\ref{def:g_flow})
where $g_t \in \cP_*(M)$ for all $t \in [0,T]$. Furthermore,
$g_t(\theta)\geq g_0(\theta) e^{-t}$ for all $\theta \in [-M,M]$ and $t \in
[0,T]$.
\end{corollary}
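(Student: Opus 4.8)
The plan is to obtain Corollary \ref{cor:exist_gflow} as an immediate specialization of Lemma \ref{lem:exist_g_fixed_q}-Part~(1). The point is to realize the fixed distribution $\bar q$ on $\R$ as the coordinate-wise average marginal of a time-constant law $q$ on path space. Concretely, I would take $p=1$ and let $q\in\mathcal{L}_q(T)$ be the law on $E_q(T)=C([0,T],\R)$ of the constant sample path $\varphi_t\equiv\varphi_0$, $t\in[0,T]$, with $\varphi_0\sim\bar q$. Its marginal at each time $t$ is $\bar q$, so the average marginal of \eqref{def:density_avg} (with $p=1$) is $\bar q_t=\bar q$ for all $t$, and consequently the ODE \eqref{def:ODE_g} for this $q$ is exactly the flow \eqref{def:g_flow}.

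Before invoking the lemma I would verify its two hypotheses on this $q$. First, membership $q\in\mathcal{L}_q(T)$ requires $\E\big[\pnorm{\varphi}{\infty,2}^2\big]=\E_{\varphi_0\sim\bar q}[\varphi_0^2]<\infty$, which holds since $\int e^{\lambda\varphi}\bar q(\d\varphi)<\infty$ for all $\lambda$; the $\dTV$-continuity in $t$ and the Wasserstein-2 integrability needed for $q$ to lie in $\mathcal{L}_q(T)$ are trivial for a constant path. Second, the exponential-moment bound \eqref{eq:qtMGF} reads $\sup_{t\in[0,T]}\E_{\varphi_0\sim\bar q}\,e^{(8M/\tau^2)|\varphi_0|}<K$ for some finite $K$, which follows from $\int e^{\lambda\varphi}\bar q(\d\varphi)<\infty$ at $\lambda=\pm 8M/\tau^2$ together with $e^{a|\varphi|}\le e^{a\varphi}+e^{-a\varphi}$. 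With the initialization hypothesis $g_0\in\cP_*(M)$ strictly positive on $[-M,M]$ passed along verbatim, Lemma \ref{lem:exist_g_fixed_q}-Part~(1) then delivers the unique solution $\{g_t\}_{t\in[0,T]}$ to \eqref{def:ODE_g}, equivalently \eqref{def:g_flow}, with $g_t\in\cP_*(M)$, together with the lower bound $g_t(\theta)\ge g_0(\theta)e^{-t}$.

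The essentially only content here is the reduction itself: checking that a constant path is an admissible element of $\mathcal{L}_q(T)$ and that its average marginal is the prescribed $\bar q$. I do not expect a genuine obstacle, since the substantive work — the Banach fixed-point construction on $E(g_0)$, the propagation of strict positivity and of mass conservation $\int_{-M}^M g_t=1$, and uniqueness — is already carried out inside Lemma \ref{lem:exist_g_fixed_q}-Part~(1). One could alternatively re-run that contraction argument from scratch with $\bar q$ frozen, but I would avoid this, as it merely duplicates the proof of the lemma.
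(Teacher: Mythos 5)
Your reduction is exactly the paper's own proof: the paper also realizes $\bar q$ as the average marginal of a time-constant process (with i.i.d.\ coordinates drawn from $\bar q$, rather than your $p=1$ specialization, an immaterial difference) and invokes Lemma \ref{lem:exist_g_fixed_q}-(1) after noting that the moment-generating-function hypothesis gives \eqref{eq:qtMGF}. The proposal is correct.
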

\begin{proof}
Let $q \in \mathcal{L}_q$ be the law of a constant process
$\bvarphi_t=\bvarphi_0$ for all $t \in [0,T]$, where coordinates of $\bvarphi_0$ are
i.i.d.\ with distribution $\bar q$. Then the condition (\ref{eq:qtMGF}) for $q$
is satisfied by the given moment generating function condition for $\bar q$, and
$\bar q_t=\bar q$ for all $t \in [0,T]$, so the result follows from
Lemma \ref{lem:exist_g_fixed_q}-(1).
\end{proof}

\subsection{Solution properties of the q-flow}\label{subsec:exist_qflow}

Recall the negative log-posterior-density $U_g(\cdot)$ of $\bvarphi$
from (\ref{Ug_smooth}).
The following result gives the solution properties of the Wasserstein $q$-flow
for any fixed $g=\{g_t\}_{t \in [0,T]} \in \mathcal{L}_g(T)$.

\begin{lemma}\label{lem:exist_q_fixed_g}
\phantom{}\\
\begin{enumerate}
\item Fix any $T > 0$, and let $g = \{g_t\}_{t\in [0,T]} \in \mathcal{L}_g(T)$. 
Then for any initialization $q_0 \in \cP(\R^p)$,
there is a strong solution $\bvarphi=\{\bvarphi_t\}_{t \in [0,T]}$ to the Langevin diffusion
\begin{align}\label{def:fix_g_sde}
\d \bvarphi_t = -\nabla U_{g_t}(\bvarphi_t) \d t + \sqrt{2}\,\d \B_t,
\end{align}
which is pathwise unique given the initialization $\bvarphi_0 \sim q_0$.

The marginal laws
$\{q_t\}_{t \in [0,T]}$ of $\{\bvarphi_t\}_{t \in [0,T]}$
form the unique weak solution valued in $\cP(\R^p)$ to the Fokker-Planck PDE
\begin{equation}\label{eq:fokkerplanckappendix}
\partial_t q_t(\bvarphi) = \nabla \cdot [q_t(\bvarphi) \nabla
U_{g_t}(\bvarphi)] + \Delta q_t(\bvarphi),
\end{equation}
in the sense that $q_t \to q_0$ weakly as $t \to 0$ and
\begin{equation}\label{eq:weaksolution}
0=\int_0^t
\int \Big(\partial_s \zeta_s(\bvarphi)-\nabla U_{g_s}(\bvarphi)^\top
\nabla \zeta_s(\bvarphi)+\Delta \zeta_s(\bvarphi)\Big)q_s(\d\bvarphi)\d s
\end{equation}
for any smooth function $(t,\varphi) \mapsto \zeta_t(\bvarphi)$ with compact
support in $(0,T) \times \R^p$.\\

\item Let $q_0 \in \cP(\R^p)$ satisfy
$\E_{\bvarphi_0 \sim q_0} e^{\blambda^\top \bvarphi_0}<\infty$
for all $\blambda \in \R^p$. Then, for each $\blambda \in \R^p$,
there exists some $C(q_0,\blambda)<\infty$ (depending also on
$n,p,\X,\y,\sigma,\tau,M$) such that for any $T>0$ and $g \in \mathcal{L}_g(T)$,
the law $q \in \mathcal{L}_q(T)$ of the solution $\{\bvarphi_t\}_{t \in [0,T]}$
of Part (1) satisfies
\begin{align}\label{ineq:mgf_bound}
\sup_{t \in [0,T]} \E_{\bvarphi_t \sim q_t} e^{\blambda^\top
\bvarphi_t}<C(q_0,\blambda).
\end{align}
In particular, there exists some $K(q_0)<\infty$ such that for any $T>0$
and $g \in \mathcal{L}_g(T)$,
\begin{align}\label{ineq:mgf_bound_component}
\sup_{t \in [0,T]} \max_{j\in[p]} \E_{\bvarphi_t \sim q_t}
e^{(8M/\tau^2)|\varphi_{t,j}|} \leq K(q_0).
\end{align}

\item For any $K>0$, there exist $T_0,C_0>0$ (depending on $K$ and also
$(n,p,\X,\y,\sigma,\tau,M)$)
such that the following holds: Let $q_0 \in \cP(\R^p)$
be any initialization as in Part (2), such that 
(\ref{ineq:mgf_bound_component}) holds with $K(q_0)=K$.
Then for any $T \leq T_0$ and $g,\tilde{g} \in \mathcal{L}_g(T)$, if
$q,\tilde{q} \in \mathcal{L}_q(T)$ are the associated solutions of
(\ref{def:fix_g_sde}) given by Part (1), then
\begin{align*}
d_W(q,\tilde{q}) \leq C_0T \cdot d_{\infty,\mathrm{TV}}(g,\tilde{g}).
\end{align*}
\end{enumerate}
\end{lemma}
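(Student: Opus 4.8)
The common starting point for all three parts is to record the explicit structure of the drift. By (\ref{eq:qflow}) and Tweedie's formula $\frac{[\N_\tau*g]'(\varphi)}{[\N_\tau*g](\varphi)} = \frac{1}{\tau^2}\big(\E_g[\theta\mid\theta+\tau Z=\varphi]-\varphi\big)$,
\[
-\nabla U_{g_t}(\bvarphi) = -A\bvarphi + \X^\top\bSigma^{-1}\y + \tfrac{1}{\tau^2}\,c_{g_t}(\bvarphi),
\qquad A := \X^\top\bSigma^{-1}\X + \tau^{-2}\Id \succeq \tau^{-2}\Id \succ 0,
\]
where $A$ and $\X^\top\bSigma^{-1}\y$ do not depend on the prior, and $c_g(\bvarphi):=\big(\E_g[\theta\mid\theta+\tau Z=\varphi_j]\big)_{j=1}^p$ has all coordinates in $[-M,M]$ and is Lipschitz in $\bvarphi$ with constant $M^2/\tau^2$ (the Lipschitz constant equals $\big\|\tfrac{1}{\tau^2}\diag(\var_g[\theta\mid\varphi_j])_{j=1}^p\big\|_{\op}$, bounded as in the computation of $V''$ in the proof of \prettyref{thm:q_lsi}), both bounds holding uniformly over $g\in\cP(M)$. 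For \textbf{Part (1)} this drift is thus globally Lipschitz in $\bvarphi$ uniformly in $t$, and continuous in $t$: TV-convergence $g_t\to g_s$ forces convergence of $\int\theta\N_\tau(\varphi_j-\theta)\,g_t(\d\theta)$ and of $\int\N_\tau(\varphi_j-\theta)\,g_t(\d\theta)$, the latter bounded away from $0$ for each fixed $\varphi_j$. Classical SDE theory then gives a pathwise-unique strong solution of (\ref{def:fix_g_sde}) for any $\bvarphi_0\sim q_0$. Applying It\^o's formula to compactly supported smooth test functions shows its marginals $q_t$ satisfy (\ref{eq:fokkerplanckappendix}) in the weak sense (\ref{eq:weaksolution}), with $q_t\Rightarrow q_0$ as $t\to0$ by path continuity; uniqueness within the class of probability-measure solutions follows from pathwise uniqueness of (\ref{def:fix_g_sde}), e.g.\ via the superposition principle or uniqueness for the associated martingale problem.

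For \textbf{Part (2)} I would use variation of constants,
\[
\bvarphi_t = e^{-At}\bvarphi_0 + \int_0^t e^{-A(t-s)}\Big(\X^\top\bSigma^{-1}\y + \tfrac{1}{\tau^2}c_{g_s}(\bvarphi_s)\Big)\d s + \sqrt{2}\int_0^t e^{-A(t-s)}\,\d\B_s,
\]
together with the dissipativity $\|e^{-At}\|_{\op}\le e^{-t/\tau^2}$. The middle integral is bounded \emph{deterministically} in $\ell_2$-norm, uniformly in $t\ge0$, by $D:=\int_0^\infty e^{-u/\tau^2}\d u\cdot\big(\|\X^\top\bSigma^{-1}\y\|_2+\sqrt p\,M/\tau^2\big)$; the stochastic integral is centered Gaussian with covariance $2\int_0^t e^{-2A(t-s)}\d s\preceq\tau^2\Id$, so $\E\exp\big(\sqrt2\,\blambda^\top\!\int_0^t e^{-A(t-s)}\d\B_s\big)\le e^{\tau^2\|\blambda\|_2^2/2}$; and, since it is independent of $\B$, $\E\exp\big(\blambda^\top e^{-At}\bvarphi_0\big)=\E\exp\big((e^{-At}\blambda)^\top\bvarphi_0\big)\le\sup_{\|\blambda'\|_2\le\|\blambda\|_2}\E e^{\blambda'^\top\bvarphi_0}=:M_0(\|\blambda\|_2)<\infty$, finite because $\blambda'\mapsto\E e^{\blambda'^\top\bvarphi_0}$ is convex and finite on $\R^p$, hence bounded on compacts. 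Pulling out the deterministic factor and using independence of the first and third terms gives $\sup_{t\ge0}\E e^{\blambda^\top\bvarphi_t}\le e^{\|\blambda\|_2 D}\,M_0(\|\blambda\|_2)\,e^{\tau^2\|\blambda\|_2^2/2}$, which is (\ref{ineq:mgf_bound}); taking $\blambda=\pm(8M/\tau^2)\e_j$ and $e^{a|x|}\le e^{ax}+e^{-ax}$ yields (\ref{ineq:mgf_bound_component}). The bound is $T$-free precisely because of the strict dissipativity of $A$.

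For \textbf{Part (3)} I would couple $q$ and $\tilde q$ synchronously: run (\ref{def:fix_g_sde}) for $g$ and for $\tilde g$ with the same Brownian motion and the same initial point $\bvarphi_0=\tilde\bvarphi_0\sim q_0$. Then $\Delta_t:=\bvarphi_t-\tilde\bvarphi_t$ has no martingale part (it cancels), is $C^1$ in $t$, and
\[
\tfrac{\d}{\d t}\|\Delta_t\|_2^2 = -2\Delta_t^\top A\Delta_t + \tfrac{2}{\tau^2}\Delta_t^\top\big(c_{g_t}(\bvarphi_t)-c_{g_t}(\tilde\bvarphi_t)\big) + \tfrac{2}{\tau^2}\Delta_t^\top\big(c_{g_t}(\tilde\bvarphi_t)-c_{\tilde g_t}(\tilde\bvarphi_t)\big).
\]
The first term is $\le0$ and is dropped; the second is $\le\tfrac{2M^2}{\tau^4}\|\Delta_t\|_2^2$ by the uniform Lipschitz bound on $c_{g_t}$. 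For the third I would establish the ratio estimate
\[
\big|\E_g[\theta\mid\theta+\tau Z=\varphi] - \E_{\tilde g}[\theta\mid\theta+\tau Z=\varphi]\big| \le C\,e^{(2M/\tau^2)|\varphi|}\,\dTV(g,\tilde g),
\qquad \varphi\in\R,\ g,\tilde g\in\cP(M),
\]
by writing the difference as $\frac{1}{[\N_\tau*g](\varphi)}\big[(\mathrm{num}_g-\mathrm{num}_{\tilde g})-\E_{\tilde g}[\theta\mid\varphi](\mathrm{den}_g-\mathrm{den}_{\tilde g})\big]$ with $\mathrm{num},\mathrm{den}$ the numerator and denominator of the posterior mean, and bounding each ratio $\N_\tau(\varphi-\theta)/[\N_\tau*g](\varphi)\le e^{M^2/2\tau^2}e^{(2M/\tau^2)|\varphi|}$ via \prettyref{prop:Nratiobound}; hence $\|c_{g_t}(\tilde\bvarphi_t)-c_{\tilde g_t}(\tilde\bvarphi_t)\|_2^2\le C^2 d_{\infty,\mathrm{TV}}(g,\tilde g)^2\sum_{j=1}^p e^{(4M/\tau^2)|\tilde\varphi_{t,j}|}$. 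Dividing through by $2\|\Delta_t\|_2$ (smoothing $\|\Delta_t\|_2^2+\epsilon$ and sending $\epsilon\downarrow0$ to handle its zero set) gives the pathwise inequality $\tfrac{\d}{\d t}\|\Delta_t\|_2\le\tfrac{M^2}{\tau^4}\|\Delta_t\|_2 + \tfrac{C}{\tau^2}d_{\infty,\mathrm{TV}}(g,\tilde g)\,R_t$ with $R_t:=\big(\sum_j e^{(4M/\tau^2)|\tilde\varphi_{t,j}|}\big)^{1/2}$. Pathwise Gr\"onwall (with $\Delta_0=0$) followed by Cauchy--Schwarz in the form $\big(\int_0^T R_s\,\d s\big)^2\le T\int_0^T R_s^2\,\d s$ yields $\E\sup_{t\le T}\|\Delta_t\|_2^2\le e^{2M^2T/\tau^4}\tfrac{C^2}{\tau^4}\,T^2\,p\,\big(\sup_{s\le T,\,j}\E e^{(4M/\tau^2)|\tilde\varphi_{s,j}|}\big)\,d_{\infty,\mathrm{TV}}(g,\tilde g)^2$, the last supremum being a finite constant by (\ref{ineq:mgf_bound_component}) applied to $\tilde q$ (using $e^{(4M/\tau^2)|x|}\le1+e^{(8M/\tau^2)|x|}$). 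Choosing $T_0$ with $2M^2T_0/\tau^4\le1$ and recalling $d_W(q,\tilde q)^2\le\E\|\bvarphi-\tilde\bvarphi\|_{\infty,2}^2$ for any coupling gives $d_W(q,\tilde q)\le C_0T\,d_{\infty,\mathrm{TV}}(g,\tilde g)$ for suitable $C_0$.

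The main obstacle is Part (3): one must isolate the bounded-and-Lipschitz structure of the prior-dependent part $c_{g_t}$ of the drift, control its \emph{dependence on the prior} through a ratio estimate whose constant degrades exponentially in $|\varphi|$, and then absorb that exponential factor using the $T$-uniform moment generating function bound from Part (2). Obtaining the claimed \emph{linear} (rather than $\sqrt T$) dependence on $T$ is the subtle point, and forces one to run the Gr\"onwall estimate on $\|\Delta_t\|_2$ itself, combined with the Cauchy--Schwarz trick above, rather than on $\|\Delta_t\|_2^2$.
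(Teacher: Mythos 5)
Your proposal is correct in all three parts, and the key structural observations (the drift decomposes as $-A\bvarphi$ plus a bounded, uniformly-Lipschitz prior-dependent term; synchronous coupling plus a TV-perturbation bound on the posterior mean $\theta_g(\varphi)$ for Part (3)) match the paper's. The interesting divergence is in Part (2): the paper proves the $T$-uniform MGF bound by applying It\^o's formula to a truncated exponential $f(\bvarphi)=e^{\u^\top\bvarphi}h(\u^\top\bvarphi)$ for each eigenvector $\u$ of $A$, deriving the differential inequality $\frac{\d}{\d t}\E f(\bvarphi_t)\le -\frac{B\mu}{4}\E f(\bvarphi_t)+C'$ and then recombining eigendirections via H\"older; you instead use the variation-of-constants (mild solution) representation, bound the Duhamel integral deterministically, and compute the Gaussian MGF of the Wiener integral exactly, exploiting independence of $\bvarphi_0$ from the driving Brownian motion. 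Your route is more explicit and avoids the cutoff construction, at the cost of needing the mild-solution identity (routine here since the drift is globally Lipschitz and the non-linear part is bounded); both correctly extract $T$-independence from the dissipativity $A\succeq\tau^{-2}\Id$. In Part (3) the difference is cosmetic: you run a pathwise Gr\"onwall on $\|\Delta_t\|_2$ after dropping the $-2\Delta_t^\top A\Delta_t$ term, whereas the paper bounds $\sup_t\|\Delta_t\|_2^2$ by an $L^2$-in-time Cauchy--Schwarz and absorbs the self-interaction term into the left side by taking $T_0$ small; both yield the linear-in-$T$ bound, so your remark that linearity \emph{forces} the first-order Gr\"onwall is not quite right, though your version is clean. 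The exponent discrepancy in your ratio estimate ($e^{(2M/\tau^2)|\varphi|}$ versus the paper's $e^{(4M/\tau^2)|\varphi|}$) is immaterial since both are dominated by the $8M/\tau^2$ moment bound of Part (2).
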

\begin{proof}[Proof of Lemma \ref{lem:exist_q_fixed_g}-\textbf{Part (1)}]
We show that $\nabla U_{g}: \R^p \rightarrow \R^p$ is Lipschitz uniformly
over $g \in \cP(M)$. Observe that by Tweedie's formula,
\begin{align}
\nabla U_{g}(\bvarphi)&=-\X^\top \bSigma^{-1}(\y - \X \bvarphi) -
\left(\frac{[\N_\tau \ast g]'(\varphi_j)}{[\N_\tau \ast
g](\varphi_j)}\right)_{j=1}^p\label{eq:Uratiog}\\
&=\big(\X^\top \bSigma^{-1}\X+\tau^{-2}\Id\big)\bvarphi-\X^\top \bSigma^{-1}\y
-\tau^{-2}\theta_g(\bvarphi)\label{eq:Upostmean}
\end{align}
where $\theta_g(\varphi)=\E_g[\theta \mid \varphi]$ is the posterior mean in the
scalar model
\[\varphi=\theta+z, \qquad \theta \sim g, \qquad z \sim \N(0,\tau^2),\]
and $\theta_g(\bvarphi)$ is its entrywise application to
$\bvarphi=(\varphi_1,\ldots,\varphi_p)$. We have
$|\theta_g'(\varphi)|=|\tau^{-2}\Var[\theta \mid \varphi]| \leq M^2/\tau^2$.
Then $\pnorm{\nabla U_{g}(\bvarphi) - \nabla U_{g}(\tilde\bvarphi)}{2} \leq
C \pnorm{\bvarphi-\tilde\bvarphi}{2}$ for some $C>0$ (depending on
$\X,\y,\sigma,\tau,M$), uniformly over $g \in \cP(M)$, as claimed.

From the weak continuity of $t \mapsto g_t$ and the form
(\ref{eq:Uratiog}) for $U_g$, we see that $(t,\bvarphi)
\mapsto U_{g_t}(\bvarphi)$ is jointly continuous.
Then the existence of a pathwise unique strong
solution $\bvarphi=\{\bvarphi_t\}_{t \in [0,T]}$
to the Langevin diffusion (\ref{def:fix_g_sde}) follows from
\cite[Theorem 8.3]{legall2016brownian}. By Ito's formula, for any smooth bounded
function $\zeta_t(\bvarphi)$, we have
\begin{equation}\label{eq:ito}
\E[\zeta_t(\bvarphi_t)-\zeta_0(\bvarphi_0)]=\E\left[\int_0^t
\Big(\partial_s \zeta_s(\bvarphi_s)
-\nabla U_{g_s}(\bvarphi_s)^\top \nabla \zeta_t(\bvarphi_s)
+\Delta \zeta_s(\bvarphi_s) \Big)\d s\right]
\end{equation}
and the statement (\ref{eq:weaksolution}) follows from
applying the boundary condition $\zeta_t(\bvarphi_t)=\zeta_0(\bvarphi_0)=0$.
This is the unique weak solution to
(\ref{eq:fokkerplanckappendix}) for which every $q_t \in \cP(\R^p)$,
by \cite[Theorem 4.9]{bogachev2011uniqueness}.
\end{proof}

\begin{proof}[Proof of Lemma \ref{lem:exist_q_fixed_g}-\textbf{Part (2)}]
Let $\{\bvarphi_t\}_{t \in [0,T]}$ be a solution of the Langevin diffusion
in Part (1), with initialization $\bvarphi_0 \sim q_0$.
Let $\bGamma = \X^\top \bSigma^{-1}\X + \tau^{-2}\Id \in \R^{p\times p}$, and
let $\u \in
\R^p$ be an eigenvector of $\bGamma$ (having arbitrary $\ell_2$ norm) with eigenvalue
$\mu>0$.
Fix some $B > 0$ to be specified later. Then
\begin{align*}
\E e^{\u^\top \bvarphi_t} = \E e^{\u^\top \bvarphi_t}\bm{1}_{\u^\top \bvarphi_t
\leq B} + \E e^{\u^\top \bvarphi_t}\bm{1}_{\u^\top \bvarphi_t > B} \leq e^{B} +
\E e^{\u^\top \bvarphi_t}\bm{1}_{\u^\top \bvarphi_t > B}. 
\end{align*}

To bound the second term, let $h$ be a twice
differentiable function such that: (i) $h(u) = 0$ for $u\leq B -1$; (ii)
$h(u) = 1$ for $u \geq B$; (iii) $h(u),h'(u) \geq 0$ for $u\in\R$; (iv)
$\sup_{u\in \R} \max(h'(u),|h''(u)|)<C$ for some $C > 0$.
Then with $f(\bvarphi)=e^{\u^\top \bvarphi}h(\u^\top \bvarphi)$,
\begin{align*}
\E e^{\u^\top \bvarphi_t}\bm{1}_{\u^\top \bvarphi_t > B} \leq \E e^{\u^\top
\bvarphi_t}h(\u^\top \bvarphi_t) = \E f(\bvarphi_t). 
\end{align*}
Using Ito's formula, we have
\begin{align}\label{eq:mgf_ito}
\notag \frac{\d}{\d t} \E f(\bvarphi_t) &= \E \iprod{\nabla f(\bvarphi_t)}{\d \bvarphi_t} + \E
\Tr(\nabla^2 f(\bvarphi_t))\\
&= \E \iprod{\nabla f(\bvarphi_t)}{-\nabla U_{g_t}(\bvarphi_t)}+ \E
\Tr(\nabla^2 f(\bvarphi_t)).
\end{align}
For the first term, note from (\ref{eq:Upostmean}) that $-\nabla
U_{g_t}(\bvarphi) = -\bGamma \bvarphi+ \s$ where
$\s=\X^\top \bSigma^{-1}\y +\tau^{-2}\theta_{g_t}(\bvarphi)$
Since $g_t$ is supported on $[-M,M]$, there exists some $C>0$
(depending on $\X,\y,\sigma,\tau,M$)
such that $\pnorm{\s}{}<C$. Hence using
\begin{align*}
\nabla f(\bvarphi) = \u e^{\u^\top \bvarphi}h(\u^\top \bvarphi) + \u e^{\u^\top
\bvarphi} h'(\u^\top \bvarphi) = \u \cdot \big(f(\bvarphi) + e^{\u^\top
\bvarphi} h'(\u^\top \bvarphi)\big),
\end{align*} 
and applying $\bGamma \u=\mu \cdot \u$, $f(\cdot) \geq 0$, and $h'(\cdot) \geq 0$,
we have
\begin{align*}
\iprod{\nabla f(\bvarphi)}{-\nabla U_{g_t}(\bvarphi)} &= -\big(f(\bvarphi) +
e^{\u^\top \bvarphi} h'(\u^\top \bvarphi)\big) \cdot \u^\top\bGamma \bvarphi +
\big(f(\bvarphi) + e^{\u^\top \bvarphi} h'(\u^\top \bvarphi)\big) \u^\top \s\\
&\leq -(\mu \cdot \u^\top \bvarphi - \pnorm{\u}{}\pnorm{\s}{}) f(\bvarphi) - \mu
e^{\u^\top \bvarphi} h'(\u^\top \bvarphi) \cdot \u^\top \bvarphi +
\pnorm{\u}{}\pnorm{\s}{} \cdot e^{\u^\top \bvarphi} h'(\u^\top \bvarphi).
\end{align*}
Choose $B=B(\pnorm{\u}{},\mu) > 1$ large enough such that
$\mu \cdot \u^\top \bvarphi-\|\u\|\,\|\s\|>B\mu/2$ whenever $\bvarphi \in \R^p$ satisfies
$\u^\top \bvarphi > B - 1$. Then, the first term is bounded above by
$-B\mu/2 \cdot f(\bvarphi)$ (which holds also when $\u^\top \bvarphi \leq B-1$, in which
case $f(\bvarphi) = 0$). The second term $-\mu e^{\u^\top \bvarphi} h'(\u^\top
\bvarphi) \cdot \u^\top \bvarphi$ is non-positive, because $h'(\u^\top
\bvarphi) \geq 0$ and equals 0 unless $\u^\top \bvarphi>B-1>0$.
For the third term, since also $h'(\u^\top \bvarphi) = 0$ unless $\u^\top
\bvarphi \leq B$, it is
bounded by some $C = C(\pnorm{\u}{},\mu) > 0$. In summary, 
\begin{align*}
\E \iprod{\nabla f(\bvarphi_t)}{-\nabla U_{g_t}(\bvarphi_t)} \leq
-\frac{B\mu}{2}\,\E f(\bvarphi_t) + C.
\end{align*}
On the other hand, since
\begin{align*}
\nabla^2 f(\bvarphi) = \u\u^\top\Big(e^{\u^\top \bvarphi}h(\u^\top \bvarphi) +
2e^{\u^\top \bvarphi} h'(\u^\top \bvarphi) + e^{\u^\top \bvarphi}h''(\u^\top
\bvarphi)\Big), 
\end{align*}
we have
\begin{align*}
\tr\big(\nabla^2 f(\bvarphi)\big) = \pnorm{\u}{}^2 \cdot f(\bvarphi) +
\pnorm{\u}{}^2\Big(2e^{\u^\top \bvarphi} h'(\u^\top \bvarphi) + e^{\u^\top
\bvarphi}h''(\u^\top \bvarphi)\Big)
\end{align*}
Using $h'(\u^\top \bvarphi) = h''(\u^\top \bvarphi) = 0$ unless $\u^\top
\bvarphi \leq B$, and choosing
$B=B(\|\u\|,\mu)$ large enough so that $B\mu\geq 4\pnorm{\u}{}^2$, for some
$C=C(\|\u\|,\mu) > 0$ we have
\begin{align*}
\E \tr(\nabla^2 f(\bvarphi_t)) \leq \frac{B\mu}{4}\E f(\bvarphi_t) + C. 
\end{align*}
Plugging the above bounds into (\ref{eq:mgf_ito}), we have
\begin{align*}
\frac{\d}{\d t} \E f(\bvarphi_t) \leq -\frac{B\mu}{4}\E f(\bvarphi_t) + C'. 
\end{align*}
This implies that $\E f(\bvarphi_t) \leq x(t)$ where
$x(t)=(\E[f(\bvarphi_0)]+4C'/B\mu)e^{-tB\mu/4}-4C'/B\mu$ is the solution of
$x'(t)=-(B\mu/4)x(t)+C'$ with initialization $x(0)=\E f(\bvarphi_0)$.
If $\E e^{\u^\top \bvarphi_0}<A$, then also
$\E f(\bvarphi_0) \leq \E e^{\u^\top \bvarphi_0}<A$
and $\E e^{\u^\top \bvarphi_t} \leq e^B+\E f(\bvarphi_t)$, so this shows
that there exists some $C=C(A,\|\u\|,\mu)>0$ for which
$\E e^{\u^\top \bvarphi_t}<C$ and every $t \in [0,T]$.

Finally, for any $\blambda \in \R^p$, we may write $\blambda=\sum_{j=1}^p \u_j$
where $\u_1,\ldots,\u_p$ are eigenvectors of $\bGamma$. Under the given
finite moment-generating-function
condition for $q_0$, there exists some $A=A(q_0,\blambda)>0$ for which
$\E_{\bvarphi_0 \sim q_0} e^{p\u_j^\top\bvarphi_0}<A$ for every $j=1,\ldots,p$.
Then for any $t \in [0,T]$, applying the above with $\u=p\u_j$ yields
\begin{align*}
\E e^{\blambda^\top \bvarphi_t}=\E \prod_{j=1}^p e^{\u_j^\top \bvarphi_t} \leq
\prod_{j=1}^p \big(\E e^{p\u_j^\top \bvarphi_t} \big)^{1/p}<C(q_0,\blambda)
\end{align*}
for some $C(q_0,\blambda)<\infty$. This shows (\ref{ineq:mgf_bound}),
and specializing to $\blambda \in \{\pm (8M/\tau^2)\e_j\}_{j \in [p]}$
for the standard basis vectors $\e_j$ shows (\ref{ineq:mgf_bound_component}).
\end{proof}

\begin{proof}[Proof of Lemma \ref{lem:exist_q_fixed_g}-\textbf{Part (3)}]
On a probability space containing $\bvarphi_0 \sim q_0$ and a standard
Brownian motion $\{\B_t\}_{t \in [0,T]}$, define
$\bvarphi=\{\bvarphi_t\}_{t \in [0,T]}$ and
$\tilde\bvarphi=\{\tilde{\bvarphi}_t\}_{t \in [0,T]}$ as the (pathwise unique)
solutions to
\begin{align*}
\bvarphi_t &= \bvarphi_0 + \int_0^t -\nabla U_{g_s}(\bvarphi_s)\d s + \sqrt{2}
\B_t, \\
\tilde{\bvarphi}_t &= \bvarphi_0 + \int_0^t -\nabla
U_{\tilde{g}_s}(\tilde{\bvarphi}_s)\d s + \sqrt{2} \B_t.
\end{align*}
Thus $\bvarphi \sim q$ and $\tilde \bvarphi \sim \tilde q$.
For any $t\in[0,T]$, 
\begin{align*}
\sup_{t\in[0,T]}\pnorm{\bvarphi_t - \tilde{\bvarphi}_t}{}^2 &= \sup_{t\in[0,T]}\left\|\int_0^t
\big(\nabla U_{g_s}(\bvarphi_s) - \nabla U_{\tilde{g}_s}(\tilde{\bvarphi}_s)\big) \d
s\right\|^2\\
&\leq T \cdot \sup_{t\in[0,T]} \int_0^t \biggpnorm{\nabla U_{g_s}(\bvarphi_s) -
\nabla U_{\tilde{g}_s}(\tilde{\bvarphi}_s)}{}^2\d s\\
&\leq 2T^2 \sup_{t\in[0,T]} \biggpnorm{\nabla U_{\tilde{g}_t}(\bvarphi_t) -
\nabla U_{\tilde{g}_t}(\tilde{\bvarphi}_t)}{}^2 + 2T \int_0^T
\biggpnorm{\nabla U_{g_t}(\bvarphi_t) - \nabla
U_{\tilde{g}_t}(\bvarphi_t)}{}^2\d t.
\end{align*}

As shown in the proof of Part (1),
$\nabla U_g: \R^p \rightarrow \R^p$ is Lipschitz uniformly over $g \in \cP(M)$,
so the first term is at most $CT^2\sup_{t\in[0,T]}\pnorm{\bvarphi_t -
\tilde{\bvarphi}_t}{}^2 \leq \frac{1}{2} \sup_{t\in[0,T]}\pnorm{\bvarphi_t -
\tilde{\bvarphi}_t}{}^2$ for some $C,T_0>0$ and any $T \leq T_0$.
For the second term, using (\ref{eq:Upostmean}),
\begin{align*}
\pnorm{\nabla U_{g_t}(\bvarphi_t) - \nabla U_{\tilde{g}_t}(\bvarphi_t)}{}^2 \leq
C\sum_{j=1}^p \big(\theta_{g_t}(\varphi_{t,j})-\theta_{\tilde{g}_t}(\varphi_{t,j})\big)^2.
\end{align*}
Applying Proposition \ref{prop:Nratiobound}, we have
\begin{align*}
\big|\theta_{g}(\varphi) - \theta_{\tilde{g}}(\varphi)\big|
&=\Big|\frac{\int_{-M}^M \theta \cdot \N_\tau(\varphi - \theta) g(\theta)\d
\theta}{\int_{-M}^M
\N_\tau(\varphi - \theta)g(\theta)\d \theta} - \frac{\int_{-M}^M \theta\cdot \N_\tau(\varphi -
\theta) \tilde{g}(\theta)\d \theta}{\int_{-M}^M \N_\tau(\varphi -
\theta)\tilde{g}(\theta)\d \theta}\Big|\\
&\leq \frac{\int_{-M}^M \theta\cdot \N_\tau(\varphi - \theta) |g(\theta) - \tilde{g}(\theta)|\d
\theta}{\int_{-M}^M \N_\tau(\varphi - \theta)g(\theta)\d \theta}\\
&\quad + \int_{-M}^M \theta\cdot \N_\tau(\varphi - \theta) \tilde{g}(\theta)\d \theta \cdot
\frac{\int_{-M}^M \N_\tau(\varphi - \theta)|g(\theta) - \tilde{g}(\theta)|\d
\theta}{\int_{-M}^M
\N_\tau(\varphi - \theta)g(\theta)\d \theta \cdot \int_{-M}^M \N_\tau(\varphi -
\theta)\tilde{g}(\theta)\d \theta}\\
&\leq Ce^{(4M/\tau^2)|\varphi|} \cdot \int_{-M}^M |g(\theta) -
\tilde{g}(\theta)| \d \theta.
\end{align*}
In summary, there exist $C',T_0>0$ such that for any $T \leq T_0$,
\begin{align*}
\sup_{t\in[0,T]} \pnorm{\bvarphi_t - \tilde{\bvarphi}_t}{}^2  \leq C'T
\int_0^T \sum_{j=1}^p
e^{(4M/\tau^2)|\varphi_{t,j}|}\d t \cdot d_{\infty,\mathrm{TV}}(g,\tilde{g})^2.
\end{align*}
Taking expectation on both sides and
using the bound (\ref{ineq:mgf_bound_component}) with $K(q_0)=K$ gives
$d_W^2(q,\tilde q) \leq C_0^2T^2 d_{\infty,\mathrm{TV}}(g,\tilde{g})^2$,
for some $C_0>0$ depending on $K$. This completes the proof.
\end{proof}

We now provide a bootstrapping lemma that will allow us to inductively establish the
smoothness of the preceding solutions $\{q_t,g_t\}$ in both space and time.

\begin{lemma}\label{lemma:smoothness}
In the setting of Lemma \ref{lem:exist_q_fixed_g}, suppose in addition that
$g_t \in \cP_*(M)$ admits a density for each $t \in [0,T]$, where
$t \mapsto g_t(\theta)$ belongs to $C^r((0,T))$ for some $r \geq 0$ and every
$\theta \in [-M,M]$,
and $\sup_{t \in (0,T)}\int_{-M}^M |\frac{\d^a}{\d t^a}
g_t(\theta)|\d\theta<\infty$ for all $a=0,\ldots,r$. Then:
\begin{enumerate}
\item The map
$t \mapsto f_t(\theta):=\N_\tau*\frac{\bar q_t}{\N_\tau*g_t}(\theta)$ belongs
also to $C^r((0,T))$, and for all $a=0,\ldots,r$,
\begin{equation}\label{eq:dtguniformbound}
\sup_{t \in (0,T)} \sup_{\theta \in [-M,M]}
\left|\frac{\d^a}{\d t^a} f_t(\theta)\right|<\infty.
\end{equation}
\item If $q_0 \in \cP_*(\R^p)$ has a continuous bounded density, 
then the marginal laws $\{q_t\}_{t \in [0,T]}$ of $\{\bvarphi_t\}_{t \in [0,T]}$
admit continuous densities bounded uniformly over $t \in [0,T]$. Furthermore if $r \geq 1$, then
$(t,\bvarphi) \mapsto q_t(\bvarphi)$ belongs to $C^{r-1,\infty}((0,T) \times
\R^p)$.
\end{enumerate}
\end{lemma}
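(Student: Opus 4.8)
The plan is to treat both parts via the probabilistic representations already available from Lemma~\ref{lem:exist_q_fixed_g}: write $f_t(\theta)=\int \Psi_t(\bvarphi;\theta)\,q_t(\d\bvarphi)$ with $\Psi_t(\bvarphi;\theta)=\frac1p\sum_{j=1}^p \frac{\N_\tau(\varphi_j-\theta)}{\N_\tau*g_t(\varphi_j)}$, and view $q_t$ as the marginal law of the diffusion $\{\bvarphi_t\}$ solving (\ref{def:fix_g_sde}). Since $n,p,\X,\y$ are fixed, the $q$-equation is a non-degenerate linear Fokker--Planck equation in $\R^p$ whose coefficients are $C^\infty$ in $\bvarphi$ and only as rough in $t$ as $g_t$. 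The two sources of $t$-dependence in $f_t$ are the integrand $\Psi_t$ (through $g_t$) and the law $q_t$ (through the flow), and the first step is to control each.

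For the integrand, differentiation of $\N_\tau*g_t(\varphi)=\int \N_\tau(\varphi-\theta')\,g_t(\d\theta')$ under the integral is justified by the hypothesis $\sup_t\int|\partial_t^a g_t|<\infty$, giving $\partial_t^a[\N_\tau*g_t](\varphi)=\N_\tau*\partial_t^a g_t(\varphi)$ for $a\le r$; moreover $\N_\tau*g_t(\varphi)$ is bounded below by an explicit positive Gaussian and, applying Proposition~\ref{prop:Nratiobound} to the positive and negative parts of $\partial_t^a g_t$, the ratios $|\partial_t^a[\N_\tau*g_t](\varphi)|/[\N_\tau*g_t](\varphi)$ grow at most like $e^{(2M/\tau^2)|\varphi|}$. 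By the quotient rule and the same ratio bound for $\N_\tau(\varphi_j-\theta)/[\N_\tau*g_t](\varphi_j)$ itself, $t\mapsto\Psi_t(\bvarphi;\theta)$ is $C^r$ with $\sup_{t\in(0,T),\,\theta\in[-M,M]}|\partial_t^a\Psi_t(\bvarphi;\theta)|\le \frac1p\sum_j C_a e^{c_a|\varphi_j|}$; using Tweedie's formula $\partial_\varphi\log[\N_\tau*g](\varphi)=\tau^{-2}(\theta_g(\varphi)-\varphi)$ and $|\theta_g(\varphi)-\theta|\le 2M$, the spatial derivatives $\nabla_\bvarphi\Psi_t$, $\Delta_\bvarphi\Psi_t$ and their $t$-derivatives obey bounds of the same exponential-growth type. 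To handle the evolution of $q_t$, apply It\^o's formula to $\Psi_t(\bvarphi_t;\theta)$ along (\ref{def:fix_g_sde}); the local-martingale part is a true martingale by the exponential moment bounds $\sup_t\E_{q_t}e^{\blambda^\top\bvarphi}<\infty$ of Lemma~\ref{lem:exist_q_fixed_g}, so taking expectations gives $f_t(\theta)=f_0(\theta)+\int_0^t \E\big[(\partial_s\Psi_s-\nabla U_{g_s}\!\cdot\!\nabla_\bvarphi\Psi_s+\Delta_\bvarphi\Psi_s)(\bvarphi_s;\theta)\big]\d s$. Continuity in $s$ of the integrand (from continuity of $s\mapsto g_s$, path-continuity of $\bvarphi_s$, and uniform integrability supplied by the exponential moments) yields $f\in C^1$ with the stated uniform bound on $\partial_t f_t$; the new integrand has the same structure ($C^\infty$ in $\bvarphi$, $C^{r-1}$ in $s$), so one iterates to obtain $f_t\in C^r((0,T))$ together with (\ref{eq:dtguniformbound}).

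For Part~(2), the drift $b_t(\bvarphi):=-\nabla U_{g_t}(\bvarphi)=-\bGamma\bvarphi+\X^\top\bSigma^{-1}\y+\tau^{-2}\theta_{g_t}(\bvarphi)$ with $\bGamma=\X^\top\bSigma^{-1}\X+\tau^{-2}\Id$ is a linear-plus-bounded field, jointly continuous in $(t,\bvarphi)$, globally Lipschitz in $\bvarphi$, $C^\infty$ in $\bvarphi$ with all spatial derivatives bounded (they are posterior cumulants of a $[-M,M]$-valued variable), and $C^r$ in $t$ by the argument of Part~(1). Comparing (\ref{def:fix_g_sde}) with the Ornstein--Uhlenbeck process $\d\bm{\psi}_t=-\bGamma\bm{\psi}_t\,\d t+\sqrt2\,\d\B_t$ via Girsanov (using boundedness of $b_t+\bGamma\bvarphi$) gives Gaussian-type upper bounds on the transition density $p^t_0(\bvarphi',\bvarphi)$, whence $q_t(\bvarphi)=\int p^t_0(\bvarphi',\bvarphi)q_0(\bvarphi')\,\d\bvarphi'\le\|q_0\|_\infty\int p^t_0(\bvarphi',\bvarphi)\,\d\bvarphi'$ is bounded uniformly over $(t,\bvarphi)\in(0,T]\times\R^p$, since the $\bvarphi'$-integral is controlled by that for the OU kernel, which equals $e^{t\Tr\bGamma}\le e^{T\Tr\bGamma}$; combined with boundedness of $q_0$ and $q_t\to q_0$ as $t\downarrow0$, this gives continuous densities with $\sup_{t\in[0,T]}\|q_t\|_\infty<\infty$. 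For $t>0$, interior parabolic regularity applies: non-degeneracy of the noise together with $C^\infty$-in-$\bvarphi$ coefficients makes $q$ locally $C^\infty$ in $\bvarphi$ (De Giorgi--Nash--Moser plus iterated Schauder/$L^p$ estimates), and differentiating $\partial_t q=\Delta q+\nabla\!\cdot\!(q\,b_t)$ successively in $t$ produces, at each stage, source terms carrying one further $t$-derivative of $b_t$; since $b_t$ is only $C^r$ in $t$ this bootstrap closes after $r-1$ differentiations, delivering $q\in C^{r-1,\infty}((0,T)\times\R^p)$.

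The main obstacle is the non-compactness of $\R^p$ combined with the unboundedness of both $\Psi_t$ (through $1/[\N_\tau*g_t]$) and the drift $\nabla U_{g_t}$. The whole argument rests on showing that every quantity differentiated under the expectation --- $\Psi_t$, its space and time derivatives, and the products $\nabla U_{g_s}\!\cdot\!\nabla_\bvarphi\Psi_s$ --- is dominated by a fixed exponential-growth function of $\bvarphi$, uniformly in $t\in(0,T)$ and $\theta\in[-M,M]$, so that the exponential moment bounds of Lemma~\ref{lem:exist_q_fixed_g} license the It\^o computation, the exchange of $\d/\d t$ with the integral, and the martingale property. The point that makes this possible is the cancellation between the decaying Gaussian $\N_\tau(\varphi_j-\theta)$ and the exploding factor $1/[\N_\tau*g_t](\varphi_j)$, captured by Proposition~\ref{prop:Nratiobound}, which keeps the growth merely exponential rather than Gaussian-squared. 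A secondary technical point is the precise time-regularity count in Part~(2); the conservative bound $C^{r-1}$ is all that is needed for the inductive proof of Theorem~\ref{thm:flow_solution}, so I would not try to optimize it.
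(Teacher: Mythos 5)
Your Part (1) is essentially the paper's proof: the same probabilistic representation $f_t(\theta)=\E_{\bvarphi\sim q_t}[\Psi_t(\bvarphi;\theta)]$, the same It\^o-formula induction producing at each stage an integrand built from the ratios in Proposition~\ref{prop:Nratiobound}, and the same use of the exponential moment bounds of Lemma~\ref{lem:exist_q_fixed_g}-(2) to justify the martingale property, the exchange of limits, and the uniform bounds. Nothing to add there.

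Part (2) departs from the paper, and one step has a genuine gap. For the uniform bound on $q_t$, you write $q_t(\bvarphi)\le\|q_0\|_\infty\int p^t_0(\bvarphi',\bvarphi)\,\d\bvarphi'$ and claim the $\bvarphi'$-integral is ``controlled by that for the OU kernel'' via Girsanov. That control does not follow from Girsanov as stated: the natural Cauchy--Schwarz bound gives $p^t_0(\bvarphi',\bvarphi)\le(\tilde p^t_0(\bvarphi',\bvarphi))^{1/2}(p^{\mathrm{OU},t}(\bvarphi',\bvarphi))^{1/2}$ where $\tilde p$ is the transition density of the process with the bounded perturbation doubled, and bounding $\int\tilde p^t_0(\bvarphi',\bvarphi)\,\d\bvarphi'$ is exactly the problem you started with, so the argument does not close. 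A clean repair in your framework is a maximum-principle estimate: since $\partial_t q=\Delta q+b_t\cdot\nabla q+(\nabla\cdot b_t)q$ with $\nabla\cdot b_t=-\Tr\bGamma+\tau^{-2}\sum_j\theta_{g_t}'(\varphi_j)\le -\Tr\bGamma+pM^2/\tau^4$ bounded above, one gets $\|q_t\|_\infty\le e^{Kt}\|q_0\|_\infty$ directly. (The paper instead cites global density bounds for Fokker--Planck solutions under moment conditions, \cite[Corollaries~6.4.3, 7.3.8]{bogachev2022fokker}.) For the $C^{r-1,\infty}$ smoothness your route --- interior $L^p$/Schauder regularity in space followed by repeated time-differentiation of the equation, losing one order because $b_t$ is only $C^r$ in $t$ --- is a legitimate alternative to the paper's Duhamel/heat-kernel bootstrap with the Ladyzhenskaya estimate and Sobolev embedding, and it lands on the same regularity count; it buys familiarity with classical parabolic theory at the cost of having to track carefully the continuity in $t$ of the spatial derivatives at each stage of the bootstrap, which your sketch elides but which is standard.
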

\begin{proof}
For Part (1), we claim by induction on $a \in \{0,1,\ldots,r\}$
that $f_t(\theta)$ belongs to
$C^a((0,T))$, the bound (\ref{eq:dtguniformbound}) holds for $a$, and
$\frac{\d^a}{\d t^a} f_t(\theta)=\E_{\bvarphi \sim
q_t}[F_{t,\theta}^a(\bvarphi)]$ for some $F_{t,\theta}^a(\bvarphi)$
that is a polynomial of the arguments
\begin{equation}\label{eq:polyterms}
\varphi_k, \qquad
\frac{\int_{-M}^M \frac{\d^i}{\d \varphi_k^i} \N_\tau(\varphi_k-\theta)
\frac{\d^j}{\d t^j} g_t(\theta)\d\theta}{\N_\tau * g_t(\varphi_k)}
\quad \text{ and } \quad
\frac{\frac{\d^i}{\d \varphi_k^i} \N_\tau(\varphi_k-\theta)}{\N_\tau * g_t(\varphi_k)}
\end{equation}
for $k \in \{1,\ldots,p\}$, $i \geq 0$, and $j \in \{0,\ldots,a\}$.

For the base case $a=0$, observe that
\[f_t(\theta)=\N_\tau*\frac{\bar q_t}{\N_\tau*g_t}(\theta)
=\E_{\bvarphi \sim q_t}[F_{t,\theta}^0(\bvarphi)],
\qquad F_{t,\theta}^0(\bvarphi):=\frac{1}{p}\sum_{k=1}^p
\frac{\N_\tau(\varphi_k-\theta)}{\N_\tau*g_t(\varphi_k)}.\]
By the bound $\sup_{t \in [0,T]} \sup_{\theta \in [-M,M]}
|\N_\tau(\varphi_k-\theta)/\N_\tau*g_t(\varphi_k)|
\leq Ce^{(2M/\tau^2)|\varphi|}$ from Proposition \ref{prop:Nratiobound} and
Lemma \ref{lem:exist_q_fixed_g}-(2),
we see that $\sup_{t \in [0,T]} \sup_{\theta \in [-M,M]}
|f_t(\theta)|<\infty$, and the maps
$\bvarphi \mapsto F_{t,\theta}^0(\bvarphi)q_t(\bvarphi)$ for $t \in [0,T]$ are
uniformly integrable, hence $f_t(\theta)$ is continuous in $t$. So the inductive
claims hold for $a=0$. Assuming they hold for $a-1$ where $1 \leq a \leq r$, we
have that
$F_{t,\theta}^{a-1}(\bvarphi)$ belongs to $C^{1,\infty}((0,T) \times \R^p)$.
Then by Ito's formula,
\[\frac{\d^a}{\d t^a} f_t(\theta)
=\frac{\d}{\d t} \E[F_{t,\theta}^{a-1}(\bvarphi)]
=\E_{\bvarphi \sim q_t}\left[\left(\frac{\d}{\d t} F_{t,\theta}^{a-1}
-\nabla U_{g_t}^\top \nabla F_{t,\theta}^{a-1}+\Delta F_{t,\theta}^{a-1}\right)
(\bvarphi)\right].\]
From the form (\ref{eq:Uratiog}), we have that each coordinate of
$\nabla U_{g_t}(\bvarphi)$ is a polynomial of terms of the form
(\ref{eq:polyterms}), and it is easily verified
that differentiating any term (\ref{eq:polyterms}) in either $t$ or $\varphi_k$
yields again a polynomial of terms (\ref{eq:polyterms}), where differentiating
in $t$ increases the maximum order $j$ of the time derivative of $g_t(\theta)$
by at most 1. Thus the above may be written as
$\frac{\d^a}{\d t^a} f_t(\theta)=\E_{\bvarphi \sim
q_t}[F_{t,\theta}^a(\bvarphi)]$ where $F_{t,\theta}^a(\bvarphi)$ is a
polynomial of the terms (\ref{eq:polyterms}) with $j \leq a$.
By Proposition \ref{prop:Nratiobound}, for each fixed $i \geq 0$ we have
$\sup_{t \in [0,T]}\sup_{\theta \in [-M,M]}|\frac{\d^i}{\d \varphi_k^i} \N_\tau(\varphi_k-\theta)/\N*g_t(\varphi_k)|
\leq Ce^{C|\varphi_k|}$ for some $C>0$.
Then, applying the given condition $\sup_{t \in (0,T)}
\int_{-M}^M |\frac{\d^a}{\d t^a}g_t(\theta)|\d\theta<\infty$ and
Lemma \ref{lem:exist_q_fixed_g}-(2),
we get $\sup_{t \in (0,T)} \sup_{\theta \in [-M,M]}
|\frac{\d^a}{\d t^a} f_t(\theta)|<\infty$. Furthermore, 
$\bvarphi \mapsto F_{t,\theta}^a(\bvarphi)q_t(\bvarphi)$ is uniformly
integrable over $t \in (0,T)$,
so $\frac{\d^a}{\d t^a} f_t(\theta)$ is continuous
in $t$. This completes the induction, showing the inductive claim
for all $a=0,\ldots,r$, and hence establishes Part (1).

For Part (2), let us write $q_t(\bvarphi) \in H^{j,k} \equiv H^{j,k}((0,T)
\times \R^p)$ if all components of the weak derivatives $D_t^a D_\bvarphi^b
q_t(\bvarphi)$ for $a \leq j$ and $b \leq k$ exist and belong to $L^r((0,T)
\times \R^p)$ (defined with respect to Lebesgue measure) for every $r<\infty$.
We write $q_t(\bvarphi) \in H_\text{loc}^{j,k}$ if for any compact subset $S
\subset (0,T) \times \R^p$, these derivatives belong to $L^r(S)$ for every
$r<\infty$. By Holder's inequality and the Leibniz rule for weak
differentiation, $H_\text{loc}^{j,k}$ is closed under sums and products.
Lemma \ref{lem:exist_q_fixed_g}-(2) implies that for any fixed
$k \geq 0$, some $C,C'>0$, and all $t \in [0,T]$,
\begin{equation}\label{eq:phiboundedmoments}
\E_{\bvarphi \sim q_t}[\|\bvarphi\|_2^k]
\leq C\sum_{j=1}^p \E_{\bvarphi \sim q_t}[|\varphi_j|^k] \leq C'.
\end{equation}
Then the expression (\ref{eq:Upostmean}) for $\nabla U_{g_t}$ 
implies, for any fixed $k>0$, that
$\E_{\bvarphi \sim q_t}[\|\nabla U_{g_t}(\bvarphi)\|^k]<C$ uniformly over $t \in [0,T]$.
Then \cite[Corollaries 6.4.3, 7.3.8]{bogachev2022fokker} show that
$\{q_t\}_{t \in [0,T]}$ admit continuous densities with respect to Lebesgue
measure, uniformly bounded over $t \in [0,T]$, which further satisfy
\begin{equation}\label{eq:qtbounded}
q_t(\bvarphi) \in H_\text{loc}^{0,1}.
\end{equation}
To improve this smoothness to $H_{\text{loc}}^{r,\infty}$, we
apply a similar argument as in \cite[Theorem 5.1]{jordan1998variational}:
Fix $(t,\bvarphi) \in (0,T) \times \R^p$ and apply 
(\ref{eq:ito}) with
$\zeta_s(\x)=\omega(s)\eta(\x)\N_{2(\delta+t-s)}(\bvarphi-\x)$,
where $\delta>0$, $\eta:\R^p \to [0,1]$ is a smooth compactly supported
function on $\R^p$, and $\omega:\R \to [0,1]$ is a smooth compactly supported
function satisfying $\omega(s)=0$ for all $s \leq 0$ and $\omega(t)=1$.
Then, applying $\partial_s \N_{2(\delta+t-s)}(\bvarphi-\x)
+\Delta \N_{2(\delta+t-s)}(\bvarphi-\x)=0$ (the heat equation) and the chain
rule, (\ref{eq:ito}) yields
\begin{align*}
&\int \eta(\x)q_t(\x)\N_{2\delta}(\bvarphi-\x)\d\x\\
&=\int_{-\infty}^t \int \Big[\omega'(s)\eta(\x)-\omega(s)\nabla U_{g_s}(\x)^\top
\nabla \eta(\x)+\omega(s)\Delta \eta(\x)\Big]\N_{2(\delta+t-s)}(\bvarphi-\x)
q_s(\x)\d\x\d s\\
&\hspace{1in}+\int_{-\infty}^t \int \Big[\omega(s)\eta(\x)\nabla
U_{g_s}(\x)-\omega(s)\nabla \eta(\x)\Big]^\top
\nabla \N_{2(\delta+t-s)}(\bvarphi-\x)q_s(\x)\d\x\d s.
\end{align*}
Applying the weak differentiability in $\x$ of $q_s(\x)$
from (\ref{eq:qtbounded})
to integrate by parts in the second term, and taking the limit $\delta \to 0$,
this yields
\begin{align}
\eta(\bvarphi)q_t(\bvarphi)
&=\int_{-\infty}^t \Big[\Big(q_s[\omega'(s)\eta-\omega(s)\nabla U_{g_s}^\top \nabla
\eta+\omega(s)\Delta \eta]\nonumber\\
&\hspace{1in}-\nabla \cdot (q_s[\omega(s)\eta\nabla U_{g_s}-\omega(s)\nabla \eta])
\Big)*\N_{t-s}\Big](\bvarphi)\d s.\label{eq:bootstrap}
\end{align}
The estimate of
\cite[Chapter 4, (3.1)]{ladyzhenskaia1968linear} shows the implication
\begin{equation}\label{eq:boosting}
f_t(\bvarphi) \in H^{j,k} \text{ for all } 2j+k \leq 2m \Rightarrow
\int_{-\infty}^t [f_s*\N_{t-s}](\bvarphi)\d s \in H^{j,k}
\text{ for all } 2j+k \leq 2m+2.
\end{equation}
From (\ref{eq:Uratiog}) and the given condition
$\sup_{t \in (0,T)}
\int_{-M}^M |\frac{\d^a}{\d t^a} g_t(\theta)|\d\theta<\infty$
for each $a \leq r$, it is direct to check
that each coordinate of $(t,\bvarphi) \mapsto \nabla U_{g_t}(\bvarphi)$ belongs to
$H_\text{loc}^{r,\infty}$. Then (\ref{eq:boosting}) yields the implications
$q_t(\bvarphi) \in H_\text{loc}^{0,1} \Rightarrow q_t(\bvarphi) \in
H_\text{loc}^{0,2} \Rightarrow q_t(\bvarphi) \in
H_\text{loc}^{0,3} \Rightarrow \ldots$ where the initial condition comes from
(\ref{eq:qtbounded}), hence $q_t(\bvarphi) \in H_\text{loc}^{0,\infty}$.
Then (\ref{eq:boosting}) further yields the implications
$q_t(\bvarphi) \in H_\text{loc}^{0,\infty} \Rightarrow
q_t(\bvarphi) \in H_\text{loc}^{1,\infty} \Rightarrow
\ldots \Rightarrow q_t(\bvarphi) \in H_\text{loc}^{r,\infty}$. Finally, by the
Sobolev embedding theorem (c.f.~\cite[Theorem 4.2 Part II]{adams2003sobolev}) we have
$H_\text{loc}^{r,\infty} \subset C^{r-1,\infty}((0,T) \times \R^p)$.
\end{proof}

\subsection{Completing the proof}\label{subsec:joint_flow_existence}

We are now ready to prove Theorem \ref{thm:flow_solution}. 

\begin{proof}[Proof of Theorem \ref{thm:flow_solution}]
We start by showing the existence and uniqueness of a solution of
(\ref{eq:qflow}--\ref{eq:gflow}) on $[0,t_0]$ for some small $t_0$ to be
specified later.
For the given initialization $q_0 \in \cP_*(\R^p)$, let $K>0$ be such that
(\ref{ineq:mgf_bound_component}) holds with $K(q_0)=K$, and define
\begin{align}\label{def:Lq_subset}
\mathcal{L}_q'(t_0)=\{q \in \mathcal{L}_q(t_0): \max_{t\in
[0,t_0]}\max_{j\in[p]} \E_{\bvarphi_t \sim q_t}e^{(8M/\tau^2)|\varphi_{t,j}|}
\leq K\}.
\end{align}
Observe that $\mathcal{L}'_q(t_0)$ is closed in $\mathcal{L}_q(t_0)$. Indeed,
since $d_W$ metrizes weak convergence, for any sequence $q_n \in
\mathcal{L}'_q(t_0)$ converging to $q \in \mathcal{L}_q(t_0)$, it holds that
$q_n \to q$ weakly, and thus
$q_{n,t} \to q_t$ weakly for all $t\in [0,t_0]$. Then by Fatou's lemma,
$\E_{\bvarphi_t \sim q_{t}}e^{(8M/\tau^2)|\varphi_{t,j}|} \leq \liminf_n
\E_{\bvarphi_t \sim q_{n,t}}e^{(8M/\tau^2)|\varphi_{t,j}|} \leq K(q_0)$.
This shows that $\mathcal{L}'_q(t_0)$ is closed, and hence also complete.

Define a map $\Gamma:\mathcal{L}'_q(t_0) \times \mathcal{L}_g(t_0)
\to \mathcal{L}'_q(t_0) \times \mathcal{L}_g(t_0)$ where
$\Gamma(q,g)=(\Gamma_q(g),\Gamma_g(q))$, as follows:
\begin{itemize}
\item $\Gamma_q(g)$ is the law of the unique strong solution to
(\ref{def:fix_g_sde}) defined by $g$, with initialization $q_0$,
as guaranteed by Lemma \ref{lem:exist_q_fixed_g}-(1). Here,
Lemma \ref{lem:exist_q_fixed_g}-(2) ensures that
$\Gamma_q(g) \in \mathcal{L}_q'(t_0)$.
\item $\Gamma_g(q)$ is the
unique solution to (\ref{def:ODE_g}) defined by $q$, with initialization $g_0$,
as guaranteed by Lemma \ref{lem:exist_g_fixed_q}-(1).
\end{itemize}
Equip $\mathcal{L}'_q(t_0) \times \mathcal{L}_g(t_0)$ with the metric
\begin{align*}
d_\ast((q,g), (\tilde q,\tilde g))=d_W(q,\tilde q) +
d_{\infty,\mathrm{TV}}(g,\tilde g). 
\end{align*}
Since both $\mathcal{L}'_q$ and $\mathcal{L}_g$ are complete metric spaces,
$\mathcal{L}'_q \times \mathcal{L}_g$ is also complete.
Moreover, by Lemmas \ref{lem:exist_g_fixed_q}-(2) and
\ref{lem:exist_q_fixed_g}-(3), for any $q,\tilde q \in \mathcal{L}'_q(t_0)$ and
$g,\tilde g \in \mathcal{L}_g(t_0)$, there exist $C_0,T_0>0$ depending on
$K$ for which
\[d_\ast(\Gamma(q,g), \Gamma(\tilde q,\tilde g))
\leq C_0t_0 \cdot d_\ast\big((q,g),(\tilde q,\tilde g)\big)\]
for any $t_0 \leq T_0$. Then, choosing $t_0$ to satisfy also
$t_0 \leq 1/2C_0$, we obtain that $\Gamma$ is a contraction in $d_\ast$.
By the Banach fixed point theorem, there
is a unique fixed point $(q^\ast, g^\ast)
 \in \mathcal{L}_q'(t_0) \times \mathcal{L}_g(t_0)$ of $\Gamma$.

By Lemma \ref{lem:exist_g_fixed_q}-(1), $g^\ast$ is a (classical) solution to
(\ref{eq:gflow}) defined by $q^\ast$ and the initial condition $g_0$, and
by Lemma \ref{lem:exist_q_fixed_g}-(1), $q^\ast$ is a weak solution to
(\ref{eq:fokkerplanckappendix}) defined by $g^\ast$ with the initial condition
$q_0$, in the sense (\ref{eq:weaksolution}).
Suppose inductively that
$t \mapsto g_t^\ast(\theta)$ belongs to $C^r((0,t_0))$
for each fixed $\theta \in [-M,M]$ and that $\sup_{t \in (0,t_0)}
\int_{-M}^M |\frac{\d^a}{\d t^a} g_t^\ast(\theta)|\d\theta<\infty$
for all $a=0,\ldots,r$.  Then Lemma \ref{lemma:smoothness}-(1) shows
that $t\mapsto f_t(\theta)=\N_\tau * \frac{\bar
q_t^\ast}{\N_\tau*g_t^\ast}(\theta)$ belongs also to $C^r((0,t_0))$
and $\sup_{t \in (0,t_0)}\sup_{\theta \in [-M,M]}
|\frac{\d^a}{\d t^a} f_t(\theta)|<\infty$ for each $a=0,\ldots,r$. Then
$\frac{\d^{r+1}}{\d t^{r+1}} g_t(\theta)
=\frac{\d^r}{\d t^r} [g_t(\theta)(f_t(\theta)-1)]$ is continuous in $t$,
and satisfies
\begin{align*}
\sup_{t \in (0,t_0)}
\int_{-M}^M \left|\frac{\d^{r+1}}{\d t^{r+1}} g_t(\theta)\right|\d\theta
&=\sup_{t \in (0,t_0)} \int_{-M}^M \left|\frac{\d^r}{\d t^r}
[g_t(\theta)(f_t(\theta)-1)]\right|\d\theta \\
&\leq \sup_{t \in (0,t_0)} \sum_{a=0}^r \binom{r}{a} \sup_{\theta \in [-M,M]}
\left|\frac{\d^{r-a}}{\d t^{r-a}}(f_t(\theta)-1)\right|
\int_{-M}^M \left|\frac{\d^a}{\d t^a} g_t(\theta)\right|\d\theta<\infty.
\end{align*}
Thus the inductive claim holds for $r+1$. Then
$t \mapsto g_t^\ast(\theta)$ belongs to $C^\infty((0,t_0))$ for each fixed
$\theta$, and $\sup_{t \in (0,t_0)} \int_{-M}^M |\frac{\d^r}{\d t^r}
g_t^\ast(\theta)|\d\theta<\infty$ for each fixed $r \geq 0$.
Then Lemma \ref{lemma:smoothness}-(2) implies that $(t,\bvarphi) \mapsto
q_t^\ast(\bvarphi)$ belongs to $C^\infty((0,t_0) \times \R^p)$.
Applying this smoothness of $q_t^\ast(\bvarphi)$ and
integrating (\ref{eq:weaksolution}) by parts, we obtain that $q^\ast$ satisfies
(\ref{eq:qflow}) pointwise at each $(t,\bvarphi) \in (0,t_0) \times \R^p$.
Furthermore, the uniform boundedness and continuity of $q_t^\ast(\bvarphi)$ and
weak convergence $q_t^\ast \to q_0^\ast$ imply that $q_t^\ast(\bvarphi)
\to q_0^\ast(\bvarphi)$ pointwise over $\bvarphi \in \R^p$, so $\{q_t^\ast\}_{t
\in [0,t_0]}$ is a classical solution to (\ref{eq:qflow}).

Conversely, for any solution $(q,g)$ to (\ref{eq:gflow}--\ref{eq:qflow}),
Lemma \ref{lem:exist_q_fixed_g}-(2) applied
with this $g$ shows that we must have $q \in \mathcal{L}_q'(t_0)$. Then $(q,g)
\in \mathcal{L}_q'(t_0) \times \mathcal{L}_g(t_0)$ is also a fixed point of
$\Gamma$, so by the uniqueness guarantees in Lemmas
\ref{lem:exist_g_fixed_q}-(1) and \ref{lem:exist_q_fixed_g}-(1),
$q=q^\ast$ and $g=g^\ast$ as equalities of elements in
$\mathcal{L}_q'(t_0)$ and $\mathcal{L}_g(t_0)$. Since $(q,g)$ are continuous,
this implies $g_t(\theta)=g^\ast_t(\theta)$ and
$q_t(\bvarphi)=q^\ast_t(\bvarphi)$ pointwise,
so the solution $(q^\ast, g^\ast)$ to (\ref{eq:qflow}--\ref{eq:gflow}) is
unique.

We now extend this solution to $[0,T]$, by applying this argument sequentially.
Let $\{g_t,q_t\}_{t \in [0,t_0]}$ be the above solution over $[0,t_0]$.
For any $T>0$ and extension of $\{g_t\}_{t \in [0,t_0]}$ to
$\{g_t\}_{t \in [0,t_0+T]} \in \mathcal{L}_g(t_0+T)$,
Lemma \ref{lem:exist_q_fixed_g} shows that
there is a unique solution to the diffusion
(\ref{def:fix_g_sde}) over $[0,t_0+T]$
with initialization $q_0$, which satisfies (\ref{ineq:mgf_bound_component})
for all $t \in [0,t_0+T]$ with $K(q_0)=K$ for the above value $K>0$ defining
(\ref{def:Lq_subset}).
The portion of this solution on $[t_0,t_0+T]$ must coincide with the unique
solution to (\ref{eq:fokkerplanckappendix}) over $[0,T]$
that is initialized at $q_{t_0}$ and defined by $\{g_t\}_{t \in [t_0,t_0+T]}$.
So for the initialization $q_{t_0}$, we must have that
 (\ref{ineq:mgf_bound_component}) holds with $K(q_{t_0})=K$ and the same value
$K$ as above. Then we may apply the above argument to $[t_0,2t_0]$
with $(q_{t_0},g_{t_0})$ in place of $(q_0,g_0)$, and then sequentially to
$[2t_0,3t_0]$, etc. This proves the existence and uniqueness of the solution
$\{(q_t,g_t)\}_{t \in [0,T]}$ to (\ref{eq:qflow}--\ref{eq:gflow}), as claimed.
The properties $g_t(\theta)>0$ and $\sup_{t \in [0,T]} \E_{\bvarphi_t \sim q_t}
e^{\blambda^\top \bvarphi_t}<\infty$ follow from the properties of the solutions
guaranteed by Lemmas \ref{lem:exist_g_fixed_q}-(1) and
\ref{lem:exist_q_fixed_g}-(2), and this concludes the proof.
\end{proof}

\subsection{Proof of Lemma \ref{lemma:Fndecays}}\label{sec:Fndecays}

\begin{proof}[Proof of Lemma \ref{lemma:Fndecays}]
We have
\begin{equation}\label{eq:Fnqgform}
F_n(q,g)=\frac{1}{p}\int\Big[U_g(\bvarphi)+\log
q(\bvarphi)\Big]q(\bvarphi)\d\bvarphi
+\frac{1}{2p}\log \det (2\pi\bSigma).
\end{equation}
To show $\limsup_{t \to 0} F_n(q_t,g_t) \leq F_n(q_0,g_0)$, recall from
(\ref{eq:qtbounded}) that for any fixed $T>0$, $q_t(\bvarphi)$ is uniformly
bounded over $t \in [0,T]$. Since $U_{g_t}(\bvarphi)$ is also uniformly bounded
over $\|\bvarphi\|_2 \leq R$, for any $R>0$,
\[\lim_{t \to 0} \int \1\{\|\bvarphi\|_\infty \leq R\}
U_{g_t}(\bvarphi)q_t(\bvarphi)\d\bvarphi
=\int \1\{\|\bvarphi\|_\infty \leq R\}
U_{g_0}(\bvarphi)q_0(\bvarphi)\d\bvarphi\]
by the bounded convergence theorem. Applying
$U_{g_t}(\bvarphi)<C(\|\bvarphi\|_2^2+1)$ and Cauchy-Schwarz,
\[\int \1\{\|\bvarphi\|_\infty>R\}
|U_{g_t}(\bvarphi)|q_t(\bvarphi)\d\bvarphi
\leq \P_{q_t}[\|\bvarphi_t\|_\infty>R]^{1/2}
(\E_{q_t}[\|\bvarphi_t\|^4]+1)^{1/2}.\]
By Lemma \ref{lem:exist_q_fixed_g}-(2), $\E_{q_t}[\|\bvarphi_t\|^4]<C$
and $\P_{q_t}[|\varphi_{t,j}|>R] \leq e^{-R}\E[e^{|\varphi_{t,j}|}]<Ce^{-R}$
for a constant $C>0$, uniformly over $j=1,\ldots,p$ and $t \in [0,T]$.
Thus the above display converges to 0 uniformly over $t \in [0,T]$
as $R \to \infty$, so
\[\lim_{t \to 0} \int U_{g_t}(\bvarphi)q_t(\bvarphi)\d\bvarphi
=\int U_{g_0}(\bvarphi)q_0(\bvarphi)\d\bvarphi.\]
Applying the uniform upper bound $\log q_t(\bvarphi) \leq C_0$ and Fatou's
lemma, we have also
\[\int [C_0-\log q_0(\bvarphi)]q_0(\bvarphi)\d\bvarphi
\leq \liminf_{t \to 0} \int [C_0-\log q_t(\bvarphi)]q_t(\bvarphi)\d\bvarphi,\]
hence $\limsup_{t \to 0} \int (\log q_t)q_t \leq \int (\log q_0)q_0$.
Combining these statements gives $\limsup_{t \to 0} F_n(q_t,g_t) \leq
F_n(q_0,g_0)$.

To show the form (\ref{eq:Fnderiv}) for the derivative of $F_n$,
we will appeal to the following properties of $\{q_t\}_{t \in [0,T]}$: For any
fixed $0<t<T$ and compact subset $K \subset \R^p$,
\begin{align}
\sup_{s \in [t,T]} \sup_{\bvarphi \in \R^p} \frac{|\log q_s(\bvarphi)|}
{\|\bvarphi\|^2+1}&<\infty,\label{eq:logqbound}\\
\sup_{s \in [t,T]} \sup_{\bvarphi \in K} \|\nabla
q_s(\bvarphi)\|_2&<\infty,\label{eq:sobolev1norm}\\
\sup_{s \in [t,T]} \sup_{\bvarphi \in K}
\|\nabla^2 q_s(\bvarphi)\|_{\mathrm{F}}
&<\infty,\label{eq:sobolev2norm}\\
\int_t^T \int_{\R^p}
\|\nabla \log q_s(\bvarphi)\|_2^2 q_s(\bvarphi)
\d\bvarphi\d s&<\infty \label{eq:entropybound}.
\end{align}
The upper bound for $\log q_s(\bvarphi)$ in 
(\ref{eq:logqbound}) follows from (\ref{eq:qtbounded}), and the lower bound
follows from \cite[Example 8.2.3]{bogachev2022fokker}, where we may take
$\beta=1$ in light of the form (\ref{eq:Upostmean}) for the drift
$\nabla U_{g_t}(\bvarphi)$.
The next two statements (\ref{eq:sobolev1norm}--\ref{eq:sobolev2norm}) follow
from the $C^\infty$ smoothness of $(t,\bvarphi) \mapsto q_t(\bvarphi)$.
By Lemma \ref{lem:exist_q_fixed_g}-(2), we must have
$\E_{q_s}[\|\bvarphi_s\|_2^2]<C$ for some $C>0$ uniformly over $s \in
[0,T]$. Then (\ref{eq:logqbound}) implies $\int |\log
q_t(\bvarphi)|q_t(\bvarphi)\d\bvarphi<\infty$, (\ref{eq:Upostmean})  implies
\begin{equation}\label{eq:Usquarebound}
\int \|\nabla U_{g_s}(\bvarphi)\|_2^2 q_s(\bvarphi)\d\bvarphi<C
\end{equation}
for some $C>0$ uniformly over $s \in [0,T]$, and
the last statement (\ref{eq:entropybound}) then follows from
\cite[Theorem 7.4.1]{bogachev2022fokker}.

Let $\boldeta(\bvarphi)=\prod_{j=1}^p \eta(\varphi_j)$ where $\eta$ is
a smooth, compactly supported bump function on $\R$. Define a truncated version of $F_n$ by
\[F_n^\eta(q_t,g_t)=\frac{1}{p}\int \boldeta(\bvarphi)[U_{g_t}(\bvarphi)+\log q_t(\bvarphi)]
q_t(\bvarphi)\d\bvarphi+\frac{n}{2p}\log (2\pi)+\frac{1}{2p}\log \det \bSigma.\]
Then for any $t>0$, we may decompose
\[\frac{\d}{\d t} F_n^\eta(q_t,g_t)=\mathrm{I}^\eta(t)+\mathrm{II}^\eta(t).\]
Here, we set
\begin{align*}
\mathrm{I}^\eta(t)&=\frac{1}{p}
\int \boldeta(\bvarphi) \cdot \frac{\d}{\d t}U_{g_t}(\bvarphi)
\cdot q_t(\bvarphi)\d\bvarphi\\
&=\int \eta(\varphi) \cdot \frac{\d}{\d
t}\Big({-}\log[\N_\tau*g_t](\varphi)\Big) \cdot \bar q_t(\varphi)\d\varphi
={-}\alpha\int 
\bigg(\bigg[\N_\tau*\frac{\eta \bar q_t}{\N_\tau*g_t}\bigg](\theta)-1\bigg)^2
g_t(\theta)\d\theta,
\end{align*}
where these calculations and argument for differentiating under the integral
are the same as in (\ref{eq:Fbarnonincreasing}). We also set
\begin{align*}
\mathrm{II}^\eta(t)&=\frac{1}{p}\int \boldeta(\bvarphi)[U_{g_t}(\bvarphi)+\log
q_t(\bvarphi)+1] \cdot \frac{\d}{\d t}q_t(\bvarphi)\d\bvarphi\\
&=\frac{1}{p}\int \boldeta(\bvarphi)[U_{g_t}(\bvarphi)+\log
q_t(\bvarphi)+1] \cdot \big(
\nabla \cdot [q_t(\bvarphi)\nabla U_{g_t}(\bvarphi)]+\Delta q_t(\bvarphi)\big)
\d\bvarphi,
\end{align*}
where we may differentiate under the integral in the first line using the
dominated convergence theorem and boundedness of
$U_{g_t},\nabla U_{g_t},\log q_t,\|\nabla q_t\|,|\Delta q_t|$ in compact
subsets of $(0,T) \times \R^p$ as follows from
(\ref{eq:logqbound}--\ref{eq:sobolev2norm}). Applying integration by parts in
$\bvarphi$, we then have
$\mathrm{II}^\eta(t)=\mathrm{III}^\eta(t)+\mathrm{IV}^\eta(t)$ where
\begin{align*}
\mathrm{III}^\eta(t)&={-}\frac{1}{p}\int \boldeta(\bvarphi)
\Big\|\nabla\Big(U_{g_t}(\bvarphi)+\log
q_t(\bvarphi)\Big)\Big\|_2^2q_t(\bvarphi)\d\bvarphi,\\
\mathrm{IV}^\eta(t)&={-}\frac{1}{p}\int 
[U_{g_t}(\bvarphi)+\log q_t(\bvarphi)+1] \cdot \nabla \boldeta(\bvarphi)^\top
\nabla [U_{g_t}(\bvarphi)+\log q_t(\bvarphi)] q_t(\bvarphi)\d\bvarphi.
\end{align*}

Take $\eta \equiv \eta_R$ such that $\eta_R(\varphi)=1$ for $|\varphi| \leq R$,
$\eta_R(\varphi)=0$ for $|\varphi| \geq R+1$, and $\eta_R'(\varphi)$ is bounded
by a constant. Then, recalling from the arguments preceding
(\ref{eq:dominatedconvergence}) that $\N_\tau*\frac{\bar q_t}{\N_\tau*g}$ is
uniformly bounded, a direct application of the dominated convergence theorem
yields
\[\lim_{R \to \infty} \int_t^T \mathrm{I}^{\eta_R}(s)\d s
={-}\alpha \int_t^T \int_{-M}^M
\bigg(\bigg[\N_\tau*\frac{\bar q_s}{\N_\tau*g_s}\bigg](\theta)-1\bigg)^2
g_s(\theta)\d s.\]
Applying the dominated convergence theorem using
(\ref{eq:Usquarebound}) and (\ref{eq:entropybound}), we also have
\[\lim_{R \to \infty} \int_t^T \mathrm{III}^{\eta_R}(s)\d s
={-}\frac{1}{p}\int_t^T \int \Big\|\nabla\Big(U_{g_s}(\bvarphi)+\log
q_s(\bvarphi)\Big)\Big\|_2^2q_s(\bvarphi)\d\bvarphi\d s.\]
Finally, applying again
(\ref{eq:Usquarebound}) and (\ref{eq:entropybound}), the bounds
$|U_{g_s}(\bvarphi)|,|\log q_s(\bvarphi)| \leq C(\|\bvarphi\|_2^2+1)$ uniformly
over $s \in [t,T]$, and Holder's inequality, for some $C,C'>0$ we have
\begin{align*}
\int_t^T |\mathrm{IV}^{\eta_R}(s)|\d s
&\leq C\left[\int_t^T \int \1\{\nabla \boldeta_R(\bvarphi) \neq 0\}
q_s(\bvarphi)\d\bvarphi\d s\right]^{1/4}
\cdot \left[\int_t^T \int 
(\|\bvarphi\|_2^8+1)q_s(\bvarphi)\d\bvarphi\d s\right]^{1/4}\\
&\hspace{1in}
\cdot \left[\int_t^T \int 
\Big\|\nabla \Big(U_{g_s}(\bvarphi)+\log q_s(\bvarphi)\Big)\Big\|_2^2
q_s(\bvarphi)\d\bvarphi\d s\right]^{1/2}\\
&\leq C'\sup_{s \in [t,T]} \P_{q_s}[\nabla \boldeta_R(\bvarphi_s) \neq
0]^{1/4} \cdot \sup_{s \in [t,T]} (\E_{q_s}[\|\bvarphi_s\|_2^8]+1)^{1/4}.
\end{align*}
By Lemma \ref{lem:exist_q_fixed_g}-(2), $\E_{q_s}[\|\bvarphi_s\|^8]<C$
and $\P_{q_s}[|\varphi_{s,j}|>R] \leq e^{-R}\E[e^{|\varphi_{s,j}|}]<Ce^{-R}$
for a constant $C>0$, uniformly over $j=1,\ldots,p$ and $s \in [0,T]$. Thus
\[\lim_{R \to \infty} \int_t^T |\mathrm{IV}^{\eta_R}(s)|\d s=0\]

Combining these statements,
\begin{align*}
&F_n(q_T,g_T)-F_n(q_t,g_t)\\
&=\lim_{R \to \infty} F_n^{\eta_R}(q_T,g_T)-F_n^{\eta_R}(q_t,g_t)\\
&=\lim_{R \to \infty} \int_t^T \Big(\mathrm{I}^{\eta_R}(s)+
\mathrm{III}^{\eta_R}(s)+\mathrm{IV}^{\eta_R}(s)\Big)\d s\\
&=\int_t^T \left({-}\alpha \int 
\bigg(\bigg[\N_\tau*\frac{\bar q_s}{\N_\tau*g_s}\bigg](\theta)-1\bigg)^2
g_s(\theta)\d s
{-}\frac{1}{p}\int \Big\|\nabla\Big(U_{g_s}(\bvarphi)+\log
q_s(\bvarphi)\Big)\Big\|_2^2q_s(\bvarphi)\d\bvarphi\right)\d s
\end{align*}
where the first equality follows from applying the dominated convergence
theorem to $\lim_{R \to \infty} F_n^{\eta_R}$. Differentiating both sides in $T$
shows that $T \mapsto F_n(q_T,g_T)$ is differentiable at any $T>0$,
with derivative given by (\ref{eq:Fnderiv}).
\end{proof}

\section{Derivations of gradient flows}

In this section, we discuss the derivations of the Wasserstein-2 and Fisher-Rao gradient flow equations (\ref{eq:qflow}--\ref{eq:gflow})
used in this paper.

This discussion is provided at a more heuristic level than the rest of our work, in order to motivate the
forms of the PDEs (\ref{eq:qflow}--\ref{eq:gflow}).
We will not aim to establish a rigorous connection between 
(\ref{eq:qflow}--\ref{eq:gflow}) and notions of gradient flows on abstract
metric spaces as developed in \cite{ambrosio2005gradient}; however, we
emphasize that formalizing such a connection is not needed for our theoretical
and methodological development.
All theorems and proofs of our work are fully rigorous in that
they pertain to the PDE system (\ref{eq:qflow}--\ref{eq:gflow})
as the definitions of the gradient
flows, and are to be understood as results about this PDE system. This PDE
system then directly motivates the form of the proposed EBflow method as a
discretized and empirical approximation of its solution.

\subsection{General background}\label{sec:flow_background}

Let $\cP$ be the space of probability density functions 
with finite second moment on a Euclidean space $\mathcal{X} \subset \R^d$. (In
our examples, $\mathcal{X}=\R^p$ or $\mathcal{X}=[-M,M] \subset \R$.)
For a smooth curve $\{\rho_t\}_{t \in [0,1]}$ in $\cP$, we identify its tangent
vector $\dot \rho_t=\partial_t \rho_t(\cdot)$ at $\rho_t$
as a function $\dot \rho_t:\mathcal{X} \to \R$. We equip $\cP$
with the formal structure of a Riemannian manifold, associating to each
$\rho \in \cP$ a tangent space $\tangent_\rho \cP$ of such tangent vectors,
and a metric tensor
$g_\rho(\cdot, \cdot): \tangent_\rho \cP \times \tangent_\rho \cP \rightarrow
\R$. This induces a metric, for any $\mu,\nu \in \cP$, 
\begin{align}\label{def:rieman_dist}
\mathsf{d}(\mu,\nu) = \inf\Big\{\int_0^1 \pnorm{\dot{\rho}_t}{\rho_t}:
\rho_0=\mu,\rho_1=\nu\Big\}
\end{align}
where the infimum is over all smooth curves $\{\rho_t\}_{t \in [0,1]}$ in $\cP$
from $\mu$ to $\nu$, and
$\pnorm{\zeta}{\rho}=\sqrt{g_{\rho}(\zeta,\zeta)}$ for any $\zeta \in \tangent_{\rho} \cP$.
Given a functional $F:\cP \rightarrow \R$, the gradient of $F$ at $\rho \in
\cP$, denoted by $\grad F[\rho]$, is the unique element in $\tangent_\rho \cP$
such that, for all curves $\{\rho_t\}_{t\in [0,1]}$ with $\rho_0 = \rho$
and $\dot \rho_0=\zeta \in \tangent_\rho \cP$,
\begin{align}\label{def:rieman_grad}
g_{\rho}\big(\grad F[\rho], \zeta\big)
=\frac{\d}{\d t}\Big|_{t=0} F(\rho_t)
=\int \delta F[\rho](x)\zeta(x)\d x
\end{align}
where $\delta F[\rho]:\R^d \to \R$ is the first variation of $F$.
We may then define the gradient flow
$\frac{\d}{\d t} \rho_t={-}\grad F[\rho_t]$.

We now specialize to the Wasserstein-2 and Fisher-Rao geometries, following
the presentation in \cite{yan2023learning}.\\

\noindent \emph{Wasserstein-2 geometry.} For the formal Riemannian structure
underlying the Wasserstein-2 geometry on $\cP_*(\R^p)$, the tangent space at
$\rho \in \cP_*(\R^p)$ is
\begin{align*}
\tangent_\rho^{W_2} \cP_*(\R^p) = \Big\{\zeta:\R^p \to \R \;\Big|\; \zeta = -\nabla \cdot (\rho \nabla u) \text{ for some $u$ satisfying}  \int \pnorm{\nabla u}{}^2\d \rho < \infty \Big\},
\end{align*}
equipped with the metric tensor
$g^{W_2}_\rho(\zeta_1,\zeta_2) = \int \iprod{\nabla u_1}{\nabla u_2}\rho(x)\d
x$.
To compute the Wasserstein-2 gradient, 
let $\{\rho_t\}_{t\in[0,1]}$ be a curve with $\rho_0=\rho$ and
$\dot\rho_0=\zeta=-\nabla \cdot (\rho \nabla v)$. Then (\ref{def:rieman_grad})
reads
\begin{align*}
g_{\rho}^{W_2}\big(\grad^{W_2} F[\rho], \zeta\big) &= \int \delta F[\rho] \cdot \zeta\,\d x
= \int {-}\delta F[\rho]\Big(\nabla \cdot (\rho \nabla v)\Big) \d x
= \int \iprod{\nabla \delta F[\rho]}{\nabla v} \rho\,\d x,
\end{align*}
the last equality applying integration by parts. Comparing with the definition
of $g_\rho^{W_2}$, this identifies the Wasserstein-2 gradient as
\begin{align}\label{def:W2_grad}
\grad^{W_2} F[\rho] = -\nabla \cdot (\rho \nabla \delta F[\rho]). 
\end{align}

\noindent\emph{Fisher-Rao geometry.} For the formal Riemannian structure
underlying the Fisher-Rao geometry on $\cP_*(M)$, the tangent space at $\rho \in
\cP_*(M)$ is
\begin{align*}
\tangent_\rho^{\FR} \cP = \Big\{\zeta:[-M,M] \to \R \;\Big|\;
\zeta=\rho\Big(\alpha - \int_{-M}^M \alpha \d
\rho\Big) \text{ for some $\alpha$ satisfying} \int_{-M}^M \alpha^2 \d\rho < \infty\Big\}, 
\end{align*}
equipped with the metric tensor
$g^{\FR}_\rho(\zeta_1,\zeta_2) = \int \frac{\zeta_1 \cdot \zeta_2}{\rho^2} \d\rho
= \int (\alpha_1-\int \alpha_1\d\rho)(\alpha_2-\int \alpha_2 \d\rho)\d\rho$.
To compute the Fisher-Rao gradient,
let $\{\rho_t\}_{t\in[0,1]}$ be a curve with $\rho_0=\rho$ and $\dot \rho_0 =
\zeta = \rho(\alpha - \int \alpha\,\d\rho)$. Then (\ref{def:rieman_grad}) reads
\begin{align*}
g^{\FR}_\rho(\grad^{\FR}F[\rho], \zeta) = 
\int_{-M}^M \delta F[\rho] \cdot \zeta\,\d x
&= \int_{-M}^M \delta F[\rho] \cdot \Big(\alpha - \int_{-M}^M \alpha \d\rho\Big)\d\rho\\
&=\int_{-M}^M \Big(\delta F[\rho] - \int_{-M}^M \delta F[\rho]\d\rho\Big)\cdot
\Big(\alpha - \int_{-M}^M \alpha \d\rho\Big)\d\rho.
\end{align*}
Comparing with the definition of $g_\rho^{\FR}$,
this identifies the Fisher-Rao gradient as
\begin{align}\label{def:FR_grad}
\grad^{\FR} F[\rho] = \rho\Big(\delta F[\rho] - \int \delta F[\rho] \d\rho\Big).
\end{align}
We note that the second term is a centering term for the first
variation $\delta
F[\rho]$ (uniquely defined only up to an additive constant) to ensure that
the condition $\int_{-M}^M \d\rho_t=1$ is conserved along the flow.

\subsection{Derivatives of $\bar{F}_n(g)$ and $F_n(q,g)$}\label{sec:Fn_grad}

We explain the calculations of the forms of the first variations of
$\bar F_n(g)$ and $F_n(q,g)$. First, from the form
\[\bar F_n(g)={-}\frac{1}{p}\log \int_{[-M,M]^p}
\frac{1}{(2\pi\sigma^2)^{n/2}}
\exp\Big({-}\frac{\|\y-\X\btheta\|^2}{2\sigma^2}\Big)
\prod_{j=1}^p g(\theta_j)\d\btheta,\]
for any density $g \in \cP_*(M)$ that is strictly positive on $[-M,M]$ and
for any bounded function $\chi:[-M,M] \to \R$ with $\int_{-M}^M
\chi(\theta)\d\theta=0$, we have
\begin{align*}
\frac{\d}{\d \eps}\Big|_{\eps=0} \bar F_n(g+\eps \chi)
&={-}\frac{1}{p} \sum_{j=1}^p
\frac{\int_{[-M,M]^p} \exp(-\frac{\|\y-\X\btheta\|^2}{2\sigma^2})
\frac{\chi(\theta_j)}{g(\theta_j)}\prod_{i=1}^p g(\theta_i)\d\btheta}
{\int_{[-M,M]^p} \exp(-\frac{\|\y-\X\btheta\|^2}{2\sigma^2})
\prod_{i=1}^p g(\theta_i)\d\btheta}\\
&=-\frac{1}{p}\sum_{j=1}^p \int_{[-M,M]^p}
\frac{\chi(\theta_j)}{g(\theta_j)} P_g(\btheta \mid \y)\d \btheta
=\int_{-M}^M {-}\frac{\chi(\theta)}{g(\theta)}\bar\mu[g](\theta)\d\theta
=\int_{-M}^M {-}\frac{\bar\mu[g](\theta)}{g(\theta)}\chi(\theta)\d\theta,
\end{align*}
where $\bar \mu[g]$ is the average marginal density of
$\mu[g](\btheta)=P_g(\btheta \mid \y)$. Alternatively, from the equivalent form
\[\bar F_n(g)={-}\frac{1}{p}\log \int
\frac{1}{\det(2\pi\bSigma)^{1/2}}
\exp\Big({-}\frac{(\y-\X\bvarphi)^\top \bSigma^{-1}(\y-\X\bvarphi)}{2}\Big)
\prod_{j=1}^p \N_\tau*g(\varphi_j)\d\bvarphi,\]
we have analogously
\begin{align*}
\frac{\d}{\d \eps}\Big|_{\eps=0} \bar F_n(g+\eps \chi)
&={-}\frac{1}{p}\sum_{j=1}^p \int \frac{[\N_\tau*\chi](\varphi_j)}
{[\N_\tau*g](\varphi_j)} P_g(\bvarphi \mid \y) \d\bvarphi\\
&=\int {-}\frac{[\N_\tau*\chi](\varphi)}{[\N_\tau*g](\varphi)}
\bar\nu[g](\varphi)\d\varphi
=\int_{-M}^M
{-}\left[\N_\tau*\frac{\bar\nu[g]}{\N_\tau*g}\right](\theta)\chi(\theta)\d\theta,
\end{align*}
where the last equality uses the identity
$\int f(\varphi)[\N_\tau*\chi](\varphi)\d\varphi
=\int_{-M}^M [\N_\tau*f](\theta)\chi(\theta)\d\theta$. Thus we identify two
equivalent forms for the first variation of $\bar F_n(g)$,
\begin{align}\label{eq:first_variation_equiv}
\delta \bar F_n[g](\theta)={-}\frac{\bar\nu[g](\theta)}{g(\theta)}+1
={-}\N_\tau*\frac{\bar\nu[g]}{\N_\tau*g}(\theta)+1,
\end{align}
where the constant $+1$ is chosen to center $\delta \bar F_n[g]$ to have mean 0
under $g$. The second form shows (\ref{eq:deltaFn}).

Applying analogous calculations for the functional $F_n(q,g)$ given explicitly
by \eqref{eq:Fnqgform}, we have
\begin{align*}
\frac{\d}{\d \eps}\Big|_{\eps=0} F_n(q, g + \eps\chi) &=
-\frac{1}{p}\sum_{j=1}^p\int \frac{[\N_\tau \ast \chi](\varphi_j)}{[\N_\tau \ast
g](\varphi_j)}q(\bvarphi)\d\bvarphi
=\int_{-M}^M {-}\left[\N_\tau \ast \frac{\bar{q}}{\N_\tau \ast g}\right](\theta)
\chi(\theta) \d\theta,\\
\frac{\d}{\d \eps}\Big|_{\eps=0} F_n(q+\eps\chi,g)&=\frac{1}{p} \int
\big[U_g(\bvarphi)+\log q(\bvarphi)+1\big] \chi(\bvarphi) \d\bvarphi,
\end{align*}
where $U_g(\bvarphi)$ is the negative log-posterior-density
(\ref{Ug_smooth}). Then, applying $\int_{-M}^M \chi(\theta)\d\theta=0$
in the first equation and $\int \chi(\bvarphi)\d\bvarphi=0$ in the second,
the first variations of $F_n(q,g)$ are
\[\delta_g F_n[q,g](\theta) = -\N_\tau \ast\frac{\bar{q}}{\N_\tau \ast
g}(\theta) + 1,
\qquad
\delta_q F_n[q,g](\bvarphi) = \frac{1}{p}\big[U_g(\bvarphi)+\log
q(\bvarphi)\big],\]
where again we have centered this first quantity
$\delta_g F_n[q,g](\theta)$ to have mean 0 under $g$.
This leads to the forms (\ref{eq:qflow}--\ref{eq:gflow}) for the gradient flows
(\ref{eq:joint_flow_intro}), in light of the general definitions
(\ref{def:W2_grad}) and (\ref{def:FR_grad}) for the Wasserstein-2 and
Fisher-Rao gradients.

\subsection{Non-convexity}
\label{app:nonconvex}

We provide a concrete example to show that even for the identity design
$\X=\Id$, the functional $\bar{F}_n(g)$ is not geodesically convex under either
the Wasserstein-2 geometry or the Fisher-Rao geometry, even though it is convex
in the sense (\ref{eq:barFconvex}) in the linear geometry.

For the Wasserstein-2 geometry, consider
$g_0=g_\ast=\delta_0$, $g_1=\frac{1}{2}\delta_{-1}+\frac{1}{2}\delta_1$,
and $y_1,\ldots,y_n \overset{iid}{\sim} \N(0,1)*g_\ast$.
The constant-speed Wasserstein-2 geodesic connecting $g_0$ and $g_1$ is
$g_t=\frac{1}{2}\delta_{-t}+\frac{1}{2}\delta_t$ for $t\in[0,1]$. Explicit
computation shows
\begin{align*}
\frac{\d^2}{\d t^2}\bar{F}_n(g_t) = 1 - \frac{1}{n}\sum_{i=1}^n
\frac{y_i^2}{\cosh^2(ty_i)},
\end{align*}
yielding that $\Prob_{g_\ast}(\frac{\d^2}{\d^2 t}|_{t=0} \bar{F}_n(g_t) \geq 0)
= \Prob_{g_\ast}(n^{-1}\sum_{i=1}^n Y_i^2 \leq 1)\approx 1/2$ for large $n$.

For the Fisher-Rao geometry, let $g_*=\operatorname{Bernoulli}(1/2)$,
$g_0=\operatorname{Bernoulli}(1/4)$,
$g_1=\operatorname{Bernoulli}(3/4)$, and $y_1,\ldots,y_n \overset{iid}{\sim}
\N(0,1)*g_\ast$.
Using that the geodesic distance in (\ref{def:rieman_dist}) is
$d_{\FR}(\text{Bernoulli}(\theta_1), \text{Bernoulli}(\theta_2)) =
2|\arcsin(\theta_1) - \arcsin(\theta_2)|$
for $\theta_1,\theta_2 \in (0,1)$ \citep{miyamoto2023closed},
we have $d_{\FR}(g_0,g_1) = \pi/3$. The
constant-speed Fisher-Rao geodesic connecting $g_0$ and $g_1$ is
$g_t=\operatorname{Bernoulli}(p_t)$ with $p_t = \sin^2(\frac{\pi}{6}(t+1))$,
from equating $d_{\FR}(g_0,g_t) = t \cdot \pi/3$. Explicit computation
shows that
\begin{align*}
\E_{g_\ast}[\bar F_n(g_t)]
=-\frac{1}{2}\int \Big(\N(x) + \N(x-1)\Big) \cdot
\log\Big[\cos^2\Big(\frac{\pi}{6}(t+1)\Big)\N(x) +
\sin^2\Big(\frac{\pi}{6}(t+1)\Big)\N(x-1)\Big] \d x
\end{align*}
is not convex in $t$, where $\N(x)$ is the standard $\N(0,1)$ density at $x$.

\section{Details of algorithms}\label{sec:other_algo}

\subsection{Smoothness regularization of EBflow}\label{subsec:ebflow_smoothness}

As discussed around (\ref{eq:smoothobjective}), one may adapt EBFlow 
to perform estimation in a space of smooth densities by
imposing additional smoothness regularization for $\hat g$:
\begin{align*}
\bar F_{n,\lambda}(g)&=-\frac{1}{p}\log P_g(\y)
+\frac{\lambda}{2}\int_{-M}^M g''(\theta)^2 d\theta\notag\\
&=\min_{q \in \cP_*(\R^p)} F_n(q,g)+
\frac{\lambda}{2}\int_{-M}^M g''(\theta)^2 d\theta.
\end{align*}
This does not affect the gradient equation \eqref{eq:qflow} for $q_t(\cdot)$,
and leads to the addition of a regularization term in the gradient equation
\eqref{eq:gflow} for $g_t(\cdot)$. We note that the addition of this
smoothness regularizer may ensure sufficient regularity of the prior and
posterior in $\btheta$ along the gradient flow,
to circumvent some of the need to perform the posterior sampling
in the smoothed regression variable $\bvarphi$.
However, even in this context, we advocate
retaining the variable reparametrization using $\bvarphi$, as it
serves to decouple the (statistical) issue of choosing a smoothness penalty
$\lambda$ from the (computational) issue of
ensuring sufficient regularity for the Langevin diffusion process.

To incorporate the additional smoothing-spline regularization in
\eqref{eq:smoothobjective}, we may adjust the weight update \eqref{eq:wupdate} as follows:
Denoting by $\Delta=b_{k+1}-b_k$ the support spacing in
\eqref{eq:gdiscretization}, we may approximate
\[\frac{\lambda}{2}\int_{-M}^M g''(\theta)^2 d\theta
\approx \frac{\lambda\Delta}{2}
\sum_{k=1}^K \left[D\left(\frac{w}{\Delta}\right)\right]_k^2\]
where $D$ is a differential quadrature matrix approximating the second
derivative, e.g.\ $D \in \R^{(K-2) \times K}$ having entries
$D[i,i+1]=-2/\Delta^2$,
$D[i,i]=D[i,i+2]=1/\Delta^2$, and remaining entries 0. Then 
(\ref{eq:wupdate}) may be replaced by
\begin{equation}\label{eq:smoothwupdate}
\begin{aligned}
w_{t+1,k}&=w_{t,k}+\eta_t^w w_{t,k}
\Bigg[\frac{1}{p}\sum_{j=1}^p \frac{\N_\tau(\varphi_{t+1,j}-b_k)}
{\sum_{i=1}^K w_{t,i}\,\N_\tau(\varphi_{t+1,j}-b_i)}-1\\
&\hspace{1in}-\lambda\left(\frac{D^\top Dw_t}{\Delta}\right)_k
+\lambda\sum_{i=1}^K w_{t,i}\left(\frac{D^\top Dw_t}{\Delta}\right)_i\Bigg].
\end{aligned}
\end{equation}
The last two terms correspond to a discrete approximation of the first
variation of the spline regularizer, which is again centered to
ensure the mass conservation property $\sum_k w_{t,k}=1$ in
every iteration.

EBflow alternately applies the ULA iteration
(\ref{eq:phiupdatereparam}) and the $w$ iteration (\ref{eq:smoothwupdate}) with
spline smoothing, or 
(\ref{eq:phiupdateprecond}) and (\ref{eq:smoothwupdate}) in the preconditioned
setting. We initialize EBflow also from $\bvarphi=0$ and
$g=\operatorname{Uniform}(\{b_1,\ldots,b_K\})$, and again run the first 200
sampling iterations at large step size $\eta_t^\varphi=1.0$ without updating
prior weights, before transitioning to the log-linear step size decay schedule
(\ref{eq:loglinearstep}).

\subsection{Details of other algorithms}
\smallskip
\ \newline

\noindent\textbf{CAVI.}
We consider the Gibbs variational representation
\[\bar F_n(g)=\min_q \tilde{F}_n(q,g):=\min_q \frac{1}{p}\int_{[-M,M]^p}
\bigg[\frac{\pnorm{\y - \X\btheta}{}^2}{2\sigma^2} - \sum_{j=1}^p \log g(\theta_j)+\log q(\btheta)\bigg] q(\btheta) \d\btheta\]
in the parametrization by $\btheta$. Mean-field variational inference
optimizes $\tilde F_n(q,g)$ over product distributions $q(\btheta) = \prod_{j=1}^p
q_j(\theta_j)$. It is easy to check that, for fixed $g$ and
$q_{-j}=(q_1,\ldots,q_{j-1},q_{j+1},\ldots,q_p)$, the minimizer over $q_j$ has
the explicit form
\begin{align}\label{def:VI_solution}
q_j(\theta_j) \propto \exp\Big({-}\frac{\|\x_j\|^2}{2\sigma^2}\theta_j^2 + \frac{1}{\sigma^2}(\y - \X_{-j}\E_{q_{-j}}[\btheta_{-j}])^\top \x_j \cdot \theta_j\Big)g(\theta_j)
\end{align}
where $\X_{-j} \in \R^{n \times (p-1)}$ is $\X$ without the $j$-th column, and
$\btheta_{-j} \in\R^{p-1}$ is $\btheta$ without the $j$-th coordinate. Over
a discrete support $b_1,\ldots,b_K \in [-M,M]$, we represent the prior
$g$ by probability weights $w=(w_1,\ldots,w_K)$ and each $q_j$
by weights $w_j=(w_{j,1},\ldots, w_{j,K})$. Then $w_j$ is updated as
\begin{align}
\label{def:VI_solution3}
w_{j,k}=\frac{w_k\exp\Big(-\frac{\|\x_j\|^2}{2\sigma^2}b_k^2+\frac{1}{\sigma^2}(\y
- \X_{-j}\E_{q_{-j}}[\btheta_{-j}])^\top \x_j \cdot b_k\Big)}{\sum_{i=1}^K w_i
\exp\Big(-\frac{\|\x_j\|^2}{2\sigma^2}b_i^2 + \frac{1}{\sigma^2}(\y -
\X_{-j}\E_{q_{-j}}[\btheta_{-j}])^\top \x_j \cdot b_i\Big)} \quad \mbox{for all }  k=1,\ldots, K,
\end{align}
where $\E_{q_{-j}}[\btheta_{-j}]=(\sum_{k=1}^K b_k w_{j',k})_{j' \neq j}$.

Fixing $q=\prod_{j=1}^p q_j$, we update $g$ to minimize
\begin{equation}\label{eq:gobjectiveGibbs}
\tilde F_n(q,g)+\frac{\lambda}{2}\int_{-M}^M g''(\theta)^2\d\theta=\int_{-M}^M [{-}\log g(\theta)]\bar
q(\theta)\d\theta+\frac{\lambda}{2}\int_{-M}^M g''(\theta)^2\d\theta+\text{constant}
\end{equation}
where $\bar q=p^{-1}\sum_j q_j$. This is implemented by the weight update
\[w=\argmin_{w\geq 0, \|w\|_1=1} {-}\sum_{k=1}^K\frac{\sum_{j=1}^p w_{j,k}}{p}
\log w_k+\frac{\lambda\Delta}{2}\sum_{k=1}^K\left[\frac{Dw}{\Delta}\right]_k^2\]
where $D$ is the differential quadrature matrix described in Section
\ref{sec:discretize}.
When $\lambda=0$, this takes the closed form $w_k=\frac{1}{p}\sum_{j=1}^p
w_{j,k}$; when $\lambda>0$, we solve this convex program using CVXR \citep{cvxr}.
Each iteration of CAVI iteratively updates $(q_1,\ldots,q_p,g)$ once in this order.

We initialize CAVI from
$q_1=\ldots=q_p=g=\operatorname{Uniform}(\{b_1,\ldots,b_K\})$.\\

\noindent\textbf{Gibbs-MCEM.} Under the prior $g(\theta)$, the conditional
posterior density of $\theta_j$ given $\btheta_{-j}$ is
\begin{align}\label{def:Gibbs_solution}
P_g(\theta_j \mid \y, \btheta_{-j}) \propto
\exp\Big({-}\frac{\|\x_j\|^2}{2\sigma^2}\theta_j^2 + \frac{1}{\sigma^2}(\y -
\X_{-j}\btheta_{-j})^\top \x_j \cdot \theta_j\Big)g(\theta_j).
\end{align}
Over a discrete support $b_1,\ldots,b_K \in [-M,M]$, this is represented by
the weight vector $w_j=(w_{j,1},\ldots, w_{j,K})$ with weights
\begin{equation}\label{def:Gibbs_solution3}
w_{j,k}=\frac{w_k\exp\Big(-\frac{\|\x_j\|^2}{2\sigma^2}b_k^2 +
\frac{1}{\sigma^2}(\y - \X_{-j}\btheta_{-j})^\top \x_j \cdot
b_k\Big)}{\sum_{i=1}^K w_i\exp\Big(-\frac{\|\x_j\|^2}{2\sigma^2}b_i^2 +
\frac{1}{\sigma^2}(\y - \X_{-j}\btheta_{-j})^\top \x_j \cdot b_i\Big)} \quad
\mbox{for all }  k=1,\ldots, K
\end{equation}
where $w=(w_1,\ldots,w_k)$ are weights for the prior.

Gibbs-MCEM iteratively resamples the coordinates $\theta_1,\ldots,\theta_p$ of
$\btheta$ in sequence
from the conditional law (\ref{def:Gibbs_solution3}) over the support points
$b_1,\ldots,b_K$, fixing the priors weights $w=(w_1,\ldots,w_K)$. Having drawn
$T_\text{iter}$ samples $\theta_j^{(1)},\ldots,\theta_j^{(T_\text{iter})}$
of each $j$-th coordinate, it then approximates $\bar q$ in
(\ref{eq:gobjectiveGibbs}) by the empirical distribution of these
samples, yielding the update for prior weights
\[w=\argmin_{w\geq 0, \|w\|_1=1} {-}\frac{\sum_{j=1}^p\sum_{t=1}^{T_\text{iter}}
\sum_{k=1}^K 1\{\theta_j^{(t)}=b_k\}}{pT_\text{iter}}\log
w_k+\frac{\lambda\Delta}{2}\sum_{k=1}^K\left[\frac{Dw}{\Delta}\right]_k^2.\]

We initialize Gibbs-MCEM from $\btheta=0$ and
$g=\operatorname{Uniform}(\{b_1,\ldots,b_K\})$, and run the first 200 sampling
iterations without updating prior weights.\\

\noindent\textbf{Langevin-MCEM.}
 Langevin-MCEM applies the ULA iterations
(\ref{eq:phiupdatereparam}) to sample from $P_g(\bvarphi \mid \y)$,
fixing the prior weights $w=w_t$ between every $T_\text{iter}$ iterations.
After every $T_\text{iter}$ iterations, denoting by
$\bvarphi^{(1)},\ldots,\bvarphi^{(T_\text{iter})}$ the sampled iterates,
we sample $S=10000$ coordinates $\varphi_1,\ldots,\varphi_S$
uniformly at random from the $pT_\text{iter}$ total coordinates of
$\bvarphi^{(1)},\ldots,\bvarphi^{(T_\text{iter})}$ (for reasons of computational
efficiency), and approximate the exact M-step
update (\ref{eq:EM}) in EM by the prior weight update
\[w=\argmin_{w\geq 0, \|w\|_1=1}
{-}\frac{1}{S}\sum_{s=1}^S \log \left(\sum_{k=1}^K
w_k\N_\tau(b_k-\varphi_s)\right)
+\frac{\lambda\Delta}{2}\sum_{k=1}^K\left[\frac{Dw}{\Delta}\right]_k^2.\]
In other words, this is a discretized NPMLE with sample size $S$ and a
smoothness penalty. This convex program is solved using CVXR and the CSC solver.

We initialize Langevin-MCEM from $\bvarphi=0$ and
$q=\operatorname{Uniform}(\{b_1,\ldots,b_K\})$, and run the first 200 sampling
iterations at large step size $\eta_t^\varphi=1.0$ without updating prior
weights, before transitioning to the fixed step size $\eta_t^\varphi \in
\{0.1,1.0\}$ with prior weight updates.\\

\pagebreak

\section{Supplementary figures and tables}

\begin{table}[h!]
\centering 
{\scriptsize
\begin{tabular}{c|c|c|cc|cc|cc}
\input{Smoothing/tausq.tab}
\end{tabular}}
\caption{Comparison of different settings for the variable reparametrization variance $\tau^2=c_\tau \cdot
\sigma^2/\|\X\|_\op^2$. The setup is
$(n,p)=(500,1000)$, standard Gaussian
prior $g_\ast=\N(0,1)$, i.i.d.\ Gaussian design $x_{ij} \overset{iid}{\sim}
\N(0,\frac{1}{n})$, and
noise variance $\sigma^2$ such that $\beps$ explains
50\% of the variance of $\y$. EBflow is performed with $T=10{,}000$
iterations, smoothing-spline regularization $\lambda=0.003$,
and the log-linear step size schedule \eqref{eq:loglinearstep}. All TV errors,
log-likelihoods, and prediction MSEs are averaged over 10 trials with
independently generated $(\btheta,\beps)$.}
\label{tab:tausq}
\end{table}

\begin{table}[h!]
\centering 
{\scriptsize
\begin{tabular}{c|c|c|cc|cc|cc|cc|cc|cc}
\input{Smoothing/lambda.tab}
\end{tabular}}
\caption{Comparison of different settings for the smoothing spline penalty
$\lambda$.
The simulation setup is the same as in Table \ref{tab:tausq}, and
EBflow is performed with $c_\tau=0.5$.}
\label{tab:lambda}
\end{table}

\begin{figure}[h!]
\includegraphics[width=1\textwidth]{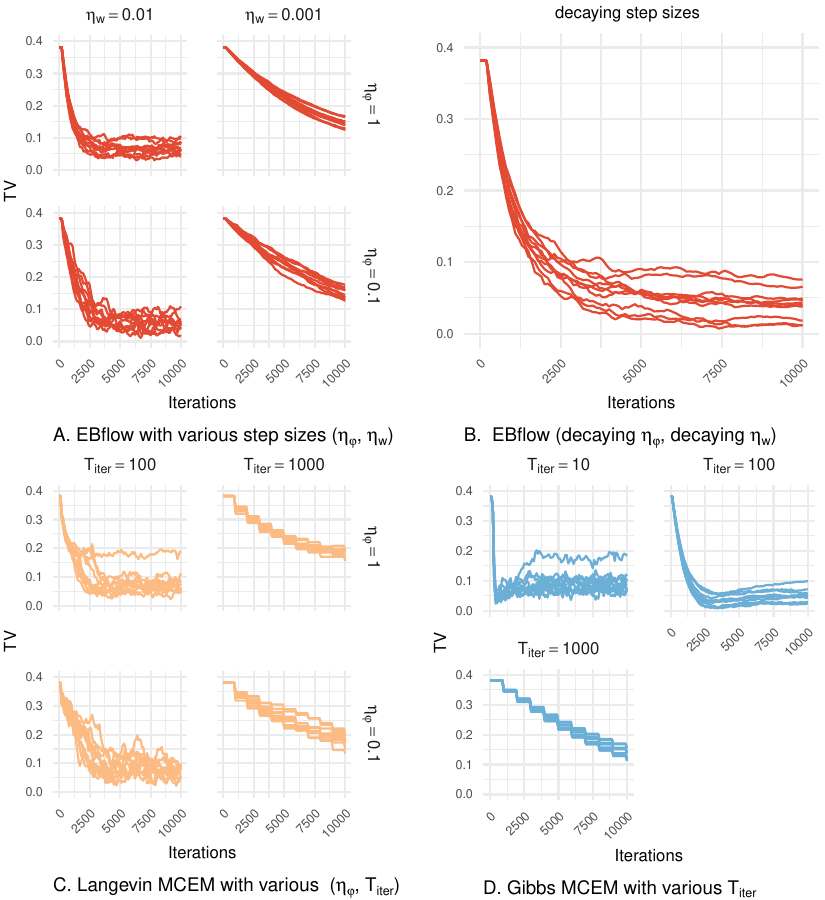}%
\caption{TV distance $\dTV(g_t,g_\ast)$ across 10000 iterations for estimating
a standard Gaussian prior $g_\ast=\N(0,1)$ via optimization of a smoothed
log-likelihood objective, using various parameter tunings for EBflow,
Langevin-MCEM, and Gibbs-MCEM (each with $10$ independent runs).}\label{fig:tuning}
\end{figure}

\begin{figure}[h!]
\centering
\includegraphics[width=0.5\textwidth]{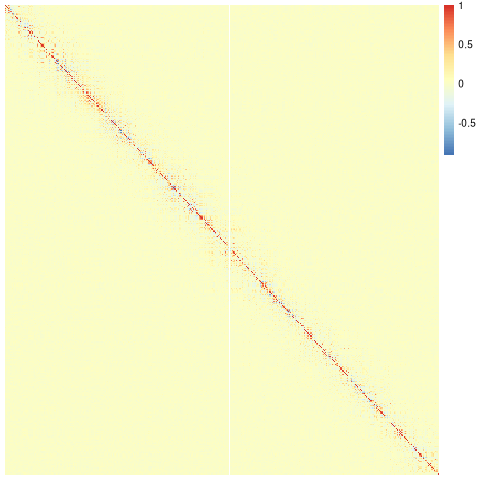}
\caption{Pairwise variable correlations in the simulated genotype
design}\label{fig:LD}
\end{figure}

\begin{figure}[h!]
\centering
\includegraphics[width=1\textwidth]{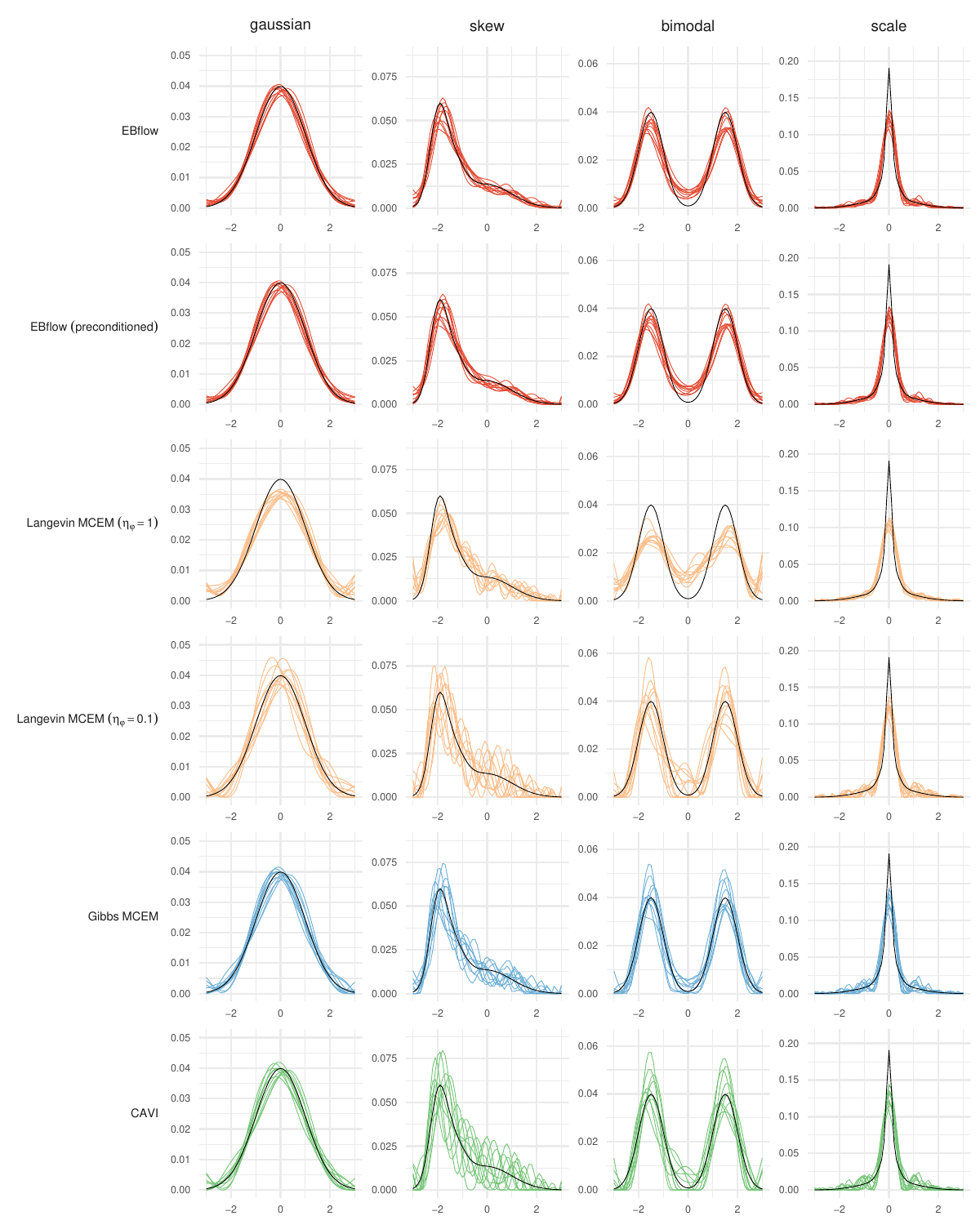}%
\caption{Estimated prior densities for identity design, across 10
independent trials.}\label{fig:identity}
\end{figure}

\begin{figure}[h!]
\centering
\includegraphics[width=1\textwidth]{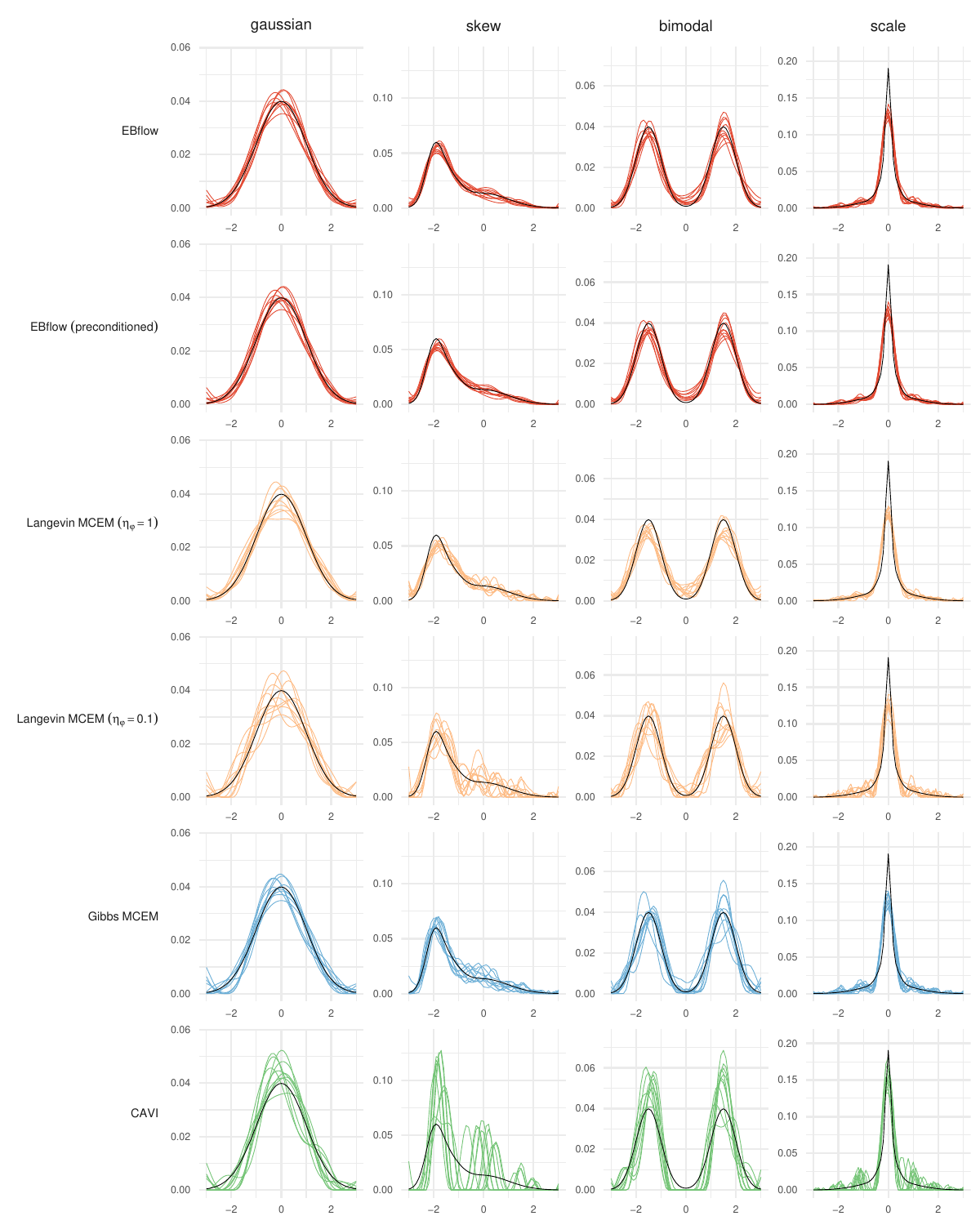}%
\caption{Estimated prior densities for iid design, $n/p=2.0$, across 10
independent trials.}\label{fig:iid}
\end{figure}

\begin{figure}[h!]
\centering
\includegraphics[width=1\textwidth]{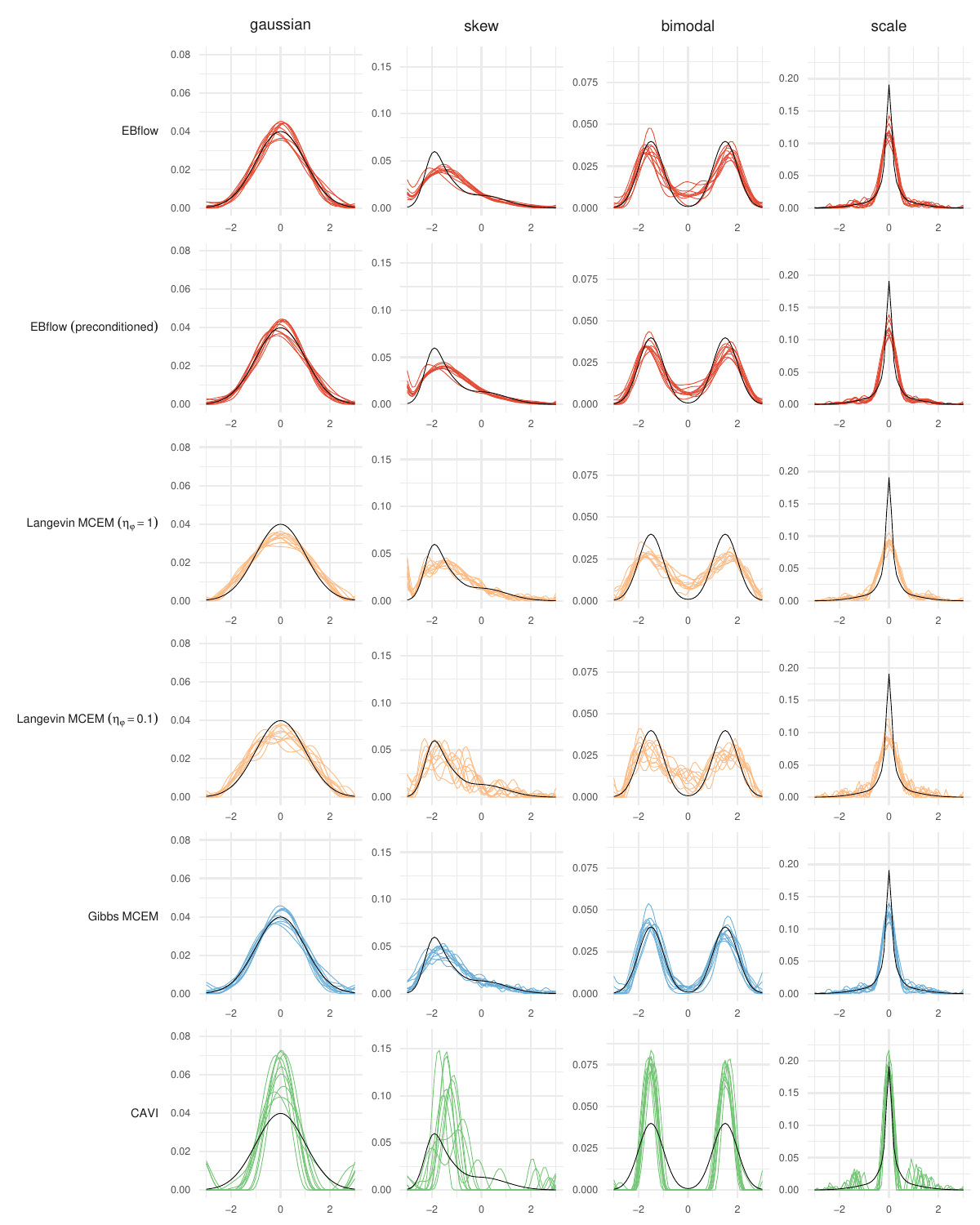}%
\caption{Estimated prior densities for block10corr0.5 design,
$n/p=2.0$, across 10 independent trials.}\label{fig:block10corr0.5}
\end{figure}

\begin{figure}[h!]
\centering
\includegraphics[width=1\textwidth]{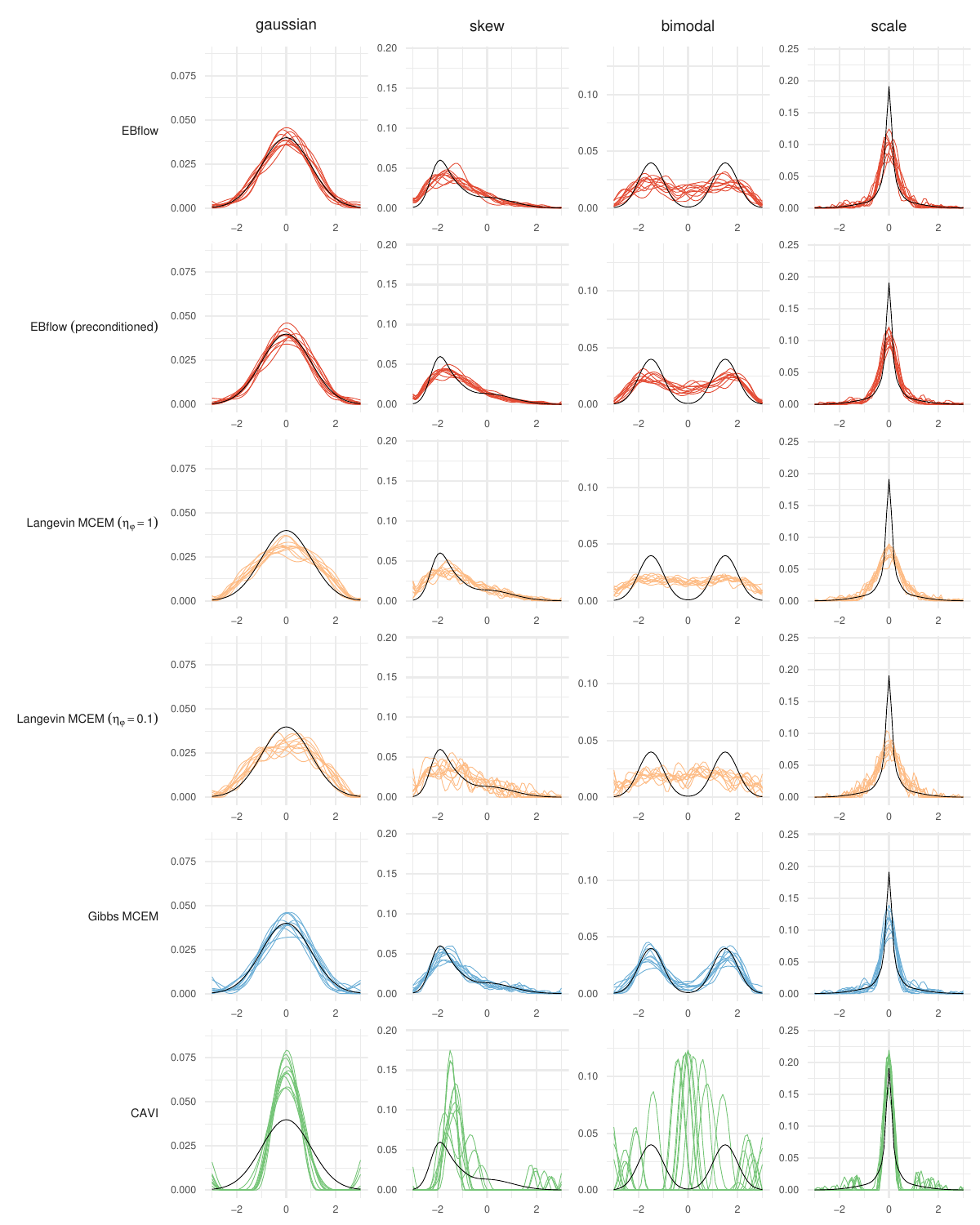}%
\caption{Estimated prior densities for genotype design, $n/p=2.0$, across 10 independent trials.}\label{fig:genotype}
\end{figure}

\begin{table}[h!]
\centering
{\scriptsize
\begin{tabular}{l|l|l|ccccc}
\input{MethodCompare/gaussian.tab}
\end{tabular}}
\caption{Comparison of methods: Gaussian prior. TV errors, negative
log-likelihoods $\bar F_n$, and
prediction MSEs are as discussed in the main text, and averaged over 10
independent trials. Time (s) is the median compute time in seconds over the 10
trials needed to achieve TV error $<0.3$, or Inf if fewer than half of the
trials attained this TV error.}\label{tab:gaussian}

\end{table}

\begin{table}[h!]
\centering
{\scriptsize
\begin{tabular}{l|l|l|ccccc}
\input{MethodCompare/skew.tab}
\end{tabular}}
\caption{Comparison of methods: Skew prior.}\label{tab:skew}
\end{table}

\begin{table}[h!]
\centering
{\scriptsize
\begin{tabular}{l|l|l|ccccc}
\input{MethodCompare/scale.tab}
\end{tabular}}
\caption{Comparison of methods: Scale-mixture prior.}\label{tab:scale}
\end{table}

\begin{table}[h!]
\centering
{\scriptsize
\begin{tabular}{l|l|l|ccccc}
\input{MethodCompare/bimodal.tab}
\end{tabular}}
\caption{Comparison of methods: Bimodal prior.}\label{tab:bimodal}
\end{table}

\FloatBarrier

\bibliographystyle{amsalpha}

{\small
\bibliography{langevin_EB} }

\end{document}